\newcommand{\define}{\textbf}
\renewcommand{\setminus}{\smallsetminus}
\renewcommand{\phi}{\varphi}
\renewcommand{\tilde}{\widetilde}
\renewcommand{\hat}{\widehat}
\renewcommand{\bar}{\overline}
\renewcommand{\L}{\mathbb{L}}
\newcommand{\D}{\mathbb{D}}
\newcommand{\C}{\mathbb{C}}
\newcommand{\Q}{\mathbb{Q}}
\newcommand{\R}{\mathbb{R}}
\newcommand{\Z}{\mathbb{Z}}
\newcommand{\A}{\mathbb{A}}
\newcommand{\K}{\mathbb{K}}
\newcommand{\cS}{\mathcal{S}}
\newcommand{\cH}{\mathcal{H}}
\def\<{\ensuremath{\langle}}
\def\>{\ensuremath{\rangle}}
\newcommand{\lc}{l^*}
\newcommand{\cHp}{{}^p\mathcal{H}}
\newcommand{\cC}{\mathcal{C}}
\DeclareMathOperator{\Sym}{Sym}
\DeclareMathOperator{\rk}{rk}
\DeclareMathOperator{\lk}{lk}
\DeclareMathOperator{\Int}{Int}
\DeclareMathOperator{\IInt}{int}
\DeclareMathOperator{\Trop}{Trop}
\DeclareMathOperator{\Var}{Var}
\DeclareMathOperator{\vol}{vol}
\DeclareMathOperator{\Ehr}{Ehr}
\DeclareMathOperator{\rec}{rec}
\DeclareMathOperator{\ex}{ex}
\DeclareMathOperator{\DD}{D}
\DeclareMathOperator{\Bary}{Bary}
\newtheorem{theorem}{Theorem}[section]
\newtheorem{lemma}[theorem]{Lemma}
\newtheorem{proposition}[theorem]{Proposition}
\newtheorem{corollary}[theorem]{Corollary}
\newtheorem*{ntheorem}{Theorem}
\theoremstyle{definition}
\newtheorem{definition}[theorem]{Definition}
\newtheorem{remark}[theorem]{Remark}
\newtheorem{example}[theorem]{Example}
\newcommand{\excise}[1]{}
\begin{document}

\title[]{Local $h$-polynomials, invariants of subdivisions, and mixed Ehrhart theory}
\author{Eric Katz}
\address{Department of Combinatorics \& Optimization, University of Waterloo, 200 University Avenue West, Waterloo, ON, Canada N2L 3G1} \email{eekatz@math.uwaterloo.ca}

\author{Alan Stapledon}
\address{Department of Mathematics\\University of Sydney\\ NSW, Australia 2006}
\email{alan.stapledon@sydney.edu.au}

\keywords{polytopes, Ehrhart theory, Eulerian posets}
\date{}
\thanks{}

\begin{abstract} There are natural polynomial invariants of polytopes and lattice polytopes coming from enumerative combinatorics and Ehrhart theory, namely the $h$- and $h^*$-polynomials, respectively.  In this paper, we study their generalization to subdivisions and lattice subdivisions of polytopes.  By abstracting constructions in mixed Hodge theory, we introduce multivariable polynomials 
which specialize to the $h$-, $h^*$- polynomials. These polynomials, the mixed $h$-polynomial and the (refined) limit mixed $h^*$-polynomial have rich symmetry, non-negativity, and unimodality properties, 
which both refine known properties of the classical polynomials, and reveal new structure. For example, we prove a lower bound theorem for a related invariant called the local $h^*$-polynomial. 
 We introduce our polynomials by developing a very general formalism for studying subdivisions of Eulerian posets that  extends the work of Stanley, Brenti and Athanasiadis on local $h$-vectors.
 In particular,  we prove a conjecture of Nill and Schepers, and answer a question of Athanasiadis. 
\end{abstract}

\maketitle

\section{Introduction}\label{s:intro}

Given a polytope $P$, one can associate an invariant called the toric $h$-vector, which, if $P$ is simplicial, encodes the number of faces of a given dimension (see Example~\ref{e:hEulerian}). Symmetry, non-negativity and unimodality results
constrain the possible toric $h$-vectors (see \cite{BraRemarks} for a survey). 
If $\cS$ is a polyhedral subdivision of $P$, then we are interested in understanding the combinatorics of the cells of $\cS$. 
Analogously to above,  one can associate an invariant  $h(\cS; t)$ called the \define{$h$-polynomial}, which, if $\cS$ is a triangulation, encodes the number of cells of $\cS$ of a given dimension. 
When $\cS$ is the trivial subdivision, $h(\cS; t)$ is Stanley's \emph{$g$-polynomial} of $P$. The \define{local $h$-polynomial} $l_P(\cS;t)$ has symmetric coefficients and was 
introduced by Stanley in \cite{StaSubdivisions}. 
When $\cS$ is a rational polyhedral subdivision, he proved that $h(\cS; t)$ and $l_P(\cS;t)$ have non-negative coefficients, 
and, in addition, when $\cS$ is regular, the coefficients of $l_P(\cS;t)$ are symmetric and unimodal \cite[Theorem~7.9]{StaSubdivisions}. 
 The case when $\cS$ is a triangulation
of a simplex has been of particular interest and  we refer the reader to \cite{AthSurvey} for a survey. 

If $P$ is a lattice polytope,  one is interested in understanding the Ehrhart theory of $P$ i.e. the enumerative combinatorics of the lattice points in all dilates of $P$. 
In this case, one can encode the count of the number of lattice points in dilates of $P$ in a certain invariant $h^*(P;t)$ called the $h^*$-polynomial  (also known as the \emph{$\delta$-polynomial} or {Ehrhart $h$-polynomial}).  The \define{local $h^*$-polynomial} $\lc(P;t)$ has symmetric coefficients and was
introduced by Stanley in \cite[Example~7.13]{StaSubdivisions}, and later independently by Borisov and Mavlyutov in \cite{BMString} with the notation $\tilde{S}(t)$. 
Stanley proved that the coefficients of $h^*(P;t)$ are non-negative in \cite{StaDecomp}, and conjectured that the coefficients of $\lc(P;t)$ were non-negative in  \cite[Example~7.13]{StaSubdivisions}. 
The latter conjecture was proved by Karu \cite[Theorem~1.1]{KarEhrhart}, and 
independently by Borisov and Mavlyutov \cite[Proposition~5.1]{BMString}.
If $\cS$ is a polyhedral subdivision of $P$ into lattice polytopes, then the Ehrhart theory of $P$ reduces to understanding the combinatorics of $\cS$, together with the Ehrhart theory of the cells of $\cS$. 
In the special case when the cells of $\cS$ are unimodular simplices (although 
such a $\cS$ may not exist for a given $P$), understanding the Ehrhart theory of $P$ is equivalent to understanding the combinatorics of $\cS$, and 
$h^*(P;t) = h(\cS;t)$ and $\lc(P;t) = l_P(\cS;t)$. In contrast to the case of the local $h$-polynomial $l_P(\cS;t)$, we note that in general the coefficients of $\lc(P;t)$ are not unimodal (see, for example, Example~\ref{e:nonunimdal}).

Motivated by geometry, specifically mixed Hodge theory, we will introduce new, powerful combinatorial invariants which are multivariable generalizations of the classical invariants. In the case of polytopes, we introduce the \define{mixed $h$-polynomial} $h_P(\cS;u,v) = 1 + \sum_{i,j} h_{i,j} u^i v^j \in \Z[u,v]$ (Definition~\ref{d:mixedpoly}). 
This polynomial is 
invariant under the interchange of $u$ and $v$, and refines the 
 $h$-polynomial 
in the sense that $h_P(\cS;u,1) =  h(\cS;u)$. The local $h$-polynomial $l_P(\cS;t)$
is recovered by considering the terms of highest combined degree in $u$ and $v$ in $h_P(\cS;u,v)$. When $\cS$ is a triangulation of a simplex, $h_P(\cS;u,v)$ precisely 
encodes the numbers $f_{i,j} := \# \{ F \in \cS \mid \dim F + 1 = i, \dim \sigma(F) - \dim F = j \}$, where, for any cell $F$ in $\cS$, $\sigma(F)$ denotes the smallest face of $P$ containing $F$. 
In Corollary~\ref{c:refine}, we prove that if $\cS$ is a rational polyhedral subdivision, then the coefficients of  $h_P(\cS;u,v)$ are non-negative, and, if, furthermore, $\cS$ is a regular subdivision, then 
 the sequences $\{ h_{i,k-i} \}_{i = 0,\ldots,k}$ are
symmetric and unimodal for all $k \ge 0$.  This generalizes the above results of Stanley for $h(\cS; t)$ and $l_P(\cS;t)$. 
When $\cS$ is a triangulation of a simplex, it may be deformed to a rational triangulation without changing the combinatorics (as in \cite{StaSubdivisions}), and the above result implies
non-trivial inequalities between the numbers $\{ f_{i,j} \}_{i,j \ge 0}$. 

In the lattice polytope case, we assume that $P$ and the cells of $\cS$ are lattice polytopes and introduce a number of new invariants. Firstly, we introduce the \define{mixed $h^*$-polynomial} $h^*(P;u,v) \in \Z[u,v]$ (Section~\ref{s:ehrhart}), which
only depends on $P$ and not $\cS$.  This polynomial is invariant under the interchange of $u$ and $v$, and refines the \define{$h^*$-polynomial} $h^*(P;t)$ in the sense that $h^*(P;u,1) =  h^*(P;u)$.   
We recover the local $h$-polynomial $\lc(P;t)$ by considering 
the terms of highest combined degree in $u$ and $v$ in $h^*(P;u,v)$. In Theorem~\ref{t:basic}, we prove that the coefficients of the mixed $h^*$-polynomial are non-negative. This 
generalizes the above results of Stanley, Karu, Borisov and Mavlyutov. When the cells of $\cS$ are unimodular simplices, then $h^*(P;u,v) = h_P(\cS;u,v)$. 
 More generally, we introduce the following diagram of invariants refining the $h^*$-polynomial:
\[\xymatrix{
h^*(P,\cS;u,v,w)   \ar[r]^{\substack{u \mapsto uw^{-1} \\ v \mapsto 1}}   \ar[d]^{w \mapsto 1}  & h^*(P;u,w) \ar[d]^{w \mapsto 1}  &\\
 h^*(P,\cS;u,v) \ar[r]^{v \mapsto 1} &   h^*(P;u). 
}\] 
The invariants $h^*(P,\cS;u,v) \in \Z[u,v]$ and $h^*(P,\cS;u,v,w) \in \Z[u,v,w]$ are called the \define{limit mixed $h^*$-polynomial} and \define{refined limit mixed $h^*$-polynomial}, respectively.
We note that although analogues of these invariants exist in the polytope case (see Remark~\ref{r:generalizations}), we will not study them in this work. 
We may write \[ h^*(P,\cS;u,v,w) = 1 + uvw^2 \sum_{r = 0}^{ \dim P - 1}  \sum_{ 0 \le p,q \le r} h^*_{p,q,r} u^p v^q w^r, \] 
 for some  integers $h^*_{p,q,r}$, that we visualize in diamonds as follows: the \define{$r$-local $h^*$-diamond} of $(P,\cS)$ is obtained by placing $h^*_{p,q,r}$ at point $(q - p, p + q)$  in $\Z^2$ for $0 \le p,q \le r$ 
(see Figure~\ref{f:localdiamondintro} below). The coefficients of the $h^*$-polynomial are recovered by stacking the diamonds on top of each other, summing the entries, and then summing along a fixed choice of 
diagonal. 
Also, the coefficients of the local  $h^*$-polynomial are recovered by  summing the coefficients of the   $(\dim P - 1)$-local $h^*$-diamond  along a fixed choice of 
diagonal.
 In Theorem~\ref{t:combinatorics}, we show that the coefficients of the diamonds are non-negative and symmetric about the horizontal and vertical axes.
In Theorem~\ref{t:lowerentry}, we prove that each horizontal strip of the $r$-local $h^*$-diamond satisfies the following lower bound theorem: its first entry is a lower bound for the other entries. If $\cS$ is a regular subdivision, then 
we prove that the coefficients of each vertical strip of the $r$-local $h^*$-diamond are symmetric and unimodal (Theorem~\ref{t:unimodality}).  
In the special case of regular, unimodular triangulations, the coefficients of each diamond vanish away from the central vertical strip, and we deduce that the coefficients of the local $h$-polynomial are unimodal
(Example~\ref{e:unimodularrefined}). 

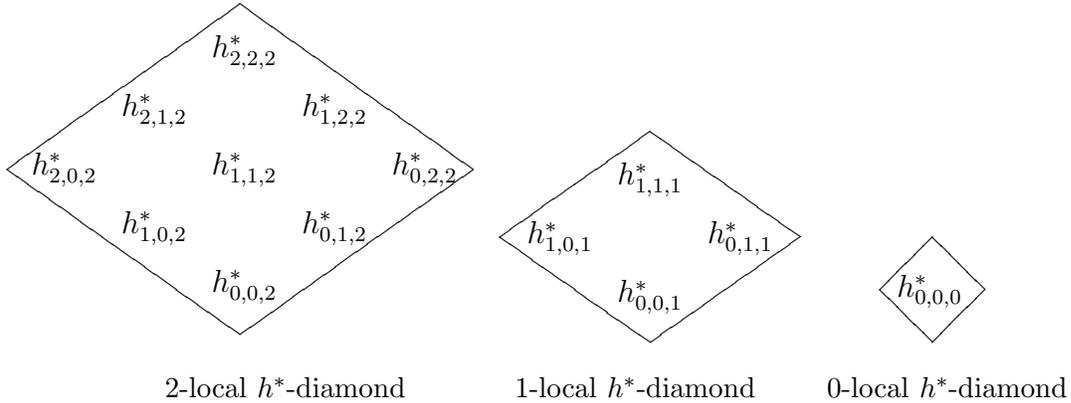
\begin{figure}[htb]
\begin{center}
\[
\begin{array}{p{5cm}p{4cm}p{5cm}}
\xy
(4,3)*{}="A"; (-27,25)*{}="B"; (4,47)*{}="C"; (35,25)*{}="D";
"A"; "B" **\dir{-};
"A"; "D" **\dir{-};
"C"; "B" **\dir{-};
"C"; "D" **\dir{-};
(-9.7,20.5)*{
\xymatrix@=0.2em{ 
   &  & h^*_{2,2,2} &                                               \\
    & h^*_{2,1,2} & & h^*_{1,2,2} &                                  \\
        h^*_{2,0,2}  &  & h^*_{1,1,2} & & h^*_{0,2,2}                 \\
          & h^*_{1,0,2} & & h^*_{0,1,2} &                           \\
         &  & h^*_{0,0,2} & &                                \\
 }
};
(10,-4)*{\hbox{\small $2$-local $h^*$-diamond}};
\endxy
&
\xy
(2,2)*{}="A"; (-18,16)*{}="B"; (2,30)*{}="C"; (22,16)*{}="D";
"A"; "B" **\dir{-};
"A"; "D" **\dir{-};
"C"; "B" **\dir{-};
"C"; "D" **\dir{-};
(-5,12)*{
\xymatrix@=0.2em{ 
          & h^*_{1,1,1} & {}                 \\
           h^*_{1,0,1} & & h^*_{0,1,1}                          \\
           & h^*_{0,0,1} &               \\
 }
};
(0,-4)*{\hbox{\small $1$-local $h^*$-diamond}};

\endxy
&
\xy
(-2,2)*{}="A"; (-9,9)*{}="B"; (-2,16)*{}="C"; (5,9)*{}="D";
"A"; "B" **\dir{-};
"A"; "D" **\dir{-};
"C"; "B" **\dir{-};
"C"; "D" **\dir{-};
(-5,9)*{
\xymatrix@=0.2em{ 
          & {} & {}                 \\
           {} & &{}                          \\
           & h^*_{0,0,0} &               \\
 }
};
(0,-4)*{\hbox{\small $0$-local $h^*$-diamond}};

\endxy

\end{array} 
\]
\end{center}

\caption{ $r$-local $h^*$-diamonds of $(P,\cS)$ when $\dim P = 3$}
\label{f:localdiamondintro}
\end{figure}

We have seen that the coefficients of the diamonds refine the coefficients of the $h^*$-polynomial and  local $h^*$-polynomial. 
 Hence the structure of these diamonds provides insights into the structure of these classical invariants. For example, 
we deduce the following lower bound theorem (Theorem~\ref{t:lower}, Remark~\ref{r:deducelower}): 

\begin{ntheorem}
Let $P \subseteq \R^d$ be a  lattice polytope.  Then the local $h^*$-polynomial $l^*(P;t) = l_1^*t + \cdots + l_{\dim P}^*t^{\dim P}$ satisfies $l_1^* = \#(\Int(P) \cap \Z^d) \le l_i^*$ for $1 \le i \le \dim P$. 
\end{ntheorem}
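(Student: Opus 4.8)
The plan is to recover $l^*(P;t)$ from the $(\dim P-1)$-local $h^*$-diamond of $(P,\cS)$ — for a choice of lattice subdivision $\cS$ of $P$, noting that the diagonal sums below are independent of $\cS$ even though individual entries are not — and then to squeeze out the inequality from Theorem~\ref{t:lowerentry} together with the non-negativity and symmetry of Theorem~\ref{t:combinatorics}. Write $m=\dim P-1$. As recalled before Figure~\ref{f:localdiamondintro}, summing the entries $h^*_{p,q,m}$ of the top diamond along the $m+1$ diagonals $\{p=0\},\dots,\{p=m\}$ yields the coefficients of $l^*$; concretely $l_i^*=\sum_{q=0}^{m}h^*_{i-1,q,m}$ for $1\le i\le m+1=\dim P$. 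In particular $l_1^*=\sum_{q=0}^{m}h^*_{0,q,m}$ is the sum along a boundary edge $\{p=0\}$ of the diamond, and the entry of that edge lying in the horizontal strip at height $h$ (for $h\le m$) is exactly the first, i.e.\ extreme, entry $h^*_{0,h,m}$ of that strip.

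I would first record the identity $l_1^*=\#(\Int(P)\cap\Z^d)$, which needs none of the diamond machinery. From the standard inclusion--exclusion expression $l^*(P;t)=\sum_{\emptyset\subseteq F\subseteq P}(-1)^{\dim P-\dim F}h^*(F;t)$ one extracts the coefficient of $t$, uses $[t^1]h^*(F;t)=\#(F\cap\Z^d)-\dim F-1$, and invokes the two Euler-type identities $\sum_F(-1)^{\dim P-\dim F}\#(F\cap\Z^d)=\#(\Int(P)\cap\Z^d)$ and $\sum_F(-1)^{\dim P-\dim F}(\dim F+1)=0$ for the face poset of $P$.

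For $l_1^*\le l_i^*$: by Theorem~\ref{t:combinatorics} every entry of the top diamond is non-negative and the diamond is invariant under $(p,q)\mapsto(q,p)$ and $(p,q)\mapsto(m-q,m-p)$. Let $f(h)$ be the common value of the two extreme entries of the strip at height $h$, so that $f(h)=f(2m-h)$, $f(h)=h^*_{0,h,m}$ for $0\le h\le m$, and $l_1^*=\sum_{h=0}^{m}f(h)$. The diagonal $\{p=i-1\}$ meets the strips at heights $h=i-1,\dots,m+i-1$, once each, and by Theorem~\ref{t:lowerentry} its entry in the strip at height $h$ is at least that strip's first entry $f(h)$ (using the up--down reflection to identify the first entry when $h>m$). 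Hence $l_i^*\ge S(i-1)$, where $S(a):=\sum_{h=a}^{a+m}f(h)$, and since $S(a)-S(a-1)=f(m-a)-f(a-1)$, once one knows that $f$ is weakly increasing on $\{0,\dots,m\}$ it follows that $S$ is symmetric about $a=m/2$ and unimodal, so $S(i-1)\ge S(0)=S(m)=l_1^*$ and therefore $l_i^*\ge l_1^*$. The equality cases $i=1$ and $i=\dim P$ (the latter via the palindromicity $l_i^*=l_{\dim P+1-i}^*$) then drop out automatically.

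The step I expect to be the main obstacle is the monotonicity $f(0)\le f(1)\le\cdots\le f(m)$ of the extreme entries of the top diamond; this does not follow formally from Theorems~\ref{t:lowerentry} and~\ref{t:combinatorics} — a symmetric non-negative diamond could a priori pile mass onto its top and bottom vertices while still obeying the horizontal-strip lower bound — so it has to be read off from the finer structure underlying those theorems. Concretely, in the limit mixed Hodge picture $h^*_{0,h,m}$ records a space of ``primitive'' classes of Hodge type $(0,h)$, and its weak growth in $h$ toward the equator is a shadow of a hard-Lefschetz-type injection; for instance when a generic hypersurface with Newton polytope $P$ is smooth, $l_i^*$ is its primitive Hodge number $h^{\,i-1,\,\dim P-i}$, and the claim becomes the statement that these are minimized at the extremes $i=1,\dim P$. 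Granting this monotonicity, the bookkeeping above completes the proof, and Remark~\ref{r:deducelower} records the specialization to the displayed statement.
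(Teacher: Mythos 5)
Your route is logically circular as it stands: you reduce the statement to Theorem~\ref{t:lowerentry}, but the paper's proof of Theorem~\ref{t:lowerentry} invokes Theorem~\ref{t:lower} (the non-negative decomposition \eqref{e:expanddef} produces terms of the form $\alpha(uv)^kw^r\lc(F;uv^{-1})$, and the horizontal-strip lower bound for each such term \emph{is} Theorem~\ref{t:lower} applied to the cell $F$). So one cannot use Theorem~\ref{t:lowerentry} to derive Theorem~\ref{t:lower} without first establishing the latter, or finding an independent proof of the former. Your closing citation of Remark~\ref{r:deducelower} does not rescue this: that remark deduces Theorem~\ref{t:lower} from the vertical-strip unimodality (Theorem~\ref{t:unimodality}), which has an independent proof, not from the horizontal-strip bound you use.

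Beyond the circularity, the auxiliary monotonicity you flag as ``the main obstacle'' is not merely unproven --- it is false for natural choices of $\cS$. Take $\cS$ to be a lattice triangulation using every lattice point of $P$, so that every positive-dimensional cell has no interior lattice points (this is the subdivision chosen in the paper's own proof of Theorem~\ref{t:lower}). Then Example~\ref{e:smallterms} gives $f(0)=h^*_{0,0,m}=\#(\Int(P)\cap\Z^d)$ while $f(h)=h^*_{0,h,m}=0$ for $1\le h\le m$, so $f$ is \emph{decreasing} on $\{0,\dots,m\}$ whenever $P$ has an interior lattice point, and your $S(a)$ collapses to $0$ for $1\le a\le m-1$, giving only the trivial bound $l_i^*\ge 0$. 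For the trivial subdivision $f$ is monotone, but there the diamond is concentrated on the strip $p+q=m$ and your inequality $l_i^*\ge S(i-1)$ literally \emph{is} the conclusion of Theorem~\ref{t:lowerentry} on that strip, i.e.\ the circularity is fully exposed. So the diamond route cannot be made to work in the form you describe.

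For comparison, the paper's proof of Theorem~\ref{t:lower} is a direct argument that bypasses the refined diamonds entirely: pick a regular lattice subdivision $\cS$ of $P$ whose positive-dimensional cells have no interior lattice points; then by \eqref{i:push4} of Lemma~\ref{l:pushstar} and \eqref{j:4} of Lemma~\ref{l:localprop} one has $\lc(P;t)=l_P(\cS;t)+\alpha(t)t^2$ with $\alpha$ having non-negative coefficients and $l_P(\cS;t)$ non-negative, symmetric, and unimodal by Theorem~\ref{t:localcoef}; since $\lc_1$ equals the linear coefficient of $l_P(\cS;t)$, unimodality of $l_P(\cS;t)$ together with $\alpha\ge 0$ gives $\lc_1\le\lc_i$. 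A minor further point: the inclusion--exclusion formula you quote for $\lc(P;t)$ is missing the factor $g([Q,P]^*;t)$ from Definition~\ref{d:localstar}; the identity $\lc_1=\#(\Int(P)\cap\Z^d)$ is nonetheless correct and is \eqref{j:4} of Lemma~\ref{l:localprop}.
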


We employ two distinct ways to view the above combinatorics from a geometric perspective. One the one hand, assume that $P \subseteq \R^d$ is a rational polytope, and consider the 
cone generated by $P \times \{ 1 \} \subseteq \R^d \times \R$. Then $\cS$ induces a fan refinement of the cone, and we have a corresponding morphism of toric varieties, to which one
can apply the decomposition theorem for intersection cohomology. The relative hard Lefschetz theorem \cite[Theorem~1.6.3]{CMBulletin} naturally gives rise to unimodal sequences. 
This approach is explained in detail in Section~\ref{s:geometric}, and we use it to prove the results above for the mixed $h$-polynomial $h_P(\cS;u,v)$.  We note that this extends ideas of Stanley \cite{StaSubdivisions} and also overlaps
with recent work of de Cataldo, Migliorini, and Mustata \cite{CMM}.

On the other hand, assuming that $\cS$ is a regular lattice subdivision, the normal fan of $P$ corresponds to a different toric variety, and we may consider a family of hypersurfaces over the punctured disc with Newton polytope $P$ whose coefficients have asymptotics related to a height function inducing $\cS$.  The cohomology of the generic member of the family of hypersurfaces inherits a weight filtration induced by the  monodromy around the puncture. 
This gives rise to invariants called \emph{refined limit mixed Hodge numbers}, which provide a geometric interpretation of the coefficients $h^*_{p,q,r}$. The action of the monodromy operator 
naturally produces unimodal sequences. This perspective is explored in detail in \cite{geometricpaper} and summarized in Remark~\ref{r:geometric}.

Finally, in the paper itself we develop a new combinatorial theory for studying locally Eulerian posets that generalizes previous work of Stanley \cite{StaSubdivisions}, Brenti \cite{BrentiPKernels} and Athanasiadis \cite{AthFlag}. 
Here a \emph{locally Eulerian} poset abstracts certain classes of 
polyhedral complexes. 
We introduce the notion of a \define{strong formal subdivision} $\sigma: \Gamma \rightarrow B$ of locally Eulerian posets, which abstracts the notion of polyhedral subdivisions, and study an associated \define{pushforward map}. 
Following Stanley \cite{StaSubdivisions}, we consider  the notion of polynomials that obey a particular symmetry property  with respect to a kernel, and are called \define{acceptable}, and we determine the behavior of acceptable functions
under pushforward. We also study the corresponding theory in 
the context of Ehrhart theory.  
We believe this work will be of considerable interest to combinatorialists. In particular, in Remark~\ref{r:conject}, we prove a conjecture of Nill and Schepers \cite{NSCombinatorial}, and answer a question of Athanasiadis  \cite[Question~2.16]{AthSurvey}. In this introduction, we have presented a special case, firstly, for simplicity of 
exposition, and, secondly, to allow readers the opportunity to understand some of the main results without using this machinery.  In the case described in the introduction, $\Gamma$ is the poset of cells of $\cS$, $B$ is the poset of faces of $P$ and
$\sigma$ takes a cell $F$ of $\cS$ to the smallest face $Q$ of $P$ such that $F \subseteq Q$. Moreover,   in this case, with notation to be introduced  later  in the paper,   $h(\cS;t) := h(\Gamma;t)$,  $l_P(\cS;t) := l_B(\Gamma;t)$ and  $h_P(\cS;u,v) := h_B(\Gamma;u,v)$.

 \subsection{Organization of the paper} 
 
 In Section 2, we review the formalism for Eulerian posets.  In Section 3, we study strong formal subdivisions of locally Eulerian posets.  In Section 4, we introduce the local $h$-polynomial of a strong formal subdivision which is then generalized to the mixed $h$-polynomial in Section 5.  In Section 6, we prove that the local $h$-polynomial is non-negative and is also symmetric and unimodal in the case of regular polyhedral subdivisions by relating it to intersection cohomology.  Section 7 begins the treatment of Ehrhart theory while Section 8 introduces and establishes the properties of the limit mixed $h^*$-polynomial.    Section 9 deals with the refined limit mixed $h^*$-polynoimal, proving a symmetry result that is important for \cite{geometricpaper}.  Section 10 is an aside on tropical geometry in which hypersurfaces are replaced by more general sch\"{o}n subvarieties.

\medskip
\noindent
{\it Notation and conventions.}  
Throughout, we will consider the empty polytope to have dimension $-1$.  If $a(t)$ and $b(t)$ are real-valued polynomials, then we write $a(t) \ge b(t)$ if $a(t) - b(t)$ has non-negative coefficients. 
 All posets considered in this paper will be finite. 
We denote by $\hat{1}$ (respectively $\hat{0}$) the unique maximal element (respectively minimal element) of a finite poset if it exists. 

\medskip
\noindent
{\it Acknowledgements.}  We would like to thank Mark de Cataldo, Mircea Mustata, and Paul Bressler for valuable conversations.  Special thanks must go to Benjamin Nill who introduced the authors to Stanley's subdivision theory and  advocated its importance in Ehrhart theory.  

\section{Combinatorics of posets}

In this section, we will study posets with a view to understanding Eulerian posets.  The motivation is geometric.  The poset will be thought of as describing the open strata of a nice stratification of a complex algebraic variety. Associated to the closed strata will be symmetric functions such as the Poincar\'{e} polynomial of intersection cohomology which is symmetric by Poincar\'{e} duality.  The functions associated to open strata will be obtained by subtracting off the contributions of smaller closed strata where each contribution is weighted by a function $\gamma$ which encodes the singularities along the smaller strata.  The functions associated to open strata are not symmetric but they obey a reciprocity theorem.  They are called ``acceptable".  
The approach here is a generalization of that of Stanley \cite[Sec.~6]{StaSubdivisions},
influenced by the work of Brenti \cite{BrentiTwisted, BrentiPKernels}.

Let $B$ be a finite poset. For any pair $x \leq x'$ in $B$, we may consider the interval $[x,x'] = \{ y \in B \mid x \leq y \leq x' \}$. The set of all intervals is identified with $\Int(B) := \{ (x,x') \in B^2 \mid x \le x' \}$. 
Let $R$ be a commutative ring with identity of characteristic not $2$. 
Following \cite[Section~3.6]{Stanley1}, 
 the \define{incidence algebra} $I(B;R)$ of $B$ with coefficients in $R$ is the associative $R$-algebra with elements given by all functions $f: \Int(B) \rightarrow R$ and addition and  multiplication 
defined by
\[
(f + g)(x,x') = f(x,x') + g(x,x'), \; \; \; (f  * g)(x,x') = \sum_{x \le y \le x'} f(x,y) g(y,x'), 
\]
for all $f,g$ in $I(B;R)$ and $x \le x'$  in $B$. 
The ring $R$ is embedded as a subalgebra via $r \mapsto f_r$, where $f_r(x,x') = r$ if $x = x'$ and $0$ otherwise. 
We may think of this algebra as follows: fix an ordering $\{ x_1, \ldots, x_r \}$ of $B$ such that $x_i \le x_j$ implies that $i \le j$. Then $I(B;R)$ is isomorphic to the subalgebra $A = \{ (a_{i,j})_{1 \le i,j \le r} \mid a_{i,j} = 0 \textrm{ unless } x_i \le x_j \}$ of the algebra of $r \times r$ upper triangular matrices.   In particular, it follows immediately that $I(B;R)$ is associative 
and an element $f$ in $I(B;R)$ is invertible if and only if 
$f(x,x) \in R$ is invertible for all $x \in B$.  For $x \le x'$ in $B$, define $e_{x,x'}$ in $I(B;R)$ by
\[
e_{x,x'}(z,z') = \left\{\begin{array}{cl} 1 & \text{if }  x = z,x' = z',  \\ 0 & \text{otherwise}. \end{array}\right.
\]
Then $I(B;R)$ is a free  $R$-module with basis $\{ e_{x,x'} \mid x \le x' \in B \}$. 

Similarly, following \cite[Section~6]{StaSubdivisions}, we define  $R^B$ to be the 
$R$-module consisting of all functions $f: B \rightarrow R$. Then $R^B$ is a right $I(B;R)$-module via
\[
(f * g)(x') = \sum_{x \le x'} f(x) g(x,x'),
\] 
for all $f$ in $R^B$, $g$ in $I(B;R)$ and $x'$ in $B$.  Observe that when $B$ contains a minimal element $\hat{0}$, then $R^B$ may be identified with the 
two-sided ideal of $I(B;R)$ consisting of all functions $f: I(B) \rightarrow R$ satisfying $f(x,x') = 0$ for $x \neq \hat{0}$.
For $x$ in $B$, define
$e_x$ in $R^B$ by 
\[
e_{x}(z) = \left\{\begin{array}{cl} 1 & \text{if }  x = z,  \\ 0 & \text{otherwise}. \end{array}\right.
\]
Then $R^B$ is a free $R$-module with basis $\{ e_x \mid x \in B \}$. 
 
Fix a ring involution  $r \mapsto \bar{r}$ of $R$, and let $S(R) = \{ r \in R \mid r = \bar{r} \}$ be the invariant subring of $R$. 
The incidence algebra $I(B;R)$ gets an induced ring involution with invariant subring $I(B;S(R))$. 
In particular, we may view $I(B;S(R))$ as the free $S(R)$-submodule of $I(B;R)$ with basis $\{ e_{x,x'} \mid x \le x' \in B \}$. Similarly, $R^B$ gets an induced $S(R)$-module involution with
invariant submodule $S(R)^B$, and we view  $S(R)^B$ as the free $S(R)$-submodule of $R^B$ with basis $\{ e_{x} \mid x \in B \}$.

For any subset $A$ of $R$, 
let 
\[
U(B;A) = \{ f \in I(B;R) \mid f(x,x) = 1 \textrm{ for all } x \in B, f(x,x') \in A \textrm{ for all } x < x' \}.
\]
Clearly, $U(B;R)$ is invariant under the involution and the corresponding invariant subring is $U(B;S(R))$. The following extends the corresponding definition in \cite[Definition~6.2]{StaSubdivisions}
(cf. \cite[Theorem~6.5]{StaSubdivisions}). 

\begin{definition} 
An element $\kappa \in U(B;R)$ 
is a \define{$B$-kernel} if  $\kappa * \bar{\kappa} = 1$. 
\end{definition}

We now extend the notion of an element being  symmetric. When the kernel below is the identity, this corresponds to the usual notion that an element is symmetric if it is invariant under the involution. 
The following is an extension the corresponding definitions in \cite[Definition~6.2]{StaSubdivisions}. 

\begin{definition}
Let $\kappa$ be a $B$-kernel. 
An element $f$ in $I(B;R)$ (respectively in $R^B$) is \define{$\kappa$-totally acceptable} (respectively  \define{$\kappa$-acceptable} )  if $\bar{f} = f * \kappa$. 
The set of all $\kappa$-totally acceptable functions forms a left  $S(R)$-submodule of $I(B;R)$ denoted $\mathcal{T}(B,\kappa)  = \mathcal{T}(B,\kappa;R)$. 
The set of all $\kappa$-acceptable functions forms a left  $S(R)$-submodule of $R^B$ denoted $\mathcal{A}(B,\kappa)  = \mathcal{A}(B,\kappa;R)$. 
\end{definition}

We will produce a basis for $\mathcal{A}(B,\kappa)$ and $\mathcal{T}(B,\kappa) $ by using particular  elements of the incidence algebra.  
\begin{definition} An element $\gamma\in U(B;R)\cap \mathcal{T}(B,\kappa) $ is called an \define{acceptability operator}.
\end{definition}
Note that an acceptability operator $\gamma$ determines $\kappa$ since
 $\gamma^{-1}*\bar{\gamma}=\kappa$ by definition.  We note that, in a special case, Brenti uses the alternative notation ``Kazhdan-Lusztig-Stanley function" or ``KLS-function'"
for the acceptability operator (see \cite[Theorem~2.1]{BrentiPKernels}).

\begin{lemma} \label{l:symacc}
Fix a $B$-kernel $\kappa$ and an acceptability operator $\gamma$. Then right multiplication by $\gamma$ gives an isomorphism of left $S(R)$-modules:
\[
I(B;S(R)) \rightarrow \mathcal{T}(B,\kappa) , 
\]
\[
f \mapsto f * \gamma. 
\]
and
\[
S(R)^B \rightarrow \mathcal{A}(B,\kappa), 
\]
\[
f \mapsto f * \gamma. 
\]
In particular,  $\mathcal{T}(B,\kappa)$ is a free left $S(R)$-module with basis  $\{ e_{x,x'} * \gamma  \mid x \le x' \in B  \}$ and $\mathcal{A}(B,\kappa)$ is a free left $S(R)$-module with basis  $\{ e_{x} * \gamma \mid x \in  B \}$.
\end{lemma}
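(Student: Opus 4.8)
The plan is to show that right multiplication by $\gamma$ carries each of the two modules onto the respective acceptable module, with the inverse map being right multiplication by $\gamma^{-1}$. First I would record the algebraic identity that makes everything work: since $\gamma \in U(B;R)$, it is invertible in $I(B;R)$ (its diagonal entries equal $1$), and since $\gamma$ is an acceptability operator, $\bar\gamma = \gamma * \kappa$, equivalently $\gamma^{-1} * \bar\gamma = \kappa$. Now suppose $f \in I(B;S(R))$, so $\bar f = f$. Then, using that the involution is an anti-involution on $I(B;R)$ only in the sense that it is a ring homomorphism here (the involution acts entrywise on matrix entries and commutes with the convolution product because $\overline{f*g} = \bar f * \bar g$), I compute
\[
\overline{f * \gamma} = \bar f * \bar\gamma = f * (\gamma * \kappa) = (f * \gamma) * \kappa,
\]
which is exactly the condition that $f * \gamma$ be $\kappa$-totally acceptable. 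So the map lands in $\mathcal{T}(B,\kappa)$. It is $S(R)$-linear because right multiplication is $R$-linear and in particular $S(R)$-linear on the left. It is injective since $\gamma$ is invertible. For surjectivity, take $g \in \mathcal{T}(B,\kappa)$, i.e. $\bar g = g * \kappa$, and set $f := g * \gamma^{-1}$; then
\[
\bar f = \bar g * \bar{\gamma^{-1}} = (g * \kappa) * \bar{\gamma^{-1}}.
\]
I need $\bar{\gamma^{-1}} = (\bar\gamma)^{-1}$, which follows by applying the involution to $\gamma * \gamma^{-1} = 1$. Then $\kappa * \bar{\gamma^{-1}} = \kappa * (\bar\gamma)^{-1} = (\gamma^{-1} * \bar\gamma) * (\bar\gamma)^{-1} = \gamma^{-1}$, so $\bar f = g * \gamma^{-1} = f$, i.e. $f \in I(B;S(R))$, and $f * \gamma = g$. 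Hence the map is a bijective $S(R)$-module homomorphism, proving the first isomorphism.

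For the second isomorphism, the argument is formally identical once I observe that $R^B$ is a right $I(B;R)$-module on which the involution is compatible in the same way, i.e. $\overline{f * g} = \bar f * \bar g$ for $f \in R^B$, $g \in I(B;R)$; this is immediate from the basis description and the matrix picture (identifying $R^B$ with the ideal supported on rows indexed by $\hat 0$ when a minimal element exists, or checking directly on the convolution formula otherwise). Then for $f \in S(R)^B$ one gets $\overline{f*\gamma} = f * \bar\gamma = (f*\gamma)*\kappa$, so $f*\gamma \in \mathcal{A}(B,\kappa)$; injectivity is again invertibility of $\gamma$; and surjectivity uses $g \mapsto g * \gamma^{-1}$ exactly as above.

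Finally, the basis statements are a formal consequence: $\{e_{x,x'} \mid x \le x' \in B\}$ is an $S(R)$-basis of $I(B;S(R))$ and $\{e_x \mid x \in B\}$ is an $S(R)$-basis of $S(R)^B$ (stated in the text), and an $S(R)$-module isomorphism sends a basis to a basis, so $\{e_{x,x'} * \gamma\}$ and $\{e_x * \gamma\}$ are bases of $\mathcal{T}(B,\kappa)$ and $\mathcal{A}(B,\kappa)$ respectively. The only genuinely substantive point — the "main obstacle," though it is really just bookkeeping — is verifying that the involution on $I(B;R)$ and on $R^B$ satisfies $\overline{f * g} = \bar f * \bar g$ (with no order reversal), which is what allows the acceptability conditions to transform correctly; this holds because the involution is induced entrywise in the matrix model and the convolution is ordinary matrix multiplication, so one checks it by a direct computation on the defining sums. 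Everything else is invertibility of $\gamma$ and routine $S(R)$-linearity.
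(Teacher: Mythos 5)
Your proof is correct and follows essentially the same approach as the paper's: both arguments reduce to the identity $\gamma^{-1}*\bar\gamma=\kappa$ together with the fact that the entrywise involution on $I(B;R)$ (and on $R^B$) commutes with convolution, so that $f\in I(B;S(R))$ (resp.\ $S(R)^B$) is equivalent to $\overline{f*\gamma}=(f*\gamma)*\kappa$. The paper compresses this into a two-line chain of equivalences and then reads off bijectivity, whereas you spell out the forward inclusion, injectivity, and surjectivity separately, but the underlying computation is identical.
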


\begin{proof}
Let $f\in I(B;R)$ or $f\in R^B$.  Then
\[
\bar{f} = f 
\iff \bar{f*\gamma}= f*\gamma * \gamma^{-1} * \bar{\gamma}
\iff  \bar{f*\gamma}= f*\gamma * \kappa, 
\]
and the result follows. 
\end{proof}

Next we consider criterion to guarantee the existence and uniqueness of an acceptability operator. 
Fix a subring $\tilde{R}$ of $R$ invariant under the involution (often we set $ \tilde{R} = R$), and  fix a splitting 
of $\Z$-modules 
\begin{equation}\label{e:split}
\tilde{R} = S(\tilde{R}) \oplus R',
\end{equation}
where $R'$ is a $\Z$-submodule of $\tilde{R}$.    Let $\tilde{\mathcal{K}}$ denote the set of $B$-kernels defined over $\tilde{R}$.  Let $D:\tilde{R}\rightarrow R'$ be the projection onto $R'$.

\begin{lemma} \label{l:gamma} Fix the splitting \eqref{e:split} above with projection $D$, and assume that $2 \in \tilde{R}$ is invertible. Then there is a bijection
\[
U(B;R')\rightarrow\tilde{\mathcal{K}}, 
\]
\[
\gamma\mapsto \kappa=\gamma^{-1}*\bar{\gamma}.
\]
The inverse, denoted $\gamma_\kappa = \gamma_{B,\kappa}$ 
(throughout we abuse notation and ignore the dependence on the choice of splitting) is recursively defined by 
\begin{equation}\label{e:kernelrecur}
\gamma_\kappa(x,x') = D\big(-\frac{1}{2} \sum_{x \le z < x'} \gamma_\kappa(x,z) \kappa(z,x')\big),
\end{equation}
for $x < x'$ in $B$. 
\end{lemma}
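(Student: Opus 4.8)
The plan is to prove both that the map $\gamma \mapsto \gamma^{-1} * \bar\gamma$ lands in $\tilde{\mathcal K}$ and is a bijection onto it, with inverse given by the stated recursion. First I would check the map is well-defined: for $\gamma \in U(B;R')$ (so $\gamma$ is defined over $\tilde R$ since $R' \subseteq \tilde R$, and $\gamma(x,x)=1$), set $\kappa = \gamma^{-1} * \bar\gamma$. Then $\gamma$ is an acceptability operator for $\kappa$ essentially by construction — but one must verify $\gamma \in \mathcal T(B,\kappa)$, i.e. $\bar\gamma = \gamma * \kappa = \gamma * \gamma^{-1} * \bar\gamma$, which is immediate, and that $\kappa$ is a $B$-kernel: $\kappa \in U(B;\tilde R)$ since $\gamma^{-1}, \bar\gamma \in U(B;\tilde R)$ and $U$ is closed under $*$ and inverses on the diagonal-$1$ part; and $\kappa * \bar\kappa = \gamma^{-1} * \bar\gamma * \overline{\gamma^{-1} * \bar\gamma} = \gamma^{-1} * \bar\gamma * \bar{\bar\gamma}^{-1} * \bar{\bar\gamma}$; using $\bar{\bar\gamma} = \gamma$ and that bar is a ring anti-... no, bar is a ring homomorphism here (the involution on $I(B;R)$ is induced entrywise, not a transpose), so $\overline{f*g} = \bar f * \bar g$, giving $\kappa * \bar\kappa = \gamma^{-1} * \bar\gamma * \gamma^{-1} * \gamma = \gamma^{-1} * \bar\gamma * \gamma^{-1} * \gamma$; hmm, I need to be careful — actually $\overline{\gamma^{-1}} = \bar\gamma^{-1}$, so $\bar\kappa = \bar\gamma^{-1} * \bar{\bar\gamma} = \bar\gamma^{-1} * \gamma$, hence $\kappa * \bar\kappa = \gamma^{-1} * \bar\gamma * \bar\gamma^{-1} * \gamma = \gamma^{-1} * \gamma = 1$, as needed.

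Next I would establish injectivity and the recursion simultaneously by a downward induction on the length of the interval $[x,x']$ (or on $x' - x$ in any compatible linear extension). Given $\kappa \in \tilde{\mathcal K}$, I claim there is a unique $\gamma \in U(B;R')$ with $\gamma^{-1} * \bar\gamma = \kappa$, equivalently $\bar\gamma = \gamma * \kappa$. Writing out the $(x,x')$-entry of $\bar\gamma = \gamma * \kappa$ for $x < x'$:
\[
\overline{\gamma(x,x')} = \sum_{x \le z \le x'} \gamma(x,z)\,\kappa(z,x') = \gamma(x,x') + \sum_{x \le z < x'} \gamma(x,z)\,\kappa(z,x'),
\]
using $\kappa(x',x') = 1$. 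Since $\gamma(x,x') \in R'$ we have $\overline{\gamma(x,x')} = -\gamma(x,x')$ (as bar acts as $-1$ on $R'$: indeed $R'$ is a complement to $S(\tilde R)$ and $\tfrac{r - \bar r}{2}$ for $r \in R'$ has... actually this needs the splitting to be bar-compatible — let me instead argue $\overline{\gamma(x,x')} = -\gamma(x,x')$ follows because $a := \gamma(x,x')$ satisfies $\bar a = -a - c$ where $c := \sum_{x\le z < x'}\gamma(x,z)\kappa(z,x')$, and by induction $c \in \tilde R$, so $a = \tfrac{a - \bar a}{2} + \tfrac{a+\bar a}{2}$; applying $D$, and noting $D$ kills $S(\tilde R)$, gives $a = D(a) = D(\tfrac{a-\bar a}{2}) = D(\tfrac{2a + c}{2}) = D(a) + D(c/2)$, so $D(c/2) = 0$... this is circular, so the clean route is: rearrange to $2\gamma(x,x') = \overline{\gamma(x,x')} - \gamma(x,x') - c + 2\gamma(x,x') $ — let me not grind this here). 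The point is: the equation $\bar a = -a - c$ together with $a \in R'$ forces, upon applying $D$, the formula $\gamma(x,x') = D(-\tfrac12 c)$, since $D(\bar a + a) $ should vanish for the splitting to be consistent — so the recursion \eqref{e:kernelrecur} is forced, giving uniqueness, and one checks it produces a genuine solution (surjectivity of $U(B;R') \to \tilde{\mathcal K}$) by verifying that the $\gamma$ so defined does satisfy $\bar\gamma = \gamma * \kappa$ on all intervals, again by induction, crucially using $\kappa * \bar\kappa = 1$ to handle the cancellation.

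I expect the main obstacle to be the bookkeeping around the splitting \eqref{e:split}: the recursion only makes sense, and the inductive step only closes up, if the partial sum $c = \sum_{x \le z < x'}\gamma(x,z)\kappa(z,x')$ behaves correctly under bar, and showing $\gamma(x,x')$ as defined by $D(-\tfrac12 c)$ genuinely satisfies $\overline{\gamma(x,x')} = -\gamma(x,x') - c$ requires knowing that the ``anti-invariant part'' $\tfrac12(c - \bar c)$ of $c$ equals $c$ up to something $D$ sees — concretely, one must show $c + \bar c \in$ (the part killed by the relevant identity), which is where the $B$-kernel condition $\kappa * \bar\kappa = 1$ enters, via $\overline{\gamma * \kappa} = \bar\gamma * \bar\kappa$ and the inductive hypothesis that $\bar\gamma = \gamma*\kappa$ holds on shorter intervals. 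I would isolate this as a short sublemma: for the putative $\gamma$, the quantity $\gamma*\kappa + \overline{\gamma*\kappa}$ has vanishing off-diagonal $R'$-projection, equivalently $D$ applied to the relevant combination reproduces $\gamma$. Everything else — closure of $U$ under $*$, that bar is a $*$-homomorphism fixing $U(B;S(R))$, invertibility of elements with unit diagonal — is routine from Section 2 and can be cited.
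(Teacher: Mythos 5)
Your overall plan matches the paper's: first check well-definedness ($\kappa=\gamma^{-1}*\bar\gamma$ is a $B$-kernel, which you do correctly), then get uniqueness from the recursion, then prove existence by showing the $\gamma_\kappa$ defined by the recursion actually satisfies $\bar\gamma_\kappa=\gamma_\kappa*\kappa$, with the kernel condition $\kappa*\bar\kappa=1$ entering at the last step. But the execution has a sign error and, more importantly, the decisive computation is left undone.

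On the recursion: from $\bar\gamma=\gamma*\kappa$ you correctly get $\bar a = a + c$ where $a=\gamma(x,x')$ and $c=\sum_{x\le z<x'}\gamma(x,z)\kappa(z,x')$, i.e.\ $\bar a - a = c$. You then write $\bar a = -a-c$, which is wrong, and your first stab (``bar acts as $-1$ on $R'$'') is also false in general (e.g.\ in Example~\ref{e:standardsplit}, $\overline{t^{-1/2}}=t^{1/2}\neq -t^{-1/2}$). The clean derivation is: for any $a$, the element $a+\bar a$ is fixed by the involution, hence lies in $S(\tilde R)$, so $D(a+\bar a)=0$; if $a\in R'$ then $D(a)=a$, giving $D(\bar a)=-a$. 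Applying $D$ to $\bar a - a = c$ yields $-2a=D(c)$, i.e.\ $a=-\tfrac12 D(c)$. This both forces the recursion (uniqueness) and explains why the paper phrases the observation as: for $g$ antisymmetric, the unique $a\in R'$ with $\bar a - a = g$ is $a=-\tfrac12 D(g)$.

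The genuine gap is the existence half. Having defined $\gamma_\kappa$ by the recursion, one must check $\bar\gamma_\kappa(x,x') - \gamma_\kappa(x,x') = c$, and by the observation above this holds if and only if $c$ is antisymmetric, i.e.\ $\bar c = -c$. You correctly flag that this is ``where the $B$-kernel condition enters'' but defer it to an unstated sublemma. The paper proves it by a short telescoping computation: expand $\bar c = \sum_{x\le z<x'}\bar\gamma_\kappa(x,z)\bar\kappa(z,x')$, substitute the inductive hypothesis $\bar\gamma_\kappa(x,z)=\sum_{x\le x''\le z}\gamma_\kappa(x,x'')\kappa(x'',z)$ (valid for $z<x'$), interchange the order of summation, and use $\sum_{x''\le z\le x'}\kappa(x'',z)\bar\kappa(z,x')=0$ for $x''<x'$ (this is exactly $\kappa*\bar\kappa=1$ off the diagonal, the generalized Dehn--Sommerville relations) to collapse the inner sum to $-\kappa(x'',x')$, giving $\bar c = -c$. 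Without this computation the surjectivity claim is unsupported; I'd regard it as the heart of the lemma rather than routine bookkeeping.
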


\begin{proof}
We verify that the functions are inverses of each other. 
Note that if $\gamma \in U(B;R')$ and  $\kappa=\gamma^{-1}*\bar{\gamma}$, then the statement $\gamma = \gamma_\kappa$ is equivalent to the statement that  for all $x<x'$,
\[ \gamma(x,x') =  D\big(-\frac{1}{2} \sum_{x\leq z<x'} \gamma(x,z)(\gamma^{-1}*\bar{\gamma})(z,x')\big),\]
which holds if and only if  $\gamma(x,x') = -D(\bar{\gamma}(x,x'))$. Since  $\gamma(x,x') \in R'$, the latter is equivalent to the fact that $\gamma(x,x') + \bar{\gamma}(x,x')$ is symmetric. 

Given $\kappa \in \tilde{\mathcal{K}}$, it remains to show that $\bar{\gamma_\kappa} = \gamma_\kappa * \kappa$. By definition, this is equivalent to  the statement that for all $x<x'$,
\begin{equation}\label{e:def}
\bar{\gamma_\kappa}(x,x')-\gamma_\kappa(x,x') =\sum_{x\leq z<x'} \gamma_\kappa(x,z)\kappa(z,x').
\end{equation}
We will use the following easily-proved observation: if $g\in \tilde{R}$ is antisymmetric,  then the unique solution for $a\in R'$ to 
$\overline{a}-a=g$ is $a=-\frac{1}{2}D(g)$. Setting $g$ to be the right hand side of \eqref{e:def}, it remains to show that $g$ is anti-symmetric. This follows from the generalized Dehn-Sommerville equations \cite[Prop 6.4]{StaSubdivisions}. Explicitly, using induction on the length of a maximal chain in an interval, we compute:
\begin{align*}
\bar{g} &= \sum_{x\leq z<x'} \bar{\gamma_\kappa}(x,z)   \bar{\kappa}(z,x') \\
&= \sum_{x\leq z<x'} \big[  \sum_{x \le x'' \le z} \gamma_\kappa(x,x'') \kappa(x'',z) \big] \bar{\kappa}(z,x') \\
&=  \sum_{x\leq x'' <x'}  \gamma_\kappa(x,x'') \sum_{x'' \leq z<x'} \kappa(x'',z) \bar{\kappa}(z,x') \\
&= -g.
\end{align*}

\end{proof}

\begin{remark}
An alternative proof of the fact that the map in Lemma~\ref{l:gamma} is a bijection is given in \cite[Prop 6.11]{StaSubdivisions} for a specific splitting, but the proof holds in  the more general case.  
\end{remark}

In fact, with the setup above, one easily obtains a  non-recursive formula for the acceptability operator $\gamma_\kappa$. 

\begin{corollary}
Fix the splitting \eqref{e:split} above with projection $D$, and assume that $2 \in \tilde{R}$ is invertible. 
Then we have the following explicit formula for $\gamma_\kappa$:
\[
\gamma_\kappa(x,x') = \sum_{ x = x_0 < x_1 < \cdots  < x_r = x'} \big(-\frac{1}{2}\big)^r    D(\ldots (D(D(\kappa(x_0,x_1))\kappa(x_1,x_2))\ldots)\kappa(x_{r-1},x_r))
\]
for $x < x'$ in $B$. 
\end{corollary}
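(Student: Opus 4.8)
The plan is to prove the formula by induction on the length $\ell$ of the longest chain in the interval $[x,x']$, simply unwinding the recursion \eqref{e:kernelrecur}. Since $B$ is finite this length is finite, and the statement concerns pairs $x<x'$, so the base case is when $x'$ covers $x$, i.e.\ $\ell=1$. In that case the sum in \eqref{e:kernelrecur} has the single term $z=x$, and since $\gamma_\kappa(x,x)=1$ it collapses to $\gamma_\kappa(x,x')=D\big(-\tfrac12\kappa(x,x')\big)=-\tfrac12 D(\kappa(x,x'))$, which is exactly the right-hand side of the asserted formula for the unique chain $x=x_0<x_1=x'$ (with $r=1$).

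For the inductive step, fix $x<x'$ and suppose the formula holds for every interval $[x,z]$ with $z<x'$; note that each such interval has strictly smaller longest-chain length, so the induction is legitimate. Here I would use that $D\colon\tilde{R}\to R'$ is $\Z$-linear, and moreover respects multiplication by $\tfrac12$: the splitting \eqref{e:split} forces $R'$ to be stable under multiplication by $\tfrac12$ once $2$ is invertible in $\tilde{R}$ (this is already implicit in the proof of Lemma~\ref{l:gamma}, where $-\tfrac12 D(g)$ is taken to lie in $R'$). Separating the term $z=x$ in \eqref{e:kernelrecur} and distributing $D$ over the remaining finite sum gives
\[
\gamma_\kappa(x,x') \;=\; -\tfrac12\, D(\kappa(x,x'))\;-\;\tfrac12\sum_{x<z<x'} D\big(\gamma_\kappa(x,z)\,\kappa(z,x')\big).
\]
Now substitute the inductive expression for each $\gamma_\kappa(x,z)$ with $x<z<x'$, multiply through by $\kappa(z,x')$, and once more push $D$ through the resulting finite sum (using $\Z[\tfrac12]$-linearity to move the scalars $\big(-\tfrac12\big)^s$ outside): every summand becomes $\big(-\tfrac12\big)^{s+1}$ times $D\big(D(\ldots D(D(\kappa(x_0,x_1))\kappa(x_1,x_2))\ldots)\kappa(x_{s-1},x_s))\,\kappa(z,x')\big)$, indexed by a chain $x=x_0<\cdots<x_s=z$ in $[x,z]$ together with a choice of $z$ with $x<z<x'$.

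The final step is a reindexing. Giving a chain $x=x_0<\cdots<x_s=z$ in $[x,z]$ together with an element $z$ satisfying $x<z<x'$ is precisely the same as giving a chain $x=x_0<\cdots<x_s<x_{s+1}=x'$ in $[x,x']$ of length $s+1\ge 2$; under this bijection the nested $D$-expression and the exponent of $-\tfrac12$ in each summand match those appearing in the claimed formula for $r=s+1$, and the leftover term $-\tfrac12 D(\kappa(x,x'))$ accounts for the length-one chain $x<x'$. Summing over all chains of all lengths $r\ge 1$ in $[x,x']$ then yields the stated identity. I do not expect any genuine obstacle here; the only points requiring care are the bookkeeping of how the nesting of $D$'s and the power of $-\tfrac12$ transform when one appends the top element $x'$ to a shorter chain, and the (routine) verification that $D$ commutes with finite sums and with multiplication by $\tfrac12$ so that it may be pushed inside.
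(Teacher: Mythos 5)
Your proof is correct and is essentially the paper's (one-line) argument, spelled out: the paper asks one to verify that the displayed formula satisfies the recurrence \eqref{e:kernelrecur}, which is exactly the computation you perform (separating the $z=x$ term, substituting the shorter chains, pushing $D$ through using $\Z[\tfrac12]$-linearity, and reindexing by appending $x'$). Your observation that $R'$ is automatically stable under $\tfrac12$ when $2\in\tilde{R}$ is invertible is a correct justification of a point the paper leaves implicit, and it makes the $\Z[\tfrac12]$-linearity of $D$ legitimate.
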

\begin{proof}
This follows since one easily verifies that the above formula satisfies the recurrence \eqref{e:kernelrecur}. 
\end{proof}

\begin{remark}
This corollary is closely related to the work of Brenti \cite[Cor 6.5]{BrentiTwisted}.
\end{remark}

\begin{example}
If $\kappa$ is the identity, then $\gamma_\kappa$ is the identity, $\mathcal{T}(B,\kappa) = I(B;S(R))$ and $\mathcal{A}(B,\kappa) = S(R)^B$. 
\end{example}

\begin{example}\label{e:standardsplit}
Let $R = \Z[t^{\pm 1/2}]$ with involution $\bar{r(t)} = r(t^{-1})$. 
We may choose a splitting by setting $R = \tilde{R}$ and $R' = \{ \sum_{i \in \Z_{< 0}} \alpha_i t^{i/2} \mid \alpha_i \in \Z \}$. 
In this case,  $D( \sum_{i \in \Z} \alpha_i t^{i/2} ) = \sum_{i \in \Z_{< 0}} (\alpha_i - \alpha_{-i})  t^{i/2} $. Note that in this case $2 \in R$ is not invertible. We will consider this example in further detail in Section~\ref{s:locEulerian}. 
\end{example}

\begin{example}\label{e:-1}
If $2 \in R$ is invertible, then let $R = \tilde{R}$  and let $R'$ be the ($-1$)-eigenspace of $R$ with respect to the involution. Then one may verify that  $\gamma_\kappa$ is inductively defined by:
\[
\gamma_\kappa(x,x') = -\frac{1}{2} \sum_{x \le z < x'} \gamma_\kappa(x,z) \kappa(z,x')
\]
for $x < x'$ in $B$. Alternatively, can verify $\gamma_\kappa$ is given directly by the formula:
\begin{equation}\label{e:gammaformula}
\gamma_\kappa(x,x') = \sum_{ x = x_0 < x_1 < \cdots  < x_r = x'} \big(-\frac{1}{2}\big)^r \prod_{i = 0}^{r - 1} \kappa(x_i,x_{i+ 1})
\end{equation}
for $x < x'$ in $B$. 
\end{example}

\begin{example}\label{e:groth}
Let $k$ be a field of characteristic zero, and consider the
\define{Grothendieck ring} $K_0(\Var_k)$ of varieties over $k$. That is, $K_0(\Var_k)$ is the free $\Z$-module with basis $\{ [V] \mid V \textrm{ is a variety over } k \}$ modulo the
relations $[W] = [V] + [W \smallsetminus V]$ whenever $V$ is a closed subvariety of $W$. We set $\L := [\A^1]$, and let $R = K_0(\Var_k)[\L^{\pm 1/2}]$.  
By \cite[Corollary~3.4]{BitUniv}, there is an involution  $\mathcal{D}_k$ on $K_0(\Var_k)[\L^{-1}]$ that sends $\L$ to $\L^{-1}$ and is characterized by
$\mathcal{D}_k([X]) = \L^{-\dim X}[X]$ for any smooth, proper variety $X$ over $k$. This extends to a ring involution of $R$ by 
setting $\mathcal{D}_k(\L^{1/2})  = \L^{-1/2}$. 
If we set $\tilde{R} = \Z[\L^{\pm 1/2}]$, then as in Example~\ref{e:standardsplit}, we obtain a splitting as above by setting  $R' = \{ \sum_{i \in \Z_{< 0}} \alpha_i \L^{i/2} \mid \alpha_i \in \Z \}$. 
\end{example}

\section{Locally Eulerian posets and strong formal subdivisions}\label{s:locEulerian}

In this section, we will study subdivisions of locally Eulerian posets.  There is a good notion of subdivisions, \emph{strong formal subdivisions}, which abstract polyhedral subdivisions in the same way that Eulerian posets abstract polytopes.  Given such a subdivision, 
we will study an induced pushforward on acceptable functions on these posets.
\subsection{Locally Eulerian posets}

A finite poset $B$ is \define{locally graded} if for every interval $[x,x']$ in $B$, all maximal chains in $[x,x']$ have the same length, denoted $\rho(x,x')$. For example, $\rho(x,x) = 0$. 
The \define{rank} $\rk(B)$ of $B$ is the longest length of a maximal chain in $B$. 

\begin{definition}
A \define{ranked poset} is a pair $(B,r)$ such that $B$ is locally graded, $r: B \rightarrow \Z$ and $\rho(x,x') = r(x') - r(x)$ for all $x \le x'$ in $B$. We call $r$ a \define{rank function}.
\end{definition}

If $B$ is locally graded and contains a unique minimal element $\hat{0}$, then $B$ is \define{lower graded}. 
A lower graded poset is naturally a ranked poset with rank function $\rho(x) := \rho(\hat{0},x)$.  
Note that the following definition does not depend on the choice of a rank function. 

\begin{definition}
A lower graded  poset with unique minimal  and  maximal elements $\hat{0}$ and $\hat{1}$ respectively is \define{Eulerian} if every interval of positive length contains as many elements of even rank as odd rank. 
A locally graded poset is  \define{locally Eulerian} if every interval is Eulerian. A locally Eulerian poset is \define{lower Eulerian} if it contains a unique minimal element $\hat{0}$. 
\end{definition}

\begin{example}\label{e:polytope}
The poset of faces of a polytope $P$ (including the empty face) is an Eulerian poset under inclusion, called the \define{face poset} of $P$, with 
$\rho(Q) = \dim Q + 1$ for every face $Q$ of $P$.    
\end{example}

\begin{example}
The Boolean algebra on $r$ elements consists of all subsets of a set of cardinality $r$ and forms an Eulerian poset under inclusion of rank $r$. This is the face poset of an $(r - 1)$-dimensional 
simplex. 
\end{example}

\begin{example}\label{e:flip}
If $B$ is a poset, then $B^*$ is the poset with the same elements as $B$ and all orderings reversed. In particular, $B$ is Eulerian if and only if $B^*$ is Eulerian. 
\end{example}

Throughout this section we will use the setup of Example~\ref{e:standardsplit}. 
That is, 
$R = \Z[t^{\pm 1/2}]$ with involution $\bar{r(t)} = r(t^{-1})$, and 
 splitting $R = S(R) \oplus R'$, with $R' = \{ \sum_{i \in \Z_{< 0}} \alpha_i t^{i/2} \mid \alpha_i \in \Z \}$. We also set $q = t^{1/2} - t^{-1/2}$. 
 Let $B$ be a locally graded poset, and let $\kappa(x,x')=q^{\rho(x,x')}$ for all $x \le x'$ in $B$.

\begin{lemma}\cite[Proposition~7.1]{StaSubdivisions} \label{l:kernel} The function $\kappa$ is a $B$-kernel if and only if $B$ is locally Eulerian.
\end{lemma}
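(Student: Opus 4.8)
The plan is to recall that $\kappa \in U(B;R)$ is a $B$-kernel precisely when $\kappa * \bar\kappa = 1$, and to translate this condition into a statement about each interval $[x,x']$ of $B$. Since $\kappa(x,x') = q^{\rho(x,x')}$ with $q = t^{1/2} - t^{-1/2}$, we have $\bar\kappa(x,x') = \bar q^{\rho(x,x')} = (-q)^{\rho(x,x')} = (-1)^{\rho(x,x')} q^{\rho(x,x')}$, using $\bar q = t^{-1/2} - t^{1/2} = -q$. Evaluating the convolution on the interval $[x,x']$ gives
\[
(\kappa * \bar\kappa)(x,x') = \sum_{x \le z \le x'} \kappa(x,z)\,\bar\kappa(z,x') = \sum_{x \le z \le x'} q^{\rho(x,z)} (-1)^{\rho(z,x')} q^{\rho(z,x')}.
\]
Because $B$ is locally graded, $\rho(x,z) + \rho(z,x') = \rho(x,x')$ for every $z$ in the interval, so every summand equals $(-1)^{\rho(z,x')} q^{\rho(x,x')}$, and the whole sum factors as
\[
(\kappa * \bar\kappa)(x,x') = q^{\rho(x,x')} \sum_{x \le z \le x'} (-1)^{\rho(z,x')} = q^{\rho(x,x')} \sum_{x \le z \le x'} (-1)^{\rho(x,x') - \rho(x,z)}.
\]

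The first step is to dispose of the trivial interval: when $x = x'$ the sum is $1$, matching $\kappa * \bar\kappa = 1$ on the diagonal, independently of any hypothesis. For $x < x'$, the identity element of $I(B;R)$ vanishes on $[x,x']$, so the $B$-kernel condition there is equivalent to $\sum_{x \le z \le x'} (-1)^{\rho(x,z)} = 0$, i.e. the interval $[x,x']$ has equally many elements of even rank and of odd rank (here $q^{\rho(x,x')}$ is a nonzero, non-zero-divisor in $R$, so it may be cancelled). This is exactly the defining condition for the interval $[x,x']$ to be Eulerian. Quantifying over all intervals, $\kappa$ is a $B$-kernel if and only if every interval of $B$ is Eulerian, which is the definition of $B$ being locally Eulerian. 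One should also check that $\kappa$ indeed lies in $U(B;R)$ — that $\kappa(x,x) = q^0 = 1$ and $\kappa(x,x') \in R$ for $x < x'$ — which is immediate, so the hypothesis " $\kappa \in U(B;R)$ " implicit in the notion of $B$-kernel costs nothing.

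There is no serious obstacle here; the only point requiring a little care is the sign bookkeeping, namely verifying $\bar q = -q$ and tracking which exponent carries the sign, together with the observation that local gradedness is exactly what makes the exponents of $q$ add up so that $q^{\rho(x,x')}$ pulls out of the sum uniformly. (If one does not want to assume $B$ locally graded a priori, one notes that $\kappa$ is only well-defined by the stated formula when $B$ is locally graded, so that hypothesis is built into the statement.) The equivalence is then purely formal, and the argument is essentially the content of \cite[Proposition~7.1]{StaSubdivisions}.
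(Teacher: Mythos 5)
Your proof is correct and follows exactly the same route as the paper: unpack $\kappa * \bar\kappa$, use $\bar q = -q$ and local gradedness to pull out $q^{\rho(x,x')}$, and observe that the remaining alternating sum vanishes precisely when the interval is Eulerian. The extra remarks (checking $\kappa \in U(B;R)$, handling $x = x'$ separately, noting $q^{\rho(x,x')}$ is a non-zero-divisor) are sound and merely make explicit what the paper leaves implicit.
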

\begin{proof}
By unpacking the definition, and using the fact that $\bar{q} = -q$, we see that for $x<x'$,
\[(\kappa*\bar{\kappa})(x,x')=q^{\rho(x,x')}\sum_{x\leq z\leq x'} (-1)^{\rho(z,x')}.\]
This is 
zero if and only if the interval $[x,x']$ has the same number of elements $z$ for which $\rho(z,x')$ is even as it is odd. 
\end{proof}

Now assume that $B$ is locally Eulerian. By Lemma \ref{l:gamma}, we can produce a unique acceptability operator  $\gamma_B :=\gamma_{B,\kappa} \in I(B;R')$ (as we will see in Lemma~\ref{l:gpoly} below, we do not need to invert $2 \in R$ in this case).  
In order to describe $\gamma_B$,  we first recall the definition of the \define{$g$-polynomial} of an Eulerian poset \cite{Stanley1}.

\begin{definition}\label{d:g}
Let $B$ be an Eulerian poset of rank $n$. If $n = 0$, 
then $g(B;t) = 1$. If $n > 0$, then $g(B;t)\in\Z[t]$ is the unique polynomial in $t$ 
of degree strictly less the $n/2$ satisfying
\[
t^{n}g(B;t^{-1}) = \sum_{x \in B} g([\hat{0},x];t) (t - 1)^{n - \rho(\hat{0}_B,x)}. 
\]
\end{definition}

\begin{example}\label{e:gconstant}
The constant term of $g(B;t)$ is $g(B;0) = 1$. The linear coefficient of $g(B;t)$ is $\#\{ x \in B \mid \rho(\hat{0}_B,x) = 1 \} - n$. 
\end{example}
 
\begin{example}\label{e:boolean}
One can verify that $g(B;t) = 1$ if and only if $B$ is the Boolean algebra on $r$ elements for some $r$ \cite[Remark~4.2]{BNCombinatorial}.
\end{example}

\begin{lemma}\label{l:gpoly}
Let $B$ be a locally Eulerian poset. Then with the notation above,  the acceptability operator $\gamma_B:=\gamma_{B,\kappa}\in I(B;R')$ is given by
\[
\gamma_B(x,x') = t^{-\rho(x,x')/2} g([x,x'];t) 
\]
for $x \le x'$ in $B$. 
\end{lemma}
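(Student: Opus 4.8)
The plan is to verify that the explicit function $\gamma(x,x') := t^{-\rho(x,x')/2} g([x,x'];t)$ coincides with the unique acceptability operator $\gamma_B = \gamma_{B,\kappa}$ produced by Lemma~\ref{l:gamma} applied to the kernel $\kappa(x,x') = q^{\rho(x,x')}$. By the uniqueness part of Lemma~\ref{l:gamma} (and the remark that $2$ need not be inverted, which I will address separately), it suffices to check two things: first, that $\gamma$ lies in $U(B;R')$, i.e.\ that $\gamma(x,x) = 1$ and $\gamma(x,x') \in R'$ for $x < x'$; and second, that $\gamma$ is $\kappa$-totally acceptable, i.e.\ $\bar\gamma = \gamma * \kappa$. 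The first point is immediate: $g([x,x];t) = 1$ gives $\gamma(x,x) = 1$, and since $g([x,x'];t)$ has degree strictly less than $\rho(x,x')/2$, the product $t^{-\rho(x,x')/2}g([x,x'];t)$ is a $\Z$-combination of $t^{i/2}$ with $i < 0$, hence lies in $R' = \{\sum_{i \in \Z_{<0}} \alpha_i t^{i/2}\}$.

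The heart of the matter is the second point. Unravelling $\bar\gamma = \gamma * \kappa$, for $x \le x'$ with $\rho(x,x') = n$ this reads
\[
t^{n/2} g([x,x'];t^{-1}) = \sum_{x \le z \le x'} t^{-\rho(x,z)/2} g([x,z];t) \, q^{\rho(z,x')}.
\]
Multiplying through by $t^{-n/2}$ and using $q = t^{1/2} - t^{-1/2}$, so that $q^{\rho(z,x')} = t^{-\rho(z,x')/2}(t-1)^{\rho(z,x')}$, the right-hand side becomes $\sum_{x \le z \le x'} t^{-n/2} g([x,z];t)(t-1)^{\rho(z,x')}$, and (using $\rho(x,z) + \rho(z,x') = n$) the identity to prove is exactly
\[
t^{n} g([x,x'];t^{-1}) = \sum_{x \le z \le x'} g([x,z];t) \, (t-1)^{n - \rho(x,z)}.
\]
This is precisely the defining recursion of the $g$-polynomial from Definition~\ref{d:g}, applied to the Eulerian interval $[x,x']$ (with $x$ playing the role of $\hat 0$). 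Since every interval of a locally Eulerian poset is Eulerian, this holds for all $x < x'$, and trivially for $x = x'$. So $\gamma$ is $\kappa$-totally acceptable, $\gamma \in U(B;R') \cap \mathcal{T}(B,\kappa)$ is an acceptability operator, and by uniqueness $\gamma = \gamma_B$.

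The one genuine subtlety — and the place I expect to spend the most care — is the parenthetical claim that one does not need $2$ to be invertible here, whereas Lemma~\ref{l:gamma} assumed $2 \in \tilde R$ invertible. The point is that uniqueness of the acceptability operator does not actually require inverting $2$: if $\gamma$ and $\gamma'$ are both acceptability operators for the same kernel $\kappa$, then $\gamma^{-1} * \bar\gamma = \kappa = (\gamma')^{-1} * \bar{\gamma'}$, and one shows by induction on $\rho(x,x')$ that $\gamma(x,x') = \gamma'(x,x')$: both equal $-D(\bar\gamma(x,x')) = -D(\bar{\gamma'}(x,x'))$ where this last step uses only that $\gamma(x,x'), \gamma'(x,x') \in R'$ and that $\gamma(x,x') + \bar\gamma(x,x')$ and $\gamma'(x,x') + \bar{\gamma'}(x,x')$ are both symmetric, via the same Dehn--Sommerville argument as in the proof of Lemma~\ref{l:gamma} (note the $-\tfrac12$ in \eqref{e:kernelrecur} is applied to an \emph{antisymmetric} element, so the result is automatically integral). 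Alternatively, and perhaps more cleanly, I would simply note that the displayed $g$-polynomial recursion \emph{uniquely determines} the polynomials $g([x,x'];t)$ by induction on rank (Definition~\ref{d:g}), so the function $\gamma$ above is well-defined over $\Z[t^{\pm1/2}]$, satisfies $\bar\gamma = \gamma*\kappa$, and lies in $U(B;R')$; existence and uniqueness of the acceptability operator then force $\gamma_B = \gamma$ without ever leaving $R$. I would state the argument in this second form to keep everything integral.
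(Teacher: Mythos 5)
Your proof is correct and takes the same approach as the paper, which simply observes that unwinding $\bar\gamma=\gamma*\kappa$ for $\gamma(x,x')=t^{-\rho(x,x')/2}g([x,x'];t)$ reproduces exactly the defining recursion of the $g$-polynomial (Definition~\ref{d:g}) on each Eulerian interval $[x,x']$. (Minor slip: to pass from the unpacked identity to the displayed $g$-recursion you multiply through by $t^{n/2}$, not $t^{-n/2}$; and your second, integrality-based argument for why $2$ need not be inverted is the cleaner one and matches the paper's parenthetical remark before the lemma.)
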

\begin{proof}
This follows from unpacking Definition~\ref{d:g} and comparing with \eqref{e:def} in the proof of Lemma~\ref{l:gamma}. 
\end{proof}

The acceptability operator $\gamma_B$ also has an easy to describe inverse. We will use the following theorem of Stanley. 
Recall from Example~\ref{e:flip} that if $B$ is Eulerian, then the dual poset $B^*$ is also Eulerian. 

\begin{theorem}\cite[Corollary~8.3]{StaSubdivisions}\label{t:inverse}
Let $B$ be a locally Eulerian poset. Then with the notation above,  the inverse of the acceptability operator $\gamma_B$ is given by
\[
(\gamma_B^{-1})(x,x') = (-1)^{\rho(x,x')} t^{-\rho(x,x')/2} g([x,x']^*;t) 
\]
for $x \le x'$ in $B$.
\end{theorem}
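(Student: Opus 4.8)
The plan is to pin down $\gamma_B^{-1}$ by a uniqueness argument inside the incidence algebra rather than by a head‑on convolution computation. Write $\mu := \gamma_B^{-1} \in I(B;R)$, and let $\delta \in I(B;R)$ be the function defined by the right‑hand side of the claimed formula, $\delta(x,x') = (-1)^{\rho(x,x')} t^{-\rho(x,x')/2} g([x,x']^*;t)$; this makes sense since $[x,x']^*$ is Eulerian by Example~\ref{e:flip}. I will show that $\mu$ and $\delta$ are both equal to the \emph{unique} element $f$ of $U(B;R')$ satisfying $f = \kappa * \bar f$.

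For $\mu$: since $\gamma_B$ is an acceptability operator we have $\bar\gamma_B = \gamma_B * \kappa$; inverting and using $\kappa^{-1} = \bar\kappa$ (because $\kappa * \bar\kappa = 1$) gives $\overline{\mu} = \bar\kappa * \mu$, and applying the involution once more yields $\mu = \kappa * \overline{\mu}$. That $\mu \in U(B;R')$ follows by a short induction on $\rho(x,x')$ from $\mu * \gamma_B = 1$, the fact that $\gamma_B(x,x') \in R'$ for $x < x'$ (Lemma~\ref{l:gpoly}, since $g([x,x'];t)$ has degree less than $\rho(x,x')/2$), and the fact that $R'$ is closed under multiplication. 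Uniqueness is then elementary: if $f_1,f_2 \in U(B;R')$ both satisfy $f = \kappa*\bar f$, set $\nu = f_1 - f_2$, so $\nu = \kappa * \bar\nu$, $\nu(x,x)=0$, and $\nu(x,x') \in R'$ for $x<x'$; if $\nu \ne 0$, pick $x<x'$ with $\rho(x,x')$ minimal such that $\nu(x,x') \ne 0$, and expand $\nu(x,x') = \sum_{x \le z \le x'} \kappa(x,z)\overline{\nu}(z,x')$ — the terms with $x<z<x'$ vanish by minimality and the term $z=x'$ vanishes since $\nu(x',x') = 0$, so $\nu(x,x') = \overline{\nu}(x,x') \in R' \cap \overline{R'} = 0$, a contradiction.

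It remains to check that $\delta$ also lies in $U(B;R')$ and satisfies $\delta = \kappa*\bar\delta$. The first point is immediate: by Definition~\ref{d:g} and Example~\ref{e:gconstant}, $g([x,x']^*;t)$ has degree less than $\rho(x,x')/2$ and constant term $1$, so $\delta(x,x) = 1$ and $\delta(x,x') \in R'$ for $x<x'$. For the second, fix $x<x'$, put $n = \rho(x,x')$ and $P = [x,x']^*$, and substitute $\kappa(x,z) = t^{-\rho(x,z)/2}(t-1)^{\rho(x,z)}$ and $\overline{\delta}(z,x') = (-1)^{\rho(z,x')} t^{\rho(z,x')/2} g([z,x']^*;t^{-1})$ into $(\kappa*\bar\delta)(x,x') = \sum_{x \le z \le x'} \kappa(x,z)\overline{\delta}(z,x')$. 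Collecting powers of $t^{1/2}$ and signs, using $\rho(x,z) + \rho(z,x') = n$, reduces the desired equality $(\kappa*\bar\delta)(x,x') = \delta(x,x')$ to
\[ g(P;t) = \sum_{z \in P} (-1)^{n - \rho_P(z)}\, t^{\rho_P(z)}\, (t-1)^{n - \rho_P(z)}\, g([\hat{0}, z]_P;\, t^{-1}), \]
where the order‑reversing identification of $[x,x']$ with $P$ gives $\rho_P(z) = \rho(z,x')$ and $[\hat 0,z]_P = [z,x']^*$. But this is exactly the defining recursion for $g(P;t)$ in Definition~\ref{d:g}, with $t$ replaced by $t^{-1}$ and cleared of denominators; hence it holds. (For $x=x'$ both sides of $\delta = \kappa*\bar\delta$ are $1$.) Combining with the previous paragraph, $\mu = \gamma_B^{-1}$ and $\delta$ both lie in $U(B;R')$ and satisfy $f = \kappa*\bar f$, so they coincide.

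I expect the only genuine work to be in the displayed identity: the bookkeeping of powers of $t^{1/2}$ and of signs needed to recognize $\kappa*\bar\delta$ as the defining recursion of the $g$‑polynomial of the \emph{dual} interval; everything else is formal. As a sanity check one should also confirm on small cases (rank $1$ and rank $2$) that the claimed $\delta$ really is the inverse. An alternative, essentially equivalent, route is to verify $\gamma_B * \delta = 1$ directly, which reduces to the vanishing $\sum_{z \in Q} (-1)^{\rho(z)} g([\hat 0, z];t)\, g([z, \hat 1]^*;t) = 0$ for every Eulerian poset $Q$ of rank at least $1$; but proving that identity again comes down to the same recursion, so the uniqueness packaging above seems cleaner.
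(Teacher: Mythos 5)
Your proof is correct. Note first that the paper itself gives no proof of this theorem; it is stated with a citation to Stanley's Corollary~8.3 in \cite{StaSubdivisions}, so there is no ``paper's proof'' to compare against. Your argument is therefore a self-contained re-derivation inside the framework of Section~2.

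A few remarks on the substance. The packaging via the unique element of $U(B;R')$ satisfying $f=\kappa*\bar f$ is clean and correct, and it usefully sidesteps inverting $2$, which matters here because $R=\Z[t^{\pm 1/2}]$. The three pieces all check out: (i) from $\bar\gamma_B=\gamma_B*\kappa$ and $\kappa^{-1}=\bar\kappa$ one indeed gets $\mu=\kappa*\bar\mu$; (ii) $\mu(x,x')\in R'$ for $x<x'$ follows by the induction you indicate from $\mu*\gamma_B=1$, using that $R'$ is closed under multiplication and that $\gamma_B$ is strictly upper-triangular with $R'$ entries off the diagonal; (iii) the uniqueness argument relies only on $R'\cap\overline{R'}=0$, which holds for the splitting of Example~\ref{e:standardsplit}. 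For the identity $\delta=\kappa*\bar\delta$, your bookkeeping is right: multiplying through by $(-1)^n t^{n/2}$, using $\rho(x,z)+\rho(z,x')=n$, and identifying $[x,x']^*$ with $P$ reduces the equality at $(x,x')$ exactly to Definition~\ref{d:g} for $g(P;t)$ with $t\mapsto t^{-1}$ and denominators cleared. Stanley's original proof of his Corollary~8.3 is phrased differently (he works with $P$-kernels and a direct convolution/inversion calculus), so your uniqueness argument is, if anything, a slightly more economical route given the machinery already set up in Section~2 of this paper; but it proves the same statement by essentially equivalent means.
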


We will also need the notion of the $h$-polynomial of a lower Eulerian poset.  

\begin{definition}\label{d:hpoly}\cite[Example~7.2]{StaSubdivisions}
Let $B$ be a lower Eulerian poset of rank $n$. Then the \define{$h$-polynomial} of $B$ is defined by
\[
t^n h(B;t^{-1}) = \sum_{x \in B}  g([\hat{0},x];t) (t - 1)^{n - \rho(\hat{0}_B,x)}.
\]
\end{definition}

\begin{example}\label{e:hconstant}
Let $B$ be a lower Eulerian poset of rank $n$. By comparison of the recursive formulas in Definition~\ref{d:g} and Definition~\ref{d:hpoly}, we see that, as in Example~\ref{e:gconstant},  
the constant term of $h(B;t)$ is $h(B;0) = 1$, and the linear coefficient of $h(B;t)$ is $\#\{ x \in B \mid \rho(\hat{0}_B,x) = 1 \} - n$. 
\end{example}

\begin{example}\label{e:hEulerian}
If $B$ is an Eulerian poset of rank $n$, then comparison of the recursive formulas in Definition~\ref{d:g} and Definition~\ref{d:hpoly} implies that $h(B;t) = g(B;t)$. 
 If, furthermore, $n > 0$, then $B \setminus \{ \hat{1} \}$ is a 
 lower Eulerian poset of rank $n - 1$, and 
\[
(1 - t) h(B \setminus \{ \hat{1} \};t) =  g(B;t) - t^ng(B;t^{-1}).
\]
In particular, $h(B \setminus \{ \hat{1} \};t)$ is a polynomial of degree $n - 1$ with symmetric coefficients. For example, if $B$ is the Boolean algebra on $r$ elements, then 
 $h(B \setminus \{ \hat{1} \};t) = 1 + t + \cdots + t^{r - 1}$. 
 More generally, if $P$ is a polytope, and $B$ is the dual poset of the face poset of $P$ (see Example~\ref{e:polytope}), 
 then $h(B \setminus \{ \hat{1} \};t)$ is the \emph{toric $h$-polynomial} of $P$ (see \cite[(3),(4)]{BraRemarks}). When $P$ is a rational polytope, the coefficients of the toric $h$-polynomial have a well-known interpretation as the dimensions of intersection cohomology of the toric variety associated to the normal fan of $P$. For the relation with the geometry in this paper, see Remark~\ref{r:geometric} and 
 Section~6.2 of \cite{geometricpaper}, where the intersection cohomology groups above contribute to the `non-primitive' intersection cohomology of a hypersurface with Newton polytope $P$. 
\end{example}

\subsection{Strong formal subdivisions}

A function $f: \Gamma \rightarrow B$ between finite posets is \define{order-preserving} if $y \le y' \in \Gamma$ implies $f(y) \le f(y') \in B$.
If $(\Gamma,r_\Gamma)$ and $(B,r_B)$ are ranked posets, then a function  $f: \Gamma \rightarrow B$ is \define{rank-increasing} if $r_\Gamma(y) \le r_B(f(y))$ for all $y \in \Gamma$. 
Throughout, we let $\sigma: \Gamma \rightarrow B$ be an order-preserving, rank-increasing function  between locally Eulerian posets with rank functions $\rho_\Gamma$ and $\rho_B$ respectively. 
\begin{definition}
Consider an order-preserving  function $\sigma: \Gamma \rightarrow B$ between locally Eulerian posets. Then for all $y \in \Gamma$ and $x \in B$ such that $\sigma(y) \le x$, we may 
consider the lower Eulerian posets $\Gamma_{\ge y} := \{ y' \in \Gamma \mid y \le y' \}$ and $(\Gamma_{\ge y})_x := \{ y' \in \Gamma \mid y \le y', \sigma(y') \le x \}$, and the locally Eulerian poset 
$\IInt((\Gamma_{\ge y})_x):=\sigma^{-1}(x) \cap (\Gamma_{\ge y})_x$.   If $\Gamma$ is lower Eulerian, then we write $\Gamma_x  := (\Gamma_{\ge \hat{0}_\Gamma})_x$ and 
 $\IInt(\Gamma_x)  := \IInt((\Gamma_{\ge \hat{0}_\Gamma})_x)$.
\end{definition}

\begin{definition}
Let $\sigma: \Gamma \rightarrow B$ be an order-preserving, rank-increasing function  between locally Eulerian posets with rank functions $\rho_\Gamma$ and $\rho_B$ respectively. 
Then $\sigma$ is \define{strongly surjective} if it is surjective and for all $y \in \Gamma$ and $x \in B$ such that $\sigma(y) \le x$, there exists $y \le y' \in \Gamma$ such that $\rho_\Gamma(y') = \rho_B(x)$ and $\sigma(y') = x$. 
\end{definition}

Below we introduce a well-behaved notion of morphisms of locally Eulerian posets that are fibered in locally Eulerian posets.  In the way that Eulerian posets are modelled on polytopes, these morphisms are modelled on subdivisions of polytopes.  The property of subdivisions that we will abstract is that the subdivision of the relative interior of a polytope has Euler characteristic equal to $1$.

\begin{definition}\label{d:sfs}
Let $\sigma: \Gamma \rightarrow B$ be an order-preserving, rank-increasing function  between locally Eulerian posets with rank functions $\rho_\Gamma$ and $\rho_B$ respectively. 
Then $\sigma$ is a \define{strong formal subdivision} if it is strongly surjective and  for all $y \in \Gamma$ and $x \in B$ such that $\sigma(y) \le x$,
\begin{equation}\label{e:sfs}
\sum_{ \substack{y \le y'\\ \sigma(y') = x}} (-1)^{\rho_B(x) - \rho_\Gamma(y')} = 1.
\end{equation}
If $\Gamma$ and $B$ are lower Eulerian, then it follows that $\sigma(\hat{0}_\Gamma) = \hat{0}_B$, and the \define{rank} $\rk(\sigma) \in \Z_{\ge 0}$ of $\sigma$ is defined to be $\rk(\sigma) := \rho_B(\hat{0}_B) - \rho_\Gamma(\hat{0}_\Gamma)$. Alternatively, it follows from the fact that $\sigma$ is rank-increasing and strongly surjective that $\rk(\sigma) = \rk(\Gamma)-\rk(B)$. 
\end{definition}

The following is a straightforward consequence of M\"{o}bius inversion.

\begin{lemma}\label{l:equivalent}
With the notation above, let $\sigma:\Gamma\rightarrow B$ be order-preserving, rank-increasing and strongly surjective.  Then $\sigma$ is a strong formal subdivision if and only if 
\[
\sum_{ \substack{ \sigma(y') \le x \\ y \le y' }}  (-1)^{\rho_B(x) - \rho_\Gamma(y')} =   \left\{\begin{array}{cl} 1 & \text{if }  \sigma(y) = x,  \\ 0 & \text{otherwise}, \end{array}\right.  \textrm{ for all } y \in \Gamma, x \in B \textrm{ such that } \sigma(y) \le x.
\]
\end{lemma}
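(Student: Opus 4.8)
The plan is to prove the equivalence by a Möbius-inversion argument inside the incidence algebra of the poset $\Gamma_{\ge y}$ (or, more precisely, of the relevant sub-poset of $\Gamma$ lying over the interval $[\sigma(y), x]$ in $B$). Fix $y \in \Gamma$ and $x \in B$ with $\sigma(y) \le x$. For each $y' \in \Gamma$ with $y \le y'$ and $\sigma(y') \le x$, write $a(y') := (-1)^{\rho_B(\sigma(y')) - \rho_\Gamma(y')}$ and $b(y') := (-1)^{\rho_B(x) - \rho_\Gamma(y')}$, so that $b(y') = (-1)^{\rho_B(x) - \rho_B(\sigma(y'))} a(y')$. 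The defining equation~\eqref{e:sfs} of a strong formal subdivision, applied with the element $x$ replaced by an arbitrary $x''$ with $\sigma(y) \le x'' \le x$, says precisely that $\sum_{y \le y',\, \sigma(y') = x''} (-1)^{\rho_B(x'') - \rho_\Gamma(y')} = 1$; summing this over $x''$ is the natural route to the ``$\le x$'' statement.

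The key computation is then:
\[
\sum_{\substack{y \le y' \\ \sigma(y') \le x}} (-1)^{\rho_B(x) - \rho_\Gamma(y')}
= \sum_{\sigma(y) \le x'' \le x} (-1)^{\rho_B(x) - \rho_B(x'')} \sum_{\substack{y \le y' \\ \sigma(y') = x''}} (-1)^{\rho_B(x'') - \rho_\Gamma(y')}.
\]
If $\sigma$ is a strong formal subdivision, the inner sum equals $1$ for every $x''$ in the range $[\sigma(y), x]$ (using strong surjectivity to guarantee that every such $x''$ actually occurs as $\sigma(y')$ for some $y \le y'$, so that~\eqref{e:sfs} applies), and the right-hand side collapses to $\sum_{\sigma(y) \le x'' \le x} (-1)^{\rho_B(x) - \rho_B(x'')}$. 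Since $B$ is locally Eulerian, the interval $[\sigma(y), x]$ is Eulerian, so this alternating sum over the interval is $0$ if $\sigma(y) < x$ and $1$ if $\sigma(y) = x$ — which is exactly the claimed formula. Conversely, assuming the displayed characterization holds for all relevant $y, x$, one recovers~\eqref{e:sfs} by inverting the same triangular relation: Möbius inversion on the Eulerian interval $[\sigma(y), x]$ (whose Möbius function is $(-1)^{\rho_B}$-alternating by the Eulerian property) turns the family of identities indexed by $x$ back into the family indexed by $x''$ with ``$\sigma(y') = x''$'', yielding~\eqref{e:sfs}; one must check that strong surjectivity still lets us range over all needed $x''$.

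Concretely, I would set up the incidence algebra $I(B; \Z)$, let $\zeta$ be its zeta function and $\mu$ its Möbius function, and define, for fixed $y$, the function $F(x) := \sum_{y \le y',\, \sigma(y') = x}(-1)^{\rho_B(x) - \rho_\Gamma(y')}$ on the interval $[\sigma(y), \hat 1_B]$ (or wherever defined) and $G(x) := \sum_{y \le y',\, \sigma(y') \le x}(-1)^{\rho_B(x) - \rho_\Gamma(y')}$; then $G = F * \zeta'$ for an appropriate sign-twisted zeta-type operator, and the statement to prove is ``$F \equiv 1$ iff $G = e_{\sigma(y)}$'', which follows because the twisting operator is invertible (its inverse being the sign-twisted Möbius function, and the twist by $(-1)^{\rho_B}$ interacts with $\mu$ exactly because $B$ is Eulerian — that is the content of the vanishing of $\sum_{x'' \in [a,b]}(-1)^{\rho_B(b) - \rho_B(x'')}$ for $a < b$). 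The main obstacle, and the only place where care is genuinely needed, is bookkeeping the sign twists and confirming that strong surjectivity is exactly the hypothesis needed so that the index set of $x''$ on both sides matches up — i.e. that no $x''$ in $[\sigma(y), x]$ is ``missed'' when we reindex — rather than any deep combinatorics; the Eulerian cancellation is standard and is cited as \cite[Prop~6.4]{StaSubdivisions}-style input.
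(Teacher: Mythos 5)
Your proof is correct and follows the same route the paper intends (the paper simply asserts the lemma is ``a straightforward consequence of M\"obius inversion,'' and your reindexing over $x''=\sigma(y')$ followed by the Eulerian cancellation $\sum_{a\le x''\le b}(-1)^{\rho_B(b)-\rho_B(x'')}=0$ for $a<b$, and its M\"obius-inverse, is exactly that argument). One small point: strong surjectivity plays no role in the inversion itself --- equation~\eqref{e:sfs} is imposed for \emph{all} $x''\ge\sigma(y)$ regardless of whether $x''$ is hit, and the triangular system with unit diagonal inverts unconditionally --- so the parenthetical worries about ``ranging over all needed $x''$'' are harmless but unnecessary.
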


\begin{example} The simplest example of a strong formal subdivision is the identity where $\Gamma=B$ and $\rho_\Gamma = \rho_B$.
\end{example}

\begin{remark}\label{r:induced}
If $\sigma: \Gamma \rightarrow B$ is a strong formal subdivision, then for all $y \in \Gamma$, the restriction 
$\sigma: \Gamma_{\ge y} \rightarrow B_{\ge \sigma(y)}$ is a strong formal subdivision of rank $\rho_B(\sigma(y)) - \rho_\Gamma(y)$ between lower Eulerian posets
 with rank functions restricted from $\Gamma$ and $B$ respectively.
\end{remark}

\begin{remark}\label{r:restrict} One may restrict strong formal subdivisions to order ideals.  More precisely, let $\sigma:\Gamma\rightarrow B$ be a strong formal subdivision and let $I$ be an order ideal of $B$ i.e. if $x' \in I$ and $x < x'$ then $x \in I$. Then $\sigma^{-1}(I)$ and $I$ are locally Eulerian posets with rank functions restricted from $\Gamma$ and $B$ respectively. One easily verifies from the definitions that the restricted function 
$\sigma: \sigma^{-1}(I) \rightarrow I$ is a strong formal subdivision. 
\end{remark}

\begin{remark}\label{r:localrestrict}
Combining Remark~\ref{r:induced} and Remark~\ref{r:restrict}, we see that if $\sigma: \Gamma \rightarrow B$ is a strong formal subdivision, and $y \in \Gamma$, $x \in B$ such that $\sigma(y) \le x$, then the 
 restriction 
$\sigma: (\Gamma_{\ge y})_x \rightarrow [ \sigma(y),x]$ is a strong formal subdivision of rank $\rho_B(\sigma(y)) - \rho_\Gamma(y)$ between the lower Eulerian poset $(\Gamma_{\ge y})_x$ and 
the Eulerian poset  $[ \sigma(y),x]$ with rank functions restricted from $\Gamma$ and $B$ respectively.
\end{remark}

For a poset $B$ with minimum element $\hat{0}$, define the barycentric subdivision, $\Bary(B)$ to be the set of all chains of elements of $B$ containing $\hat{0}$, partially ordered under refinement.   The rank of a chain is the number of non-zero elements in the chain. Let $B$ be a lower Eulerian poset with rank function $\rho_B(x) = \rho_B(\hat{0}_B,x)$. There is a rank-increasing map $\sigma:\Bary(B)\rightarrow B$ given by
\[\sigma:\hat{0} = x_0 < x_1<x_2<\ldots<x_k \mapsto x_k.\]

\begin{lemma}\label{l:bary}
The barycentric subdivision $\sigma:\Bary(B)\rightarrow B$ of a lower Eulerian poset is a strong formal subdivision of rank $0$.
\end{lemma}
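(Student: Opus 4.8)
The plan is to verify the two defining properties of a strong formal subdivision directly from the combinatorics of chains in $B$. First I would check that $\sigma : \Bary(B) \to B$ is order-preserving (refining a chain can only move its maximum down or keep it fixed, so actually we need: if one chain refines another, hmm — I should be careful about the direction of the ordering on $\Bary(B)$; with the convention that finer chains are larger, the map $\sigma$ sending a chain to its top element is order-preserving since adding elements to a chain keeps the top element $\ge$ the old top, and in fact a refinement of $\hat 0 = x_0 < \cdots < x_k$ has top element $\ge x_k$). Rank-increasing is immediate: a chain $\hat 0 = x_0 < x_1 < \cdots < x_k$ has rank $k$ in $\Bary(B)$, and since $\rho_B(x_i) > \rho_B(x_{i-1})$ we get $\rho_B(x_k) \ge k$. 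For strong surjectivity, given a chain $c$ with top $x = \sigma(c)$ and $x \le x'$ in $B$, I would refine $c$ by inserting a maximal chain of $[x, x']$ on top; the resulting chain $c'$ has $\sigma(c') = x'$, refines $c$ (so $c \le c'$), and has rank $\rho_\Gamma(c) + \rho_B(x,x')$, which can be arranged to equal $\rho_B(x')$ by starting from a $c$ of rank $\rho_B(x)$ — indeed strong surjectivity only asks for the existence of \emph{some} such $c'$ once $c$ is given, so inserting a maximal chain of $[x,x']$ works provided $c$ already has maximal rank $\rho_B(x)$, i.e. $c$ is itself a maximal chain of $[\hat 0, x]$; the general case of the definition follows since any chain $c$ with $\sigma(c) \le x$ can first be refined to such a maximal chain. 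Since the identity map $\Bary(B) \to B$ on rank functions gives $\rk(\sigma) = \rk(\Bary(B)) - \rk(B) = \rk(B) - \rk(B) = 0$, once we know it is a strong formal subdivision its rank is $0$.

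The substantive point is equation~\eqref{e:sfs}: for every chain $y = (\hat 0 = x_0 < \cdots < x_k)$ in $\Bary(B)$ and every $x \in B$ with $x_k = \sigma(y) \le x$,
\[
\sum_{\substack{y \le y' \\ \sigma(y') = x}} (-1)^{\rho_B(x) - \rho_\Gamma(y')} = 1.
\]
A chain $y'$ refining $y$ with $\sigma(y') = x$ is obtained by inserting, into the "top slot", a chain $x_k = z_0 < z_1 < \cdots < z_m = x$ in the interval $[x_k, x]$ (and possibly inserting further elements into the lower slots between consecutive $x_i$'s — but those insertions do not change $\sigma(y')$, so they must be handled too). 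So the sum factors: over the lower slots $[x_{i-1}, x_i]$ for $1 \le i \le k$ we are summing $(-1)^{\#\text{inserted}}$ over all chains of the \emph{open} interval $(x_{i-1}, x_i)$, and over the top slot we sum $(-1)^{m}$ over all chains $x_k = z_0 < \cdots < z_m = x$ of $[x_k, x]$. The key computation is therefore the two identities
\[
\sum_{\text{chains } c \text{ of } (a,b)} (-1)^{\#c} = 1 \quad (a < b), \qquad \sum_{a = z_0 < \cdots < z_m = b} (-1)^m = -1 \quad (a < b),
\]
the first being the Euler-characteristic statement and the second its "closed interval" variant; note $\rho_\Gamma(y') = k + (\text{number of lower insertions}) + m$ and $\rho_B(x) = \rho_B(x_k) + \rho_B(x_k,x)$, with $\rho_B(x_k) = \rho_B(\hat 0, x_k) \geq k$, so I must track signs carefully, but the product of $k$ copies of $+1$ (empty open intervals contribute $1$, nonempty contribute $1$) times one copy of the top-slot sum gives exactly $\pm 1$, and a bookkeeping of the exponents shows it is $+1$. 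The cleanest route is to recognize that this is precisely the statement of Lemma~\ref{l:equivalent} applied pointwise, or better: both identities above are standard consequences of the Eulerian (indeed just graded-with-$\mu = (-1)^\rho$) property of $B$ via the fact that $\sum_{a \le z \le b}(-1)^{\rho(z,b)} = 0$ for $a < b$, which is exactly the computation in the proof of Lemma~\ref{l:kernel}.

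The main obstacle, and the part deserving real care, is the combinatorial bookkeeping that a chain $y'$ refining $y$ with $\sigma(y') = x$ decomposes uniquely as "$y$ together with an inserted chain in each slot", where the top slot ranges over chains from $x_k$ to $x$ and the lower slots over chains strictly between consecutive elements of $y$, and then matching the sign $(-1)^{\rho_B(x) - \rho_\Gamma(y')}$ to the product of the per-slot signs. Once that decomposition is pinned down, \eqref{e:sfs} reduces to the product of one factor of the form $\sum (-1)^m = -1$ (top slot) and $k$ factors of the form $\sum(-1)^{\#c} = 1$ (lower slots), together with the global sign $(-1)^{\rho_B(x_k) - k} = (-1)^{\rho_B(x_k,\hat 0)\cdot 0}$ — wait, more precisely $(-1)^{\rho_B(x) - \rho_\Gamma(y')}$ splits as $(-1)^{\rho_B(x_k) - k}\cdot(-1)^{\rho_B(x_k,x) - m}\cdot(-1)^{-(\text{lower insertions})}$, and since $B$ is Eulerian one checks $\rho_B(x_k) \equiv k$ only if all lower intervals have length $1$, which is false in general — so in fact the sign does depend on slot data and the telescoping must be done slot by slot, each slot contributing a factor $\sum_{c \subseteq (x_{i-1},x_i)} (-1)^{\rho_B(x_{i-1},x_i) - 1 - \#c} = (-1)^{\rho_B(x_{i-1},x_i)-1}\sum(-1)^{\#c}$; using $\sum_c(-1)^{\#c} = \sum_{x_{i-1} \le z \le x_i}(-1)^{\rho(z,x_i)}\cdot(\pm) $ and the Euler relation, each lower factor collapses to $1$ and the top factor to $1$, yielding the required value $1$. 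I would present this as: fix $y$ and $x$, parametrize $\{y' : y \le y', \sigma(y') = x\}$ by tuples of chains in the $k+1$ slots, factor the sum, and invoke the Euler-characteristic identity (as in Lemma~\ref{l:kernel}) in each factor.
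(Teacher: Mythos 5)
Your overall strategy — parametrize the chains $y'\ge y$ with $\sigma(y')=x$ by what gets inserted into each slot, factor the alternating sum over slots, and evaluate each factor via the M\"obius function of the interval — is the same as the paper's, and the final conclusion (each factor equals $1$, so the sum is $1$) is correct. But the proof as written does not close, because the two identities you flag as the "key computation" are false as stated, and the patch you offer for them is also incorrect.

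Concretely, for $a<b$ in a locally Eulerian poset with $\rho=\rho(a,b)$, Philip Hall's theorem gives
\[
\sum_{a=z_0<\cdots<z_m=b}(-1)^m \;=\; \mu(a,b)\;=\;(-1)^{\rho},
\qquad
\sum_{\text{chains }c\subseteq(a,b)}(-1)^{\#c}\;=\;-\mu(a,b)\;=\;(-1)^{\rho+1},
\]
so your claimed constant values $-1$ and $1$ hold only when $\rho$ is odd, not in general. You notice this ("the sign does depend on slot data") and recover the correct per-slot factor $(-1)^{\rho_B(x_{i-1},x_i)-1}\sum_c(-1)^{\#c}$, but then justify its collapse to $1$ via the claim $\sum_c(-1)^{\#c}=\sum_{x_{i-1}\le z\le x_i}(-1)^{\rho(z,x_i)}\cdot(\pm)$; that right-hand sum is exactly the Eulerian relation and equals $0$ for any positive-length interval, whereas the left-hand side equals $(-1)^{\rho+1}$ — these are different quantities and the equality is false. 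Substituting the correct value $(-1)^{\rho+1}$ does make every slot contribute $(-1)^{\rho-1}(-1)^{\rho+1}=1$, which is what you want; but as written the step is not proved.

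The paper avoids all of this parity bookkeeping by first reducing (WLOG) to the case where $y$ already terminates at $x$, so every slot is a closed interval $[x_i,x_{i+1}]$. Then each slot factor is literally $\sum_{z\in\cC(x_i,x_{i+1})}(-1)^{l(z)}=\mu_B(x_i,x_{i+1})=(-1)^{\rho_B(x_{i+1})-\rho_B(x_i)}$ by Philip Hall, and these telescope against the global sign $(-1)^{\rho_B(x)}$ to give $1$ with no casework. Your open-interval-plus-separate-top-slot decomposition can be made to work, but you must keep the $\rho$-dependent signs in every slot and let them cancel, rather than asserting each slot contributes a fixed constant.
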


\begin{proof}
We first show that $\sigma$ is strongly surjective.
Let $y=\{\hat{0} = x_0 < x_1<x_2<\ldots<x_k\}\in \Bary(B)$. Then if $\sigma(y)=x_k \le x$, we can refine $y$ to a maximal chain $y'$ terminating at $x$.  Consequently, $\sigma(y')=x$ and $\rho_{\Bary(B)}(y')=\rho_B(x)$.

Now, let $y\in \Bary(B)$ and $x \in B$ such that $\sigma(y) \le x$.  We must show
\[\sum_{ \substack{y \le y'\\ \sigma(y') = x}} (-1)^{\rho_B(x) - \rho_{\Bary(B)}(y')} = 1.\]
The elements $y'$ satisfying $y'\geq y$ and $\sigma(y')=x$ are the chains refining $y$ and terminating with $x$.  Without loss of generality, we may suppose that $y$ terminates with a non-zero element $x$.  Therefore, such chains must refine
\[\hat{0} = x_0 < x_1<x_2<\ldots  <x_k=x.\]
Let $\cC(x_i,x_{i+1})$ be the set of all chains in $B$ starting with $x_i$ and terminating with $x_{i+1}$. 
 For $z\in \cC(x_i,x_{i+1})$, let $l(z)$ be the length of the chain.  Then the sum can be rewritten
\[(-1)^{\rho_B(x)}\prod_{i=0}^{k-1} \sum_{z\in \cC(x_i,x_{i+1})} (-1)^{l(z)}=(-1)^{\rho_B(x)}\prod_{i=1}^{k-1} \mu_B(x_i,x_{i+1})=1\]
where we have used Philip Hall's theorem \cite[Prop~3.8.5]{Stanley1} and the fact that $\mu_B(x_i,x_{i+1})=(-1)^{\rho_B(x_{i+1})-\rho_B(x_i)}$ for a locally Eulerian poset. 
\end{proof}

Our most important examples are polyhedral subdivisions of polytopes. 

\begin{definition}
A \define{polyhedral subdivision} $\cS$ of a polytope $P\subset\R^n$ is a subdivision of $P$ into a finite number of polytopes such that the intersection of any two polytopes is a (possibly empty) face of both.  A \define{lattice polyhedral subdivision} of a lattice polytope $P$ is a polyhedral subdivision of $P$ into lattice polytopes.  
Let $F$ be  a (possibly empty) cell of $\cS$. The \define{link} $\lk_\cS(F)$  of $F$ in $\cS$ is the subcomplex consisting of all 
cells $F'$ of $\cS$ that contain $F$ under inclusion.  We will often abuse notation and identify $\lk_\cS(F)$ with its associated lower Eulerian poset. 
\end{definition}

As in Example~\ref{e:polytope}, let $\cS'$ and $\cS$ be polyhedral subdivisions of a polytope $P$, such that $\cS'$ is a refinement of $\cS$.  We identify $\cS$ and $\cS'$ with their face posets ordered under inclusion.
For every cell $F'$ in $\cS'$, let $\sigma(F')$ denote the smallest cell of $\cS$ containing $F'$. When $F'$ is the empty cell of $\cS'$, $\sigma(F')$ is the empty cell of $\cS$. By abuse of notation, we write $\sigma: \cS' \rightarrow \cS$ for the corresponding order-preserving function of posets. For example, when $\cS$ is the trivial subdivision of $P$,  then the corresponding poset is the face poset of $P$. 

\begin{lemma} \label{l:polyhedralsubdivision} Let $\cS'$ and $\cS$ be polyhedral subdivisions of a polytope $P$ such that $\cS'$ refines $\cS$.  
Then the function $\sigma:\cS' \rightarrow \cS$  
is a strong formal subdivision of rank $0$.
\end{lemma}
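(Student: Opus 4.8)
The plan is to verify the axioms for a strong formal subdivision of rank $0$ one at a time, giving $\cS$ and $\cS'$ the rank function $\rho(F)=\dim F+1$ (with $\rho(\emptyset)=0$). First, both posets are locally Eulerian: an interval $[G,G']$ with $G=\emptyset$ is the face poset of the polytope $G'$, while with $G\neq\emptyset$ it is the lattice of faces of $G'$ containing $G$, which is the face lattice of a polytope of dimension $\dim G'-\dim G-1$; either way it is Eulerian by Example~\ref{e:polytope}. Having minimum $\emptyset$, both posets are lower Eulerian. Next, since $\sigma(F')$ is the smallest cell of $\cS$ containing $F'$, we have $F'\subseteq\sigma(F')$ (so $\sigma$ is rank-increasing), and $F_1'\subseteq F_2'$ gives $F_1'\subseteq F_2'\subseteq\sigma(F_2')$, hence $\sigma(F_1')\subseteq\sigma(F_2')$ (so $\sigma$ is order-preserving). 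Throughout I will use the standard polytopal facts that, since $\cS'$ refines $\cS$, the set $\cS'|_F$ of cells of $\cS'$ contained in a fixed $F\in\cS$ is a polyhedral subdivision of $F$, and that for $F'\in\cS'$ with $F'\subseteq F$ one has $\sigma(F')=F$ exactly when $F'$ meets $\operatorname{relint}(F)$ and $\sigma(F')<F$ exactly when $F'\subseteq\partial F$.

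Strong surjectivity and the rank are then immediate. Every cell of a polyhedral subdivision of a polytope is a face of a full-dimensional cell of that subdivision; applied to $\cS'|_F$, any $F'\subseteq F$ in $\cS'$ lies in some $F''\in\cS'$ with $F''\subseteq F$ and $\dim F''=\dim F$, and such an $F''$ cannot be contained in a proper face of $F$, so $\sigma(F'')=F$. Specializing $F'=\emptyset$ yields surjectivity, and general $F'$ yields the required lift of rank $\rho_\cS(F)$; thus $\sigma$ is strongly surjective. Since $\rk(\cS')=\rk(\cS)=\dim P+1$, we get $\rk(\sigma)=\rho_\cS(\emptyset)-\rho_{\cS'}(\emptyset)=0$.

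The crux is the identity \eqref{e:sfs}. By Lemma~\ref{l:equivalent} (whose hypotheses we have just checked), it is enough to show that for all $F'\in\cS'$ and $F\in\cS$ with $\sigma(F')\le F$,
\[
\sum_{\substack{F''\in\cS'\\ F'\subseteq F''\subseteq F}}(-1)^{\dim F-\dim F''}=\begin{cases}1&\text{if }\sigma(F')=F,\\ 0&\text{if }\sigma(F')<F.\end{cases}
\]
The cells $F''$ in this sum are precisely the cells of the subdivision $\cS'|_F$ of $F$ that contain $F'$. Letting $L$ denote the topological link of $F'$ in $\cS'|_F$ — so a cell $F''\supsetneq F'$ of $\cS'|_F$ is a face of $L$ of dimension $\dim F''-\dim F'-1$ — a short computation gives
\[
\sum_{F'\subseteq F''\subseteq F}(-1)^{\dim F''}=(-1)^{\dim F'}\bigl(1-\chi(|L|)\bigr).
\]
I then invoke the standard fact that in a polyhedral subdivision of a polytope $F$ the link of a cell $F'$ is a sphere of dimension $\dim F-\dim F'-1$ if $F'$ meets $\operatorname{relint}(F)$ and a ball of that dimension if $F'\subseteq\partial F$; since $\chi(S^n)=1+(-1)^n$ and $\chi(B^n)=1$, the second display equals $(-1)^{\dim F}$ in the first case and $0$ in the second. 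Multiplying by $(-1)^{\dim F}$ and translating the two cases back through the characterization of $\sigma(F')$ above gives the displayed identity, so by Lemma~\ref{l:equivalent} together with the previous paragraph $\sigma$ is a strong formal subdivision of rank $0$.

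The main obstacle is this last topological input: the link of a cell in a polyhedral subdivision of a polytope is a sphere (for an interior cell) or a ball (for a boundary cell) of the expected dimension; this is standard PL topology and is already implicit in Stanley's treatment of subdivisions \cite[Section~7]{StaSubdivisions}. Everything else reduces to bookkeeping with ranks and, via Lemma~\ref{l:equivalent}, to M\"{o}bius inversion. If one wishes to keep the argument self-contained, only the Euler characteristic of $|L|$ is needed, and that follows directly from the conical local structure of the subdivision near a relative-interior point of $F'$, together with the observation that such a point lies in the interior of the manifold-with-boundary $F$ precisely when $F'$ meets $\operatorname{relint}(F)$.
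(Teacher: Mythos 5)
Your proof is correct and follows essentially the same route as the paper: reduce via Lemma~\ref{l:equivalent} to an Euler characteristic computation for the link of a cell, then invoke the sphere/ball dichotomy for links in a polyhedral subdivision. You simply spell out more of the preliminary verifications (that the posets are locally Eulerian, that $\sigma$ is order-preserving, rank-increasing, and strongly surjective) than the paper, which dismisses them as clear.
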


\begin{proof}
The map $\sigma$ is clearly strongly surjective.
Now, let $F$ be a face of $\cS'$ and $Q$ be a face of $\cS$ containing $F$.  By Lemma~\ref{l:equivalent}, we must show
\[
\sum_{ \substack{ \sigma(F') \le Q \\ F \le F' }}  (-1)^{\dim Q - \dim F'} =   \left\{\begin{array}{cl} 1 & \text{if }  \sigma(F) = Q,  \\ 0 & \text{otherwise}. \end{array}\right.\]
The sum is the $(-1)^{\dim Q-\dim F-1}\tilde{\chi}(\lk_{\cS'|_Q}(F))$, where $\lk_{\cS'|_Q}(F)$ is link of $F$ in the restriction of the subdivision $\cS'$ to $Q$.  The conclusion follows because 
the link is contractible if $F \subseteq \partial Q$ (including the case when $F'$ is the empty cell), and has the topology of a sphere of dimension $\dim Q - \dim F- 1$ if $\sigma(F) = Q$ and $\dim F < \dim Q$. 
\end{proof}

\begin{remark} 
In the case of polyhedral and barycentric subdivisions, we made use of a topological interpretation of  strong formal  subdivisions.   We explain it in general here.
Let $\sigma: \Gamma \rightarrow B$ be an order-preserving function between lower Eulerian posets, and fix $y$ in $\Gamma$ and $x$ in $B$ 
such that $\sigma(y) \le x$. 
Consider the poset $\hat{(\Gamma_{\ge y})_x} := (\Gamma_{\ge y})_x \cup \{ \hat{1} \}$ obtained from $(\Gamma_{\ge y})_x$ by adding a maximal element $\hat{1}$, 
and let $(\Gamma_{> y})_x$ denote the poset obtained from $(\Gamma_{\ge y})_x$ by removing the minimal element $y$. 
 Since $(\Gamma_{\ge y})_x$ is lower 
Eulerian, one  computes the M\"obius
number  of $\hat{(\Gamma_{\ge y})_x}$  to be
\[
\mu(y,\hat{1}) = - \sum_{\substack{y \le y' \in \Gamma \\ \sigma(y') \le x }} (-1)^{\rho_\Gamma(y') - \rho_\Gamma(y)} =  -(-1)^{\rho_B(x)  - \rho_\Gamma(y)} \sum_{\substack{y \le y' \in \Gamma \\ \sigma(y') \le x }} (-1)^{\rho_B(x) - \rho_\Gamma(y')}. 
\]
By Philip Hall's theorem \cite[Prop~3.8.5]{Stanley1}, $\mu(y,\hat{1}) $ is equal to the reduced Euler characteristic 
$\tilde{\chi}(\Delta((\Gamma_{> y})_x))$ of the order 
complex $\Delta((\Gamma_{> y})_x)$ of  $(\Gamma_{> y})_x$. Recall that $\Delta((\Gamma_{> y})_x)$  is the simplicial complex with $i$-dimensional faces corresponding to 
chains of length $i$ in $(\Gamma_{> y})_x$. 
We conclude that the condition in Lemma~\ref{l:equivalent} is equivalent to the following: 
\[
-\tilde{\chi}(\Delta((\Gamma_{> y})_x)) =  \left\{\begin{array}{cl}   (-1)^{\rho_B(x) - \rho_\Gamma(y)}  & \text{if } \sigma(y) =  x,  \\ 0 & \text{otherwise}. \end{array}\right.
\]
\end{remark}

\subsection{Pushforwards along strong formal subdivisions}

\begin{definition}\label{d:push}
Let $(\Gamma,r_\Gamma)$ and $(B,r_B)$ be ranked posets, and consider  an order-preserving, rank-increasing function $\sigma: \Gamma \rightarrow B$. 
Then the corresponding \define{push-forward} map is the left $S(R)$-module homomorphism:
\[
\sigma_*: R^\Gamma \rightarrow R^B
\]
\[
(\sigma_*f)(x) = \sum_{y \in \sigma^{-1}(x)} f(y) q^{r_B(x) - r_\Gamma(y)} 
\]
For any $x \in B$ and $y \in \Gamma$, define $\eta(x,y) \in R$ by:
\[
\sigma_* (e_y * \gamma_{\Gamma}) = \sum_{x \in B} \eta(x,y) e_x. 
\]
\end{definition}

One easily verifies the following lemma. 

\begin{lemma}\label{l:compose}
Let $(\Omega,r_\Omega)$, $(\Gamma,r_\Gamma)$ and $(B,r_B)$ be ranked posets, and consider order-preserving, rank-increasing functions $\tau: \Omega \rightarrow \Gamma$ and $\sigma: \Gamma \rightarrow B$. Then $\sigma \circ \tau: \Omega \rightarrow B$ is order-preserving and rank-increasing, and $(\sigma \circ \tau)_* = \sigma_* \circ \tau_*$.
\end{lemma}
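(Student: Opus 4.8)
The plan is to check the three assertions in turn, each being a direct unwinding of the relevant definitions. First, for order-preservation: if $z \le z'$ in $\Omega$, then $\tau(z) \le \tau(z')$ in $\Gamma$ because $\tau$ is order-preserving, and then $\sigma(\tau(z)) \le \sigma(\tau(z'))$ in $B$ because $\sigma$ is order-preserving, so $\sigma \circ \tau$ is order-preserving. For the rank-increasing property I would chain the two inequalities: for every $z \in \Omega$,
\[
r_\Omega(z) \le r_\Gamma(\tau(z)) \le r_B(\sigma(\tau(z))),
\]
where the first inequality holds since $\tau$ is rank-increasing and the second since $\sigma$ is rank-increasing. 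Hence $\sigma \circ \tau$ is rank-increasing, and so its push-forward $(\sigma \circ \tau)_*$ from Definition~\ref{d:push} is indeed defined.

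For the equality $(\sigma \circ \tau)_* = \sigma_* \circ \tau_*$ of maps $R^\Omega \rightarrow R^B$, it is equivalent (since an element of $R^B$ is determined by its values) to check that $(\sigma_*(\tau_* f))(x) = ((\sigma \circ \tau)_* f)(x)$ for every $f \in R^\Omega$ and every $x \in B$. I would compute the left-hand side by substituting the definition of $\tau_* f$ into the definition of $\sigma_*$:
\[
(\sigma_*(\tau_* f))(x) = \sum_{y \in \sigma^{-1}(x)} (\tau_* f)(y)\, q^{\,r_B(x) - r_\Gamma(y)} = \sum_{y \in \sigma^{-1}(x)} \sum_{z \in \tau^{-1}(y)} f(z)\, q^{\,r_\Gamma(y) - r_\Omega(z)}\, q^{\,r_B(x) - r_\Gamma(y)}.
\]
The key point is the exponent cancellation $\bigl(r_\Gamma(y) - r_\Omega(z)\bigr) + \bigl(r_B(x) - r_\Gamma(y)\bigr) = r_B(x) - r_\Omega(z)$, after which the double sum over $\{(y,z) : y \in \sigma^{-1}(x),\ z \in \tau^{-1}(y)\}$ is reindexed as a single sum over $z \in (\sigma \circ \tau)^{-1}(x)$, via the bijection $(y,z) \mapsto z$ with inverse $z \mapsto (\tau(z), z)$ — valid because $\sigma(\tau(z)) = x$ if and only if $\tau(z) \in \sigma^{-1}(x)$. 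This produces $\sum_{z \in (\sigma \circ \tau)^{-1}(x)} f(z)\, q^{\,r_B(x) - r_\Omega(z)} = ((\sigma \circ \tau)_* f)(x)$, as required. All sums are finite because the posets are finite.

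I do not expect any genuine obstacle — this is the ``one easily verifies'' promised before the statement. The only subtlety worth a remark is that every exponent of $q$ occurring above is non-negative (so all expressions lie in $R = \Z[t^{\pm 1/2}]$, recalling that $q = t^{1/2} - t^{-1/2}$ is not a unit): $r_B(x) - r_\Gamma(y) \ge 0$ and $r_\Gamma(y) - r_\Omega(z) \ge 0$ because $\sigma$ and $\tau$ are rank-increasing, and hence $r_B(x) - r_\Omega(z) \ge 0$ for the composite — which is exactly the rank-increasing property of $\sigma \circ \tau$ established in the first step, so the computation is internally consistent.
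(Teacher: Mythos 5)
Your proof is correct and is exactly the straightforward verification the paper leaves to the reader (the lemma is prefaced with ``One easily verifies''): composition of order-preserving (resp.\ rank-increasing) maps is order-preserving (resp.\ rank-increasing), and the equality of push-forwards follows by expanding Definition~\ref{d:push}, cancelling the $q$-exponents, and reindexing the double sum over $\{(y,z) : \sigma(y)=x,\ \tau(z)=y\}$ as a single sum over $(\sigma\circ\tau)^{-1}(x)$. The parenthetical observation that the rank-increasing hypotheses keep all $q$-exponents non-negative, so every term lies in $R=\Z[t^{\pm 1/2}]$, is a nice sanity check consistent with why the definition of the push-forward requires rank-increasing maps in the first place.
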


We will give an alternative criterion for a function to be a strong formal subdivision in terms of the pushforward map. 

\begin{proposition} \label{p:alternate}
Let $\sigma: \Gamma \rightarrow B$ be an order-preserving, rank-increasing function  between locally Eulerian posets with rank functions $\rho_\Gamma$ and $\rho_B$ respectively.
If $\sigma$ is strongly surjective, then the following are equivalent:
\begin{enumerate}

\item $\sigma$ is a strong formal subdivision,

\item the pushforward of an acceptable function is acceptable i.e. 
 \[ \sigma_*: \mathcal{A}(\Gamma,\kappa_\Gamma)\rightarrow \mathcal{A}(B,\kappa_B), \]

\item\label{i:pthird} for all $y$ in $\Gamma$ and $x$ in $B$ such that $\sigma(y) \le x$,
  \[ h( (\Gamma_{\ge y})_x;t) = t^{ \rho_B(x) - \rho_\Gamma(y)}  h( \IInt(\Gamma_{\ge y})_x;t^{-1}),\]  where 
\[
t^{ \rho_B(x) - \rho_\Gamma(y)}  h( \IInt(\Gamma_{\ge y})_x;t^{-1}) := \sum_{ \substack{ y \le y' \\ \sigma(y') = x   } }  g([y,y'];t) (t - 1)^{\rho_B(x) - \rho_\Gamma(y')}.
\]

\end{enumerate}
\end{proposition}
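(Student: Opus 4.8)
The plan is to establish the cycle $(1)\Rightarrow(2)\Rightarrow(3)\Rightarrow(1)$; as a byproduct the argument for $(2)\Rightarrow(3)$ reverses, so $(2)$ and $(3)$ are in fact directly equivalent. Fix notation: for $y\in\Gamma$ and $x\in B$ with $\sigma(y)\le x$ put $n=\rho_B(x)-\rho_\Gamma(y)$, and abbreviate the right-hand side of the displayed identity in item (\ref{i:pthird}) by
\[
\Phi_{y,x}(t):=\sum_{\substack{y\le y'\\ \sigma(y')=x}} g([y,y'];t)\,(t-1)^{\rho_B(x)-\rho_\Gamma(y')}\ \in\ \Z[t],
\]
so that $(3)$ is exactly the assertion $h((\Gamma_{\ge y})_x;t)=\Phi_{y,x}(t)$ for all such $y,x$. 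Two preliminary observations will be used repeatedly. First, by strong surjectivity there is a maximal element $y'$ of $(\Gamma_{\ge y})_x$ with $\rho_\Gamma(y')=\rho_B(x)$, and rank-increasingness forbids longer chains, so the lower Eulerian poset $(\Gamma_{\ge y})_x$ has rank exactly $n$. Second, applying Definition~\ref{d:hpoly} to $(\Gamma_{\ge y})_x$ and grouping the defining sum according to the value $x'=\sigma(y')\in[\sigma(y),x]$ (together with $(t-1)^{\rho_B(x)-\rho_\Gamma(y')}=(t-1)^{\rho_B(x)-\rho_B(x')}(t-1)^{\rho_B(x')-\rho_\Gamma(y')}$) yields the ``convolution identity''
\[
t^{n}\,h\big((\Gamma_{\ge y})_x;t^{-1}\big)=\sum_{\sigma(y)\le x'\le x}(t-1)^{\rho_B(x)-\rho_B(x')}\,\Phi_{y,x'}(t).
\]

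For $(1)\Rightarrow(2)$ I would compute directly. Let $f\in\mathcal{A}(\Gamma,\kappa_\Gamma)$, so $\overline{f(y)}=\sum_{y''\le y}f(y'')q^{\rho_\Gamma(y'',y)}$. Expanding $\overline{(\sigma_* f)}(x)$ using $\overline{q}=-q$, inserting this formula, interchanging the two summations, and using $q^{\rho_\Gamma(y'',y)}q^{\rho_B(x)-\rho_\Gamma(y)}=q^{\rho_B(x)-\rho_\Gamma(y'')}$, one obtains
\[
\overline{(\sigma_* f)}(x)=\sum_{y''}f(y'')\,q^{\rho_B(x)-\rho_\Gamma(y'')}\sum_{\substack{y''\le y\\ \sigma(y)=x}}(-1)^{\rho_B(x)-\rho_\Gamma(y)}.
\]
By the defining equation (\ref{e:sfs}) of a strong formal subdivision the inner sum equals $1$ when $\sigma(y'')\le x$ and is empty (hence $0$) otherwise, so $\overline{(\sigma_* f)}(x)=\sum_{\sigma(y'')\le x}f(y'')q^{\rho_B(x)-\rho_\Gamma(y'')}$; a one-line rearrangement identifies the right-hand side with $\big((\sigma_* f)*\kappa_B\big)(x)$, giving $\sigma_* f\in\mathcal{A}(B,\kappa_B)$. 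This is the only place where (\ref{e:sfs}) is used, so it is the conceptual core.

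For $(2)\Rightarrow(3)$: by Lemma~\ref{l:symacc}, $\mathcal{A}(\Gamma,\kappa_\Gamma)$ is generated over $S(R)$ by the $e_y*\gamma_\Gamma$, so $(2)$ is equivalent to $\sigma_*(e_y*\gamma_\Gamma)\in\mathcal{A}(B,\kappa_B)$ for every $y$. Using Lemma~\ref{l:gpoly} ($\gamma_\Gamma(y,y')=t^{-\rho_\Gamma(y,y')/2}g([y,y'];t)$), the factorization $q=t^{-1/2}(t-1)$, and the definition of $\sigma_*$, the half-integer powers telescope and one finds $\sigma_*(e_y*\gamma_\Gamma)=\sum_x t^{-n/2}\Phi_{y,x}(t)\,e_x$, i.e. $\eta(x,y)=t^{-n/2}\Phi_{y,x}(t)$. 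Now expand the acceptability condition $\overline{\sigma_*(e_y*\gamma_\Gamma)}=\sigma_*(e_y*\gamma_\Gamma)*\kappa_B$ at an element $w$, with $n':=\rho_B(w)-\rho_\Gamma(y)$: the left side is $t^{n'/2}\,\overline{\Phi_{y,w}(t)}$, while the right side, after substituting $q^k=t^{-k/2}(t-1)^k$ and applying the convolution identity, becomes $t^{n'/2}\,h((\Gamma_{\ge y})_w;t^{-1})$. Comparing and substituting $t\mapsto t^{-1}$ gives $\Phi_{y,w}(t)=h((\Gamma_{\ge y})_w;t)$ for all $w\ge\sigma(y)$, which is $(3)$; the same chain of equalities read backwards proves $(3)\Rightarrow(2)$.

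Finally $(3)\Rightarrow(1)$ is immediate: setting $t=0$ in $h((\Gamma_{\ge y})_x;t)=\Phi_{y,x}(t)$ and using $h((\Gamma_{\ge y})_x;0)=1$ (Example~\ref{e:hconstant}) and $g(\,\cdot\,;0)=1$ (Example~\ref{e:gconstant}) gives $\sum_{y\le y',\,\sigma(y')=x}(-1)^{\rho_B(x)-\rho_\Gamma(y')}=1$, which is (\ref{e:sfs}). The main obstacle is the middle implication $(2)\Rightarrow(3)$: one must correctly recognize the convolution identity as a regrouping of Definition~\ref{d:hpoly} (which in turn depends on the rank computation for $(\Gamma_{\ge y})_x$), and then keep careful track of the cancellations among the three sources of half-integer powers of $t$ — those in $\gamma_\Gamma$, in the factors $q^{\rho_B(\cdot,\cdot)}$ produced by $\sigma_*$, and in $\kappa_B$. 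Everything else is a short direct manipulation or an invocation of structure already in place (Lemmas~\ref{l:symacc} and~\ref{l:gpoly}).
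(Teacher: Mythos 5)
Your proof is correct, and your computations are the same ones that drive the paper's proof (the expressions for $\sigma_*(e_y*\gamma_\Gamma)(x)$, its bar, and its right-convolution with $\kappa_B$, unpacked via Lemma~\ref{l:gpoly} and Definition~\ref{d:hpoly}), but you organize the equivalences along a different route. The paper proves $(1)\Leftrightarrow(2)$ directly and then $(2)\Leftrightarrow(3)$; your proof closes a cycle $(1)\Rightarrow(2)\Rightarrow(3)\Rightarrow(1)$. Three points of comparison are worth noting. First, your $(1)\Rightarrow(2)$ is carried out for a general acceptable $f$ rather than on the basis elements $e_y*\gamma_\Gamma$; this is a cleaner way to phrase the one-directional implication, and the appearance of the alternating inner sum of \eqref{e:sfs} is the same. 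Second, your $(2)\Rightarrow(3)$ is the same unwinding via the identity $\eta(x,y)=t^{-n/2}\Phi_{y,x}(t)$ and the convolution identity (your regrouping of Definition~\ref{d:hpoly} by $x'=\sigma(y')$), which matches the paper's derivation of the identities $\sigma_*(e_y*\gamma_\Gamma)(x)=t^{n/2}h(\IInt(\Gamma_{\ge y})_x;t^{-1})$ and $(\sigma_*(e_y*\gamma_\Gamma)*\kappa_B)(x)=t^{n/2}h((\Gamma_{\ge y})_x;t^{-1})$. Third, and most notably, your $(3)\Rightarrow(1)$ --- specializing $t=0$ and using $h(\cdot;0)=g(\cdot;0)=1$ --- is not in the paper. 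The paper instead asserts (``it follows easily'') that equality of the two displayed sums for all $y,x$ forces each alternating inner sum to equal $1$; this is the ``only if'' direction of $(1)\Leftrightarrow(2)$, and it actually requires a small top-down argument (take $z_0$ maximal with nonzero defect $c(z_0,x)$ and set $y=z_0$ to isolate the term $q^{\rho_B(x)-\rho_\Gamma(z_0)}c(z_0,x)$). Your cycle sidesteps that extraction entirely: once $(3)$ is in hand, the specialization at $t=0$ recovers \eqref{e:sfs} in one line. That is a genuine simplification of the logical structure, at the cost of $(2)\Leftrightarrow(3)$ being only recorded as a remark rather than being needed for the cycle; but since both directions of that step are the same chain of substitutions read forward and backward, nothing is lost.
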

\begin{proof}
Consider any $y$ in $\Gamma$ and $x$ in $B$. 
We compute from the definitions:
\begin{equation}\label{e:definition}
\sigma_* (e_y * \gamma_{\Gamma})(x) = \sum_{ \substack{ y' \in \sigma^{-1}(x) \\ y \le y'} }   \gamma_{\Gamma}(y,y') q^{ \rho_B(x) - \rho_\Gamma(y')} \text{ if  }  \sigma(y) \le x,  
\end{equation}
and equals $0$ otherwise. On the one hand, if $\sigma(y)\le x$, since $\bar{q} = - q$ and $\gamma_\Gamma$ is totally acceptable: 
\begin{eqnarray*}
\bar{\sigma_* (e_y * \gamma_{\Gamma})}(x) &=&\sum_{ \substack{ y' \in \sigma^{-1}(x) \\ y \le y'} }   \bar{\gamma_{\Gamma}}(y,y') (-q)^{ \rho_B(x) - \rho_\Gamma(y')} \\
&=& \sum_{ \substack{ y' \in \sigma^{-1}(x) \\ y \le y'} }  \sum_{y \le z \le y'}  \gamma_{\Gamma}(y,z) q^{\rho_\Gamma(y') - \rho_\Gamma(z) } (-q)^{ \rho_B(x) - \rho_\Gamma(y')} \\
&=&\sum_{\substack{y \le z \\ \sigma(z) \le x}} \gamma_{\Gamma}(y,z) q^{\rho_B(x)-\rho_\Gamma(z)}\left(\sum_{\substack{z\le y'\\\sigma(y')=x}}(-1)^{\rho_B(x)-\rho_{\Gamma}(y')}\right),
\end{eqnarray*}
and equals $0$ otherwise.
On the other hand, if $\sigma(y)\le x$,
\begin{eqnarray*}
(\sigma_* (e_y * \gamma_{\Gamma})*\kappa_B)(x)& =& \sum_{x'\le x} 
\left(\sum_{ \substack{ z\in \sigma^{-1}(x') \\ y \le z} }   \gamma_{\Gamma}(y,z) q^{ \rho_B(x') - \rho_\Gamma(z)}\right) q^{\rho_B(x)-\rho_B(x')}\\
&=&  \sum_{ \substack{ y \le z \\\sigma(z)\le x} }   \gamma_{\Gamma}(y,z) q^{ \rho_B(x) - \rho_\Gamma(z)},
\end{eqnarray*}
and equals $0$ otherwise. It follows easily that \eqref{e:sfs} in Definition~\ref{d:sfs} holds if and only if $\bar{\sigma_* (e_y * \gamma_{\Gamma})} = \sigma_* (e_y * \gamma_{\Gamma})*\kappa_B$ for all $y \in \Gamma$. By Lemma~\ref{l:symacc}, the latter condition is equivalent to the condition that $\sigma_*$ takes acceptable functions to acceptable functions.

Finally,  for $\sigma(y) \le x$, unpacking the expressions for $\sigma_* (e_y * \gamma_{\Gamma})(x)$ and $(\sigma_* (e_y * \gamma_{\Gamma})*\kappa_B)(x)$ above using Lemma~\ref{l:gpoly} and Definition~\ref{d:hpoly}, yields
\[
\sigma_* (e_y * \gamma_{\Gamma})(x) = t^{ (\rho_B(x) - \rho_\Gamma(y))/2} h( \IInt(\Gamma_{\ge y})_x;t^{-1}),
\]
\[
(\sigma_* (e_y * \gamma_{\Gamma})*\kappa_B)(x) = t^{ (\rho_B(x) - \rho_\Gamma(y))/2}h( (\Gamma_{\ge y})_x;t^{-1}),
\]
and we deduce the equivalence with the third statement. 
\end{proof}

\begin{remark}\label{r:compose}
Let  $\tau: \Omega  \rightarrow  \Gamma$ and    $\sigma: \Gamma \rightarrow B$ be strong formal subdivisions.  Then 
$\sigma \circ \tau$ is strongly surjective and hence is a strong formal subdivision by Lemma~\ref{l:compose} and the acceptability criterion of Proposition~\ref{p:alternate}.
If $\Omega, \Gamma,B$ are lower Eulerian, then $\rk(\sigma \circ \tau) = \rk(\sigma) + \rk(\tau)$. 
\end{remark}

\begin{remark}\label{r:hpush}
If  $\sigma: \Gamma \rightarrow B$ is a strong formal subdivision, then for  all $y$ in $\Gamma$ and $x$ in $B$, a corollary of the proof of Proposition~\ref{p:alternate} is 
\[
\eta(x,y) = \sigma_* (e_y * \gamma_{\Gamma})(x) =   \left\{\begin{array}{cl} t^{ -(\rho_B(x) - \rho_\Gamma(y))/2} h( (\Gamma_{\ge y})_x;t)  & \text{if }  \sigma(y) \le x,  \\ 0 & \text{otherwise}. \end{array}\right.  
\]
\end{remark}

\begin{remark}\label{r:leading}
If  $\sigma: \Gamma \rightarrow B$ is a strong formal subdivision, then for  all $y$ in $\Gamma$ and $x$ in $B$ such that $\sigma(y) \le x$, condition~\eqref{i:pthird} in Proposition~\ref{p:alternate}, together with the 
fact that,  by definition, the degree of $g([y,y'];t)$ is strictly bounded by $(\rho_\Gamma(y') -  \rho_\Gamma(y))/2$ for $y < y' \in \Gamma$,  implies that 
\[
h( (\Gamma_{\ge y})_x;t) = \left\{\begin{array}{cl}  t^{\rho_B(x) - \rho_\Gamma(y)} + (\beta_{x,y} - ( \rho_B(x) - \rho_\Gamma(y) )  t^{\rho_B(x) - \rho_\Gamma(y) - 1} +  \textrm{l.o.t.} & \text{if } \sigma(y) = x,  \\ 
\beta_{x,y}   t^{\rho_B(x) - \rho_\Gamma(y) - 1} +  \textrm{l.o.t.} & \text{if } \sigma(y) < x.  \end{array}\right. ,
\]
where 
\[
\beta_{x,y}  = \# \{ y \le y' \mid \sigma(y') = x, \rho_\Gamma(y') = \rho_\Gamma(y) + 1 \}.
\]
\end{remark}

\begin{remark}
In the case that $\Gamma$ and  $B$ are lower Eulerian, 
 one may verify that the notion of a strong formal subdivision of rank $0$ is strictly stronger than Stanley's notion of a formal subdivision \cite[Definition~7.4]{StaSubdivisions}. 
\end{remark}

\section{Local invariants of strong formal subdivisions}\label{s:local}

In this section we introduce a 
symmetric polynomial associated to a strong formal subdivision called the \emph{local $h$-polynomial}.

We continue with the notation of the previous section, and let
$\sigma:\Gamma\rightarrow B$ be a strong formal subdivision between locally Eulerian posets with rank functions $\rho_\Gamma$ and $\rho_B$ respectively. By Proposition~\ref{p:alternate}, push-forward gives a left $S(R)$-module homomorphism:
\[
\sigma_*: \mathcal{A}(\Gamma,\kappa_\Gamma)\rightarrow \mathcal{A}(B,\kappa_B)
\]
We will write an expression for this linear map in terms of the bases $\{e_y*\gamma_\Gamma\}$ and  $\{e_x*\gamma_B\}$.
Explicitly, we will describe the elements $\lambda(x,y)\in S(R)$ defined  by
\begin{equation}\label{e:lambda}
\sigma_*(e_y*\gamma_\Gamma)=\sum_x \lambda(x,y)e_x*\gamma_B =  \sum_x \eta(x,y)e_x.
\end{equation}
First we need the following definition. 

\begin{definition} \label{d:localpoly} 
Let $\sigma: \Gamma \rightarrow B$ be a strong formal subdivision  between a lower  Eulerian poset $\Gamma$ and an Eulerian poset $B$. 
Then the  \define{local $h$-polynomial} $l_B(\Gamma;t) \in \Z[t]$ is defined by
\[
l_B(\Gamma;t) = \sum_{ x \in B } h(\Gamma_{x};t) (-1)^{\rho_B(x,\hat{1}_B)} g([x,\hat{1}_B]^*;t).
\]
We also introduce the following convenient notation. 
Let $\sigma: \Gamma \rightarrow B$ be a strong formal subdivision  between locally Eulerian posets with rank functions $\rho_\Gamma$ and $\rho_B$ respectively, and 
fix $y \in \Gamma$ and  $x \in B$. 
If $\sigma(y) \le x$, then
recall from Remark~\ref{r:localrestrict} that we may consider the restricted strong formal subdivision 
$\sigma: (\Gamma_{\ge y})_x \rightarrow [ \sigma(y),x]$. In this case we set 
\[
l_B(\Gamma,x,y;t) := l_{[ \sigma(y),x]}((\Gamma_{\ge y})_x;t) = \sum_{\sigma(y) \le x' \le x} h((\Gamma_{\ge y})_{x'};t) (-1)^{\rho_B(x',x)} g([x',x]^*;t). 
\]
We set $l_B(\Gamma,x,y;t) = 0$ if $\sigma(y) \nleq x$. We will abuse notation throughout by ignoring the dependence of the local $h$-polynomial on $\sigma$. 
\end{definition}

\begin{remark}\label{r:reverseh}
Let $\sigma: \Gamma \rightarrow B$ be a strong formal subdivision  between a lower  Eulerian poset $\Gamma$ and an Eulerian poset $B$. 
It follows from Theorem~\ref{t:inverse} that we recover the $h$-polynomial of $\Gamma$ via:
\[
h(\Gamma;t)= \sum_{ x \in B }  l_{[\hat{0}_B,x]}(\Gamma_x;t) g([x,\hat{1}_B];t).
\]
\end{remark}

\begin{remark}
Let $\sigma: \Gamma \rightarrow B$ be a strong formal subdivision  between a lower  Eulerian poset $\Gamma$ and an Eulerian poset $B$, and assume that  $\rk(\sigma) = 0$. 
In this case, 
the definition of the local $h$-polynomial $l_B(\Gamma;t)$ agrees with Stanley's definition \cite[Corollary~7.7]{StaSubdivisions}.
Moreover, for all $y \in \Gamma$, if $\sigma$ is induced by a homology subdivision of a simplex or a polyhedral subdivision of a polytope, then in this case the polynomial $l_B(\Gamma,\hat{1}_B,y;t)$ agrees with the \emph{relative local $h$-polynomial} introduced by
Athanasiadis in \cite{AthFlag} and Nill and Schepers in \cite{NSCombinatorial} respectively. 
\end{remark}

We may now describe the elements $\lambda(x,y)\in S(R)$. 

\begin{lemma}\label{l:localpoly}
 Let
$\sigma:\Gamma\rightarrow B$ be a strong formal subdivision between locally Eulerian posets with rank functions $\rho_\Gamma$ and $\rho_B$ respectively. Then
\[ \lambda(x,y) = t^{-(\rho_B(x) - \rho_\Gamma(y))/2}l_B(\Gamma,x,y;t).\] 
\end{lemma}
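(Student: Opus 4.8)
The plan is to unwind the definition of $\lambda(x,y)$ via the two bases and relate it to the pushforward formula computed in Proposition~\ref{p:alternate} and Remark~\ref{r:hpush}. Recall that $\sigma_*(e_y * \gamma_\Gamma) = \sum_x \lambda(x,y)\, e_x * \gamma_B$, while from Remark~\ref{r:hpush} we have $\eta(x,y) = \sigma_*(e_y*\gamma_\Gamma)(x) = t^{-(\rho_B(x)-\rho_\Gamma(y))/2} h((\Gamma_{\ge y})_x;t)$ when $\sigma(y) \le x$ and $0$ otherwise. On the other hand, evaluating $e_x * \gamma_B$ at a point $x'$ gives $\gamma_B(x,x') = t^{-\rho_B(x,x')/2} g([x,x'];t)$ for $x \le x'$ (Lemma~\ref{l:gpoly}), and $0$ otherwise. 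So the relation \eqref{e:lambda} becomes, for each $x' \in B$,
\[
t^{-(\rho_B(x')-\rho_\Gamma(y))/2} h((\Gamma_{\ge y})_{x'};t) = \sum_{x \le x'} \lambda(x,y)\, t^{-\rho_B(x,x')/2} g([x,x'];t),
\]
understanding both sides to vanish unless $\sigma(y)\le x'$.

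The first step is therefore to solve this triangular system for $\lambda(x,y)$. Multiplying through by $t^{\rho_B(x') /2}$ and absorbing the $t^{\rho_\Gamma(y)/2}$ factor, this is an instance of the standard inversion between $h$-polynomials and $g$-polynomials: the inverse of $\gamma_B$ in the incidence algebra is given explicitly by Theorem~\ref{t:inverse} as $(\gamma_B^{-1})(x,x') = (-1)^{\rho_B(x,x')} t^{-\rho_B(x,x')/2} g([x,x']^*;t)$. Applying $\gamma_B^{-1}$ on the right to $\sigma_*(e_y*\gamma_\Gamma)$, I get $\sum_x \lambda(x,y) e_x = \sigma_*(e_y*\gamma_\Gamma) * \gamma_B^{-1}$, i.e.
\[
\lambda(x,y) = \sum_{x \le x'} \eta(x',y)\, (\gamma_B^{-1})(x,x') \cdot \big(\text{sign/normalization from the } e\text{-basis}\big);
\]
more precisely, evaluating the $e_x$-coefficient, $\lambda(x,y) = \sum_{x \le x'} \eta(x',y)(\gamma_B^{-1})(x,x')$ where we must be careful that $\eta(x',y)$ as defined in Definition~\ref{d:push} is already the $e_{x'}$-coefficient of $\sigma_*(e_y*\gamma_\Gamma)$, so no extra normalization is needed.

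The second step is to substitute the explicit values: for $\sigma(y)\le x$,
\[
\lambda(x,y) = \sum_{\sigma(y)\le x' ,\ x \le x'} t^{-(\rho_B(x')-\rho_\Gamma(y))/2} h((\Gamma_{\ge y})_{x'};t) \cdot (-1)^{\rho_B(x,x')} t^{-\rho_B(x,x')/2} g([x,x']^*;t).
\]
Pulling out $t^{-(\rho_B(x)-\rho_\Gamma(y))/2}$ (using $\rho_B(x') = \rho_B(x) + \rho_B(x,x')$, so the exponents combine as $-(\rho_B(x)-\rho_\Gamma(y))/2 - \rho_B(x,x')$... one checks the powers of $t$ match after noting $h$ and $g$ carry the compensating $t^{\rho_B(x,x')}$ through their defining symmetry), the remaining sum over $x \le x' \le$ (implicitly bounded by the support of $\eta$) is exactly $\sum_{\sigma(y)\le x'} h((\Gamma_{\ge y})_{x'};t)(-1)^{\rho_B(x,x')} g([x,x']^*;t)$, which is $l_B(\Gamma,x,y;t)$ as in Definition~\ref{d:localpoly} once one observes the sum is supported on $x' \le x$ — wait, it is supported on $x' \ge x$ with $\sigma(y) \le x'$, and Definition~\ref{d:localpoly} sums over $\sigma(y) \le x' \le x$; so actually the correct reading is that $\eta(x',y)$ forces $\sigma(y') \le x'$ and the $g([x,x']^*;t)$ in $\gamma_B^{-1}(x,x')$ forces $x \le x'$ — reconciling the index ranges is the delicate bookkeeping point.

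The main obstacle will be exactly this index-range and sign reconciliation: making sure that the inversion via $\gamma_B^{-1}$ (Theorem~\ref{t:inverse}) produces a sum over the interval matching the one in Definition~\ref{d:localpoly} of $l_B(\Gamma,x,y;t)$, with the $(-1)^{\rho_B(x,x')}$ signs and the $g([x,x']^*;t)$ terms landing correctly, and tracking the half-integer powers of $t$ so that the prefactor is precisely $t^{-(\rho_B(x)-\rho_\Gamma(y))/2}$. Everything else is formal manipulation in the incidence algebra. I would organize the proof as: (i) write \eqref{e:lambda} as an identity in $R^B$ and apply $*\gamma_B^{-1}$; (ii) extract the $e_x$-coefficient using Remark~\ref{r:hpush} and Theorem~\ref{t:inverse}; (iii) factor out the power of $t$ and identify the residual sum with $l_B(\Gamma,x,y;t)$ via Definition~\ref{d:localpoly}.
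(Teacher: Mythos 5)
Your overall plan is exactly the paper's: right-multiply $\sigma_*(e_y*\gamma_\Gamma)=\sum_x\lambda(x,y)\,e_x*\gamma_B$ by $\gamma_B^{-1}$, extract the $e_x$-coefficient, then substitute Remark~\ref{r:hpush} and Theorem~\ref{t:inverse}. However, the coefficient extraction in your first step is carried out in the wrong direction, and this is a genuine error, not a ``delicate bookkeeping point'' that can be reconciled downstream.

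Concretely, the right module action $(f*g)(x')=\sum_{x\le x'}f(x)\,g(x,x')$ says that evaluating $f*\gamma_B^{-1}$ at $x$ accumulates contributions from $x'$ \emph{below} $x$, with $\gamma_B^{-1}$ evaluated on the interval $[x',x]$ terminating at $x$. Thus
\[
\lambda(x,y)=\bigl(\sigma_*(e_y*\gamma_\Gamma)*\gamma_B^{-1}\bigr)(x)=\sum_{x'\le x}\eta(x',y)\,\gamma_B^{-1}(x',x),
\]
whereas you wrote $\sum_{x\le x'}\eta(x',y)\,\gamma_B^{-1}(x,x')$, transposing both the order constraint and the arguments of $\gamma_B^{-1}$. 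Everything downstream inherits this: you correctly observe that your residual sum runs over $x'\ge x$ while Definition~\ref{d:localpoly} sums over $\sigma(y)\le x'\le x$, and that your $t$-exponents do not quite cancel (you invoke a hand-waved ``compensating $t^{\rho_B(x,x')}$'' from symmetry, which does not in fact appear). These are not independent issues to be reconciled; both disappear once the formula is corrected. With the correct extraction the domain of summation is automatically $\{x':\sigma(y)\le x'\le x\}$ (since $\eta(x',y)=0$ unless $\sigma(y)\le x'$), the $t$-exponents combine directly via $\rho_B(x')-\rho_\Gamma(y)+\rho_B(x',x)=\rho_B(x)-\rho_\Gamma(y)$ to give the prefactor $t^{-(\rho_B(x)-\rho_\Gamma(y))/2}$, and the remaining sum $\sum_{\sigma(y)\le x'\le x}h((\Gamma_{\ge y})_{x'};t)(-1)^{\rho_B(x',x)}g([x',x]^*;t)$ is \emph{verbatim} the expression in Definition~\ref{d:localpoly} for $l_B(\Gamma,x,y;t)$. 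No further argument is needed.
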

\begin{proof}
Note that our definition of $\lambda(x,y) \in S(R)$ given in \eqref{e:lambda} 
is clearly equivalent to 
\begin{equation*}
\lambda(x,y) = \sum_{x'} \eta(x',y)(e_{x'} *  \gamma_B^{-1})(x) = \sum_{x' \le x} \eta(x',y) \gamma_B^{-1}(x',x). 
\end{equation*}
The result follows by substituting in our expressions for $\eta(x',y)$ and $\gamma_B^{-1}(x',x)$ in Remark~\ref{r:hpush} and Theorem~\ref{t:inverse} respectively. 
\end{proof}

We immediately deduce the following symmetry property of local $h$-polynomials (cf. \cite[Remark~3.7]{AthFlag}, \cite[Corollary~7.7]{StaSubdivisions}).

\begin{corollary}\label{c:symmetry}
Let $\sigma: \Gamma \rightarrow B$ be a strong formal subdivision  between a lower  Eulerian poset $\Gamma$ of rank $r$ and an Eulerian poset $B$. 
Then 
\[l_B(\Gamma;t)=t^{r}l_B(\Gamma;t^{-1}).\]
\end{corollary}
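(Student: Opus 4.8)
The plan is to derive the symmetry of $l_B(\Gamma;t)$ directly from Lemma~\ref{l:localpoly} together with the fact that $\lambda(x,y)$ lies in $S(R)$, i.e. is invariant under the involution $t \mapsto t^{-1}$. Concretely, take $y = \hat 0_\Gamma$ and $x = \hat 1_B$ in Lemma~\ref{l:localpoly}: since $\sigma(\hat 0_\Gamma) = \hat 0_B \le \hat 1_B$ and $\rho_B(\hat 1_B) - \rho_\Gamma(\hat 0_\Gamma) = \rk(B) + \rk(\sigma) = \rk(\Gamma) = r$ (here using $\rk(\sigma)=0$ is \emph{not} assumed, but note $(\Gamma_{\ge \hat0_\Gamma})_{\hat1_B} = \Gamma$ and $[\sigma(\hat0_\Gamma),\hat1_B] = B$, so $l_B(\Gamma,\hat1_B,\hat0_\Gamma;t) = l_B(\Gamma;t)$), we get
\[
\lambda(\hat 1_B, \hat 0_\Gamma) = t^{-r/2}\, l_B(\Gamma;t).
\]

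Next I would invoke that $\lambda(\hat1_B,\hat0_\Gamma) \in S(R)$: by \eqref{e:lambda}, $\sigma_*(e_{\hat0_\Gamma} * \gamma_\Gamma)$ is the pushforward of an acceptable function (it is $\hat0_\Gamma$'s basis element pushed into $\mathcal{A}(\Gamma,\kappa_\Gamma)$ under the isomorphism of Lemma~\ref{l:symacc}, and Proposition~\ref{p:alternate} ensures its image lands in $\mathcal{A}(B,\kappa_B)$), hence its expansion in the basis $\{e_x * \gamma_B\}$ has coefficients in $S(R)$ by Lemma~\ref{l:symacc}. Therefore $\lambda(\hat1_B,\hat0_\Gamma)$ is fixed by $r(t) \mapsto r(t^{-1})$, which gives
\[
t^{-r/2}\, l_B(\Gamma;t) = t^{r/2}\, l_B(\Gamma;t^{-1}),
\]
and multiplying through by $t^{r/2}$ yields exactly $l_B(\Gamma;t) = t^r l_B(\Gamma;t^{-1})$, as desired. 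One should also note at the outset that $l_B(\Gamma;t) \in \Z[t]$ by Definition~\ref{d:localpoly}, so the identity is between genuine polynomials and the half-integer powers of $t$ cancel consistently.

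I do not expect any serious obstacle here; the statement is essentially a repackaging of Lemma~\ref{l:localpoly} and the $S(R)$-linearity of $\sigma_*$ on acceptable functions. The only point requiring a little care is bookkeeping the ranks: verifying that the exponent $-(\rho_B(\hat1_B) - \rho_\Gamma(\hat0_\Gamma))/2$ equals $-r/2$ with $r = \rk(\Gamma)$, and confirming that the restricted strong formal subdivision $\sigma\colon (\Gamma_{\ge\hat0_\Gamma})_{\hat1_B} \to [\hat0_B,\hat1_B]$ is literally $\sigma\colon\Gamma\to B$ so that $l_B(\Gamma,\hat1_B,\hat0_\Gamma;t)$ is unambiguously $l_B(\Gamma;t)$. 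Both are immediate from the definitions, so the proof is short.
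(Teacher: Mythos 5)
Your proof is correct and is essentially the paper's own argument: take $y=\hat0_\Gamma$, $x=\hat1_B$ in Lemma~\ref{l:localpoly} to identify $\lambda(\hat1_B,\hat0_\Gamma)=t^{-r/2}l_B(\Gamma;t)$, then use $\lambda(\hat1_B,\hat0_\Gamma)\in S(R)$. The rank bookkeeping $\rho_B(\hat1_B)-\rho_\Gamma(\hat0_\Gamma)=\rk(\sigma)+\rk(B)=\rk(\Gamma)=r$ and the justification via Proposition~\ref{p:alternate} and Lemma~\ref{l:symacc} for why $\lambda$ is symmetric are both accurate, merely spelling out what the paper leaves as ``by definition.''
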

\begin{proof}
By Lemma~\ref{l:localpoly},  $\lambda(\hat{1}_B,\hat{0}_\Gamma) = t^{-(\rho_B(\hat{1}_B) - \rho_\Gamma(\hat{0}_\Gamma))/2}l_B(\Gamma;t)= t^{-r/2}l_B(\Gamma;t)$. The result follows since
$\lambda(\hat{1}_B,\hat{0}_\Gamma) \in S(R)$ is symmetric by definition. 
\end{proof}

%

We also deduce the following property of the pushforward map. 

\begin{corollary}
 Let
$\sigma:\Gamma\rightarrow B$ be a strong formal subdivision between locally Eulerian posets. Then  $\sigma_*: \mathcal{A}(\Gamma,\kappa_\Gamma)\rightarrow \mathcal{A}(B,\kappa_B)$
is surjective.
\end{corollary}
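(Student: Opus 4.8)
The plan is to deduce surjectivity from Lemma~\ref{l:symacc} together with the explicit description of the pushforward on the preferred bases. Recall that by Lemma~\ref{l:symacc}, $\{e_y * \gamma_\Gamma \mid y \in \Gamma\}$ is a free $S(R)$-basis of $\mathcal{A}(\Gamma,\kappa_\Gamma)$, and $\{e_x * \gamma_B \mid x \in B\}$ is a free $S(R)$-basis of $\mathcal{A}(B,\kappa_B)$. By \eqref{e:lambda} and Lemma~\ref{l:localpoly}, the pushforward is given in these bases by the ``matrix'' $(\lambda(x,y))$ with $\lambda(x,y) = t^{-(\rho_B(x) - \rho_\Gamma(y))/2} l_B(\Gamma,x,y;t)$. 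So it suffices to show this matrix is surjective as a map of free $S(R)$-modules, i.e.\ that every $e_x * \gamma_B$ lies in the image.

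First I would fix $x \in B$ and look for a $y \in \Gamma$ with $\sigma(y) = x$ such that $\lambda(x,y)$ is a unit in $S(R)$, and moreover such that $\lambda(x',y) = 0$ whenever $x' \ngeq x$; then a triangularity argument (induction on $\rho_B$, or on the poset $B$ downward from $\hat 1_B$) produces preimages of all the $e_x * \gamma_B$. By the definition of $l_B(\Gamma,x,y;t)$, we have $l_B(\Gamma,x,y;t) = 0$ unless $\sigma(y) \le x$, so $\lambda(x',y) \neq 0$ forces $x \le x'$; this is exactly the triangularity we want (with respect to the partial order on $B$, reading from the top). It remains to find, for each $x$, a suitable $y$ with $\sigma(y) = x$ and $\lambda(x,y)$ a unit. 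Since $\sigma$ is strongly surjective, there exists $y \in \Gamma$ with $\sigma(y) = x$ and $\rho_\Gamma(y) = \rho_B(x)$; for such a $y$, the restricted subdivision $\sigma \colon (\Gamma_{\ge y})_x \to [\sigma(y),x] = [x,x]$ has rank $0$ and both posets are trivial (just $\{x\}$ and $\{y\}$), so $l_B(\Gamma,x,y;t) = 1$ and hence $\lambda(x,y) = 1$.

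With that in hand, the argument is: induct on the number of elements of $B$ above a given element, or equivalently work downward. For $x = \hat 1_B$ (or more precisely, process elements $x$ of $B$ in any linear extension of the \emph{opposite} order), pick $y_x \in \Gamma$ with $\sigma(y_x) = x$, $\rho_\Gamma(y_x) = \rho_B(x)$. Then $\sigma_*(e_{y_x} * \gamma_\Gamma) = e_x * \gamma_B + \sum_{x' > x} \lambda(x',y_x)\, e_{x'} * \gamma_B$, and by the inductive hypothesis every $e_{x'} * \gamma_B$ with $x' > x$ is already in the image, so $e_x * \gamma_B$ is too. Hence every basis element of $\mathcal{A}(B,\kappa_B)$ is in the image of $\sigma_*$, and since $\sigma_*$ is an $S(R)$-module map, it is surjective.

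The main obstacle is the bookkeeping needed to make the triangularity precise: one must check carefully that $\lambda(x',y) = 0$ when $x \not\le x'$ (immediate from the support of $l_B(\Gamma,x,y;t)$) and that the ``diagonal'' entry $\lambda(x,y_x)$ is genuinely a unit — which reduces, via Remark~\ref{r:localrestrict}, to the observation that a rank-$0$ strong formal subdivision over a one-element Eulerian poset has local $h$-polynomial equal to $1$. Everything else is formal. I would also remark that this corollary can alternatively be read off directly from Lemma~\ref{l:symacc} applied on both sides once one knows the pushforward is compatible with the bases, but the triangularity proof above is the most transparent.
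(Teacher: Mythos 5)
Your proof is correct and follows essentially the same route as the paper's: both use Lemma~\ref{l:localpoly} to obtain the support condition $\lambda(x,y)=0$ for $\sigma(y)\nleq x$, use strong surjectivity to produce a rank-$\rho_B(x)$ element $y$ over each $x$ with $\lambda(x,y)=1$, and conclude by triangularity. The paper states the last step more tersely ("The result follows"), while you spell out the downward induction and justify $\lambda(x,y_x)=1$ via the trivial restricted subdivision rather than via $y_x$ being maximal in $\sigma^{-1}(x)$; these are the same observation in slightly different clothing.
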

\begin{proof}
By Lemma~\ref{l:localpoly}, $\lambda(x,y) = 0$ for $\sigma(y) \nleq x$. For any $x \in B$, since $\sigma$ is strongly surjective, there exists an element $y \in \Gamma$ such that $\rho_\Gamma(y) = \rho_B(x)$ 
and $\sigma(y) = x$. Then $y$ is maximal in $\sigma^{-1}(x)$, and it follows that $\lambda(x,y) = l_B(\Gamma,x,y;t) =1$. The result follows.
\end{proof}

We have seen in Remark~\ref{r:compose} that strong formal subdivisions are closed under composition. 
The following is an immediate consequence of Lemma~\ref{l:localpoly} and the definition of $\lambda(x,y)$ as matrix elements of a linear transformation (cf. \cite[Proposition~3.6]{AthFlag}).
The second statement follows from the first by substituting $x = \hat{1}_B$ and $z = \hat{0}_\Omega$. 

\begin{corollary} \label{c:hrefine}
Let  $\tau: \Omega  \rightarrow  \Gamma$ and let $\sigma: \Gamma \rightarrow B$ be strong formal subdivisions  between locally Eulerian posets. Then for $z\in\Omega$, $x\in\Gamma$,
\[
l_B(\Omega,x,z;t) =  \sum_{ y \in \Gamma } l_B(\Gamma,x,y;t) l_\Gamma(\Omega,y,z;t). 
\]
In particular, if $\Omega$,$\Gamma$ are lower Eulerian and $B$ is Eulerian, then 
\[
l_B(\Omega;t) =  \sum_{ y \in \Gamma } l_{[\sigma(y),\hat{1}_B]}(\Gamma_{\ge y};t) l_{[\hat{0}_\Gamma,y]}(\Omega_y;t). 
\]
\end{corollary}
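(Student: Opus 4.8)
The plan is to recognize $\lambda(x,z)$ for the composite $\sigma \circ \tau : \Omega \to B$ as a matrix entry of the composite linear map $(\sigma \circ \tau)_* = \sigma_* \circ \tau_*$, and then simply multiply the two matrices. Concretely, working in the left $S(R)$-modules $\mathcal{A}(\Omega,\kappa_\Omega)$, $\mathcal{A}(\Gamma,\kappa_\Gamma)$, $\mathcal{A}(B,\kappa_B)$ with the bases $\{e_z * \gamma_\Omega\}$, $\{e_y * \gamma_\Gamma\}$, $\{e_x * \gamma_B\}$ supplied by Lemma~\ref{l:symacc}, the maps $\tau_*$ and $\sigma_*$ are left $S(R)$-module homomorphisms (Definition~\ref{d:push}, Proposition~\ref{p:alternate}), so writing $\tau_*(e_z * \gamma_\Omega) = \sum_y \lambda_\tau(y,z)\, e_y * \gamma_\Gamma$ and $\sigma_*(e_y * \gamma_\Gamma) = \sum_x \lambda_\sigma(x,y)\, e_x * \gamma_B$, we get
\[
(\sigma \circ \tau)_*(e_z * \gamma_\Omega) = \sum_x \Big( \sum_y \lambda_\sigma(x,y)\, \lambda_\tau(y,z) \Big) e_x * \gamma_B
\]
by Lemma~\ref{l:compose}. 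Here I must be slightly careful about left-module scalars versus the order of composition: since $\lambda_\sigma(x,y)$ and $\lambda_\tau(y,z)$ lie in the commutative ring $S(R) \subseteq \Z[t^{\pm 1/2}]$, there is no ordering subtlety, and the matrix entry for the composite is $\sum_y \lambda_\sigma(x,y)\lambda_\tau(y,z)$.

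Next I would translate this back into local $h$-polynomials using Lemma~\ref{l:localpoly}, which says $\lambda(x,y) = t^{-(\rho(x)-\rho(y))/2}\, l(\Gamma,x,y;t)$ (with the appropriate rank functions and base posets in each of the three settings). Multiplying out, the exponents telescope: $t^{-(\rho_B(x)-\rho_\Gamma(y))/2} \cdot t^{-(\rho_\Gamma(y)-\rho_\Omega(z))/2} = t^{-(\rho_B(x)-\rho_\Omega(z))/2}$, independent of $y$, so it factors out of the sum and cancels against the same factor on the left-hand side. This yields exactly
\[
l_B(\Omega,x,z;t) = \sum_{y \in \Gamma} l_B(\Gamma,x,y;t)\, l_\Gamma(\Omega,y,z;t),
\]
as desired, with the convention $l_B(\Gamma,x,y;t) = 0$ when $\sigma(y) \nleq x$ (and similarly for the others) making all the "out of range" terms vanish automatically. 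For the second statement one specializes $x = \hat{1}_B$ and $z = \hat{0}_\Omega$: then $l_B(\Omega,\hat{1}_B,\hat{0}_\Omega;t) = l_B(\Omega;t)$, $l_B(\Gamma,\hat{1}_B,y;t) = l_{[\sigma(y),\hat{1}_B]}(\Gamma_{\ge y};t)$, and $l_\Gamma(\Omega,y,\hat{0}_\Omega;t) = l_{[\hat{0}_\Gamma,y]}(\Omega_y;t)$ directly from Definition~\ref{d:localpoly}.

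I expect no serious obstacle: the argument is essentially "matrix multiplication is associative / functorial," with Lemma~\ref{l:compose} supplying functoriality of the pushforward and Lemma~\ref{l:localpoly} supplying the dictionary. The one point that needs genuine attention — and the closest thing to a difficulty — is verifying that the restricted maps are themselves strong formal subdivisions so that Lemma~\ref{l:localpoly} applies in each interval: that is, for $z \le z'$ in $\Omega$ one needs $\tau : (\Omega_{\ge z})_{?} \to \cdots$ and the analogous restrictions of $\sigma$ and $\sigma \circ \tau$ to be strong formal subdivisions. This is handled by Remark~\ref{r:localrestrict} together with Remark~\ref{r:compose} (closure under composition), so one should cite those to make the localized version of Lemma~\ref{l:localpoly} legitimate before running the telescoping exponent computation.
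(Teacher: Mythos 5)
Your proof is correct and follows exactly the approach the paper uses: the paper states the corollary is "an immediate consequence of Lemma~\ref{l:localpoly} and the definition of $\lambda(x,y)$ as matrix elements of a linear transformation," with Remark~\ref{r:compose} giving closure of strong formal subdivisions under composition, which is precisely your argument. One small remark: you do not actually need to re-invoke Remark~\ref{r:localrestrict} to apply Lemma~\ref{l:localpoly} at the end — that lemma is stated once for the whole strong formal subdivision and already covers every pair $(x,y)$ — you only need Remark~\ref{r:compose} to know $\sigma\circ\tau$ is itself a strong formal subdivision.
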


\begin{example}\label{e:lidentity}
If $B$ is Eulerian of rank $n$ and $\sigma:B\rightarrow B$ is the identity, then
\[
l_B(B;t)  =  \left\{\begin{array}{cl} 1 & \text{if } n = 0,  \\ 0 & \text{otherwise}. \end{array}\right.
\]
\end{example}

\begin{example}\label{e:localsingle}
If $\sigma: \Gamma \rightarrow B$ is a formal subdivision of a lower Eulerian poset $\Gamma$ by the single element poset $B$, then 
 $l_B(\Gamma;t) = h(\Gamma;t)$. 
\end{example}

\begin{example}\label{e:lconstant} 
Let $\sigma: \Gamma \rightarrow B$ be a strong formal subdivision  between a lower  Eulerian poset $\Gamma$ of rank $r$ and an Eulerian poset $B$ of rank $n$.
Substituting the expression for $h( \Gamma_{x};t)$ into Definition~\ref{d:localpoly} and  using the fact that,  by definition, 
the degree of $g([x,\hat{1}_B]^*;t)$ is strictly bounded by $\rho_B(x,\hat{1}_B)/2$ for $x < \hat{1}_B$, 
together with the symmetry of $l_B(\Gamma;t)$ from Corollary~\ref{c:symmetry}, we have:

\begin{itemize}

\item The constant term of $l_B(\Gamma;t)$ is equal to the coefficient of $t^{r}$ in $l_B(\Gamma;t)$, which is equal to 
\[
\left\{\begin{array}{cl}  1 & \text{if } n = 0,  \\ 
0 & \text{if } n > 0.  \end{array}\right. 
\]

\item The linear coefficient of $l_B(\Gamma;t)$ is equal to the coefficient of $t^{r - 1}$ in $l_B(\Gamma;t)$, which is equal to 
\[
 \left\{\begin{array}{cl}   \beta- r  & \text{if } n=0,  \\ 
\beta  - 1  & \text{if } n=1, \\
\beta & \text{if }  n > 1.
  \end{array}\right. ,
\]
where 
\[
\beta = \# \{ y \in \Gamma \mid \sigma(y) = \hat{1}_B, \rho_\Gamma(\hat{0}_\Gamma,y) =  1 \}.
\]
\end{itemize}
\end{example}

\begin{example}\label{e:lsmallcases} 
Let $\sigma: \Gamma \rightarrow B$ be a strong formal subdivision of rank $\rk(\sigma) = r - n$ between a lower  Eulerian poset $\Gamma$ of rank $r$ and an Eulerian poset $B$ of rank $n$.
By Example~\ref{e:lconstant}, we have explicit formulas for the local $h$-polynomial when 
$r \le 3$:
\[
l_B(\Gamma;t) = \left\{\begin{array}{cl} 1 & \text{if } (n,r-n) =  (0,0),  \\ 
1 + t & \text{if } (n,r-n) =  (0,1),  \\ 
0 & \text{if } (n,r-n) =  (1,0),  \\ 
1 + (\beta - 2)t  + t^2 & \text{if } (n,r-n) =  (0,2),  \\ 
(\beta - 1)t & \text{if } (n,r-n) =  (1,1),  \\ 
\beta t & \text{if } (n,r-n) =  (2,0),  \\ 
1 + (\beta - 3)t  + (\beta - 3)t^2+ t^3 & \text{if } (n,r-n) =  (0,3),  \\ 
(\beta - 1) t(1 + t)  & \text{if } (n,r-n) =  (1,2),  \\ 
\beta  t(1 + t)  & \text{if } (n,r-n) =  (2,1),  \\ 
\beta  t(1 + t)  & \text{if } (n,r-n) =  (3,0),  \end{array}\right.
\]
where 
\[
\beta = \# \{ y \in \Gamma \mid \sigma(y) = \hat{1}_B, \rho_\Gamma(\hat{0}_\Gamma,y) =  1 \}.
\]
\end{example}

A poset $B$ with minimal element $\hat{0}$ is \define{simplicial} if for every $x$ in $B$, the interval $[\hat{0},x]$ is a Boolean algebra. This implies that every interval of $B$ is a Boolean algebra, 
and, in  particular,  $B$ is lower Eulerian. 
We have the following 
generalization of \cite[Proposition~2.2]{StaSubdivisions},  
which is useful for computing examples. 

\begin{lemma}\label{l:simplicialsub}
Let $\sigma: \Gamma \rightarrow B$ be a strong formal subdivision between 
a simplicial
 poset $\Gamma$  of rank $r$ and
 a Boolean algebra $B$ of rank $n$ with rank functions $\rho_\Gamma$ and $\rho_B$ respectively. 
 Then 
\[
l_B(\Gamma;t)  = \sum_{ y \in \Gamma } (-1)^{r - \rho_\Gamma(\hat{0}_\Gamma,y)} t^{r- e(y)}(t - 1)^{e(y)},
\]
where $e(y) = \rho_B(\sigma(y)) - \rho_\Gamma(y)$  is the \define{excess} of $y$.

\end{lemma}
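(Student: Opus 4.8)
The plan is to compute the local $h$-polynomial directly from Definition~\ref{d:localpoly} by exploiting the fact that $B$ is a Boolean algebra, so that $g([x,\hat{1}_B]^*;t) = 1$ for every $x \in B$ (Example~\ref{e:boolean}, since intervals in a Boolean algebra are Boolean algebras, whose duals are again Boolean algebras). This collapses the definition to
\[
l_B(\Gamma;t) = \sum_{x \in B} (-1)^{\rho_B(x,\hat{1}_B)} h(\Gamma_x;t).
\]
Next I would substitute the defining recursion for $h(\Gamma_x;t)$ from Definition~\ref{d:hpoly}, applied to the lower Eulerian poset $\Gamma_x = (\Gamma_{\ge \hat{0}_\Gamma})_x$ of rank $\rho_B(x)$. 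Since $\Gamma$ is simplicial, every interval $[\hat{0}_\Gamma,y]$ is a Boolean algebra, so $g([\hat{0}_\Gamma,y];t) = 1$ by Example~\ref{e:boolean}, and the recursion reads
\[
t^{\rho_B(x)} h(\Gamma_x;t^{-1}) = \sum_{\substack{y \in \Gamma \\ \sigma(y) \le x}} (t-1)^{\rho_B(x) - \rho_\Gamma(y)}.
\]

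The key step is then to interchange the order of summation: rewrite $l_B(\Gamma;t)$ (or rather $t^r l_B(\Gamma;t^{-1})$, using the symmetry from Corollary~\ref{c:symmetry} to work with the cleaner side) as a double sum over $y \in \Gamma$ and over $x \in B$ with $\sigma(y) \le x$, and evaluate the inner sum $\sum_{x \ge \sigma(y)} (-1)^{\rho_B(x,\hat{1}_B)} (t-1)^{\rho_B(x) - \rho_\Gamma(y)}$. Because $B$ is a Boolean algebra of rank $n$, the interval $[\sigma(y), \hat{1}_B]$ is a Boolean algebra of rank $n - \rho_B(\sigma(y)) = e(y)$ in disguise — more precisely the elements $x \ge \sigma(y)$ correspond to subsets, and $\rho_B(x) - \rho_\Gamma(y)$ runs from $\rho_B(\sigma(y)) - \rho_\Gamma(y)$ upward in the expected binomial fashion. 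Writing $a = \rho_B(\sigma(y)) - \rho_\Gamma(y)$ and summing over the $\binom{e(y)}{k}$ elements $x$ at distance $k$ above $\sigma(y)$, the inner sum becomes
\[
\sum_{k=0}^{e(y)} \binom{e(y)}{k} (-1)^{e(y) - k} (t-1)^{a + k} = (t-1)^a \big((t-1) - 1\big)^{e(y)} = (t-1)^a (t-2)^{e(y)}.
\]
Hmm — this does not immediately match the claimed $t^{r - e(y)}(t-1)^{e(y)}$ shape, so I expect the actual bookkeeping to require being careful about which variable ($t$ versus $t^{-1}$) the $h$-polynomial recursion is stated in and about the overall power of $t$ needed to pass between $l_B(\Gamma;t)$ and $l_B(\Gamma;t^{-1})$; the correct identity to use is likely $(t-1)^{\rho_B(x) - \rho_\Gamma(y)}$ against $t^r h(\Gamma;t^{-1})$-type normalization so that after the binomial collapse one gets $t^{r - e(y)}(t-1)^{e(y)}$ with the sign $(-1)^{r - \rho_\Gamma(\hat 0_\Gamma,y)}$ emerging from the alternating sum over the coatom-to-top part of $B$ combined with the rank shift $r - n$.

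The main obstacle will be the careful tracking of the three different rank normalizations simultaneously in play — the rank $r$ of $\Gamma$, the rank $n$ of $B$, and the excess $e(y)$ — and making sure the generalization from Stanley's rank-$0$ case \cite[Proposition~2.2]{StaSubdivisions} correctly incorporates the rank-$(r-n)$ shift. Concretely, I would first prove the $\rk(\sigma) = 0$ case by the interchange-of-summation argument above (where it reduces to Stanley's identity), and then handle the general case either by a direct modification of that computation keeping $e(y)$ general, or by factoring $\sigma$ through an auxiliary strong formal subdivision and invoking the composition formula in Corollary~\ref{c:hrefine}. A secondary check is to verify the formula against the small cases tabulated in Example~\ref{e:lsmallcases}: for instance when $\Gamma$ is itself a Boolean algebra of rank $n = r$ mapping by the identity, every $y$ has $e(y) = 0$ and the formula gives $\sum_y (-1)^{r - \rho_\Gamma(y)} t^r = t^r \sum_{k=0}^{r} \binom{r}{k}(-1)^{r-k} = 0$ for $r > 0$ and $1$ for $r = 0$, matching Example~\ref{e:lidentity}, which is a reassuring consistency test for the sign conventions.
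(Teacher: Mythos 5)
Your overall plan is the paper's plan: collapse $g$-polynomials using Example~\ref{e:boolean} (Boolean intervals), substitute the $h$-polynomial recursion with $g([\hat{0}_\Gamma,y];t)=1$ from simpliciality, interchange the sums over $x$ and $y$, and evaluate the inner Boolean sum by the binomial theorem. That part of the reasoning is sound and is exactly what the paper does.

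However, there are two concrete errors in your execution of the inner sum, and the hand-wave at the end does not resolve them. First, you write that ``the interval $[\sigma(y),\hat 1_B]$ is a Boolean algebra of rank $n-\rho_B(\sigma(y))=e(y)$''; this equality is false. The rank of that interval is $\rho_B(\sigma(y),\hat 1_B)$, whereas $e(y)=\rho_B(\sigma(y))-\rho_\Gamma(y)$ measures the gap between the rank of $y$ and the rank of its carrier in $B$ --- these are different quantities, and conflating them makes the index range of your binomial sum wrong from the start. Second, your computation $\sum_k\binom{m}{k}(-1)^{m-k}(t-1)^{a+k}=(t-1)^a(t-2)^m$ correctly evaluates the sum you wrote down, but the sum you wrote down is not the one that arises. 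If you compute $l_B(\Gamma;t)$ directly, the $h$-polynomial recursion yields a factor $(1-t)^{\rho_B(x)-\rho_\Gamma(y)}$, and the sign $(-1)^{\rho_B(x)-\rho_\Gamma(y)}$ it contributes combines with $(-1)^{\rho_B(x,\hat 1_B)}$ to give the \emph{$x$-independent} sign $(-1)^{\rho_B(\hat 1_B)-\rho_\Gamma(y)}=(-1)^{r-\rho_\Gamma(\hat 0_\Gamma,y)}$, which can be pulled entirely out of the inner sum. What remains inside is $\sum_{x\ge\sigma(y)}(t-1)^{\rho_B(x)-\rho_B(\sigma(y))}=\sum_{k}\binom{m}{k}(t-1)^k=t^m$ with $m=\rho_B(\sigma(y),\hat 1_B)$ --- no alternating signs, hence $t^m$ rather than the spurious $(t-2)^m$. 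Putting the pieces together, $\rho_\Gamma(\hat 0_\Gamma,y)+\rho_B(\sigma(y),\hat 1_B)=r-e(y)$ gives the claimed power of $t$. (If instead you insist on working with $t^r l_B(\Gamma;t^{-1})$, the $t^r$ factor does not cancel uniformly against $t^{\mathrm{rk}(\Gamma_x)}h(\Gamma_x;t^{-1})$: a residual $t^{\rho_B(x,\hat 1_B)}$ survives and must ride along in the binomial identity as $\sum_k\binom{m}{k}(-t)^{m-k}(t-1)^{e(y)+k}=(-1)^m(t-1)^{e(y)}$, an equivalent but less transparent form you would then still need to reconcile with the target via the symmetry relation.) Your consistency check against Example~\ref{e:lidentity} is valid and worth keeping, but the argument itself needs the sign cancellation above made explicit; neither the rank-$0$ reduction nor the detour through Corollary~\ref{c:hrefine} is needed.
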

\begin{proof}
By Definition~\ref{d:localpoly} and Example~\ref{e:boolean},
\[
l_B(\Gamma;t) = \sum_{ x \in B } h(\Gamma_{x};t) (-1)^{\rho_B(x,\hat{1}_B)},
\]
Also, by Definition~\ref{d:hpoly}, 
\[
t^{\rho_B(x) - \rho_\Gamma(\hat{0}_\Gamma)} h(\Gamma_{x};t^{-1}) = \sum_{ \substack{ y \in \Gamma \\ \sigma(y) \le x } }   (t - 1)^{\rho_B(x) - \rho_\Gamma(y)}.
\]
Hence 
\begin{align*}
l_B(\Gamma;t) &= \sum_{x \in B}  (-1)^{\rho_B(x,\hat{1}_B)} \sum_{ \substack{ y \in \Gamma \\ \sigma(y) \le x } }   t^{ \rho_\Gamma(\hat{0}_\Gamma,y)}  (1 - t)^{\rho_B(x) - \rho_\Gamma(y)} \\
&= \sum_{ y \in \Gamma } (-1)^{\rho_B(\hat{1}_B) - \rho_\Gamma(y)}t^{ \rho_\Gamma(\hat{0}_\Gamma,y)}  (t - 1)^{e(y)}   \sum_{\sigma(y) \le x \le \hat{1}_B} (t - 1)^{\rho_B(x) - \rho_B(\sigma(y))} 
\end{align*}
Since $[\sigma(y), \hat{1}_B]$ is a Boolean algebra, the latter sum in the above formula equals  $t^{\rho_B(\sigma(y),\hat{1}_B)}$, and the result follows.
\end{proof}

\begin{remark}\label{r:extend}
We note that the results of  Sections~\ref{s:locEulerian}  and \ref{s:local} hold for any commutative ring $R$ with involution that contains $\Z[t^{\pm 1/2}]$ as a subring such that the induced involution is $\bar{r(t)} = r(t^{-1})$. 
More specifically, let $B$ be a locally Eulerian poset and consider the $B$-kernel $\kappa(x,x')=q^{\rho(x,x')}$ with $q = t^{1/2} - t^{-1/2}$. 
If one chooses a splitting 
 $R = S(R) \oplus R'$, with $\{ \sum_{i \in \Z_{< 0}} \alpha_i t^{i/2} \mid \alpha_i \in \Z \} \subseteq R' $, then the corresponding acceptability operator is  described as in Lemma~\ref{l:gpoly}, i.e. $\gamma_B(x,x') = t^{-\rho(x,x')/2} g([x,x'];t)$. 
 The rest of the sections hold verbatim. In particular, the results above are independent of the choice of such an $R'$, and, as such, in what follows we will ignore such a choice.

 For example, we may set $R = K_0(\Var_k)[\L^{\pm 1/2}]$ with $\Z[\L^{\pm 1/2}] \subseteq R$ as in Example~\ref{e:groth} (see Section~\ref{s:trop}). 
 In subsequent sections, we will use the following two examples. 
 Firstly, we consider $R_{uv}=\Z[u^{\pm 1/2},v^{\pm 1/2}]$ with involution $\bar{r(u,v)} = r(u^{-1},v^{-1})$, and $\Z[(uv)^{\pm 1/2}] \subseteq R_{uv}$. Secondly, we consider 
  $R_{uvw}=\Z[u^{\pm 1/2},v^{\pm 1/2},w^{\pm 1}]$ with involution $\bar{r(u,v,w)} = r(u^{-1},v^{-1},w^{-1})$, and $\Z[(uvw^2)^{\pm 1/2}] \subseteq R_{uvw}$. 
\end{remark}

\section{The mixed $h$-polynomial}\label{s:relmixed}

In this section, we introduce and study the mixed $h$-polynomial, a two-variable invariant of
strong  subdivisions of posets.  The results of this section are presented for their own interest and are not required in the sequel.

\subsection{Definition}

We will continue with the notation of previous section with $R=\Z[t^{\pm 1/2}]$, but, as in Remark~\ref{r:extend} we will also consider the ring $R_{uv}=\Z[u^{\pm 1/2},v^{\pm 1/2}]$ with involution $\bar{r(u,v)} = r(u^{-1},v^{-1})$
and inclusion $\Z[(uv)^{\pm 1/2}] \subseteq R_{uv}$ i.e. $uv$ will play the role of $t$ in the previous sections. For example, as in Lemma~\ref{l:gpoly}, 
the acceptability operator of a locally Eulerian poset $B$ is given by $\gamma_B(x,x')|_{t = uv} = (uv)^{-\rho(x,x')/2} g([x,x'];uv)$.

We introduce our main definition below. 

\begin{definition}\label{d:mixedpoly}
Let $\sigma: \Gamma \rightarrow B$ be a strong formal subdivision  between a lower  Eulerian poset $\Gamma$ and an Eulerian poset $B$. 
Then the  \define{mixed $h$-polynomial} $h_B(\Gamma;u,v) \in \Z[u,v]$ is defined by
\[
h_B(\Gamma;u,v) =  \sum_{x \in B } v^{\rk(\Gamma_x)}l_{[\hat{0}_B,x]}(\Gamma_x;uv^{-1}) g([x,\hat{1}_B];uv). 
\]
We also introduce the following convenient notation. 
Let $\sigma: \Gamma \rightarrow B$ be a strong formal subdivision  between locally Eulerian posets with rank functions $\rho_\Gamma$ and $\rho_B$ respectively, and 
fix $y \in \Gamma$ and  $x \in B$. 
If $\sigma(y) \le x$, then
recall from Remark~\ref{r:localrestrict} that we may consider the restricted strong formal subdivision 
$\sigma: (\Gamma_{\ge y})_x \rightarrow [ \sigma(y),x]$. In this case we set 
\[
h_B(\Gamma,x,y;u,v) := h_{[ \sigma(y),x]}((\Gamma_{\ge y})_x;u,v) =  \sum_{\sigma(y) \le x' \le x} v^{\rho_B(x') - \rho_\Gamma(y)} l_{[\sigma(y),x']}((\Gamma_{\ge y})_{x'};uv^{-1}) g([x',x];uv).
\]
We set $h_B(\Gamma,x,y;u,v) = 0$ if $\sigma(y) \nleq x$. We will abuse notation throughout by ignoring the dependence of the mixed $h$-polynomial on $\sigma$. 
\end{definition}

We have the following interpretation of the mixed $h$-polynomial in terms of the pushforward map. 
Let $\sigma: \Gamma \rightarrow B$ be a strong formal subdivision  between locally Eulerian posets. 
Using the results of  Sections~\ref{s:locEulerian}  and \ref{s:local}, together with Remark~\ref{r:extend}, we have a left $S(R_{uv})$-module homomorphism
\[ \sigma_*: \mathcal{A}(\Gamma,\kappa_\Gamma; R_{uv})\rightarrow \mathcal{A}(B,\kappa_B; R_{uv}), \]
determined by 
\[\sigma_*(e_y*\gamma_\Gamma|_{t = uv}) = \sum_x \lambda(x,y)|_{t = uv} e_x*\gamma_B|_{t = uv} = \sum_x \eta(x,y)|_{t = uv} e_x.\]
Note that since  $\lambda(x,y) \in S(R)$, it follows that $\lambda(x,y)|_{t = uv^{-1}} \in S(R_{uv})$. Hence the definition below is well-defined.

\begin{definition}\label{d:mixedpush}
Let $\sigma: \Gamma \rightarrow B$ be a strong formal subdivision  between locally Eulerian posets. The \define{mixed push-forward} is the left $S(R_{uv})$-module homomorphism
\[ \tilde{\sigma}_*: \mathcal{A}(\Gamma,\kappa_\Gamma; R_{uv})\rightarrow \mathcal{A}(B,\kappa_B; R_{uv}), \]
defined by
\[ \tilde{\sigma}_*(e_y*\gamma_\Gamma|_{t = uv})=\sum_{x \in B} \lambda(x,y)|_{t = uv^{-1}}   e_x*\gamma_B|_{t = uv} = \sum_{x \in B} \tilde{\eta}(x,y) e_x,\]
for some $\tilde{\eta}(x,y) \in R_{uv}$.
\end{definition}

\begin{remark}\label{r:tildecompose}
If $\tau: \Omega \rightarrow \Gamma$  and $\sigma:\Gamma\rightarrow B$ are strong formal subdivisions of locally Eulerian posets, then $\tilde{(\sigma \circ \tau)}_* = \tilde{\sigma}_* \circ \tilde{\tau}_*$. It follows that 
for every $x \in B$ and $z \in \Omega$:
\[
\tilde{\eta}(x,z) = \sum_{y \in \Gamma} \tilde{\eta}(x,y) \lambda(y,z)|_{t = uv^{-1}}  . 
\]
\end{remark}

The following lemma expresses the mixed pushforward in terms of the mixed $h$-polynomial. 

\begin{lemma}\label{l:mixedpoly}
 Let
$\sigma:\Gamma\rightarrow B$ be a strong formal subdivision between locally Eulerian posets with rank functions $\rho_\Gamma$ and $\rho_B$ respectively. Then 
\[
\tilde{\eta}(x,y) = (uv)^{-(\rho_B(x) - \rho_\Gamma(y))/2}h_B(\Gamma,x,y;u,v).
\]

\end{lemma}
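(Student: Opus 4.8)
The plan is to unwind the definition of the mixed pushforward $\tilde{\sigma}_*$ and match it against the definition of $h_B(\Gamma,x,y;u,v)$ from Definition~\ref{d:mixedpoly}. By Definition~\ref{d:mixedpush}, we have $\tilde{\eta}(x,y)$ as the coefficient of $e_x$ in $\tilde{\sigma}_*(e_y*\gamma_\Gamma|_{t=uv})=\sum_{x'\in B}\lambda(x',y)|_{t=uv^{-1}}\, e_{x'}*\gamma_B|_{t=uv}$, so
\[
\tilde{\eta}(x,y)=\sum_{x'\le x}\lambda(x',y)|_{t=uv^{-1}}\,\gamma_B(x',x)|_{t=uv}.
\]
By Lemma~\ref{l:localpoly}, $\lambda(x',y)=t^{-(\rho_B(x')-\rho_\Gamma(y))/2}l_B(\Gamma,x',y;t)$, and by Lemma~\ref{l:gpoly}, $\gamma_B(x',x)=t^{-\rho_B(x',x)/2}g([x',x];t)$. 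The first step is to substitute these two formulas in, being careful about which variable substitution ($t\mapsto uv^{-1}$ versus $t\mapsto uv$) applies to each factor.

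The second step is a bookkeeping computation with the exponents of $u$ and $v$. Under $t\mapsto uv^{-1}$ the factor $t^{-(\rho_B(x')-\rho_\Gamma(y))/2}$ becomes $u^{-(\rho_B(x')-\rho_\Gamma(y))/2}v^{(\rho_B(x')-\rho_\Gamma(y))/2}$, and $l_B(\Gamma,x',y;t)|_{t=uv^{-1}}=l_B(\Gamma,x',y;uv^{-1})$; under $t\mapsto uv$ the factor $t^{-\rho_B(x',x)/2}$ becomes $(uv)^{-\rho_B(x',x)/2}$. Collecting, one should find the summand equals
\[
(uv)^{-(\rho_B(x)-\rho_\Gamma(y))/2}\cdot v^{\rho_B(x')-\rho_\Gamma(y)}\, l_B(\Gamma,x',y;uv^{-1})\, g([x',x];uv),
\]
using that $\rho_B(x')+\rho_B(x',x)=\rho_B(x)$ to combine the leftover powers into the prefactor $(uv)^{-(\rho_B(x)-\rho_\Gamma(y))/2}$. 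Summing over $\sigma(y)\le x'\le x$ and comparing with the displayed formula for $h_B(\Gamma,x,y;u,v)$ in Definition~\ref{d:mixedpoly} — recalling that $\rk((\Gamma_{\ge y})_{x'})=\rho_B(x')-\rho_\Gamma(y)$ for the restricted subdivision by Remark~\ref{r:localrestrict} — yields exactly $\tilde{\eta}(x,y)=(uv)^{-(\rho_B(x)-\rho_\Gamma(y))/2}h_B(\Gamma,x,y;u,v)$. One should also note the edge case $\sigma(y)\nleq x$, where both sides are zero by convention and by $\lambda(x',y)=0$ unless $\sigma(y)\le x'$.

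I do not expect a genuine obstacle here: the statement is essentially a definitional reconciliation, and the only real risk is a sign or exponent slip in tracking the two distinct substitutions $t\mapsto uv$ and $t\mapsto uv^{-1}$ through the half-integer powers. The key identity making everything fit is the additivity $\rho_B(x)=\rho_B(x')+\rho_B(x',x)$ of the rank function on the interval $[x',x]$, together with the fact that $l_B(\Gamma,x',y;t)\in S(R)$ is symmetric so that $l_B(\Gamma,x',y;uv^{-1})$ is a genuine element of $S(R_{uv})$ (already observed in the paragraph preceding Definition~\ref{d:mixedpush}), which guarantees the expression is well-defined. So the proof is short: substitute Lemma~\ref{l:localpoly} and Lemma~\ref{l:gpoly} into the defining sum for $\tilde{\eta}(x,y)$, regroup the monomial prefactors, and recognize Definition~\ref{d:mixedpoly}.
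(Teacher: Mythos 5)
Your proposal is correct and follows exactly the paper's own argument: start from the defining expansion $\tilde{\eta}(x,y)=\sum_{x'\le x}\lambda(x',y)|_{t=uv^{-1}}\,\gamma_B(x',x)|_{t=uv}$, substitute Lemma~\ref{l:localpoly} and Lemma~\ref{l:gpoly}, and compare with Definition~\ref{d:mixedpoly}. The paper leaves the exponent bookkeeping implicit; you carry it out explicitly and correctly, including the key simplification via $\rho_B(x)=\rho_B(x')+\rho_B(x',x)$.
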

\begin{proof}
By definition,
\[
\tilde{\eta}(x,y) = \sum_{x' \le x} \lambda(x',y)|_{t = uv^{-1}} \gamma_B(x',x)|_{t = uv}. 
\]
The result follows by substituting the expressions for $\lambda(x',y)$ and  $\gamma_B(x',x)$ in Lemma~\ref{l:localpoly} and Lemma~\ref{l:gpoly} respectively into the right 
hand side and comparing with the definition of $h_B(\Gamma,x,y;u,v)$.
\end{proof}

Below we summarize some of the elementary properties of the mixed $h$-polynomial. These generalize properties of the $h$-polynomial from previous sections.

\begin{theorem}\label{t:refineprop}
Let $\sigma: \Gamma \rightarrow B$ be a strong formal subdivision of rank $r - n$  between a lower  Eulerian poset $\Gamma$ of rank $r$ and an Eulerian poset $B$ of rank $n$. 
Then the  mixed $h$-polynomial $h_B(\Gamma;u,v)$ satisfies the following properties:
\begin{enumerate}

\item\label{i:rh1'} ($uv$-interchange) The  mixed $h$-polynomial is invariant under the interchange of $u$ and $v$ i.e. \[ h_B(\Gamma;u,v) = h_B(\Gamma;v,u). \] 

\item\label{i:rh2'} (specialization) We recover the $h$-polynomial of $\Gamma$ via the specialization \[ h_B(\Gamma;u,1) = h(\Gamma;u).\] 

\item\label{i:rh7'} (symmetry) We have
\[
(uv)^{r} h_B(\Gamma;u^{-1},v^{-1}) = \sum_{x \in B} h_{[\hat{0}_B,x]}(\Gamma_x;u,v) (uv - 1)^{\rho_B(x,\hat{1}_B)}. 
\]

\item\label{i:rh3'} (constant terms) We have $h_B(\Gamma;0,v) = v^{r - n}$. 

\item\label{i:rh4'} (identity subdivision) If $\Gamma = B$ and $\sigma$ is the identity function, then $h_B(B;u,v) =  g(B;uv)$. 

\item\label{i:rh5'} (interior) If $n = 0$, i.e. $B$ is the single element poset, then 
 \[ h_B(\Gamma;u,v) = v^{r} l_B(\Gamma;uv^{-1}) = 
v^{r} h(\Gamma;uv^{-1}). \]

\item\label{i:rh6'} (degree) All terms in $h_B(\Gamma;u,v)$ have combined degree  in $u$ and $v$ at most $r$, and 
the terms of combined degree $r$ equal $v^{r}l_{B}(\Gamma;uv^{-1})$.

\item\label{i:rh6''} (inversion)  We recover the local $h$-polynomial via
\[
v^{r}  l_{B}(\Gamma;uv^{-1}) = \sum_{x \in B}  h_{[\hat{0}_B,x]}(\Gamma_x;u,v) (-1)^{\rho_B(x,\hat{1}_B)} g([x,\hat{1}_B]^*;uv). 
\]

\item\label{i:rh8'} (composition)
 Suppose that $\tau: \Omega \rightarrow \Gamma$  and $\sigma:\Gamma\rightarrow B$ are strong formal subdivisions of locally Eulerian posets.  Then for $z\in\Omega$, $x\in\Gamma$,
\[
h_B(\Omega,x,z;u,v) = \sum_{y \in \Gamma}  h_B(\Gamma,x,y;u,v) v^{\rho_\Gamma(y) - \rho_\Omega(z)}l_\Gamma(\Omega,y,z;uv^{-1}). 
\]
In particular, if $\Omega$,$\Gamma$ are lower Eulerian and $B$ is Eulerian, then 
\[
h_B(\Omega;u,v) = \sum_{y \in \Gamma}  h_{[\sigma(y), \hat{1}_B]}(\Gamma_{\ge y};u,v) v^{\rk(\Omega_y)} l_{[\hat{0}_\Gamma,y]}(\Omega_y;uv^{-1}). 
\]

\end{enumerate}
\end{theorem}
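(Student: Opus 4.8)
The plan is to read off almost every item from one of two sources: Definition~\ref{d:mixedpoly} itself, or the identification of the mixed $h$-polynomial with a matrix entry of the mixed push-forward given by Lemma~\ref{l:mixedpoly}, i.e. $\tilde\eta(x,y)=(uv)^{-(\rho_B(x)-\rho_\Gamma(y))/2}h_B(\Gamma,x,y;u,v)$, specialized to $(x,y)=(\hat 1_B,\hat 0_\Gamma)$, where $\rho_B(\hat 1_B)-\rho_\Gamma(\hat 0_\Gamma)=n+\rk(\sigma)=r$ since $\sigma(\hat 0_\Gamma)=\hat 0_B$. Several items come straight out of Definition~\ref{d:mixedpoly}. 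Setting $v=1$ collapses the defining sum to $\sum_x l_{[\hat 0_B,x]}(\Gamma_x;u)g([x,\hat 1_B];u)$, which is $h(\Gamma;u)$ by Remark~\ref{r:reverseh}: this is (2). Setting $u=0$ kills every summand except $x=\hat 0_B$ (the constant term of a local $h$-polynomial is $1$ exactly in rank $0$, by Example~\ref{e:lconstant}) and leaves $v^{\rk(\Gamma_{\hat 0_B})}\cdot 1=v^{\rk(\sigma)}=v^{r-n}$: this is (4). Item (5) is the case $\Gamma=B$, $\sigma=\mathrm{id}$, where only $x=\hat 0_B$ survives by Example~\ref{e:lidentity}; item (6) is the single-element case combined with Example~\ref{e:localsingle}. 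For the inversion formula (8), I would substitute the definition of $h_{[\hat 0_B,x]}(\Gamma_x;u,v)$ into the right-hand side, swap the order of summation, and recognize the inner sum $\sum_{x'\le x\le\hat 1_B}g([x',x];uv)(-1)^{\rho_B(x,\hat 1_B)}g([x,\hat 1_B]^*;uv)$ as a rescaling of the $(x',\hat 1_B)$-entry of $\gamma_B*\gamma_B^{-1}$ via Theorem~\ref{t:inverse}, which is $\delta_{x',\hat 1_B}$; the surviving term is $v^r l_B(\Gamma;uv^{-1})$.

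The two items that are cleanest via the push-forward are (1) and (3). In $\tilde\sigma_*(e_{\hat 0_\Gamma}*\gamma_\Gamma)=\sum_x\lambda(x,\hat 0_\Gamma)|_{t=uv^{-1}}\,e_x*\gamma_B|_{t=uv}$, every ingredient is invariant under $u\leftrightarrow v$: the entries of $\gamma_B|_{t=uv}$ are polynomials in $(uv)^{\pm1/2}$, and $\lambda(x,\hat 0_\Gamma)\in S(R)$ is fixed by $t\mapsto t^{-1}$, so $\lambda(x,\hat 0_\Gamma)|_{t=uv^{-1}}$ is fixed by $u\leftrightarrow v$. Hence every $\tilde\eta(x,\hat 0_\Gamma)$, and in particular $(uv)^{-r/2}h_B(\Gamma;u,v)$, is swap-invariant, proving (1). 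For (3), I use that $\tilde\sigma_*(e_{\hat 0_\Gamma}*\gamma_\Gamma)$ lies in $\mathcal{A}(B,\kappa_B;R_{uv})$, hence $\overline{\tilde\sigma_*(e_{\hat 0_\Gamma}*\gamma_\Gamma)}=\tilde\sigma_*(e_{\hat 0_\Gamma}*\gamma_\Gamma)*\kappa_B$; evaluating both sides at $\hat 1_B$, writing $\tilde\eta(x,\hat 0_\Gamma)=(uv)^{-(\rho_B(x)-\rho_\Gamma(\hat 0_\Gamma))/2}h_{[\hat 0_B,x]}(\Gamma_x;u,v)$ (using $h_B(\Gamma,x,\hat 0_\Gamma;u,v)=h_{[\hat 0_B,x]}(\Gamma_x;u,v)$) and $\kappa_B(x,\hat 1_B)=q^{\rho_B(x,\hat 1_B)}=(uv)^{-\rho_B(x,\hat 1_B)/2}(uv-1)^{\rho_B(x,\hat 1_B)}$, one checks that all powers of $uv$ combine to a uniform $(uv)^{-r/2}$, producing exactly the claimed identity after multiplying through by $(uv)^{r}$.

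For the degree statement (7), the key point is that $v^{\rk(\Gamma_x)}l_{[\hat 0_B,x]}(\Gamma_x;uv^{-1})$ is \emph{homogeneous} of degree $\rk(\Gamma_x)$ in $u$ and $v$, since $l_{[\hat 0_B,x]}(\Gamma_x;t)$ has degree $\le\rk(\Gamma_x)$ by Corollary~\ref{c:symmetry}; meanwhile $g([x,\hat 1_B];uv)$ contributes only terms of combined degree $2j$ with $2j<\rho_B(x,\hat 1_B)$ by Definition~\ref{d:g}. Since $\sigma$ is rank-increasing we have $\rk(\Gamma_x)\le\rho_B(x)-\rho_\Gamma(\hat 0_\Gamma)$, so $\rk(\Gamma_x)+2j\le\rho_B(\hat 1_B)-\rho_\Gamma(\hat 0_\Gamma)=r$, with equality forcing $x=\hat 1_B$ and $j=0$, so the degree-$r$ part is exactly $v^r l_B(\Gamma;uv^{-1})$. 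Finally, the composition formula (9) is the translation of Remark~\ref{r:tildecompose}, $\tilde\eta(x,z)=\sum_y\tilde\eta(x,y)\lambda(y,z)|_{t=uv^{-1}}$: substituting Lemma~\ref{l:mixedpoly} for both $\tilde\eta$'s and Lemma~\ref{l:localpoly} for $\lambda(y,z)$, all the $(uv)$-prefactors cancel against the overall $(uv)^{-(\rho_B(x)-\rho_\Omega(z))/2}$ except for a residual factor $v^{\rho_\Gamma(y)-\rho_\Omega(z)}$, and the ``in particular'' statement is the case $x=\hat 1_B$, $z=\hat 0_\Omega$, using $h_B(\Gamma,\hat 1_B,y;u,v)=h_{[\sigma(y),\hat 1_B]}(\Gamma_{\ge y};u,v)$, $\rk(\Omega_y)=\rho_\Gamma(y)-\rho_\Omega(\hat 0_\Omega)$, and $l_\Gamma(\Omega,y,\hat 0_\Omega;t)=l_{[\hat 0_\Gamma,y]}(\Omega_y;t)$. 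I expect the only real difficulty to be bookkeeping: keeping track of the fact that $\rho_\Gamma(\hat 0_\Gamma)\neq\rho_B(\hat 0_B)$ in general (their difference being $\rk(\sigma)=r-n$), so that the half-integer powers of $uv$ land where they should, together with the sharpness analysis of the rank inequality in (7).
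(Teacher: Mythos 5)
Your proposal is correct and uses essentially the same ingredients as the paper's (very terse) proof: Definition~\ref{d:mixedpoly}, Remark~\ref{r:reverseh}, Examples~\ref{e:gconstant}, \ref{e:lconstant}, \ref{e:lidentity}, \ref{e:localsingle}, Corollary~\ref{c:symmetry}, Lemma~\ref{l:localpoly}, Lemma~\ref{l:mixedpoly}, Theorem~\ref{t:inverse}, and Remark~\ref{r:tildecompose}, in the same roles. The only cosmetic deviations are that you route the $uv$-interchange through $\lambda(x,\hat 0_\Gamma)\in S(R)$ rather than citing Corollary~\ref{c:symmetry} directly (the two are the same fact), and you spell out the homogeneity and rank bookkeeping for the degree bound (\eqref{i:rh6'}) that the paper leaves implicit.
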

\begin{proof}

Property~\eqref{i:rh1'} follows from Definition~\ref{d:mixedpoly} and the symmetry of the local $h$-polynomial (Corollary~\ref{c:symmetry}).

Property~\eqref{i:rh2'} follows from Definition~\ref{d:mixedpoly} and Remark~\ref{r:reverseh}. 


Property~\eqref{i:rh7'} follows from Lemma~\ref{l:mixedpoly} together with 
the fact that  $\sum_{x \in B} \tilde{\eta}(x,y) e_x \in \mathcal{A}(B,\kappa_B; R_{uv})$ by definition. 

Property~\eqref{i:rh3'} follows by substituting  Example~\ref{e:gconstant} and Example~\ref{e:lconstant} into Definition~\ref{d:mixedpoly}.
 
Property~\eqref{i:rh4'} follows from Definition~\ref{d:mixedpoly} and Example~\ref{e:lidentity}.

Property~\eqref{i:rh5'} follows from Definition~\ref{d:mixedpoly} and Example~\ref{e:localsingle}. 

Property~\eqref{i:rh6'} follows since $g([x',x];uv)$ has  combined degree in $u$ and $v$ strictly less than $\rho_B(x',x)$ for $x' < x$.

Property~\eqref{i:rh6''} is equivalent to 
$\sum_{x \in B} \lambda(x,y)|_{t = uv^{-1}}   e_x = \sum_{x \in B} \tilde{\eta}(x,y) e_x *\gamma_B^{-1}|_{t = uv}$, using Lemma~\ref{l:localpoly} , Lemma~\ref{l:mixedpoly} and Theorem~\ref{t:inverse}. 

Property~\eqref{i:rh8'} follows from Remark~\ref{r:tildecompose}, using Lemma~\ref{l:localpoly} and Lemma~\ref{l:mixedpoly}. 
The second statement follows from the first by substituting $x = \hat{1}_B$ and $z = \hat{0}_\Omega$. 
\end{proof}

\subsection{Examples}

In subsequent sections, we will be interested in the case of a strong formal subdivision induced by a polyhedral subdivision of a polytope, as in Lemma~\ref{l:polyhedralsubdivision}. In this case, the coefficients of the
mixed $h$-polynomial are non-negative integers by Theorem~\ref{t:localcoef}. In general, the following example shows that the coefficients of the  mixed $h$-polynomial may be negative, even when the
$h$-polynomial itself has non-negative coefficients. 

\begin{example}
The following example is due to C. Chan, and was originally used to show that the local $h$-polynomial may have negative coefficients 
\cite[Example~2.3 (h)]{StaSubdivisions}.  
Let $B$ be the face poset of a simplex $P$ with vertex set $\{1,2,3,4\}$. 
Let $F$ be the face spanned by $\{1,2,3\}$. Consider a deformation $F'$ of $F$ such that the boundary of $F$ is unchanged, but the interior of $F'$ is contained in the interior of $P$. 
If we add a vertex $\{5 \}$ to the interior of $F$, then we may consider the simplicial complex $\Gamma$ consisting of two simplices $\{1,2,3,4\}$ and $\{1,2,3,5\}$ joined along $F'$. Using, for example, 
Lemma~\ref{l:simplicialsub2},  one verifies
that the corresponding map $\sigma: \Gamma \rightarrow B$ is a strong formal subdivision with mixed $h$-polynomial $h_B(\Gamma;u,v) = 1 + uv(u + v) - (uv)^2$. Note that $l_B(\Gamma;t) = -t^2$ and $h(\Gamma;t ) = h_B(\Gamma;1,t) = 1 + t$.
\end{example}

Below, we present some explicit formulas for the  mixed $h$-polynomial. 

\begin{example}\label{e:rhsmallcases} 
Let $\sigma: \Gamma \rightarrow B$ be a strong formal subdivision of rank $\rk(\sigma) = r - n$ between a lower  Eulerian poset $\Gamma$ of rank $r$ and an Eulerian poset $B$ of rank $n$.
Using Example~\ref{e:lsmallcases}, we have explicit formulas for the mixed $h$-polynomial when 
$r \le 3$:
\[
h_B(\Gamma;u,v) = \left\{\begin{array}{cl} 1 & \text{if } (n,r-n) =  (0,0),  \\ 
u + v & \text{if } (n,r-n) =  (0,1),  \\ 
1 & \text{if } (n,r-n) =  (1,0),  \\ 
u^2 + (\beta - 2)uv  + v^2 & \text{if } (n,r-n) =  (0,2),  \\ 
u + v + (\beta - 1)uv & \text{if } (n,r-n) =  (1,1),  \\ 
1 + \beta uv & \text{if } (n,r-n) =  (2,0),  \\ 
u^3 + (\beta - 3)u^2v  + (\beta - 3)uv^2+ v^3 & \text{if } (n,r-n) =  (0,3),  \\ 
u^2 + (\mu - 2)uv  + v^2 + (\beta - 1) uv(u + v)  & \text{if } (n,r-n) =  (1,2),  \\ 
u + v + (\mu - 2)uv + \beta  uv(u + v)  & \text{if } (n,r-n) =  (2,1),  \\ 
1 + (\mu + \nu - 3)uv + \beta  uv(u + v)  & \text{if } (n,r-n) =  (3,0),  \end{array}\right.
\]
where 
\[
\beta = \# \{ y \in \Gamma \mid \sigma(y) = \hat{1}_B, \rho_\Gamma(\hat{0}_\Gamma,y) =  1 \},
\]
\[
\mu  = \# \{ y \in \Gamma  \mid  \rho_B(\sigma(y),\hat{1}_B) = 1, \rho_\Gamma(\hat{0}_\Gamma,y) =  1 \},
\]
\[
\nu = \# \{ x \in B \mid \rho_B(\hat{0}_B,x) = 1 \}. 
\]
\end{example}

The following lemma provides interesting examples of mixed $h$-polynomials, as well as demonstrating their combinatorial significance (cf. Example~\ref{e:triangulation}). 
Recall that a  poset $B$ with minimal element $\hat{0}$ is simplicial if for every $x$ in $B$, the interval $[\hat{0},x]$ is a Boolean algebra.

\begin{lemma}\label{l:simplicialsub2}
Let $\sigma: \Gamma \rightarrow B$ be a strong formal subdivision between 
a simplicial
 poset $\Gamma$  of rank $r$ and
 a Boolean algebra $B$ of rank $n$ with rank functions $\rho_\Gamma$ and $\rho_B$ respectively. Then
\[
h_B(\Gamma;u,v)  =  \sum_{ y \in \Gamma }  u^{\rho_\Gamma(\hat{0}_\Gamma,y)}  (1 - u)^{\rho_B(\sigma(y),\hat{1}_B)} (v - u)^{e(y)}, 
\]
where $e(y) = \rho_B(\sigma(y)) - \rho_\Gamma(y)$ is the \define{excess} of $y$. For any non-negative integers 
$i,j$, let $f_{i,j} = \# \{ y \in \Gamma \mid \rho_\Gamma(\hat{0}_\Gamma,y) = i, e(y) = j \}$. Then 
\[
h_B(\Gamma;u,v) = \sum_{i,j\ge 0} f_{i,j}    u^{i}  (1 - u)^{r - i - j} (v - u)^{j}, 
\]
and the mixed $h$-polynomial $h_B(\Gamma;u,v)$ determines and is determined by the numbers $\{ f_{i,j} \}_{i,j \ge 0}$.
\end{lemma}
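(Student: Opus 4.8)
The plan is to compute $h_B(\Gamma;u,v)$ directly from Definition~\ref{d:mixedpoly} by substituting the explicit formula for the local $h$-polynomial of a simplicial subdivision over a Boolean algebra, namely Lemma~\ref{l:simplicialsub}, and then carefully reorganizing the resulting sum. First I would recall that since $B$ is a Boolean algebra, every interval $[x,\hat1_B]$ is itself a Boolean algebra, so $g([x,\hat1_B];uv)=1$ by Example~\ref{e:boolean}; likewise every $\Gamma_x=(\Gamma_{\ge\hat0_\Gamma})_x$ is a simplicial poset and the restricted map $\sigma\colon\Gamma_x\to[\hat0_B,x]$ is again a strong formal subdivision onto a Boolean algebra. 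Thus Definition~\ref{d:mixedpoly} collapses to
\[
h_B(\Gamma;u,v)=\sum_{x\in B} v^{\rk(\Gamma_x)}\, l_{[\hat0_B,x]}(\Gamma_x;uv^{-1}),
\]
and I would apply Lemma~\ref{l:simplicialsub} to each $l_{[\hat0_B,x]}(\Gamma_x;t)$, with $t=uv^{-1}$ and $\rk(\Gamma_x)=\rho_B(x)-\rho_\Gamma(\hat0_\Gamma)$.

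The next step is to interchange the order of summation: instead of summing over $x\in B$ and then over $y\in\Gamma_x$, sum over $y\in\Gamma$ and then over all $x\in B$ with $\sigma(y)\le x$. For fixed $y$, the terms $\rho_\Gamma(\hat0_\Gamma,y)$ and the excess $e(y)=\rho_B(\sigma(y))-\rho_\Gamma(y)$ are constant, but the exponents appearing in Lemma~\ref{l:simplicialsub} (applied inside $\Gamma_x$, where the "top" is $x$ rather than $\hat1_B$) involve $\rho_B(x)$; after substituting $t=uv^{-1}$ and multiplying by $v^{\rho_B(x)-\rho_\Gamma(\hat0_\Gamma)}$, the powers of $v$ should telescope the factors $t^{\cdots}(t-1)^{e(y)}$ into $u^{\rho_\Gamma(\hat0_\Gamma,y)}(v-u)^{e(y)}$ times a factor that still depends on $x$ through $\rho_B(\sigma(y),x)$. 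The inner sum over $x$ with $\sigma(y)\le x\le\hat1_B$ then becomes a sum of the form $\sum_{\sigma(y)\le x}(-1)^{\cdots}(\text{power of }(1-u))^{\cdots}$ over the Boolean interval $[\sigma(y),\hat1_B]$, which by the binomial theorem collapses to $(1-u)^{\rho_B(\sigma(y),\hat1_B)}$ — exactly the mechanism used at the end of the proof of Lemma~\ref{l:simplicialsub}. This yields the first displayed formula
\[
h_B(\Gamma;u,v)=\sum_{y\in\Gamma} u^{\rho_\Gamma(\hat0_\Gamma,y)}(1-u)^{\rho_B(\sigma(y),\hat1_B)}(v-u)^{e(y)}.
\]

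For the second formula, I would simply group the terms of the first sum according to the pair $(i,j)=(\rho_\Gamma(\hat0_\Gamma,y),e(y))$, noting that $\rho_B(\sigma(y),\hat1_B)=n-\rho_B(\sigma(y))=n-\rho_\Gamma(y)-e(y)=r-i-j$ because $r=\rk(\Gamma)=\rk(B)+\rk(\sigma)=n+(r-n)$ and more precisely because a maximal chain in $\Gamma$ through $y$ has length $\rho_\Gamma(\hat0_\Gamma,y)$ below $y$ and, by strong surjectivity, $\rho_B(\sigma(y),\hat1_B)+e(y)$ above it summing to $r$. Collecting gives $h_B(\Gamma;u,v)=\sum_{i,j\ge0} f_{i,j}\,u^i(1-u)^{r-i-j}(v-u)^j$. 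Finally, for the last assertion I would observe that the polynomials $\{u^i(1-u)^{r-i-j}(v-u)^j\}$, as $(i,j)$ ranges over pairs with $i,j\ge0$ and $i+j\le r$, are linearly independent over $\Z$ — this can be seen by expanding in the monomial basis and tracking, say, the coefficient of the lexicographically extreme monomial in each, or by a change of variables $1-u=a$, $v-u=b$ under which they become $(1-a)^i a^{r-i-j} b^j$, manifestly linearly independent. Hence the numbers $f_{i,j}$ are recovered from $h_B(\Gamma;u,v)$.

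The main obstacle I anticipate is the bookkeeping in the second step: correctly matching the exponents produced by applying Lemma~\ref{l:simplicialsub} \emph{inside the subposet} $\Gamma_x$ (whose ambient Boolean algebra is $[\hat0_B,x]$, not $B$) with the global data $\rho_\Gamma(\hat0_\Gamma,y)$, $e(y)$, and $\rho_B$, and verifying that after the substitution $t=uv^{-1}$ and multiplication by $v^{\rk(\Gamma_x)}$ every fractional or negative power of $v$ cancels and the sum over $x$ genuinely telescopes via the binomial theorem. Once that identity is pinned down, the remaining steps are routine regrouping and a linear-independence argument.
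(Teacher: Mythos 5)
Your proposal is correct and takes essentially the same route as the paper: collapse Definition~\ref{d:mixedpoly} using $g=1$ on Boolean intervals, substitute Lemma~\ref{l:simplicialsub} for each $l_{[\hat 0_B,x]}(\Gamma_x;uv^{-1})$, interchange the order of summation, apply the binomial theorem over the Boolean interval $[\sigma(y),\hat 1_B]$, and then regroup by $(i,j)$. For the final assertion the paper reaches the same conclusion by a single explicit substitution, checking that $(1+u)^r h_B\big(\Gamma;\tfrac{1}{1+u},\tfrac{1+v}{1+u}\big)=\sum_{i,j}f_{i,j}u^{r-i-j}v^j$, which makes the linear independence you invoke completely transparent.
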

\begin{proof}
We compute using Example~\ref{e:boolean} and Lemma~\ref{l:simplicialsub}, 
\begin{align*}
h_B(\Gamma;u,v) &= \sum_{x \in B} v^{\rho_B(x) - \rho_\Gamma(\hat{0}_\Gamma) }l_{[\hat{0}_B,x]}(\Gamma_x;uv^{-1})  \\
&= \sum_{x \in B} v^{\rho_B(x) - \rho_\Gamma(\hat{0}_\Gamma) } \sum_{ y \in \Gamma_x } (-1)^{\rho_B(x) - \rho_\Gamma(y)} (uv^{-1})^{\rho_B(x) - \rho_\Gamma(\hat{0}_\Gamma) - e(y)}(uv^{-1} - 1)^{e(y)} \\
&= \sum_{y \in \Gamma} (v - u)^{e(y)} u^{\rho_\Gamma(\hat{0}_\Gamma,y)} \sum_{\sigma(y) \le x}   (-u)^{\rho_B(x) - \rho_B(\sigma(y))}\\
&= \sum_{y \in \Gamma} (v - u)^{e(y)} u^{\rho_\Gamma(\hat{0}_\Gamma,y)} (1 - u)^{\rho_B(\sigma(y),\hat{1}_B)}.
\end{align*}
The second equality is a direct consequence. Finally, to see that the mixed $h$-polynomial determines the numbers $f_{i,j}$, note that
\[
(1 + u)^{r}h_B(\Gamma;\frac{1}{1 + u},\frac{1 + v}{1 + u}) = \sum_{i,j} f_{i,j} u^{r - i  - j} v^j.  
\]
\end{proof}
 
Finally, we present the following interesting, concrete example of Lemma~\ref{l:simplicialsub2}.

\begin{example}
Let $\cS$ be the barycentric subdivision of a simplex $P$ with $n$ vertices. Note that $\cS$ may be realized as a rational polyhedral subdivision (cf. Section~\ref{s:geometric}). 
By either Lemma~\ref{l:bary} or Lemma~\ref{l:polyhedralsubdivision}, we may consider the corresponding strong formal subdivision $\sigma: \cS \rightarrow [\emptyset,P]$. 
Then the $h$-polynomial of $\cS$ is the \emph{Eulerian polynomial} $A_n(t)$ (see, for example, \cite[(7)]{StaSubdivisions}). That is,
given a permutation $w \in \Sym_n$, let $\ex(w) = \# \{ i \mid w(i) > i \}$ be the number of \emph{excedances} of $w$.  Then 
\[
h(\cS;t) = A_n(t) = \sum_{w \in \Sym_n} t^{\ex(w)}. 
\]
The \emph{derangements} $\DD_n \subseteq \Sym_n$ are the permutations without any fixed points. 
In \cite[Proposition~2.4]{StaSubdivisions}, Stanley proved that 
\[
l_P(\cS;t) := l_{[\emptyset,P]}(\cS;t) = \sum_{ w \in \DD_n} t^{\ex(w)}.
\]
Observe that $\ex(w^{-1}) = \# \{ i \mid w(i) < i \}$. Using Stanley's result, one may  calculate that 
\[
h_P(\cS;u,v)  : = h_{[\emptyset,P]}(\cS;u,v) = \sum_{ w \in \Sym_n } u^{\ex(w)} v^{\ex(w^{-1})}.
\]
More generally, a face $F$ of $\cS$ containing $k$ vertices and contained in a face $Q$ of $P$, corresponds to a chain of faces $\emptyset = S_0 \subsetneq S_1 \subsetneq \cdots  \subsetneq S_k  \subseteq S_{k + 1} = Q$. 
Let $r_i = \dim S_i - \dim S_{i - 1}$, for $i = 1,\ldots,k+1$. Then Athanasiadis and Savvidou proved the following in \cite[Example~5.2]{ASSymmetric}:
\[
l_P(\cS,Q,F;t) := l_{[\emptyset,P]}(\cS,Q,F;t) =\prod_{i = 1}^{k} A_{r_i}(t) \cdot  \sum_{ w \in \DD_{r_{k+ 1}}}  t^{\ex(w)}, 
\]
where $ \sum_{ w \in \DD_{r_{k+ 1}}}  t^{\ex(w)}:= 1$ when $r_{k + 1} = 0$. 
Using this result, a short computation gives
\[
h_P(\cS,Q,F;u,v) := h_{[\emptyset,P]}(\cS,Q,F;u,v) =\prod_{i = 1}^{k} v^{r_i - 1} A_{r_i}(uv^{-1}) \cdot  \sum_{ w \in \Sym_{r_{k+ 1}}}   u^{\ex(w)} v^{\ex(w^{-1})}.
\]
\end{example}

\begin{remark}\label{r:generalizations}
We briefly mention that there exists a natural generalization of the mixed $h$-polynomial to an invariant of three variables with similar properties. 
We will discuss the Ehrhart analogue of this invariant in detail in Section~\ref{s:rlmhstar}.
Let  $\tau: \Omega  \rightarrow  \Gamma$ and $\sigma: \Gamma \rightarrow B$ be strong formal subdivisions  between locally Eulerian posets, 
and assume that $\Omega$,$\Gamma$ are lower Eulerian and $B$ is Eulerian. 
Then we define
\[
l_B(\Omega,\Gamma;u,v) :=  \sum_{ y \in \Gamma } l_{[\sigma(y),\hat{1}_B]}(\Gamma_{\ge y};uv)v^{\rk(\Omega_y)} l_{[\hat{0}_\Gamma,y]}(\Omega_y;uv^{-1}), 
\]
\[
h_B(\Omega,\Gamma;u,v,w) :=  \sum_{x \in B} w^{\rk(\Omega_x)} l_{[\hat{0}_B,x]}( \Omega_x,\Gamma_x;u,v)g([x,\hat{1}_B]; uvw^2). 
\]
Observe that if $\sigma$ is the identity function, then it follows from Example~\ref{e:lidentity} and Definition~\ref{d:mixedpoly} that 
$l_B(\Omega,B;u,v) = v^{\rk(\Omega)}l_B(\Omega;uv^{-1})$ and  $h_B(\Omega,B;u,v,w) =  h_B(\Omega;uw,vw)$.
\end{remark}

\section{The geometry of the local $h$-polynomial}\label{s:geometric}

In this section, we consider polyhedral subdivisions of polytopes, as in Lemma~\ref{l:polyhedralsubdivision}, and give a geometric interpretation of the corresponding local $h$-polynomials 
in terms of intersection cohomology.  This interpretation is also studied by de Cataldo, Mustata, and Migliorini \cite{CMM} in the context of morphisms between toric varieties whose domain is simplicial.  We continue with the notation of the previous sections.

Let $\cS$ be a rational polyhedral subdivision of a polytope $P$.  That is, $\cS$ is a polyhedral subdivision, all of whose vertices have rational coordinates.
Recall from Lemma \ref{l:polyhedralsubdivision} that we have a corresponding strong formal subdivision $\sigma: \cS \rightarrow [\emptyset, P]$ of the face poset of $P$ by the face poset of $\cS$, 
where for every cell $F$ in $\cS$, $\sigma(F)$ denotes the smallest face of $P$ containing $F$. For any face $Q$ of $P$ containing $\sigma(F)$, we may consider the corresponding  local $h$-polynomial:
\[
l_P(\cS,Q,F;t) := l_{[\emptyset, P]}(\cS,Q,F;t) =  l_{[\sigma(F),Q]}(\lk_{\cS|_{Q}}(F);t).
\]
 
A natural class of polyhedral subdivisions are the regular subdivisions.  They are induced by a height function $\omega:A\rightarrow \R$, for a set of points $A\subset P$ containing the vertices of $P$. 
 The cells of the subdivision are the projections 
of the bounded faces of the convex hull of $\operatorname{UH}=\{ (u, \lambda) \mid \lambda \geq\omega(u) \} \in \R^n \times \R$.  A subdivision is said to be \define{regular} if it is induced by some height function.  For more details, see \cite{Triangulations,GKZ}.  A rational, regular subdivision, possibly after replacing $P$ by an integer dilation $nP$, is induced by a height function $\omega:P\cap \Z^n\rightarrow \R$.  Moreover, in this case we may ensure that $\omega$ takes integer values at lattice points. 

The main result of this section is the following:

\begin{theorem}\label{t:localcoef}
Let $\cS$ be a rational polyhedral subdivision of a polytope $P$. Then for every cell $F$ of $\cS$ and face $Q$ of $P$ containing $F$, the  local $h$-polynomial $l_P(\cS,Q,F;t)$ has non-negative coefficients. Moreover, if $\cS$ is a regular subdivision,  
then the coefficients of  $l_P(\cS,Q,F;t)$ are symmetric and unimodal. 
\end{theorem}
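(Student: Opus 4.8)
The plan is to interpret the local $h$-polynomial $l_P(\cS,Q,F;t)$ geometrically as a contribution to the decomposition theorem for a suitable toric morphism, and then extract non-negativity from the fact that intersection cohomology groups are honest vector spaces, and symmetry/unimodality (in the regular case) from the relative hard Lefschetz theorem \cite[Theorem~1.6.3]{CMBulletin}. First I would reduce to the case $Q = P$ and $F = \emptyset$ by the restriction/induction formalism of Remark~\ref{r:localrestrict}: since $l_P(\cS,Q,F;t) = l_{[\sigma(F),Q]}(\lk_{\cS|_Q}(F);t)$, and $\lk_{\cS|_Q}(F)$ is itself (the face poset of) a rational polyhedral subdivision of a polytope whose face poset is $[\sigma(F),Q]$, it suffices to prove the statement for the ``global'' local $h$-polynomial $l_P(\cS;t)$ of a rational polyhedral subdivision $\cS$ of a polytope $P$. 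Here one should be slightly careful that the link of a cell in a polyhedral subdivision is combinatorially a polyhedral subdivision of a polytope (the polytope being the ``transverse slice''), which holds after a rational change of coordinates; I would note this explicitly, perhaps citing the analogous reduction in \cite[Section~7]{StaSubdivisions}.

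Next, following the sketch in the introduction and Stanley \cite{StaSubdivisions}, I would pass to cones: let $C_P \subseteq \R^d \times \R$ be the cone over $P \times \{1\}$, so that the trivial subdivision of $P$ gives the fan $\{$ faces of $C_P\}$ with associated toric variety $X_P$, and $\cS$ induces a rational fan refinement $\Sigma_\cS$ with toric variety $X_\cS$ and a proper birational toric morphism $\pi : X_\cS \to X_P$. The decomposition theorem applied to $\pi_* \mathbf{IC}_{X_\cS}$ expresses the intersection cohomology sheaf of $X_P$ together with shifted $\mathbf{IC}$-sheaves supported on the torus-orbit closures $V(\tau)$, $\tau$ ranging over cones of $C_P$ (equivalently over faces $Q$ of $P$ and the apex). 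Matching the combinatorial identity in Definition~\ref{d:localpoly} — which is Stanley's defining recursion for $l_P$ in terms of the $h$-polynomials $h(\cS_Q;t)$ (themselves Poincar\'e polynomials of stalks of $\pi_* \mathbf{IC}$, as in Example~\ref{e:hEulerian}) and the $g$-polynomials of the dual intervals $[Q,\hat 1]^*$ (Poincar\'e polynomials of the $\mathbf{IC}$ of the transverse toric singularity) — shows that $t^{?}\cdot l_P(\cS;t)$ is precisely the Poincar\'e polynomial of the multiplicity space of the summand of $\pi_*\mathbf{IC}_{X_\cS}$ supported at the torus-fixed point of $X_P$. This is where non-negativity comes from: that multiplicity space is a genuine graded vector space. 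For the mixed $h$-polynomial version (Theorem~\ref{t:refineprop}\eqref{i:rh6'} identifies $v^r l_B(\Gamma;uv^{-1})$ with the top-degree part of $h_B(\Gamma;u,v)$), the bigrading comes from the perverse/weight filtration, and Theorem~\ref{t:localcoef} should really be stated and proved for $h_P(\cS,Q,F;u,v)$, with the local $h$-polynomial recovered on the diagonal — I would organize the proof that way and derive the stated corollary at the end.

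For the unimodality and symmetry in the regular case: if $\cS$ is regular, the height function $\omega$ provides a $\pi$-relatively ample line bundle $\mathcal{L}$ on $X_\cS$, and the relative hard Lefschetz theorem gives, for each $i \ge 0$, isomorphisms
\[
\cup\, c_1(\mathcal{L})^i : \cHp^{-i}(\pi_* \mathbf{IC}_{X_\cS}) \xrightarrow{\ \sim\ } \cHp^{i}(\pi_* \mathbf{IC}_{X_\cS}),
\]
and, restricting to the summands supported at the fixed point, a Lefschetz $\mathfrak{sl}_2$-action on the relevant multiplicity space; the symmetry is then Poincar\'e duality on that space and the unimodality is the standard consequence that a finite-dimensional $\mathfrak{sl}_2$-representation has unimodal weight multiplicities. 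I would also record that symmetry of $l_P(\cS;t)$ alone already follows combinatorially from Corollary~\ref{c:symmetry}, so the new input of relative hard Lefschetz is really the unimodality (and, in the mixed setting, the stronger unimodality along vertical strips). The main obstacle I anticipate is bookkeeping: carefully identifying which stalks/costalks and multiplicity spaces of $\pi_*\mathbf{IC}$ correspond to which combinatorial polynomials $h(\cS_Q;t)$, $g([Q,\hat 1];t)$, $g([Q,\hat 1]^*;t)$ appearing in Definition~\ref{d:localpoly}, keeping the cohomological degree shifts (and the $uv$-versus-$u/v$ substitutions relating the mixed and local polynomials) consistent — this is exactly the point where the decomposition-theorem formula and Stanley's recursion must be shown to be the same identity. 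A secondary subtlety is ensuring the reduction to $Q=P$, $F=\emptyset$ genuinely produces a rational polyhedral subdivision of an honest polytope (rather than of a cone or a non-pointed object), for which the standard trick is to intersect the relevant cone with an affine hyperplane; I would mention this but not belabor it.
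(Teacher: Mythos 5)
Your proposal is correct and uses the same geometric machinery as the paper: interpreting $g$- and $h$-polynomials via intersection cohomology of toric varieties, applying the decomposition theorem to the toric morphism $\pi : X(\Sigma) \to X(P)$ to obtain non-negativity of the multiplicity spaces, and invoking Poincar\'e duality and relative hard Lefschetz for symmetry and unimodality.

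The one organizational difference worth flagging: you propose to first reduce to the case $F=\emptyset$, $Q=P$ by passing to the transverse slice at $F$ inside $Q$, while the paper avoids this reduction entirely. Instead, it considers, for each cell $F$, the restricted morphism $\pi_F : V_F \to V_{\sigma(F)}$ on torus-orbit closures (whose fan is precisely the star fan you would slice), applies the decomposition theorem to $\pi_F$ to define multiplicity polynomials $\phi_P(\cS,Q,F;t)$, and then proves a locality lemma ($\phi_P(\cS,Q,F;t) = \phi_Q(\cS|_Q,Q,F;t)$, via semi-simplicity and naturality) that handles the dependence on $Q$. This lets the paper avoid the subtlety you correctly flag --- namely that one must justify that $\lk_{\cS|_Q}(F)$, with its map to $[\sigma(F),Q]$, really does arise from a rational (and, for the unimodality statement, regular) polyhedral subdivision of a quotient polytope. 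Your reduction is morally the same move (the orbit closure $V_F$ is the toric variety of that transverse slice), and it would work, but it requires a careful check that rationality and regularity descend to the slice; the paper's $\pi_F$-plus-locality formulation makes those checks unnecessary. The suggestion to prove the mixed-polynomial version first and derive the local one by restricting to top combined degree would also work, but the paper goes the other way: it proves the local statement directly and then deduces Corollary~\ref{c:refine} by expanding Definition~\ref{d:mixedpoly} and using non-negativity of the $g$-polynomials.
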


Our proof proceeds by giving a geometric interpretation of the  local $h$-polynomial.

We refer the reader to \cite{FulIntroduction} for the relevant background on toric varieties.  
If $P$ is a polytope in a vector space   $N_\R$, then let $\tau_P$ denote the cone over $P \times \{ 1 \}$ in $N_\R \times \R$.  
Let $\Sigma$ denote the fan refinement of $\tau_P$ induced by $\cS$, with cones given by the cones over $F \times \{ 1\}$, where $F$ is a cell of $\cS$ (the empty cell of $\cS$ corresponds to $\{0\}$). 
Let $X(\Sigma)$ and $X(P)$ denote the toric varieties corresponding to $\Sigma$ and $\tau_P$ respectively. For a face $Q\subseteq P$ (resp. cell $F$ of $\Sigma$), let $V_Q$ (resp. $V_F$) be the closed subvariety of $X(P)$ (resp. $X(\Sigma)$) corresponding to $\tau_Q$ (resp. $\tau_F$).
There is an inclusion reversing correspondence between the cells $F$ of $\cS$ and closed torus-invariant subvarieties $V_F$ of $X(\Sigma)$.  We have a corresponding proper, birational morphism of toric varieties 
\[
\pi: X(\Sigma) \rightarrow X(P). 
\]
Moreover, $\pi$ is projective precisely when $\cS$ is a regular polyhedral subdivision of $P$. In particular, by restricting $\pi$ to $V_F$, we have a proper map
\[
\pi_F: V_F \rightarrow V_{\sigma(F)} \subseteq X(P)
\]
which is projective when $\cS$ is a regular polyhedral subdivision.

We recall the 
following combinatorial interpretation of the Betti polynomials of  intersection cohomology \cite[Thm~6.2]{DLWeights}.
Using the fact that, by Example~\ref{e:hEulerian},  $g([Q,P];t) = h([Q,P];t)$,
\begin{equation}\label{e:gintersect}
g([Q,P];t)  = \sum_{i} \dim IH^{2i}(V_Q;\C) t^i.   
\end{equation}
For $V_F$, we have
\begin{equation}\label{e:intersect}
h(\lk_{\cS}(F);t)  = \sum_{i} \dim IH^{2i}(V_F;\C) t^i.   
\end{equation}

The decomposition theorem \cite{BBD} applied to $\pi_F$ (see \cite{CM,CMBulletin,CMM} for details) states that 
\begin{eqnarray} \label{mdecomp}
(\pi_F)_*IC_{V_F}&\cong& \bigoplus_{Q \supseteq 
\sigma(F)}  \bigoplus_i IC_{V_Q}(L_{i,Q,F})[-i]
\end{eqnarray}
for some constant local systems $L_{i,Q,F}$ on $V_Q$.  
Write 
 \[ \phi_Q(F,i)=\rk L_{2i,Q,F}\]
where each $\phi_Q(F,i)$ 
is non-negative.
Pushing forward to a point, we get the following formula
\begin{equation}\label{e:hformula}
h(\lk_{\cS}(F);t)  = \sum_{\sigma(F) \subseteq Q \subseteq P}  \phi_P(\cS,Q,F;t) g([Q,P];t),
\end{equation}
where $ \phi_P(\cS,Q,F;t)=\sum_i \phi_Q(F,\dim Q-\dim F-i)t^i$. 

\begin{lemma}\label{l:philocal} The polynomial $\phi_P(\cS,Q,F;t)$ only depends on $(Q,\cS|_Q,F)$ in the following sense:
$\phi_P(\cS,Q,F;t)=\phi_Q(\cS|_Q,Q,F;t)$ 
\end{lemma}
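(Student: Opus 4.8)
The plan is to show that the polynomial $\phi_P(\cS,Q,F;t)$ is a \emph{local} invariant of the proper morphism $\pi_F$ near the generic point of the torus orbit $O(\tau_Q)\subseteq V_{\sigma(F)}$, and then to identify, on a torus-invariant neighborhood of that orbit, the map $\pi_F$ with the analogous map of the ``$Q$-picture'' --- the one built from $Q$ and $\cS|_Q$ in place of $P$ and $\cS$.

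\textbf{Step 1: locality of the multiplicities.} First I would record that the integers $\phi_Q(F,i)=\rk L_{2i,Q,F}$ can be read off of $(\pi_F)_*IC_{V_F}$ in an arbitrarily small neighborhood of a general point $p\in O(\tau_Q)$. Taking perverse cohomology in \eqref{mdecomp} gives $\cHp^i((\pi_F)_*IC_{V_F})\cong\bigoplus_{Q'\supseteq\sigma(F)}IC_{V_{Q'}}(L_{i,Q',F})$, a semisimple perverse sheaf. Since $V_{Q'}\cap U_{\tau_Q}=\emptyset$ for $Q'\supsetneq Q$, while $V_Q\cap U_{\tau_Q}=O(\tau_Q)$ is smooth, the support conditions for intersection complexes give
\[
\rk L_{i,Q,F}=\dim_\C \mathcal H^{-\dim V_Q}\bigl(\cHp^i((\pi_F)_*IC_{V_F})\bigr)_p
\]
for general $p\in O(\tau_Q)$: the $Q'=Q$ summand contributes the stalk of $L_{i,Q,F}$, the summands with $Q'\subsetneq Q$ have $\mathcal H^{-\dim V_Q}$ supported in dimension $<\dim V_Q$ hence vanish at $p$, and the summands with $Q'\supsetneq Q$ vanish on $U_{\tau_Q}$. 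By proper base change the right-hand side is unchanged if $\pi_F$ is replaced by its restriction over any torus-invariant open $W\subseteq V_{\sigma(F)}$ containing $O(\tau_Q)$.

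\textbf{Step 2: the local model.} Next I would take $W=U_{\tau_Q}\subseteq X(P)=X(\tau_P)$, the distinguished affine open of the face $\tau_Q=\mathrm{cone}(Q\times\{1\})$. Because $\Sigma$ refines $\tau_P$, its subfan of cones contained in $\tau_Q$ is exactly $\{\tau_{F'}:F'\in\cS|_Q\}$, so $\pi^{-1}(U_{\tau_Q})=X(\Sigma|_Q)$. The standard splitting of $U_{\tau_Q}$ as a product of a torus and the affine toric variety of the face $\tau_Q$ produces compatible isomorphisms $U_{\tau_Q}\cong(\C^*)^{\,n-\dim Q}\times X(Q)$ and $\pi^{-1}(U_{\tau_Q})\cong(\C^*)^{\,n-\dim Q}\times X(\Sigma^Q)$ under which $\pi$ becomes $\mathrm{id}\times\pi^Q$, where $\pi^Q\colon X(\Sigma^Q)\to X(Q)$ is the toric map attached to $(Q,\cS|_Q)$ (the lattice $\Span(\tau_Q)\cap(N\times\Z)$ playing the role of the intrinsic lattice of $Q$); moreover $V_F$, $V_{\sigma(F)}$ and $O(\tau_Q)$ correspond to $(\C^*)^{\,n-\dim Q}$ times, respectively, the analogous $V_F^Q$, $V_{\sigma(F)}^Q$ and the torus-fixed point $V_Q^Q$ of $X(Q)$. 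Hence $IC_{V_F}$ restricts to $\underline{\C}[\,n-\dim Q\,]\boxtimes IC_{V_F^Q}$, and since $\pi_F^Q$ is proper, Künneth gives
\[
(\pi_F)_*IC_{V_F}\,|_{U_{\tau_Q}}\;\cong\;\underline{\C}[\,n-\dim Q\,]\boxtimes(\pi_F^Q)_*IC_{V_F^Q}.
\]
Applying the formula of Step 1 to both sides --- on the right at the torus-fixed point $V_Q^Q$, which is exactly where the $Q$-picture reads off $\phi_Q(\cS|_Q,Q,F;\cdot)$ from its own decomposition theorem --- yields $\rk L_{i,Q,F}=\rk L^Q_{i,Q,F}$ for all $i$, and therefore $\phi_P(\cS,Q,F;t)=\phi_Q(\cS|_Q,Q,F;t)$.

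\textbf{Main obstacle.} The only genuinely delicate part is the bookkeeping in Step 2: one must verify that the usual ``product with a torus'' description of a toric variety near an orbit closure is compatible simultaneously with the refinement morphism $\pi$ and with all three subvarieties $V_F$, $V_{\sigma(F)}$, $O(\tau_Q)$, so that the localized morphism is \emph{literally} the $Q$-picture morphism, down to the identification of lattices. Once this identification is in place, the sheaf-theoretic conclusion (Künneth plus the local formula of Step 1) is purely formal.
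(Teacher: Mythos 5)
Your proof is correct and takes essentially the same approach as the paper: the paper's one-line invocation of ``naturality'' of the formula $L_{i,Q,F}=i_Q^*\cH^{-(n-\dim Q)}(\cHp^i((\pi_F)_*IC_{V_F}))$ is precisely the locality/product-decomposition argument you spell out in Steps 1 and 2. You have simply made explicit what the paper leaves implicit.
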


\begin{proof}
By the semi-simplicity theorem as described in \cite{CM}, the local systems have a description in terms of usual and perverse cohomology sheaves as
\[L_{i,Q,F}=i_Q^*\cH^{-(n-\dim Q)}(\cHp^i((\pi_F)_*IC_{V_F}))\]
where $i_Q:V_Q\hookrightarrow X(P)$. 
By naturality of this description, the conclusion follows.
\end{proof}

\begin{lemma} $\phi_P$ obeys the following:
\begin{enumerate}
\item \label{i:pd} The coefficients of $\phi_P(\cS,Q,F;t)$ are symmetric: 
\[\phi_P(\cS,Q,F;t) = t^{\dim Q - \dim F}\phi_P(\cS,Q,F;t^{-1});\]
\item \label{i:hardlefschetz} If $\cS$ is a regular polyhedral subdivision, then the coefficients of $\phi_P(\cS,Q,F;t)$ are unimodal.
\end{enumerate}
\end{lemma}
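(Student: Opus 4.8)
The plan is to read both statements off the decomposition \eqref{mdecomp} for the proper morphism $\pi_F\colon V_F\to V_{\sigma(F)}$, through the description of the multiplicity local systems in Lemma~\ref{l:philocal}.

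For \eqref{i:pd} I would invoke Verdier self-duality. Since $\pi_F$ is proper, $(\pi_F)_*$ commutes with Verdier duality, and $IC_{V_F}$ is self-dual up to shift; hence $(\pi_F)_*IC_{V_F}$ is self-dual up to the same shift. Applying Verdier duality to the right-hand side of \eqref{mdecomp}, using that the Verdier dual of $IC_{V_Q}(L)[-i]$ is $IC_{V_Q}(L^\vee)[i]$ together with $\rk L^\vee=\rk L$, and matching the result against \eqref{mdecomp} itself by uniqueness of the decomposition of a semisimple complex into shifted intersection complexes, I would obtain a symmetry among the ranks $\rk L_{i,Q,F}$. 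Tracking the shift $\dim V_F-\dim V_Q=\dim Q-\dim F$ between the indexing in \eqref{mdecomp} and the perverse degree, this becomes $\phi_Q(F,i)=\phi_Q(F,\dim Q-\dim F-i)$ for all $i$, i.e.\ exactly $\phi_P(\cS,Q,F;t)=t^{\dim Q-\dim F}\phi_P(\cS,Q,F;t^{-1})$.

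For \eqref{i:hardlefschetz} I would use that when $\cS$ is regular the morphism $\pi$, and hence $\pi_F\colon V_F\to V_{\sigma(F)}$, is projective; fix a $\pi_F$-ample class $\xi$. By the relative hard Lefschetz theorem \cite[Theorem~1.6.3]{CMBulletin}, cup product with powers of $\xi$ gives isomorphisms $\xi^{i}\colon \cHp^{-i}\big((\pi_F)_*IC_{V_F}\big)\xrightarrow{\sim}\cHp^{i}\big((\pi_F)_*IC_{V_F}\big)$ for $i\ge 0$. The standard consequence --- the Lefschetz decomposition of the graded perverse sheaf $\bigoplus_i\cHp^{i}\big((\pi_F)_*IC_{V_F}\big)$ --- is that the maps induced by $\xi$ between consecutive nonzero perverse cohomology sheaves are injective below the centre and surjective above it. Since all these perverse sheaves are semisimple, a morphism between them is block-diagonal for the decomposition into $IC_{V_Q}$-isotypic summands (Schur's lemma), so the same injectivity and surjectivity pass to the multiplicity local systems; hence $\rk L_{i,Q,F}$ is unimodal in $i$, and therefore so are the $\phi_Q(F,i)$ and the coefficients of $\phi_P(\cS,Q,F;t)$ --- which by \eqref{i:pd} are symmetric about $(\dim Q-\dim F)/2$. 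Equivalently, one can transport the injectivity and surjectivity through the exact functor $i_Q^*\cH^{-(n-\dim Q)}(-)$ of Lemma~\ref{l:philocal}.

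The step I expect to require the most care is the bookkeeping of normalizations and shifts --- the intrinsic shift of each intersection complex, the $[-i]$ in \eqref{mdecomp}, and the $-(n-\dim Q)$ in Lemma~\ref{l:philocal} --- so that the centre of the symmetry really comes out to be $\dim Q-\dim F$; and verifying that the local-system-extraction functor of Lemma~\ref{l:philocal} is exact on the semisimple subcategory of finite direct sums of shifted intersection complexes with constant coefficients on $X(P)$, which is precisely what legitimises moving injectivity and surjectivity from perverse sheaves down to the multiplicity local systems. Beyond that, both parts are a direct application of the decomposition theorem \cite{BBD,CM,CMM}.
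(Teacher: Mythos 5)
Your proof is correct and takes essentially the same approach as the paper's, which is a one-line citation of Poincar\'e duality for intersection cohomology for part (1) and of the relative hard Lefschetz theorem \cite[Theorem~1.6.3]{CMBulletin} for part (2). You have simply spelled out the details that the paper delegates to the cited references: Verdier self-duality of the proper pushforward and uniqueness of the semisimple decomposition for (1), and the passage from relative hard Lefschetz isomorphisms to injectivity/surjectivity of cup-by-$\xi$ on the isotypic multiplicity spaces via Schur's lemma for (2).
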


\begin{proof}
The first statement is  Poincar\'{e} duality for intersection cohomology.  The second is the relative hard Lefschetz theorem \cite[Theorem~1.6.3]{CMBulletin}.
\end{proof}

\begin{lemma} We have the identity
\[
l_P(\cS,Q,F;t) = \phi_P(\cS,Q,F;t).
\]
\end{lemma}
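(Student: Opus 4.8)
The idea is that \eqref{e:hformula} says exactly that $\phi_P(\cS,Q,F;t)$ satisfies the defining relation of $l_P(\cS,Q,F;t)$, so the two agree by the triangularity (hence invertibility) of the relevant linear system. Concretely, applying Remark~\ref{r:reverseh} to the strong formal subdivision $\sigma\colon \lk_\cS(F)\rightarrow [\sigma(F),P]$ (using that $\lk_\cS(F)=\cS_{\ge F}$ has minimal element $F$, and that $(\cS_{\ge F})_Q=\lk_{\cS|_Q}(F)$, so that the term indexed by $Q$ is $l_{[\sigma(F),Q]}((\cS_{\ge F})_Q;t)=l_P(\cS,Q,F;t)$) gives
\[
h(\lk_\cS(F);t)=\sum_{\sigma(F)\subseteq Q\subseteq P} l_P(\cS,Q,F;t)\,g([Q,P];t),
\]
while \eqref{e:hformula} is literally this same identity with $l_P(\cS,Q,F;t)$ replaced by $\phi_P(\cS,Q,F;t)$.

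To conclude I would run this comparison over every face of $P$ containing $F$. Fix a face $Q_0\supseteq F$. Applying Remark~\ref{r:reverseh} to the restricted strong formal subdivision $\sigma\colon \lk_{\cS|_{Q_0}}(F)\rightarrow[\sigma(F),Q_0]$ (which is a strong formal subdivision by Remark~\ref{r:localrestrict}, and for which the term indexed by $Q\subseteq Q_0$ is again $l_P(\cS,Q,F;t)$) yields
\[
h(\lk_{\cS|_{Q_0}}(F);t)=\sum_{\sigma(F)\subseteq Q\subseteq Q_0} l_P(\cS,Q,F;t)\,g([Q,Q_0];t).
\]
Likewise, applying \eqref{e:hformula} with $(P,\cS)$ replaced by $(Q_0,\cS|_{Q_0})$ and invoking the locality statement Lemma~\ref{l:philocal} to identify $\phi_{Q_0}(\cS|_{Q_0},Q,F;t)$ with $\phi_Q(\cS|_Q,Q,F;t)=\phi_P(\cS,Q,F;t)$, yields
\[
h(\lk_{\cS|_{Q_0}}(F);t)=\sum_{\sigma(F)\subseteq Q\subseteq Q_0} \phi_P(\cS,Q,F;t)\,g([Q,Q_0];t).
\]
Subtracting and inducting on $\dim Q_0-\dim F$ finishes the proof: since $g([Q_0,Q_0];t)=1$ the term $Q=Q_0$ isolates $l_P(\cS,Q_0,F;t)-\phi_P(\cS,Q_0,F;t)$, the terms with $Q\subsetneq Q_0$ vanish by induction, and the base case $Q_0=\sigma(F)=F$ is the empty-sum case (where both polynomials equal $1$). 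In particular $l_P(\cS,Q,F;t)=\phi_P(\cS,Q,F;t)$, as desired.

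The substantive points are purely bookkeeping: checking $\lk_{\cS|_{Q_0}}(F)=(\cS_{\ge F})_{Q_0}$ as lower Eulerian posets and that the minimal face $\sigma(F)$ is the same whether computed in $P$ or in $Q_0$; and using Lemma~\ref{l:philocal} so that the $\phi$-values computed inside $Q_0$ match those inside $P$. Alternatively, one can phrase the uniqueness step once and for all without induction: the function $(x,x')\mapsto g([x,x'];t)$ on $\Int([\sigma(F),P])$ is a unit in the incidence algebra (all diagonal entries are $1$), so the linear system determining the coefficients $l_P(\cS,Q,F;t)$ from the data $\{h(\lk_{\cS|_{Q}}(F);t)\}_{Q}$ has a unique solution, and $\phi_P(\cS,Q,F;t)$ solves the same system by \eqref{e:hformula} and Lemma~\ref{l:philocal}. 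I do not expect any real obstacle here; the only care needed is to keep the restrictions consistent.
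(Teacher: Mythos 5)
Your argument is essentially the same as the paper's: both apply Remark~\ref{r:reverseh} and \eqref{e:hformula} (via Lemma~\ref{l:philocal}) over all faces $Q_0 \supseteq \sigma(F)$ and then invoke uniqueness, which your second formulation (invertibility of $(x,x')\mapsto g([x,x'];t)$ in the incidence algebra) captures exactly as the paper does via Theorem~\ref{t:inverse}. One small slip in your inductive variant: the base case is $Q_0=\sigma(F)$, not $Q_0=F$ (these coincide only when $F$ is itself a face of $P$), and there both sides equal $h(\lk_{\cS|_{\sigma(F)}}(F);t)$ rather than $1$; this does not affect the conclusion, and the incidence-algebra phrasing avoids the issue entirely.
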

\begin{proof}
 For any 
face $Q \subseteq P$ containing $F$, by \eqref{e:hformula} and Lemma~\ref{l:philocal}, we have
\[
h(\lk_{\cS|_{Q}}(F);t)  = \sum_{\sigma(F) \subseteq Q' \subseteq Q} \phi_P(\cS,Q',F;t) g([Q',Q];t). 
\] 
On the other hand, by Remark~\ref{r:reverseh}, 
\[
h(\lk_{\cS|_{Q}}(F);t)  = \sum_{\sigma(F) \subseteq Q' \subseteq Q} l_P(\cS,Q',F;t) g([Q',Q];t). 
\]
As in Remark~\ref{r:reverseh}, it follows from Theorem~\ref{t:inverse} that both $l_P(\cS,Q,F;t)$ and $ \phi_P(\cS,Q,F;t)$ are determined by 
the equation in Definition~\ref{d:localpoly}. 
\end{proof}

\begin{remark}\label{r:conject}
When $F = Q = \emptyset$,  Stanley gave a geometric interpretation of the local $h$-polynomial $l_P(\cS;t)$ in terms of the decomposition theorem for intersection cohomology \cite[Theorem~5.2, Theorem~7.9]{StaSubdivisions} and proved the above theorem in this case.
When $\cS$ is a triangulation, the non-negativity of $l_P(\cS,Q,F;t)$ was conjectured by Nill and Schepers in \cite{NSCombinatorial}, 
and proved by Athanasiadis and Savvidou for triangulations of simplices in \cite[Theorem~5.4]{ASSymmetric}. For regular triangulations of simplices, 
the fact that the coefficients of  $l_P(\cS,Q,F;t)$ are symmetric and unimodal answers affirmatively a question of Athanasiadis \cite[Question~2.16]{AthSurvey}.
\end{remark}

\begin{remark}\label{r:irrational}
The condition in Theorem~\ref{t:localcoef} that the polyhedral subdivision is rational is almost certainly unnecessary.  In fact, when $\cS$ is a (not necessarily rational) polyhedral subdivision of a polytope $P$, the identification of 
the coefficients of the  local $h$-polynomial with
terms in the decomposition theorem can be phrased in terms of the intersection cohomology on fans, 
developed by Barthel, Brasselet, Fieseler and Kaup \cite{BBFK}. One may show in this way that the coefficients of $l_P(\cS,Q,F;t)$ are non-negative. However, at this time, an analogue of the 
relative hard Lefschetz theorem is a topic of current research, and hence the unimodality statement above can not currently be generalized. We note that 
the hard Lefschetz theorem for intersection cohomology on fans due to Karu \cite{KarHard} implies that if $\cS$ is regular then 
 $l_P(\cS,\sigma(F),F;t) = h(\lk_{\cS|_{\sigma(F)}}(F);t)$ 
has unimodal coefficients. 
Also, if $\cS$ is a regular triangulation of a simplex, then $\cS$ can be deformed to a rational triangulation without changing the corresponding strong formal subdivision of posets, and hence $l_P(\cS,Q,F;t)$ has unimodal coefficients.  
\end{remark}

For any face $Q$ of $P$ containing $\sigma(F)$, we may consider the corresponding  mixed $h$-polynomial:
\[
h_P(\cS,Q,F;u,v) := h_{[\emptyset, P]}(\cS,Q,F;u,v) =  h_{[\sigma(F),Q]}(\lk_{\cS|_{Q}}(F);u,v).
\]

\begin{corollary}\label{c:refine}
Let $\cS$ be a rational polyhedral subdivision of a polytope $P$. Then for every cell $F$ of $\cS$ contained in a face $Q$ of $P$, the  mixed $h$-polynomial $h_P(\cS,Q,F;u,v)$ has non-negative integer coefficients. Moreover, 
\[
h_P(\cS,Q,F;u,v) \ge v^{\dim Q - \dim F} l_P(\cS,Q,F;uv^{-1}). 
\]
If $\cS$ is a regular subdivision, and we write $h_P(\cS,Q,F;u,v) = \sum_{i,j \ge 0} h_{i,j} u^i v^j$ for some non-negative integers $h_{i,j}$, then the sequences $\{ h_{i,k-i} \}_{i = 0,\ldots,k}$ are
symmetric and unimodal.  
\end{corollary}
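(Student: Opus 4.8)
The plan is to reduce everything to Theorem~\ref{t:localcoef}, Corollary~\ref{c:symmetry}, and Theorem~\ref{t:refineprop}, applied through the defining formula in Definition~\ref{d:mixedpoly}. Unwinding that definition in the polyhedral setting (so that $\Gamma=(\cS_{\ge F})_Q=\lk_{\cS|_Q}(F)$ and $B=[\sigma(F),Q]$) gives
\[
h_P(\cS,Q,F;u,v)=\sum_{\sigma(F)\subseteq Q'\subseteq Q} v^{\dim Q'-\dim F}\,l_P(\cS,Q',F;uv^{-1})\,g([Q',Q];uv).
\]
First I would prove non-negativity of the coefficients. For each $Q'$ with $\sigma(F)\subseteq Q'\subseteq Q$, Theorem~\ref{t:localcoef} (applied to the rational subdivision $\cS|_{Q'}$ of the polytope $Q'$) says $l_P(\cS,Q',F;t)$ has non-negative coefficients, and Corollary~\ref{c:symmetry} says it is symmetric of degree $\dim Q'-\dim F$; hence $v^{\dim Q'-\dim F}l_P(\cS,Q',F;uv^{-1})$ is homogeneous of degree $\dim Q'-\dim F$ in $u,v$ with non-negative integer coefficients. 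Moreover, since the vertices of $P$ are vertices of $\cS$, rationality of $\cS$ forces $P$, and hence each face of $P$, to be a rational polytope, so $g([Q',Q];t)$ has non-negative coefficients by Stanley's theorem on $g$-polynomials of rational polytopes (cf.\ \eqref{e:gintersect}). Each summand is therefore a product of polynomials with non-negative integer coefficients, so $h_P(\cS,Q,F;u,v)$ has non-negative integer coefficients. The inequality then follows from Theorem~\ref{t:refineprop}\eqref{i:rh6'}: the $Q'=Q$ term equals $v^{\dim Q-\dim F}l_P(\cS,Q,F;uv^{-1})$, while, since $g([Q',Q];t)$ has degree $<(\dim Q-\dim Q')/2$ for $Q'<Q$, every other (non-negative) summand has combined $u,v$-degree strictly below $\dim Q-\dim F$; subtracting off the top-degree part leaves a sum of non-negative terms.

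Now suppose $\cS$ is regular and write $h_P(\cS,Q,F;u,v)=\sum_{i,j\ge 0}h_{i,j}u^iv^j$. Symmetry of the diagonal sequences, $h_{i,k-i}=h_{k-i,i}$, is immediate from the $uv$-interchange symmetry Theorem~\ref{t:refineprop}\eqref{i:rh1'}. For unimodality, fix $k$ and read off the diagonal coefficient from the displayed formula: writing $l_P(\cS,Q',F;t)=\sum_m \ell^{Q'}_m t^m$, $g([Q',Q];t)=\sum_j b^{Q'}_j t^j$, and $d_{Q'}=\dim Q'-\dim F$, the $Q'$-summand is $\sum_{m,j}\ell^{Q'}_m b^{Q'}_j\,u^{m+j}v^{d_{Q'}-m+j}$, whose coefficient of $u^iv^{k-i}$ is $b^{Q'}_{j_0}\ell^{Q'}_{i-j_0}$ with $j_0=(k-d_{Q'})/2$ (and is $0$ unless $k-d_{Q'}$ is a non-negative even integer). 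Hence $h_{i,k-i}=\sum_{Q'} b^{Q'}_{j_0}\,\ell^{Q'}_{i-j_0}$.

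The decisive observation — and the only point where regularity is used — is that for each contributing $Q'$ the sequence $(\ell^{Q'}_{i-j_0})_i$ is symmetric and unimodal with symmetry axis exactly at $i=k/2$: the sequence $(\ell^{Q'}_m)_m$ is symmetric about $d_{Q'}/2$ by Corollary~\ref{c:symmetry} and unimodal by Theorem~\ref{t:localcoef} (using that $\cS|_{Q'}$ is regular), and shifting by $j_0$ moves its centre to $j_0+d_{Q'}/2=k/2$, independently of $Q'$. Since the coefficients $b^{Q'}_{j_0}$ are non-negative, $(h_{i,k-i})_i$ is a non-negative linear combination of symmetric unimodal sequences sharing the common symmetry axis $i=k/2$, and any such combination is again symmetric and unimodal. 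I expect the only real care needed to be exactly this alignment of symmetry axes: a sum of unimodal sequences need not be unimodal, but a sum of symmetric unimodal sequences about a common centre is, and the homogeneity degrees appearing in the formula are precisely what force all these centres to coincide at $k/2$.
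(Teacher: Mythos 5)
Your proof is correct and follows the same route as the paper: both expand $h_P(\cS,Q,F;u,v)$ via Definition~\ref{d:mixedpoly}, derive non-negativity from Theorem~\ref{t:localcoef} together with the non-negativity of $g$-polynomials of rational polytopes (\eqref{e:gintersect}), extract the inequality from the $Q'=Q$ term, and derive unimodality of the diagonal sequences from the regular case of Theorem~\ref{t:localcoef}. The paper compresses the last step to a single sentence; you spell out the one genuinely load-bearing observation it leaves implicit, namely that each summand $v^{d_{Q'}}l_P(\cS,Q',F;uv^{-1})\,g([Q',Q];uv)$ contributes to the total-degree-$k$ diagonal a symmetric unimodal sequence centred at $k/2$ regardless of $Q'$, so the sum inherits symmetry and unimodality.
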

\begin{proof}
By Definition~\ref{d:mixedpoly},
\[
h_P(\cS,Q,F;u,v)  =  \sum_{\sigma(F) \le Q' \le Q} v^{\dim Q' - \dim F} l_{P}(\cS,Q',F;uv^{-1}) g([Q',Q];uv).
\]
It follows from Theorem~\ref{t:localcoef} 
and \eqref{e:gintersect} that the coefficients of each polynomial on the right hand side of the above equation are non-negative integers. The first statement follows. If $\cS$ is regular, then by Theorem~\ref{t:localcoef} the coefficients of $v^{\dim Q' - \dim F}l_{P}(\cS,Q',F;uv^{-1})$ are symmetric (with respect to the appropriate degree) and unimodal,
and the second statement follows.  
\end{proof}

\begin{remark}
Specializing Corollary~\ref{c:refine} by setting $Q = P$ and $v = 1$ gives 
\[
h(\lk_\cS(F);u) \ge  l_{[\sigma(F),P]}(\lk_\cS(F);u). 
\]
In the case when $F = \emptyset$ and $\cS$ is a triangulation of a simplex, Stanley proved the stronger statement that the coefficients of $h(\cS;u) -  l_P(\cS;u)$ form an $O$-sequence \cite[Corollary~4.8]{StaSubdivisions}. 
\end{remark}

\begin{remark} 
It would be interesting to have an explicit geometric description of $h_P(\cS,Q,F;u,v)$. 
From a different geometric perspective to the one in this section, we remark that when $\cS$ is a unimodular triangulation of a lattice polytope $P$, then the coefficients of $h_P(\cS;u,v)$ may be interpreted as mixed Hodge numbers (see Example~\ref{e:unimodular} and Remark~\ref{r:geometric}). 
\end{remark}

\begin{corollary}\label{c:geomrefine}
Let $\cS$ be a rational polyhedral subdivision of a polytope $P$, and let $\cS'$ be a rational polyhedral subdivision refining $\cS$. Then 
\begin{equation}\label{e:h}
h_P(\cS';u,v) \ge   h_P(\cS;u,v), 
\end{equation}
\begin{equation}\label{e:l}
l_P(\cS';t) \ge  l_P(\cS;t).
\end{equation}
Moreover, if $\cS'$ and $\cS$ are regular subdivisions, then $l_P(\cS';t) - l_P(\cS;t)$ is a polynomial with non-negative, symmetric, unimodal coefficients.
\end{corollary}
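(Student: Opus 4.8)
The plan is to derive both inequalities from the composition formulae for the invariants of a strong formal subdivision, which reduce everything to pieces already controlled by Theorem~\ref{t:localcoef} and Corollary~\ref{c:refine}. By Lemma~\ref{l:polyhedralsubdivision} the refinement of $\cS$ by $\cS'$ gives a strong formal subdivision $\tau\colon\cS'\to\cS$ of rank $0$, and composing it with $\sigma\colon\cS\to[\emptyset,P]$ recovers the strong formal subdivision attached to $\cS'$ (Remark~\ref{r:compose}). For a cell $F$ of $\cS$, restricting $\tau$ over the order ideal $[\emptyset,F]\subseteq\cS$ produces the restriction $\cS'|_F$, which is a rational polyhedral subdivision of the polytope $F$; moreover, if $\cS'$ is regular then so is $\cS'|_F$, since a height function inducing $\cS'$ restricts to one inducing $\cS'|_F$. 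I will write $l_P(\cS,P,F;t)=l_{[\sigma(F),P]}(\lk_\cS(F);t)$ and, similarly, $h_P(\cS,P,F;u,v)=h_{[\sigma(F),P]}(\lk_\cS(F);u,v)$; note that $l_{[\emptyset,F]}(\cS'|_F;t)$ is symmetric of degree $\rk(\cS'|_F)=\dim F+1$ by Corollary~\ref{c:symmetry}, hence has degree at most $\rk(\cS'|_F)$.

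First I would apply the composition formula for the local $h$-polynomial (Corollary~\ref{c:hrefine}) and its mixed analogue (part~\eqref{i:rh8'} of Theorem~\ref{t:refineprop}), taking $\Omega=\cS'$, $\Gamma=\cS$, $B=[\emptyset,P]$. This gives
\[
l_P(\cS';t)=\sum_{F\in\cS}l_P(\cS,P,F;t)\,l_{[\emptyset,F]}(\cS'|_F;t),
\qquad
h_P(\cS';u,v)=\sum_{F\in\cS}h_P(\cS,P,F;u,v)\,v^{\rk(\cS'|_F)}\,l_{[\emptyset,F]}(\cS'|_F;uv^{-1}).
\]
The term indexed by $F=\emptyset$ is precisely $l_P(\cS;t)$, respectively $h_P(\cS;u,v)$, because $\lk_\cS(\emptyset)=\cS$, $\rk(\cS'|_\emptyset)=0$, and $l_{[\emptyset,\emptyset]}(\cS'|_\emptyset;t)=1$ by Example~\ref{e:lidentity}. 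Subtracting, $l_P(\cS';t)-l_P(\cS;t)$ and $h_P(\cS';u,v)-h_P(\cS;u,v)$ equal the same sums taken over the \emph{nonempty} cells $F$ of $\cS$.

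Non-negativity in \eqref{e:h} and \eqref{e:l} then follows term by term: $l_P(\cS,P,F;t)$ and $l_{[\emptyset,F]}(\cS'|_F;t)$ have non-negative coefficients by Theorem~\ref{t:localcoef} (the latter applied to the rational polyhedral subdivision $\cS'|_F$ of $F$), $h_P(\cS,P,F;u,v)$ has non-negative coefficients by Corollary~\ref{c:refine}, and substituting $t\mapsto uv^{-1}$ and multiplying by $v^{\rk(\cS'|_F)}$ carries a non-negative polynomial of degree at most $\rk(\cS'|_F)$ to one with non-negative coefficients. For the final assertion, suppose in addition that $\cS$ and $\cS'$ are regular. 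Then for each nonempty $F$, Theorem~\ref{t:localcoef} gives that $l_P(\cS,P,F;t)$ has symmetric and unimodal coefficients, symmetric of degree $\rk(\lk_\cS(F))=\dim P-\dim F$ by Corollary~\ref{c:symmetry}, while $l_{[\emptyset,F]}(\cS'|_F;t)$ has symmetric, unimodal coefficients, symmetric of degree $\rk(\cS'|_F)=\dim F+1$. Hence each product $l_P(\cS,P,F;t)\,l_{[\emptyset,F]}(\cS'|_F;t)$ is symmetric of degree $\dim P+1$; invoking the standard fact that a product of polynomials with non-negative, symmetric, unimodal coefficients again has non-negative, symmetric, unimodal coefficients, every summand does, and since all summands are symmetric of the same degree $\dim P+1$, so is their sum $l_P(\cS';t)-l_P(\cS;t)$.

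The one ingredient not already in hand is this product lemma for symmetric, unimodal, non-negative polynomials; I would prove it by writing such a polynomial $f=\sum_i f_i t^i$ of degree $m$ as $\sum_i (f_i-f_{i-1})(t^i+t^{i+1}+\cdots+t^{m-i})$, a non-negative combination of symmetric ``interval'' polynomials, and observing that the product of two symmetric interval polynomials is a (symmetric, unimodal) trapezoidal polynomial. This is the only place where a genuine, if routine, computation is required, and the main thing to be careful about is matching up the centres of symmetry so that all the summands are symmetric about the same point; the rest is bookkeeping with the composition identities of Sections~\ref{s:local} and~\ref{s:relmixed}.
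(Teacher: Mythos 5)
Your proof is correct and follows essentially the same route as the paper: apply the composition formulas (Corollary~\ref{c:hrefine} and part~\eqref{i:rh8'} of Theorem~\ref{t:refineprop}) to split off the $F=\emptyset$ term and then invoke Theorem~\ref{t:localcoef} and Corollary~\ref{c:refine} termwise. You are somewhat more explicit than the paper about two points it treats as implicit — that regularity of $\cS'$ restricts to the cells $F$, and the standard fact that a product of non-negative, symmetric, unimodal polynomials with compatible centres is again of that form — but this is elaboration, not a different argument.
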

\begin{proof}
By \eqref{i:rh8'} in Theorem~\ref{t:refineprop}, 
\[
h_P(\cS';u,v) - h_P(\cS;u,v) = \sum_{\emptyset \ne F \in \cS} h_P(\cS,P,F;u,v) v^{\dim F + 1} l_F(\cS'|_F;uv^{-1}),
\]
\[
l_P(\cS';t) - l_P(\cS;t) = \sum_{\emptyset \ne F \in \cS} l_P(\cS,P,F;t)  l_F(\cS'|_F;t). 
\]
The first statement follows from Theorem~\ref{t:localcoef} and Corollary~\ref{c:refine}, since the coefficients of each polynomial on the right hand side of 
the above equations are non-negative integers. If $\cS'$ and $\cS$ are regular subdivisions, then by Theorem~\ref{t:localcoef},  the coefficients of each polynomial on the right hand side of 
the second equation have symmetric (with respect to the appropriate degree), unimodal coefficients and the second statement follows. 
\end{proof}

\begin{example} 
Let $\cS$ be a polyhedral subdivision of a polytope $P$, and fix a cell $F$ in $\cS$ contained in a face $Q$ of $P$. Then Example~\ref{e:rhsmallcases} gives an explicit description of $h_P(\cS,Q,F;u,v)$ when $\dim Q - \dim F \le 3$.
In particular, 
\[
h_P(\cS;u,v) =  \left\{\begin{array}{cl} 
1 & \text{if } \dim P =  0,  \\ 
1 + \beta uv & \text{if } \dim P = 1,  \\ 
1 + (\mu + \nu - 3)uv + \beta  uv(u + v)  & \text{if } \dim P =  2,  \end{array}\right.
\]
where $\beta$ is the number of interior vertices of $\cS$, $\mu$ is the number of vertices of $\cS$ contained in the interior of an edge of $P$, and $\nu$ is the number of vertices of $P$.

\end{example}

\begin{example}\label{e:triangulation}
Let $\cS$ be a triangulation of a simplex $P$. 
For any non-negative integers $i,j$, let $f_{i,j} = \# \{ F \in \cS \mid \dim F + 1 = i, \dim \sigma(F) - \dim F = j \}$. 
It is natural to ask what are the possible set of numbers $\{ f_{i,j} \}_{i,j \ge 0}$ for any triangulation of a simplex. 
After possibly deforming $P$ and $\cS$, we may assume, that the triangulation is rational. 
By Lemma~\ref{l:simplicialsub2}, 
\[
h_P(\cS;u,v) = \sum_{i,j\ge 0} f_{i,j}    u^{i}  (1 - u)^{\dim P + 1 - i - j} (v - u)^{j}, 
\]
and $h_P(\cS;u,v)$ determines and is determined by the numbers $\{ f_{i,j} \}_{i,j \ge 0}$. 
 If we write $h_P(\cS;u,v) = \sum_{i,j \ge 0} h_{i,j} u^i v^j$, then by Corollary~\ref{c:refine}, the coefficients $h_{i,j}$ are non-negative integers, and, if $\cS$ is a regular subdivision, 
 then the sequences $\{ h_{i,k-i} \}_{i = 0,\ldots,k}$ are
symmetric and unimodal.  This implies non-trivial relations
between the numbers $\{ f_{i,j} \}_{i,j \ge 0}$. 
\end{example}

\section{Ehrhart theory of lattice polytopes} \label{s:ehrhart}

\subsection{Review of Ehrhart theory}

In this section, we recall some basic combinatorial facts about lattice points in lattice polytopes. We continue with the notation of the previous sections.  We recommend \cite{BRComputing} as an introduction to Ehrhart theory.

 Let $P$ be a non-empty lattice polytope in a lattice $\Z^d$. 
 Consider the function $f_P(m)=\#(mP\cap \Z^d)$, for $m\in \Z_{> 0}$.  By Ehrhart's theorem  \cite[Section~3.3]{BRComputing}, $f_P(m)$ is a polynomial of degree $\dim P$, called
 the \define{Ehrhart polynomial} of $P$, and $f_P(0) = 1$.  
 Ehrhart recipricocity \cite{Macdonaldpolynomials} states that  for $m\in\Z_{>0}$
\[f_P(-m)=(-1)^{\dim P} f_{P^\circ}(m)\]
where 
 $f_{P^\circ}(m) =\#(\Int(mP)\cap \Z^d)$. 
 It follows that we can write
\begin{equation}\label{e:Ehrhartpoly}
f_P(m)=f_0(P)+f_1(P)m+\dots+f_{\dim P}(P)m^{\dim P}, 
\end{equation}
where $f_i(P)\in \Z$, and
\[
\Ehr_P(t)\equiv 1 + \sum_{m > 0} f_P(m) t^m = \frac{h^*(P;t)}{(1 - t)^{\dim P + 1}}, 
\]
where  $h^*(P;t)$ is a polynomial of degree at most $\dim P$ called the \define{$h^*$-polynomial} of $P$ (see, for example, \cite[Section~3.3]{BRComputing}). 
A non-trivial theorem of Stanley \cite{StaDecomp} states that the coefficients of the 
$h^*$-polynomial are non-negative integers. Throughout, we will use the notation\[
h^*(P; t) =  \sum_{i = 0}^{\dim P} h^*_i t^i. 
\]

\begin{example}\label{e:hstarlinear}
It follows from the definition that $h^*_0 = 1$  
and 
$h^*_1 = \#(mP\cap \Z^d) - \dim P  - 1$. 
It follows from Ehrhart reciprocity 
that $h^*_{\dim P} = \#(\Int(P)\cap \Z^d)$. 
The sum of the coefficients $h^*(P;1)$ is equal to the \emph{normalized volume} of $P$, i.e. after possibly replacing 
$\Z^d$ with a smaller lattice so that we may assume that $\dim P = d$, 
$h^*(P;1)$ is $d!$ times the Euclidean volume of $P$. 
\end{example}

We also define 
\[
\Ehr_{P^\circ}(t)\equiv  \sum_{m > 0} f_{P^\circ}(m) t^m.  
\]
If $P$ is empty, then we set  $f_P(m) \equiv 0$ and  $\Ehr_{P}(t) = \Ehr_{P^\circ}(t) = h^*(P;t) = 1$. One may verify that Ehrhart reciprocity is equivalent to the following (see, for example, \cite[(22)]{StaGeneralized}):
\begin{equation}\label{e:power}
\Ehr_{P}(t) = (-1)^{\dim P + 1}\Ehr_{P^\circ}(t^{-1})
\end{equation}

The following definition was introduced by Stanley in \cite[Example~7.13]{StaSubdivisions}, generalizing the definition of Betke and McMullen in the case of a simplex \cite{BMLattice}.  It was independently introduced by Borisov and Mavlyutov in \cite{BMString} as $\tilde{S}(t)$. 

\begin{definition}\label{d:localstar}
The \define{local $h^*$-polynomial} $\lc(P;t)$ of $P$ is 
\begin{equation*}
\lc(P; t) =  \sum_{Q \subseteq P} (-1)^{\dim P - \dim Q}h^*(Q;t)g([Q,P]^*;t). 
\end{equation*}
\end{definition}
Note that if $P$ is empty, then $\lc(P; t) = 1$.  
Throughout, we will use the notation
\[
\lc(P; t) =  \sum_{i = 0}^{\dim P} \lc_i t^i.
\]

We record some known properties of the local $h^*$-polynomial in the following lemma, that are analogous to properties of the local $h$-polynomial from Section~\ref{s:local}. 

\begin{lemma}\label{l:localprop}
Let $P$ be a non-empty lattice polytope. Then the local $h^*$-polynomial $\lc(P; t)$ satisfies the following properties:

\begin{enumerate}

\item\label{j:1} The local $h^*$-polynomial is symmetric in the sense that $\lc(P; t) = t^{\dim P + 1}\lc(P; t^{-1})$. 

\item\label{j:2} The $h^*$-polynomial of a polytope can be recovered from the local $h^*$-polynomials of its faces via 
\[
h^*(P;t) =  \sum_{Q \subseteq P} \lc(Q; t)g([Q,P];t).
\]

\item\label{j:3} The coefficients $\lc_i$ of $\lc(P;t)$ are non-negative integers. 

\item\label{j:4} We have $\lc_0 = 0$ and $\lc_1 = \lc_{\dim P} = h^*_{\dim P} =  \#(\Int(P)\cap \Z^d)$. 
\end{enumerate}

\end{lemma}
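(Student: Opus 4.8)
The plan is to deduce each of the four parts from the corresponding results already established for the local $h$-polynomial of strong formal subdivisions in Section~\ref{s:local}, together with the definition of $\lc(P;t)$ (Definition~\ref{d:localstar}). The key observation is that Definition~\ref{d:localstar} is formally the "$h^*$-analogue" of Definition~\ref{d:localpoly}: it has the same shape, with $h^*(Q;t)$ playing the role of $h(\Gamma_x;t)$ and with $B$ being the face poset $[\emptyset,P]$, which is Eulerian of rank $\dim P + 1$. So the strategy is to carry out, in the Ehrhart setting, the Möbius-inversion bookkeeping that was done abstractly for posets in Section~\ref{s:local}.

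First I would prove part~\eqref{j:2}: this is the inversion statement, and it follows from Definition~\ref{d:localstar} by exactly the computation in Remark~\ref{r:reverseh}, using that $\gamma_{[\emptyset,P]}$ and $\gamma_{[\emptyset,P]}^{-1}$ are inverse in the incidence algebra (Theorem~\ref{t:inverse}) and that $g([Q,P];t) = h([Q,P];t)$ for the Eulerian interval $[Q,P]$ (Example~\ref{e:hEulerian}). Concretely, substituting $h^*(Q;t) = \sum_{Q' \subseteq Q} \lc(Q';t) g([Q',Q];t)$ into the right-hand side of Definition~\ref{d:localstar} and using $\sum_{Q' \subseteq Q \subseteq P} g([Q',Q];t)(-1)^{\dim P - \dim Q} g([Q,P]^*;t) = \delta_{Q',P}$ gives $\lc(P;t)$, and conversely; this is the same pair of mutually inverse triangular relations as in Section~\ref{s:local}. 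Part~\eqref{j:1}, the symmetry $\lc(P;t) = t^{\dim P + 1}\lc(P;t^{-1})$, I would obtain by the argument of Corollary~\ref{c:symmetry}: the coefficient vector of $\lc(P;t)$ is, up to the normalizing power of $t^{1/2}$, a matrix entry of a pushforward map between modules of acceptable functions, hence lies in $S(R)$ and is therefore symmetric — the only input needed beyond the poset formalism is that $\Ehr_P(t)$ satisfies the reciprocity \eqref{e:power}, which plays the role that the Dehn–Sommerville relations played for $h(\Gamma;t)$.

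For part~\eqref{j:3}, the non-negativity, I would not reprove it from scratch but cite Karu \cite{KarEhrhart} and Borisov–Mavlyutov \cite{BMString} as already recorded in the introduction; alternatively it is subsumed by the (stronger) non-negativity of the mixed $h^*$-polynomial, Theorem~\ref{t:basic}, once that is available. Part~\eqref{j:4} is the low-degree computation: $\lc_0 = 0$ follows by comparing constant terms, since $h^*_0 = 1$ for every face and $\sum_{Q \subseteq P}(-1)^{\dim P - \dim Q}g([Q,P]^*;0) = \sum_{Q\subseteq P}(-1)^{\dim P - \dim Q}$ vanishes for $\dim P \ge 0$ by the Euler relation for the Eulerian poset $[\emptyset,P]$ (equivalently, this is Example~\ref{e:lconstant} in the Ehrhart guise); $\lc_{\dim P} = \lc_1$ is then immediate from the symmetry in \eqref{j:1}; and $\lc_{\dim P} = h^*_{\dim P} = \#(\Int(P)\cap\Z^d)$ follows from \eqref{j:2} by extracting the top-degree coefficient (only the $Q = P$ term contributes a term of degree $\dim P + 1 \le \dim P + \deg g$ in the relevant range, and $g([P,P];t) = 1$), combined with the identification $h^*_{\dim P} = \#(\Int(P)\cap\Z^d)$ from Example~\ref{e:hstarlinear}.

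The main obstacle is making the parallel with Section~\ref{s:local} fully rigorous: there, everything was phrased via the pushforward $\sigma_*$ along a strong formal subdivision $\sigma\colon\Gamma\to B$, whereas here there is no subdivision $\Gamma$ in sight — only the polytope $P$ and its faces. What is really needed is the observation that the assignment $Q \mapsto h^*(Q;t)$ defines an element of $\mathcal{A}([\emptyset,P],\kappa)$ over a suitable ring (this is exactly the content of Ehrhart reciprocity \eqref{e:power}, via Macdonald's formula), so that the formalism of acceptable functions applies verbatim with $h^*$ in place of $h$. Once that identification is in place — and it is essentially \cite[Example~7.13]{StaSubdivisions} — parts \eqref{j:1}, \eqref{j:2}, \eqref{j:4} are formal consequences exactly as in Section~\ref{s:local}, and \eqref{j:3} is cited. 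I expect the write-up to be short, with the only subtlety being the careful statement of which ring and which kernel are being used for the Ehrhart-theoretic acceptable function.
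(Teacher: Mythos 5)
Your proposal is correct and follows essentially the same route as the paper: the paper likewise derives~\eqref{j:1} and~\eqref{j:2} from the acceptability formalism (via Lemma~\ref{l:reciprocity}, which identifies symmetry of $\lc$ with Ehrhart reciprocity through the acceptability of $\eta^\heartsuit_P$, and Remark~\ref{r:heartdef}, which is the M\"obius inversion through $\gamma_P^{-1}$), cites Borisov--Mavlyutov and Karu for~\eqref{j:3}, and proves~\eqref{j:4} by the degree bound on $g([Q,P]^*;t)$ plus symmetry and Example~\ref{e:hstarlinear}. The only cosmetic difference is that you phrase the symmetry argument loosely as a pushforward matrix entry (there is no nontrivial subdivision here; the mechanism is just that $\eta^\heartsuit_P$ acceptable forces $\lambda^\heartsuit_P \in S(R)^P$), and your degree bookkeeping in part~\eqref{j:4} has a minor typo, but the substance matches the paper.
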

\begin{proof}
Properties~\eqref{j:1} and  Property~\eqref{j:2} are established in \cite[Example~7.13]{StaSubdivisions}.  We will prove them in 
 Lemma~\ref{l:reciprocity} and Remark~\ref{r:heartdef} below respectively. 


Property~\eqref{j:3} was conjectured by Stanley in \cite[Example~7.13]{StaSubdivisions}, and proved 
independently by Borisov and Mavlyutov \cite[Proposition~5.1]{BMString} and Karu \cite[Theorem~1.1]{KarEhrhart}.  

Properties \eqref{j:4} is proven in Example~4.7 of \cite{BNCombinatorial}. Note that by definition, for $Q \ne P$, the degree of $g([Q,P]^*;t)$ is strictly bounded by $(\dim P - \dim Q)/2$. It follows 
that $\lc(P;t)$ has degree at most $\dim P$ with  $\lc_{\dim P} = h^*_{\dim P}$. The result now follows from Property~\eqref{j:1} and Example~\ref{e:hstarlinear}.
\end{proof}

\begin{example} \label{e:hstarsimplex}
If $P \subseteq \Z^d$ is a simplex and $\Pi$ is the parallelepiped spanned by the vertices of $P \times \{ 1 \}$ in $\R^d \times \R$, then 
\[
\lc(P;t) = \sum_{ w \in \IInt(\Pi) \cap \Z^{d + 1}} t^{\psi(w)}, \; h^*(P;t) = \sum_{Q \subseteq P} \lc(Q;t)
\] 
where $\psi: \Z^{d + 1} \rightarrow \Z$ denotes projection onto the last co-ordinate. 
This was proven as Proposition~4.6 of \cite{BNCombinatorial} and was the original definition of Betke and McMullen  in \cite{BMLattice} of the local $h^*$-polynomial of a  simplex.
\end{example}

We introduce the following analogue of Definition~\ref{d:mixedpoly} (cf. Example~\ref{e:unimodular}). 

\begin{definition}\label{d:mixedstar}
Let $P$ be a non-empty lattice polytope. The \define{mixed $h^*$-polynomial} is defined by
\[
h^*(P;u,v) = \sum_{Q \subseteq P} v^{\dim Q + 1}\lc(Q; uv^{-1})  g([Q,P];uv). 
\] 
\end{definition}

We see from Lemma~\ref{l:localprop} and \eqref{e:gintersect} that the mixed $h^*$-polynomial has non-negative integer coefficients,  is invariant under the interchange of $u$ and $v$, and specializes to the $h^*$-polynomial by 
setting $v = 1$. We will study generalizations of this polynomial  in subsequent sections. For the moment, we have the following analogue of \eqref{i:rh6'} in Theorem~\ref{t:refineprop} (cf. Example~\ref{e:unimodular}). 

\begin{remark}\label{r:charmixed}
All terms in $h^*(P;u,v)$ have combined degree  in $u$ and $v$ at most $\dim P + 1$, and 
the terms of combined degree $\dim P + 1$ equal $v^{\dim P + 1}\lc(P;uv^{-1})$. 
\end{remark}

We introduce the following convenient way to visualize this polynomial. 

\begin{remark}\label{r:hstardiamond}
Assuming $P$ is non-empty, we may write 
\[
h^*(P;u,v) = 1 + uv \sum_{ 0 \le p,q \le \dim P - 1} h^*_{p,q} u^p v^q.
\]
 for some non-negative integers $h^*_{p,q}$.  We may visualize these coefficients and the coefficients of the local $h^*$-polynomial
in diamonds, which we call the \define{ $h^*$-diamond} and \define{local $h^*$-diamond} of $P$, by placing $h^*_{p,q}$ at point $(q - p, p + q)$ and $\lc_i$ at point  $(i-1,n- 1-  i)$ in $\Z^2$ respectively (see Figure~\ref{f:trivial} below). Observe that the  $h^*$-diamond is symmetric about the vertical axis, identically zero above the middle horizontal strip, and encodes the coefficients of the local $h^*$-polynomial along the middle horizontal strip.  Also, we recover the coefficients of $h^*(P;u)$ by summing the coefficients of the  $h^*$-diamond along a fixed choice of diagonal i.e. $h^*_{i + 1} = \sum_{p + q = i} h^*_{p,q}$.
It follows from Theorem~\ref{t:lower} below that each horizontal strip of the diamonds satisfies the following lower bound theorem: its first entry is a lower bound for the other entries i.e. 
$h^*_{k,0} \le h^*_{k-i,i}$ for $i = 0,\ldots, k$ and  $\lc_1 \le \lc_i$ for $i = 1,\ldots,\dim P$. 

\end{remark}

\begin{figure}[htb]
\begin{center}
\[
\begin{array}{p{5cm}p{5cm}}
\xy
(9,-12)*{}="A"; (-27,19)*{}="B"; (9,49)*{}="C"; (46,19)*{}="D";
"A"; "B" **\dir{-};
"A"; "D" **\dir{-};
"C"; "B" **\dir{-};
"C"; "D" **\dir{-};
(-9.5,19.5)*{
\xymatrix@=0.25em{ 
 &  &  & 0 & &                                              \\
  &  & 0 & & 0 &  &                                \\
   &  0  &  & 0& & 0 &                    \\
  \lc_{1}  &   & \lc_{2}  &  & \lc_{3} &  & \lc_{4}      \\
      &  h^*_{2,0}  &  & h^*_{1,1} & & h^*_{0,2} &                \\
        &  & h^*_{1,0} & & h^*_{0,1} &  &                         \\
        &  &  & h^*_{0,0} & &                                \\
 }
};
(12,-19)*{\hbox{\small $h^*$-diamond of $P$}};
\endxy
&
\xy
(9,-12)*{}="A"; (-27,19)*{}="B"; (9,49)*{}="C"; (46,19)*{}="D";
"A"; "B" **\dir{-};
"A"; "D" **\dir{-};
"C"; "B" **\dir{-};
"C"; "D" **\dir{-};
(-6,20.5)*{
\xymatrix@=0.5em{ 
 &  &  & 0 & &                                              \\
  &  & 0 & & 0 &  &                                \\
   &  0 &  &0 & & 0 &                    \\
     \lc_{1}  &   & \lc_{2}  &  & \lc_{3} &  & \lc_{4}      \\
      & 0  &  &0 & &0 &                \\
        &  & 0 & & 0 &  &                         \\
        &  &  & 0 & &                                \\
 }
};
(13,-17)*{\hbox{\small local $h^*$-diamond of $P$}};

\endxy
\end{array} 
\]
\end{center}

\caption{ $h^*$-diamond and  local $h^*$-diamond of $P$ when $\dim P = 4$} 
\label{f:trivial}
\end{figure}
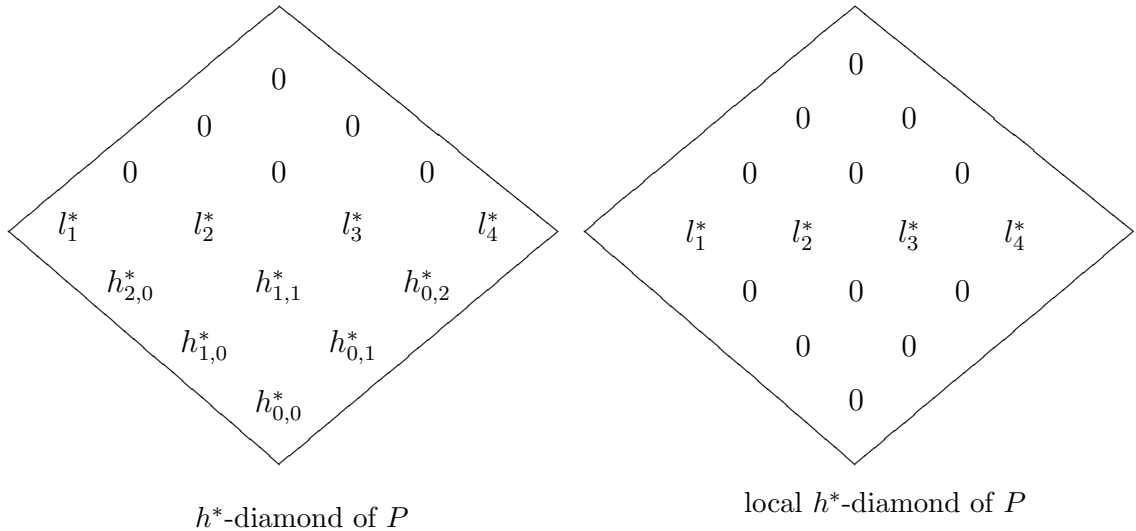

Recall that the face poset $[\emptyset, P]$ of $P$ is an Eulerian poset with rank function  $\rho(Q)=\dim Q+1$. 
More generally, if $\cS$ is a  lattice polyhedral subdivision of  $P$, then we identify $\cS$ with its lower Eulerian poset of cells, with rank function $\rho(F)=\dim F+1$. 
Recall that $R=\Z[t^{\pm 1/2}]$ with kernel $\kappa(Q,Q')=q^{\rho(Q')-\rho(Q)}$, where $q=t^{1/2}-t^{-1/2}$, and acceptability operators $\gamma_P := \gamma_{[\emptyset, P]}$ and $\gamma_\cS$ given by Lemma~\ref{l:gpoly}. 
Recall that $R_{uv}=\Z[u^{\pm 1/2},v^{\pm 1/2}]$, with the corresponding kernel and acceptability operators obtained by replacing $t$ with $uv$. 
We will write $R^P := R^{[\emptyset, P]}$ and $R_{uv}^P := R_{uv}^{[\emptyset, P]}$. 

\begin{definition}\label{d:ehrdef}
Let $\cS$ be a lattice polyhedral subdivision of  a lattice polytope $P$. Define elements $\eta^\heartsuit_\cS,\lambda^\heartsuit_\cS \in R^\cS$ 
and $\tilde{\eta}^\heartsuit_\cS \in R_{uv}^\cS$   by:
 \[ \eta^\heartsuit_\cS(F) = t^{-(\dim F + 1)/2}h^*(F;t) = \bar{q}^{\dim F + 1} \Ehr_F(t), \]
 \[ \lambda^\heartsuit_\cS(F) = t^{-(\dim F + 1)/2}l^*(F;t), \]
 \[ \tilde{\eta}^\heartsuit_\cS(F) = (uv)^{-(\dim F + 1)/2}h^*(F;u,v),\]
 respectively.
 When $\cS$ is the trivial subdivision of $P$, then we write $\eta^\heartsuit_P := \eta^\heartsuit_\cS$, $\lambda^\heartsuit_P := \lambda^\heartsuit_\cS$ 
 and $\tilde{\eta}^\heartsuit_P := \tilde{\eta}^\heartsuit_\cS$.
\end{definition}

\begin{remark}\label{r:heartdef}
By Theorem~\ref{t:inverse} and Definition~\ref{d:ehrdef}, $\eta^\heartsuit_\cS = \lambda^\heartsuit_\cS * \gamma_\cS$ and 
$\tilde{\eta}^\heartsuit_\cS = \lambda^\heartsuit_\cS|_{t = uv^{-1}} * \gamma_\cS|_{t = uv}$. 
\end{remark}

\begin{remark}\label{r:independent}
Note that $\eta^\heartsuit_\cS(F) = \eta^\heartsuit_F(F)$, $\lambda^\heartsuit_\cS(F) = \lambda^\heartsuit_F(F)$ and $\tilde{\eta}^\heartsuit_\cS(F) = \tilde{\eta}^\heartsuit_F(F)$ only depend on $F$ and not on $\cS$ 
by definition. 
\end{remark}

As observed in \cite[p. 201]{StaGeneralized}, $\eta^\heartsuit_\cS$ is acceptable. In fact, we have the following slightly stronger statement. 

\begin{lemma}\label{l:reciprocity}
The following statements are equivalent:
\begin{enumerate}

\item\label{i:first} Ehrhart reciprocity,

\item\label{i:third} $\lc(P; t) = t^{\dim P + 1}\lc(P; t^{-1})$  for all lattice polytopes $P$,

\item\label{i:fourth} $h^*(P;u,v) = h^*(P;v,u)$   for all lattice polytopes $P$,




\item\label{i:second'} $\eta^\heartsuit_\cS \in \mathcal{A}(\cS,\kappa_\cS)$ 
for all lattice polyhedral subdivisions $\cS$,

\item\label{i:fifth'} $\lambda^\heartsuit_\cS \in S(R)$ for all lattice polyhedral subdivisions $\cS$,


\item\label{i:seventh}  $\tilde{\eta}^\heartsuit_\cS \in \mathcal{A}(\cS,\kappa_\cS;R_{uv})$ 
for all lattice polyhedral subdivisions $\cS$.

\end{enumerate}

\end{lemma}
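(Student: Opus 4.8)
## Proof Proposal for the Equivalence Lemma

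The plan is to establish the cycle of implications \eqref{i:first} $\Rightarrow$ \eqref{i:second'} $\Rightarrow$ \eqref{i:fifth'} $\Rightarrow$ \eqref{i:third} $\Rightarrow$ \eqref{i:first}, and then to fold in \eqref{i:fourth} and \eqref{i:seventh} as easy consequences once the core equivalence is in place. The central object is the identity from Remark~\ref{r:heartdef}, namely $\eta^\heartsuit_\cS = \lambda^\heartsuit_\cS * \gamma_\cS$, which by Lemma~\ref{l:symacc} (applied to the kernel $\kappa_\cS$ and acceptability operator $\gamma_\cS$) translates the statement ``$\eta^\heartsuit_\cS$ is $\kappa_\cS$-acceptable'' directly into the statement ``$\lambda^\heartsuit_\cS$ is symmetric,'' i.e. lies in $S(R)^\cS$. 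This makes \eqref{i:second'} $\Leftrightarrow$ \eqref{i:fifth'} essentially immediate. The implication \eqref{i:fifth'} $\Rightarrow$ \eqref{i:third} is then just unwinding Definition~\ref{d:ehrdef}: since $\lambda^\heartsuit_\cS(F) = t^{-(\dim F+1)/2}l^*(F;t)$, symmetry of this Laurent polynomial under $t \mapsto t^{-1}$ is precisely the palindromicity $l^*(P;t) = t^{\dim P+1}l^*(P;t^{-1})$, taking $\cS$ to be the trivial subdivision of $P$ and $F = P$.

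First I would handle \eqref{i:first} $\Rightarrow$ \eqref{i:second'}. Here the key is the reformulation \eqref{e:power} of Ehrhart reciprocity, $\Ehr_P(t) = (-1)^{\dim P + 1}\Ehr_{P^\circ}(t^{-1})$, combined with the observation in Definition~\ref{d:ehrdef} that $\eta^\heartsuit_\cS(F) = \bar q^{\dim F + 1}\Ehr_F(t)$ where $q = t^{1/2} - t^{-1/2}$, so $\bar q = -q$. To show $\eta^\heartsuit_\cS \in \mathcal{A}(\cS,\kappa_\cS)$ I must verify $\overline{\eta^\heartsuit_\cS} = \eta^\heartsuit_\cS * \kappa_\cS$, i.e. for every cell $F$ of $\cS$,
\[
\sum_{F' \le F} \eta^\heartsuit_\cS(F')\, q^{\dim F - \dim F'} = \overline{\eta^\heartsuit_\cS(F)}.
\]
The left-hand side, after substituting the definition of $\eta^\heartsuit_\cS$ and using $\Ehr_{F'}(t) = \sum_{m>0} \#(mF' \cap \Z^d)t^m + 1$ stratified over relative interiors, telescopes: summing the interior lattice-point counts of all faces $F' \le F$ recovers the total lattice-point count of $F$, so $\sum_{F' \le F}\bar q^{\dim F' + 1}\Ehr_{F'^\circ}(t)\cdot(\text{correction}) $ reassembles into $\bar q^{\dim F+1}\Ehr_F$ evaluated suitably; then Ehrhart reciprocity \eqref{e:power} converts $\Ehr_{F^\circ}(t)$ into $\Ehr_F(t^{-1})$ up to sign, which is exactly $\overline{\eta^\heartsuit_\cS(F)}$ once the $(-1)$'s from $\bar q = -q$ are bookkept. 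This is the computational heart of the lemma and should be carried out carefully but is a standard inclusion–exclusion over the face lattice.

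For \eqref{i:third} $\Rightarrow$ \eqref{i:first}, I would run the previous paragraph in reverse: Property~\eqref{j:2} of Lemma~\ref{l:localprop} (which the paper attributes to Stanley and will reprove via Remark~\ref{r:heartdef}) expresses $h^*(P;t) = \sum_{Q \subseteq P} l^*(Q;t)g([Q,P];t)$, so assuming all the $l^*(Q;t)$ are palindromic of the right degree forces a functional equation on $h^*(P;t)$, hence on $\Ehr_P(t)$, which unpacks to \eqref{e:power}; one uses induction on $\dim P$ together with the known symmetry of the $g$-polynomials of Eulerian posets. Finally, \eqref{i:fourth} is equivalent to \eqref{i:third} by Remark~\ref{r:charmixed}: the $u \leftrightarrow v$ symmetry of $h^*(P;u,v)$ is equivalent to palindromicity of its top-degree part $v^{\dim P+1}l^*(P;uv^{-1})$, together with the symmetry of the lower-degree contributions coming from the $g([Q,P];uv)$ factors which are automatically $u \leftrightarrow v$ symmetric; and \eqref{i:seventh} $\Leftrightarrow$ \eqref{i:fifth'} follows from the base-change version of Remark~\ref{r:heartdef}, $\tilde\eta^\heartsuit_\cS = \lambda^\heartsuit_\cS|_{t = uv^{-1}} * \gamma_\cS|_{t = uv}$, and Lemma~\ref{l:symacc} over the ring $R_{uv}$, exactly as in the $R$ case. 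The main obstacle is purely the bookkeeping in the telescoping inclusion–exclusion of step two (getting the signs and the $\bar q$-powers to line up across faces of all dimensions); everything else is formal manipulation of the incidence-algebra machinery already set up.
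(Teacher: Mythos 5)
Your proposal is essentially the paper's argument: the hinge is Remark~\ref{r:heartdef} together with Lemma~\ref{l:symacc}, which makes $\kappa_\cS$-acceptability of $\eta^\heartsuit_\cS$ (resp.\ $\tilde\eta^\heartsuit_\cS$) equivalent to $\lambda^\heartsuit_\cS \in S(R)^\cS$ (resp.\ $S(R_{uv})^\cS$), and then the definitional unpacking of $\lambda^\heartsuit_\cS$ gives palindromicity of $\lc$; the connection to Ehrhart reciprocity is the same inclusion--exclusion over faces (stratifying $\Ehr_F$ by relative interiors) combined with \eqref{e:power}. The paper proves \eqref{i:third}--\eqref{i:seventh} mutually equivalent by a hub-and-spoke arrangement and then shows \eqref{i:second'} $\Leftrightarrow$ \eqref{i:first} directly, whereas you organize it as a cycle; the content is identical.

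One small caution about your proposed route for \eqref{i:third} $\Rightarrow$ \eqref{i:first}: you invoke ``the known symmetry of the $g$-polynomials'' together with $h^*(P;t)=\sum_Q \lc(Q;t)\,g([Q,P];t)$, but the $g$-polynomial of an Eulerian poset is not itself palindromic (it satisfies the Dehn--Sommerville-type recursion of Definition~\ref{d:g}, not a symmetry), so this step as stated is shaky. You do not actually need it: the computation in your \eqref{i:first} $\Rightarrow$ \eqref{i:second'} step is a chain of biconditionals (acceptability at each $F$ $\Leftrightarrow$ the functional equation $\sum_{Q'\subseteq F}(-1)^{\dim Q'+1}\Ehr_{Q'}(t)=\Ehr_F(t^{-1})$ $\Leftrightarrow$ \eqref{e:power} by induction on dimension), which is exactly how the paper closes the loop. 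Replacing your \eqref{i:third} $\Rightarrow$ \eqref{i:first} with \eqref{i:second'} $\Rightarrow$ \eqref{i:first} from that biconditional gives a clean proof with no new ingredients.
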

\begin{proof}
By definition, \eqref{i:third} is equivalent to \eqref{i:fifth'}. 
By Remark~\ref{r:heartdef} and Lemma~\ref{l:symacc}, \eqref{i:second'} and  \eqref{i:fifth'} are equivalent. 
The equivalence of \eqref{i:third} and \eqref{i:fourth} follows from Definition~\ref{d:mixedstar} and Remark~\ref{r:charmixed}. Evaluating at $v = 1$, we see that 
\eqref{i:seventh} implies \eqref{i:second'}. On the other hand, \eqref{i:fifth'} implies that $\lambda^\heartsuit_\cS|_{t = uv^{-1}} \in S(R_{uv})$, and hence, by Lemma~\ref{l:symacc} and Remark~\ref{r:heartdef},
  $\tilde{\eta}^\heartsuit_\cS =  \lambda^\heartsuit_\cS|_{t = uv^{-1}} * \gamma_\cS|_{t = uv}$ 
 is acceptable. 
We deduce that all conditions except \eqref{i:first} are equivalent. 


By definition, $\eta^\heartsuit_P$ is acceptable if and only if $\eta^\heartsuit_P = \bar{\eta^\heartsuit_P} * \bar{\kappa}$, if and only if for all faces $Q$ of $P$, 
\[
\Ehr_Q(t) = \sum_{Q' \subseteq Q} (-1)^{\rho(Q')} \Ehr_{Q'}(t^{-1}).
\]
Since every lattice point in a polytope lies in the relative interior of some face, $\Ehr_Q(t) = \sum_{Q' \subseteq Q} \Ehr_{(Q')^\circ}(t)$. Hence $\eta^\heartsuit_P$ is acceptable if and only if
\[
 \sum_{Q' \subseteq Q} (-1)^{\rho(Q')} \Ehr_{Q'}(t^{-1}) =  \sum_{Q' \subseteq Q} \Ehr_{(Q')^\circ}(t)
\]
for all faces $Q$ of $P$. It follows from  \eqref{e:power} that  Ehrhart reciprocity is equivalent to the fact that $\eta^\heartsuit_P$ is acceptable for all lattice polytopes $P$. The latter condition is equivalent to 
\eqref{i:second'} by Remark~\ref{r:independent}. 
\end{proof}

\subsection{Subdivisions and pushforwards in Ehrhart theory}

Now, we study the properties of the $h^*$-polynomial and local $h^*$-polynomial under subdivision.  These will 
be analogous to Corollary~\ref{c:hrefine} and \eqref{i:rh8'} in Theorem~\ref{t:refineprop}. 

\begin{lemma}\label{l:pushstar} Let $\sigma:\cS\rightarrow [\emptyset,P]$ be the strong formal subdivision induced by a lattice polyhedral subdivision $\cS$ of a lattice polytope $P$.  Then
the following holds:
\begin{enumerate}

\item\label{i:push1}  \[ \sigma_*(\eta^\heartsuit_\cS) = \eta^\heartsuit_P , \]

\item\label{i:push2}  \[ h^*(P;t) = \sum_{ \substack{F \in \cS \\ \sigma(F) = P} } h^*(F; t)  (t - 1)^{\dim P - \dim F},\]

\item\label{i:push3}  \[ h^*(P;t) = \sum_{F \in \cS} \lc(F; t)  h(\lk_\cS(F);t),\]

\item\label{i:push4}  \[ \lc(P; t) = \sum_{F \in \cS} \lc(F;t) l_{[\sigma(F),P]}(\lk_\cS(F);t).\]

\end{enumerate}

Moreover, the statements that each of these conditions hold for all pairs $(P,\cS)$ are equivalent. 
\end{lemma}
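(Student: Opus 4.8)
The plan is to package condition~\eqref{i:push1} as the single identity $\sigma_*(\eta^\heartsuit_\cS)=\eta^\heartsuit_P$ in the free $S(R)$-module $R^P=R^{[\emptyset,P]}$ and read it off in the two natural bases $\{e_Q\}_{Q\subseteq P}$ and $\{e_Q*\gamma_P\}_{Q\subseteq P}$ (the latter an $R$-basis of $\mathcal{A}([\emptyset,P],\kappa_P)$ by Lemma~\ref{l:symacc}): the scalar equations obtained at each face $Q\subseteq P$ will be exactly conditions~\eqref{i:push3} and~\eqref{i:push4} for the pair $(Q,\cS|_Q)$. Separately, \eqref{i:push2} and \eqref{i:push3} will be shown directly equivalent for a fixed pair. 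Since a lattice polyhedral subdivision restricts to one on every face (Remark~\ref{r:restrict}), these local equivalences assemble into the equivalence of the four conditions quantified over all $(P,\cS)$. That all four then in fact hold follows from Ehrhart reciprocity \eqref{e:power} (equivalently from Lemma~\ref{l:reciprocity}), as indicated below.

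First I would compute $\sigma_*(\eta^\heartsuit_\cS)$. Using Remark~\ref{r:heartdef}, $\eta^\heartsuit_\cS=\lambda^\heartsuit_\cS*\gamma_\cS$, so $S(R)$-linearity of $\sigma_*$ and \eqref{e:lambda} give
\[
\sigma_*(\eta^\heartsuit_\cS)=\sum_{F\in\cS}\lambda^\heartsuit_\cS(F)\,\sigma_*(e_F*\gamma_\cS),\qquad \sigma_*(e_F*\gamma_\cS)=\sum_{Q\subseteq P}\eta(Q,F)\,e_Q=\sum_{Q\subseteq P}\lambda(Q,F)\,e_Q*\gamma_P .
\]
Now substitute $\lambda^\heartsuit_\cS(F)=t^{-(\dim F+1)/2}\lc(F;t)$, which depends only on $F$ (Remark~\ref{r:independent}); $\eta(Q,F)=t^{-(\dim Q-\dim F)/2}h(\lk_{\cS|_Q}(F);t)$ when $\sigma(F)\le Q$ and $\eta(Q,F)=0$ otherwise (Remark~\ref{r:hpush}); and $\lambda(Q,F)=t^{-(\dim Q-\dim F)/2}l_{[\sigma(F),Q]}(\lk_{\cS|_Q}(F);t)$ (Lemma~\ref{l:localpoly}). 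The powers of $t$ collapse to $t^{-(\dim Q+1)/2}$, so the $e_Q$-coefficient of $\sigma_*(\eta^\heartsuit_\cS)$ is $t^{-(\dim Q+1)/2}\sum_{F\in\cS|_Q}\lc(F;t)h(\lk_{\cS|_Q}(F);t)$ and its $e_Q*\gamma_P$-coefficient is $t^{-(\dim Q+1)/2}\sum_{F\in\cS|_Q}\lc(F;t)l_{[\sigma(F),Q]}(\lk_{\cS|_Q}(F);t)$. On the other side, $\eta^\heartsuit_P(Q)=t^{-(\dim Q+1)/2}h^*(Q;t)$, while $\eta^\heartsuit_P=\lambda^\heartsuit_P*\gamma_P$ (Remark~\ref{r:heartdef} for the trivial subdivision) has $e_Q*\gamma_P$-coefficient $\lambda^\heartsuit_P(Q)=t^{-(\dim Q+1)/2}\lc(Q;t)$. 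Comparing coefficients shows that \eqref{i:push1} for $(P,\cS)$ is equivalent to ``\eqref{i:push3} holds for $(Q,\cS|_Q)$ for every $Q\subseteq P$'', and also to ``\eqref{i:push4} holds for $(Q,\cS|_Q)$ for every $Q\subseteq P$''; taking $Q=P$ yields \eqref{i:push1}$\Rightarrow$\eqref{i:push3} and \eqref{i:push1}$\Rightarrow$\eqref{i:push4} for a single pair, and conversely \eqref{i:push3} (resp.\ \eqref{i:push4}) for all pairs implies \eqref{i:push1} for all pairs.

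Next I would show \eqref{i:push2} $\Leftrightarrow$ \eqref{i:push3} for a fixed $(P,\cS)$. Substituting $h^*(F;t)=\sum_{G\subseteq F}\lc(G;t)g([G,F];t)$ — which is Lemma~\ref{l:localprop}\eqref{j:2}, the relation $\eta^\heartsuit_F=\lambda^\heartsuit_F*\gamma_F$ read at the top element — into the right-hand side of \eqref{i:push2} and interchanging the order of summation gives
\[
\sum_{\substack{F\in\cS\\\sigma(F)=P}}h^*(F;t)(t-1)^{\dim P-\dim F}=\sum_{G\in\cS}\lc(G;t)\sum_{\substack{G\subseteq F\in\cS\\\sigma(F)=P}}g([G,F];t)(t-1)^{\dim P-\dim F}.
\]
By Lemma~\ref{l:polyhedralsubdivision}, $\sigma\colon\cS\to[\emptyset,P]$ is a strong formal subdivision, so the inner sum is the right-hand side of the defining relation of Proposition~\ref{p:alternate}\eqref{i:pthird} with $y=G$, $x=P$, and hence equals $h((\cS_{\ge G})_P;t)=h(\lk_\cS(G);t)$. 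Therefore the right-hand side of \eqref{i:push2} equals $\sum_{G\in\cS}\lc(G;t)h(\lk_\cS(G);t)$, the right-hand side of \eqref{i:push3}; as the left-hand sides already agree, \eqref{i:push2} holds for $(P,\cS)$ if and only if \eqref{i:push3} does. Combining with the previous paragraph, \eqref{i:push1}--\eqref{i:push4} are equivalent as assertions over all $(P,\cS)$. Finally \eqref{i:push1} itself holds: from $\eta^\heartsuit_\cS(F)=\bar{q}^{\dim F+1}\Ehr_F(t)$ and $\bar{q}=-q$ one gets $\sigma_*(\eta^\heartsuit_\cS)(Q)=q^{\dim Q+1}\sum_{\sigma(F)=Q}(-1)^{\dim F+1}\Ehr_F(t)$, and applying \eqref{e:power} together with the partition of the lattice points of $Q$ into relative interiors of cells (so that $\Ehr_{Q^\circ}=\sum_{\sigma(F)=Q}\Ehr_{F^\circ}$) turns this into $(-q)^{\dim Q+1}\Ehr_Q(t)=\eta^\heartsuit_P(Q)$.

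The step I expect to be the main obstacle is the bookkeeping in the third paragraph: one must carefully identify the inner sum $\sum_{G\subseteq F,\ \sigma(F)=P}g([G,F];t)(t-1)^{\dim P-\dim F}$ with the right-hand side of Proposition~\ref{p:alternate}\eqref{i:pthird} — i.e.\ recognize $(\cS_{\ge G})_P$ as $\lk_\cS(G)$ and the cells $F\supseteq G$ with $\sigma(F)=P$ as $\IInt((\cS_{\ge G})_P)$ — while keeping track of the half-integer powers of $t$ throughout, and handle the empty cell and empty face (with $\lc(\emptyset;t)=h^*(\emptyset;t)=1$) consistently in all the sums. Everything else is a routine unwinding of Remarks~\ref{r:hpush}, \ref{r:heartdef}, \ref{r:independent} and Lemmas~\ref{l:localpoly}, \ref{l:localprop}.
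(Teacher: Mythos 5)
Your proposal is correct and follows essentially the same route as the paper: both hinge on Remark~\ref{r:heartdef} together with Proposition~\ref{p:alternate}\eqref{i:pthird} to unwind $\sigma_*(\lambda^\heartsuit_\cS * \gamma_\cS)$ and thereby link \eqref{i:push1}, \eqref{i:push3}, \eqref{i:push4}, and both close by the same Ehrhart-reciprocity plus relative-interior-partition argument. The only (immaterial) difference is in the ordering: you establish \eqref{i:push2}$\Leftrightarrow$\eqref{i:push3} directly via Lemma~\ref{l:localprop}\eqref{j:2} and then verify \eqref{i:push1} from \eqref{e:power}, whereas the paper establishes \eqref{i:push1}$\Leftrightarrow$\eqref{i:push2} and verifies \eqref{i:push2} directly; you are also somewhat more careful than the paper in flagging that some of the equivalences only hold when quantified over all pairs $(P,\cS)$.
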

\begin{proof}
First note that expanding out the definitions yields the equivalence of \eqref{i:push1}  and \eqref{i:push2}. 
Secondly, by Remark~\ref{r:heartdef}, \eqref{i:push1} is equivalent to  $\sigma_*(\lambda^\heartsuit_\cS * \gamma_\cS)= \eta^\heartsuit_P$. 
Expanding out the definitions using 
\eqref{i:pthird} in Proposition~\ref{p:alternate} shows the 
equivalence of \eqref{i:push1}  and \eqref{i:push3}. Similarly, $\sigma_*(\lambda^\heartsuit_\cS  * \gamma_\cS) * \gamma_P^{-1}= \lambda^\heartsuit_P $
is equivalent to  \eqref{i:push4}. 

Note that  \eqref{i:push2} is equivalent to \[ \Ehr_P(t) =  \sum_{ \substack{F \in \cS \\ \sigma(F) = P} } (-1)^{\dim P - \dim F}\Ehr_F(t).\]
By Ehrhart reciprocity (see \eqref{e:power}), this is equivalent to 
\[
\Ehr_{P^\circ}(t) =  \sum_{ \substack{F \in \cS \\ \sigma(F) = P} } \Ehr_{F^\circ}(t).
\] 
The latter statement holds since every interior lattice point in $P$ 
lies in the relative interior of a unique cell of $\cS$. 
\end{proof}

\begin{remark}
 When $\cS$ is a triangulation, \eqref{i:push2} and  \eqref{i:push4}  in Lemma~\ref{l:pushstar} are due to  Betke and McMullen \cite[Theorem~1]{BMLattice} and Nill and Schepers \cite{NSCombinatorial}
 respectively. In the general case, \eqref{i:push4}  in Lemma~\ref{l:pushstar}  was suggested by Nill. 
\end{remark}

\begin{remark}
Let $\cS$ be a lattice polyhedral subdivision of a lattice polytope $P$, and let $\cS'$ be a lattice polyhedral subdivision refining $\cS$. 
Let $\sigma: \cS' \rightarrow \cS$ denote the corresponding strong formal subdivision. 
Then it follows immediately from 
Lemma~\ref{l:pushstar} that $\sigma_*(\eta^\heartsuit_{\cS'}) = \eta^\heartsuit_\cS$. 
\end{remark}

Recall from Definition~\ref{d:mixedpush} that the mixed pushforward is the left $S(R_{uv})$-module homomorphism
$\tilde{\sigma}_*: \mathcal{A}(\cS,\kappa_\cS; R_{uv})\rightarrow \mathcal{A}([\emptyset,P],\kappa_P; R_{uv})$ 
defined by
\[\tilde{\sigma}_*(e_F*\gamma_\cS|_{t = uv})=\sum_{Q \subseteq P} \lambda(Q,F)|_{t = uv^{-1}} e_Q*\gamma_P|_{t = uv}.\]

\begin{corollary}\label{c:mixedheart}
Let $\sigma:\cS\rightarrow [\emptyset,P]$ be the strong formal subdivision induced by a lattice polyhedral subdivision $\cS$ of a lattice polytope $P$.
Then $\tilde{\sigma}_*(\tilde{\eta}^\heartsuit_\cS) = \tilde{\eta}^\heartsuit_P$. 
\end{corollary}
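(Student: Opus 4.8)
The plan is to reduce the statement $\tilde{\sigma}_*(\tilde{\eta}^\heartsuit_\cS) = \tilde{\eta}^\heartsuit_P$ to Lemma~\ref{l:pushstar}\eqref{i:push1} via the substitution philosophy of Remark~\ref{r:extend}, working in the ring $R_{uv} = \Z[u^{\pm 1/2}, v^{\pm 1/2}]$ with $uv$ playing the role of $t$. First I would recall from Remark~\ref{r:heartdef} that $\tilde{\eta}^\heartsuit_\cS = \lambda^\heartsuit_\cS|_{t = uv^{-1}} * \gamma_\cS|_{t = uv}$, so by definition of the mixed pushforward (Definition~\ref{d:mixedpush}) and $S(R_{uv})$-linearity, it suffices to compute $\tilde{\sigma}_*$ on each basis element $e_F * \gamma_\cS|_{t=uv}$ and then take the $S(R_{uv})$-linear combination with coefficients $\lambda^\heartsuit_\cS|_{t = uv^{-1}}(F) \in S(R_{uv})$ (note these coefficients lie in $S(R_{uv})$ precisely by Lemma~\ref{l:reciprocity}\eqref{i:fifth'}, which is Ehrhart reciprocity, so the expression is legitimate).

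The key computation is then
\[
\tilde{\sigma}_*(\tilde{\eta}^\heartsuit_\cS) = \sum_{F \in \cS} \lambda^\heartsuit_\cS|_{t = uv^{-1}}(F) \cdot \tilde{\sigma}_*(e_F * \gamma_\cS|_{t = uv}) = \sum_{F \in \cS} \lambda^\heartsuit_\cS|_{t = uv^{-1}}(F) \sum_{Q \subseteq P} \lambda(Q,F)|_{t = uv^{-1}} \, e_Q * \gamma_P|_{t = uv}.
\]
On the other side, the analogue of Lemma~\ref{l:pushstar}\eqref{i:push1} for the ordinary pushforward $\sigma_*$ over $R$ reads $\sigma_*(\lambda^\heartsuit_\cS * \gamma_\cS) = \lambda^\heartsuit_P * \gamma_P$, which unwinds (using $\sigma_*(e_F * \gamma_\cS) = \sum_Q \lambda(Q,F) e_Q * \gamma_P$ from \eqref{e:lambda}) to the identity $\sum_{F \in \cS} \lambda^\heartsuit_\cS(F) \lambda(Q,F) = \lambda^\heartsuit_P(Q)$ in $S(R)$ for every face $Q$. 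Now I would apply the substitution $t \mapsto uv^{-1}$ to this $S(R)$-identity — which is valid since all three factors lie in $S(R)$, hence their images lie in $S(R_{uv}) \subseteq R_{uv}$, and substitution is a ring homomorphism — to get $\sum_{F} \lambda^\heartsuit_\cS|_{t = uv^{-1}}(F) \lambda(Q,F)|_{t = uv^{-1}} = \lambda^\heartsuit_P|_{t = uv^{-1}}(Q)$. Substituting this back into the double sum above collapses it to $\sum_{Q \subseteq P} \lambda^\heartsuit_P|_{t=uv^{-1}}(Q) \, e_Q * \gamma_P|_{t = uv} = \lambda^\heartsuit_P|_{t = uv^{-1}} * \gamma_P|_{t = uv} = \tilde{\eta}^\heartsuit_P$, where the last equality is again Remark~\ref{r:heartdef}.

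The main obstacle, such as it is, is bookkeeping rather than mathematics: one must be careful that the coefficients $\lambda^\heartsuit_\cS(F)$, $\lambda(Q,F)$, and $\lambda^\heartsuit_P(Q)$ are all genuinely symmetric (in $S(R)$) so that the substitution $t \mapsto uv^{-1}$ preserves the relevant identities and lands inside $S(R_{uv})$ — this is exactly where Ehrhart reciprocity (Lemma~\ref{l:reciprocity}) and the fact that $\lambda(Q,F) \in S(R)$ (Lemma~\ref{l:localpoly}, together with Corollary~\ref{c:symmetry}) are used. Once that is in place, the argument is purely formal: it is the statement that the mixed pushforward is obtained from the ordinary pushforward by the $t \mapsto uv^{-1}$ substitution on matrix coefficients, combined with Lemma~\ref{l:pushstar}\eqref{i:push1}. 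I would present it as the short chain of equalities above, citing Remark~\ref{r:heartdef}, Definition~\ref{d:mixedpush}, Lemma~\ref{l:pushstar}\eqref{i:push1}, and Lemma~\ref{l:reciprocity}.
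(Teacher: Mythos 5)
Your proposal is correct and follows essentially the same route as the paper: both reduce to the coefficient identity $\sum_{F}\lambda^\heartsuit_\cS(F)|_{t=uv^{-1}}\,\lambda(Q,F)|_{t=uv^{-1}}=\lambda^\heartsuit_P(Q)|_{t=uv^{-1}}$ via Remark~\ref{r:heartdef} and the basis $\{e_Q*\gamma_P|_{t=uv}\}$, and then verify it by specializing the corresponding $S(R)$-identity at $t\mapsto uv^{-1}$. The only cosmetic difference is the citation — the paper invokes Lemma~\ref{l:pushstar}\eqref{i:push4}, which after clearing the $t^{-\cdot/2}$ prefactors is literally that coefficient identity (for each face $Q$, applied to $\cS|_Q$), whereas you derive it by unwinding the equivalent statement \eqref{i:push1}; since Lemma~\ref{l:pushstar} already establishes those statements are equivalent, the proofs coincide.
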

\begin{proof}
By Remark~\ref{r:heartdef}, $\tilde{\sigma}_*(\tilde{\eta}^\heartsuit_\cS) = \tilde{\eta}^\heartsuit_P$ is equivalent to 
\[
\sum_{F \in \cS}  \lambda^\heartsuit_\cS(F)|_{t = uv^{-1}}  \sum_{Q \subseteq P} \lambda(Q,F)|_{t = uv^{-1}}  e_Q*\gamma_P|_{t = uv} = \sum_{Q \subseteq P}  \lambda^\heartsuit_P(Q)|_{t = uv^{-1}} e_Q*\gamma_P|_{t = uv}. 
\]
The fact that $\sum_{F \in \cS}  \lambda^\heartsuit_\cS(F)|_{t = uv^{-1}}  \lambda(Q,F)|_{t = uv^{-1}}  = \lambda^\heartsuit_P(Q)|_{t = uv^{-1}} $ follows immediately from \eqref{i:push4} of Lemma~\ref{l:pushstar}. 
\end{proof}

\begin{remark}
Let $\cS$ be a lattice polyhedral subdivision of a lattice polytope $P$, and let $\cS'$ be a lattice polyhedral subdivision refining $\cS$. 
Let $\sigma: \cS' \rightarrow \cS$ denote the corresponding strong formal subdivision. 
Then it follows immediately from 
Corollary~\ref{c:mixedheart} that $\tilde{\sigma}_*(\tilde{\eta}^\heartsuit_{\cS'}) = \tilde{\eta}^\heartsuit_\cS$. 
\end{remark}


Following the proof of
\cite{BMLattice} and the conjectural proof of \cite{NSCombinatorial}, we present a proof the non-negativity of the coefficients of the $h^*$-polynomial and local $h^*$-polynomial respectively.
\begin{theorem}\label{t:hlocalh}
 The coefficients of $h^*(P;t)$ and $\lc(P;t)$ are non-negative. 
 \end{theorem}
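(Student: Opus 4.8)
The plan is to reduce both statements to the case of lattice simplices, where $\lc$ is manifestly non-negative by Example~\ref{e:hstarsimplex}, and then feed this into the pushforward identities of this section together with the non-negativity of relative local $h$-polynomials established geometrically in Theorem~\ref{t:localcoef}. So, fix any lattice triangulation $\cS$ of $P$ — for instance a pulling triangulation, which exists for every lattice polytope. Then every cell $F\in\cS$, and every face of such a cell, is a lattice simplex, and $\cS$, being a lattice polyhedral subdivision, is in particular a rational polyhedral subdivision, so Theorem~\ref{t:localcoef} applies to $\cS$.

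First I would prove that $\lc(P;t)\ge 0$. By \eqref{i:push4} of Lemma~\ref{l:pushstar}, and recalling the notation $l_P(\cS,P,F;t)=l_{[\sigma(F),P]}(\lk_\cS(F);t)$ from Section~\ref{s:geometric}, we have
\[
\lc(P;t)=\sum_{F\in\cS}\lc(F;t)\,l_P(\cS,P,F;t).
\]
For each cell $F$ the polytope $F$ is a lattice simplex, so Example~\ref{e:hstarsimplex} writes $\lc(F;t)$ as a sum of monomials and hence $\lc(F;t)\ge 0$; and $l_P(\cS,P,F;t)\ge 0$ by Theorem~\ref{t:localcoef}. Thus $\lc(P;t)$ is a sum of products of polynomials with non-negative coefficients, so $\lc(P;t)\ge 0$. (The empty polytope is trivial, since $\lc(\emptyset;t)=1$.)

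Next, since the previous paragraph applies verbatim to an arbitrary lattice polytope, we have $\lc(Q;t)\ge 0$ for every face $Q\subseteq P$. The non-negativity of $h^*(P;t)$ then follows from the face decomposition \eqref{j:2} of Lemma~\ref{l:localprop},
\[
h^*(P;t)=\sum_{Q\subseteq P}\lc(Q;t)\,g([Q,P];t),
\]
combined with $g([Q,P];t)\ge 0$, which is \eqref{e:gintersect}. (Alternatively one can use \eqref{i:push3} of Lemma~\ref{l:pushstar} directly: its summands are non-negative because $h(\lk_\cS(F);t)\ge 0$ by Remark~\ref{r:reverseh}, Theorem~\ref{t:localcoef} and \eqref{e:gintersect}.) This completes the proof.

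The only serious input is Theorem~\ref{t:localcoef}, whose proof passes through the decomposition theorem for the toric morphism $\pi\colon X(\Sigma)\to X(P)$; everything else is bookkeeping with the pushforward identities of Section~\ref{s:ehrhart}. In spirit this is the Betke--McMullen reduction of $h^*$ to simplices \cite{BMLattice}, together with the face decomposition of $\lc$ suggested by Nill and Schepers \cite{NSCombinatorial}: the point that keeps the argument clean is that the cells of a lattice triangulation are honest lattice simplices, so the explicit fundamental-parallelepiped formula of Example~\ref{e:hstarsimplex} applies to them directly, with no further induction needed.
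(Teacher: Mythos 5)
Your proof is correct and follows the paper's approach: pick a lattice triangulation of $P$, observe the simplex case via Example~\ref{e:hstarsimplex}, and then conclude from the pushforward identities \eqref{i:push3} and \eqref{i:push4} of Lemma~\ref{l:pushstar} together with the non-negativity of $h$- and local $h$-polynomials from Section~\ref{s:geometric}. The only cosmetic difference is in the $h^*$ step: the paper cites \eqref{e:intersect} directly for $h(\lk_\cS(F);t)\ge 0$, while you route through $\lc(Q;t)\ge 0$ for all faces and the identity \eqref{j:2} of Lemma~\ref{l:localprop} (or, as you note, the equivalent decomposition of $h(\lk_\cS(F);t)$ via Remark~\ref{r:reverseh}); these are the same argument in substance.
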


\begin{proof}
By Example \ref{e:hstarsimplex}, this conclusion is true for simplices.  Because every lattice polytope admits a lattice triangulation, and by the non-negativity of the $h$-polynomial and the local $h$-polynomial (Theorem~\ref{t:localcoef} and \eqref{e:intersect}), the conclusion follows from \eqref{i:push3} and  \eqref{i:push4} in Lemma~\ref{l:pushstar}. 
\end{proof}

\begin{remark}
It follows from the proof of Theorem~\ref{t:hlocalh}, that  
 we have the stronger statement that 
\[
h^*(P;t) \ge h(\cS;t), \; \lc(P;t) \ge l_P(\cS;t),
\]
for any lattice polyhedral subdivision $\cS$ of a lattice polytope $P$. 
\end{remark}

\begin{example}\label{e:unimodular}
Let $\cS$ be a unimodular triangulation  of a lattice polytope $P$, and consider the corresponding strong formal subdivision $\sigma: \cS \rightarrow [\emptyset,P]$. That is, $\cS$ is a lattice polyhedral subdivision such that every non-empty cell $F$ in $\cS$ is isomorphic to a standard simplex. 
If $F$ is a standard simplex, then   $\lc(F;t) = 0$ by Example~\ref{e:hstarsimplex}.
Hence, by \eqref{i:push3} and  \eqref{i:push4} in Lemma~\ref{l:pushstar}, 
\[
h^*(P;t) = h(\cS;t), \; \lc(P;t) = l_P(\cS;t). 
\]
In particular, it follows Definition~\ref{d:mixedpoly} and  Definition~\ref{d:mixedstar}  that in this case  the mixed $h^*$-polynomial is equal to the mixed $h$-polynomial:
\[
h^*(P;u,v) = h_P(\cS;u,v).
\]
Equivalently, in this case, $\sigma_*( e_{\emptyset_\cS} * \gamma_\cS) = \eta^\heartsuit_P  = \lambda^\heartsuit_P * \gamma_\cS$ and $\tilde{\sigma}_*( e_{\emptyset_\cS} * \gamma_\cS|_{t = uv}) = \tilde{\eta}^\heartsuit_P$.
More generally, if $\cS'$ is a unimodular triangulation refining a lattice polyhedral decomposition $\cS$ of $P$, with corresponding strong formal subdivision $\sigma: \cS' \rightarrow \cS$, then 
$\sigma_*( e_{\emptyset_{\cS'}} * \gamma_{\cS'}) = \eta^\heartsuit_\cS  = \lambda^\heartsuit_\cS * \gamma_{\cS}$ and $\tilde{\sigma}_*( e_{\emptyset_{\cS'}} * \gamma_{\cS'}|_{t = uv}) = \tilde{\eta}^\heartsuit_\cS$.

Moreover, if $\cS$ is a regular unimodular triangulation, then it follows from Theorem~\ref{t:localcoef}  that $\lc(P;t)$ has unimodal coefficients \cite[(4)]{NSCombinatorial}. 
Note that not all lattice polytopes admit unimodular subdivisions \cite{Triangulations}, but we can sometimes get unimodality with weaker hypotheses (see Remark~\ref{r:weakerhyp} below). \end{example}

Hibi's lower bound theorem \cite[Theorem~1.1]{HibLower} states that if $\Int(P)\cap \Z^d \ne \emptyset$, then $h^*_1 \le h^*_i$ for $i = 1, \ldots, \dim P -1$. We deduce the following 
counterpart for the local $h^*$-polynomial. 

\begin{theorem}\label{t:lower}
Let $P \subseteq \R^d$ be a non-empty lattice polytope.  Then the local $h^*$-polynomial $l^*(P;t) = l_1^*t + \cdots + l_{\dim P}^*t^{\dim P}$ satisfies $l_1^* = \#(\Int(P) \cap \Z^d) \le l_i^*$ for $1 \le i \le \dim P$. 
\end{theorem}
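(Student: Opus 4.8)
The plan is to reduce the statement to a combination of Lemma~\ref{l:pushstar} and the lower bound theorem for the local $h$-polynomial of a triangulation. First I would recall from Lemma~\ref{l:localprop}\eqref{j:4} that $l_1^* = l_{\dim P}^* = \#(\Int(P)\cap\Z^d)$, so the only thing to prove is the inequality $l_1^* \le l_i^*$ for $2 \le i \le \dim P - 1$ (the endpoints being equalities). Choose a lattice triangulation $\cT$ of $P$ — one always exists. By \eqref{i:push4} of Lemma~\ref{l:pushstar},
\[
l^*(P;t) = \sum_{F \in \cT} l^*(F;t)\, l_{[\sigma(F),P]}(\lk_\cT(F);t),
\]
and since every nonempty cell $F$ of a triangulation is a simplex, $l^*(F;t)$ is a single monomial $\#(\Int(F)\cap\Z^d)\, t^{\dim F + 1}$ for $F \ne \emptyset$ and $l^*(\emptyset;t) = 1$, by Example~\ref{e:hstarsimplex}. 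Hence
\[
l^*(P;t) = l_{[\emptyset,P]}(\cT;t) + \sum_{\emptyset \ne F \in \cT} \#(\Int(F)\cap\Z^d)\, t^{\dim F + 1}\, l_{[\sigma(F),P]}(\lk_\cT(F);t).
\]
Each $l_{[\sigma(F),P]}(\lk_\cT(F);t)$ is the relative local $h$-polynomial of a triangulation, so its coefficients are non-negative by Theorem~\ref{t:localcoef}; moreover it is a symmetric polynomial of degree $\dim P - \dim F$ with no constant term, so all its support lies in degrees $1$ through $\dim P - \dim F$, hence the monomials $t^{\dim F + 1} l_{[\sigma(F),P]}(\lk_\cT(F);t)$ contribute only in degrees $\dim F + 2$ through $\dim P$ — in particular never in degree $1$.

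Next I would handle the degree-$1$ coefficient and the main triangulation term. The only contribution to the coefficient of $t^1$ comes from $l_{[\emptyset,P]}(\cT;t)$, so $l_1^* = l_1(\cT)$, where $l_1(\cT)$ denotes the linear coefficient of $l_P(\cT;t)$; but by the interior-point interpretation (or directly: the linear coefficient of $l_P(\cS;t)$ counts interior vertices of $\cT$ that are lattice points, which for a lattice triangulation is exactly $\#(\Int(P)\cap\Z^d)$ minus the number of lower-dimensional interior faces carrying lattice points — actually cleaner to argue via the known fact $l_1(\cT) = $ number of interior vertices of $\cT$) one sees $l_1^*$ equals the number of interior vertices of $\cT$. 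I would then invoke the lower bound theorem of Stanley for the local $h$-polynomial of a triangulation: the linear coefficient $l_1(\cT)$ is a lower bound for every coefficient $l_i(\cT)$, $1 \le i \le \dim P$. This is exactly the degenerate case $F = Q = \emptyset$ of the ``lower bound theorem'' asserted in the introduction (Theorem~\ref{t:lowerentry}) applied to the strong formal subdivision $\sigma:\cT \to [\emptyset,P]$, or can be extracted from Stanley's original results; since the paper explicitly flags Theorem~\ref{t:lowerentry} as the source, I would cite that. Combining: for $2 \le i \le \dim P$,
\[
l_i^* = l_i(\cT) + \sum_{\emptyset \ne F \in \cT}\#(\Int(F)\cap\Z^d)\,\big[\text{coeff of }t^{i - \dim F - 1}\text{ in } l_{[\sigma(F),P]}(\lk_\cT(F);t)\big] \ge l_i(\cT) \ge l_1(\cT) = l_1^*,
\]
since all the extra terms are non-negative.

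The main obstacle I anticipate is pinning down precisely why $l_1^*$ equals the number of interior vertices of $\cT$ and coincides with $l_1(\cT)$ — i.e. making sure the extra sum genuinely contributes nothing in degree $1$ and that the linear coefficient of $l_P(\cT;t)$ is correctly identified. The degree bookkeeping (each relative local $h$-polynomial $l_{[\sigma(F),P]}(\lk_\cT(F);t)$ is symmetric of degree $\dim P - \dim F$ with vanishing constant term, so $t^{\dim F+1}$ times it has lowest degree $\dim F + 2 \ge 2$ whenever $F \ne \emptyset$) is the crux, and it rests on the symmetry Corollary~\ref{c:symmetry} together with the vanishing of the constant term of a local $h$-polynomial when the base poset has positive rank (Example~\ref{e:lconstant}). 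Once that is in place, the inequality is immediate from non-negativity (Theorem~\ref{t:localcoef}) and the already-established lower bound theorem for $l_P(\cT;t)$. A secondary point to verify is that the decomposition \eqref{i:push4} is being applied with the correct identification $l^*(F;t) = \#(\Int F \cap \Z^d)\, t^{\dim F + 1}$ for simplices, which is exactly Example~\ref{e:hstarsimplex} (the interior of the fundamental parallelepiped of a standard-position simplex, projected, is supported in top degree for a single simplex only when... — actually for a general lattice simplex $l^*$ need not be a monomial!). This is the real subtlety: a lattice simplex's local $h^*$-polynomial is \emph{not} in general a monomial, so I would instead iterate with a unimodular subdivision where available, or — better — not decompose all the way down, but rather run the argument by induction on $\dim P$, peeling off one subdivision at a time using \eqref{i:push4} with $\cS$ a triangulation and controlling the terms $l^*(F;t)$ for lower-dimensional $F$ by the inductive hypothesis applied to the face $F$. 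That inductive reorganization is where I expect to spend the most care.
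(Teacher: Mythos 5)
Your plan does not quite work as written, and the gap you flag at the end is the real one. Let me be concrete about what breaks and what the paper does instead.

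First, the claim that $l^*(F;t) = \#(\Int(F)\cap\Z^d)\,t^{\dim F + 1}$ for a lattice simplex $F$ is false, as you note: Example~\ref{e:hstarsimplex} gives $l^*(F;t)$ as a sum over all interior lattice points of the parallelepiped $\Pi$, graded by height, and this is distributed across degrees $1,\ldots,\dim F$ (indeed $l^*_1(F) = l^*_{\dim F}(F) = \#(\Int F \cap \Z^d)$). Even for a single lattice point, your formula would give $l^*(\mathrm{pt};t) = t$, but in fact $l^*(\mathrm{pt};t)=0$. Consequently, the step ``the only contribution to the coefficient of $t^1$ comes from $l_{[\emptyset,P]}(\cT;t)$'' fails: any cell $F$ of the triangulation with $\sigma(F)=P$, $\dim F\ge 1$, and an interior lattice point contributes $l^*_1(F)\cdot 1$ to the degree-$1$ coefficient (since the constant term of $l_{[P,P]}(\lk_\cT(F);t)=h(\lk_\cT(F);t)$ is $1$). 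So in general $l^*_1 \neq l_1(\cT)$, and the bookkeeping you sketch collapses.

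Second, you cannot cite Theorem~\ref{t:lowerentry}: its proof in the paper uses Theorem~\ref{t:lower}, so the argument would be circular. The correct input for the lower bound on the coefficients of $l_P(\cS;t)$ is Theorem~\ref{t:localcoef}: when $\cS$ is a \emph{regular} lattice subdivision, $l_P(\cS;t)$ is symmetric and unimodal (hence its linear coefficient is a lower bound). You therefore also need to take $\cT$ regular, not merely lattice.

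Third, the inductive fallback you sketch is not trivially correct: after pushing the lower bound to $l^*(F;t)$ for lower-dimensional faces, you would still need to know that, for an interior cell $F$ of the triangulation, the $h$-polynomial $h(\lk_\cT(F);t)$ has a strictly positive coefficient in every degree $0,\ldots,\dim P-\dim F$ in order to ``transport'' the degree-$1$ contribution of $l^*(F;t)$ to higher degrees. That is an additional fact (true here via intersection cohomology of complete projective toric varieties, but an extra input you haven't supplied), and the resulting argument would be noticeably heavier than what the paper does.

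The paper's proof avoids all of this with one well-chosen move you didn't consider: pick a \emph{regular} lattice polyhedral subdivision $\cS$ of $P$ such that every positive-dimensional cell of $\cS$ has \emph{no interior lattice points} (e.g.\ a regular lattice triangulation using all points of $P\cap\Z^d$ as vertices). Then for every positive-dimensional cell $F$, Lemma~\ref{l:localprop}\eqref{j:4} gives $l^*_0(F)=l^*_1(F)=l^*_{\dim F}(F)=0$, so $l^*(F;t)=\alpha_F(t)t^2$ with $\alpha_F\ge 0$ (Theorem~\ref{t:hlocalh}). Combined with \eqref{i:push4} of Lemma~\ref{l:pushstar} and non-negativity of the relative local $h$-polynomials (Theorem~\ref{t:localcoef}), this yields
\[
l^*(P;t) = l_P(\cS;t) + \alpha(t)\,t^2,
\]
with $l_P(\cS;t)$ symmetric, non-negative, unimodal (Theorem~\ref{t:localcoef}) and $\alpha(t)\ge 0$. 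Now $l^*_1 = l_1(\cS)$, while for $i\ge 2$, $l^*_i = l_i(\cS) + (\text{non-negative}) \ge l_1(\cS) = l^*_1$ by unimodality. This is the content of the paper's argument; it sidesteps both the monomial mistake and the need for induction.
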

\begin{proof}
Let  $\cS$ be a regular, lattice polyhedral subdivision of $P$ such that for every positive-dimensional  cell in $\cS$ contains no interior lattice points. 
By  Theorem~\ref{t:localcoef}, \eqref{i:push4} in Lemma~\ref{l:pushstar} and \eqref{j:4} in Lemma~\ref{l:localprop}, 
\[
l^*(P;t) = l(\cS;t) + \alpha(t)t^2, 
\]
where $l(\cS;t)$ has non-negative, symmetric, unimodal coefficients and $\alpha(t)$ has non-negative integer coefficients. 
\end{proof}

\begin{remark}
It follows immediately from Theorem~\ref{t:lower} and the symmetry of the local $h^*$-polynomial (\eqref{j:1} in Lemma~\ref{l:localprop}) that the coefficients 
of the local $h^*$-polynomial are symmetric and unimodal for $\dim P \le 4$.  
\end{remark}

 \begin{example} \label{e:nonunimdal}
The coefficients of the local $h^*$-polynomial are not unimodal in general. For example, let $f = \frac{1}{3}(1,1,1,1,1)$ and $M = \Z^5 + \Z \cdot f$. Then 
if $P$ is the convex hull of the origin and the standard basis vectors $e_1,\ldots, e_5$ of $\Z^5$, one computes using Example~\ref{e:hstarsimplex} that
$\lc(P;t) = t^2 + t^4$.  
\end{example}

\begin{remark}\label{r:weakerhyp}
We don't need the existence of a unimodular triangulation to guarantee the unimodality of the coefficients of the local $h^*$-polynomial. Indeed, it follows from \eqref{i:push4} in Lemma~\ref{l:pushstar} and Theorem~\ref{t:localcoef} that
if $P$ admits a regular lattice polyhedral decomposition $\cS$ such that, for every cell $F$ in $\cS$, either $l_{[\sigma(F),P]}(\lk_\cS(F);t) = 0$ or $l^*(F;t)$ has symmetric, unimodal coefficients, then $l^*(P;t)$ has symmetric, unimodal coefficients. 
\end{remark}

\section{The limit mixed $h^*$-polynomial}\label{s:lmhstar}

In this section, we introduce the limit mixed $h^*$-polynomial,a two-variable invariant of a lattice polyhedral subdivision $\cS$ of a lattice polytope $P$.

\subsection{Definition of limit mixed $h^*$-polynomial}

We continue with the notation of previous sections with  $R =\Z[t^{\pm 1/2}]$ and $R_{uv}=\Z[u^{\pm 1/2},v^{\pm 1/2}]$. 
Recall that, by Proposition~\ref{p:alternate}, we have a pushforward map $\sigma_*: \mathcal{A}(\cS,\kappa_\cS; R_{uv}) \rightarrow \mathcal{A}([\emptyset,P],\kappa_P; R_{uv})$, satisfying
\[
\sigma_* ( e_F * \gamma_\cS|_{t = uv} )  = \sum_{Q \subseteq P} \eta(Q,F)|_{t = uv} e_Q = \sum_{Q \subseteq P} \lambda(Q,F)|_{t = uv} e_Q * \gamma_P|_{t = uv}. 
\]
Recall that, by Remark~\ref{r:heartdef} and  Lemma~\ref{l:reciprocity}, we may consider $\tilde{\eta}^\heartsuit_\cS = \sum_{F \in \cS} \lambda_\cS^\heartsuit(F)|_{t = uv^{-1}} e_F * \gamma_\cS|_{t = uv}  \in \mathcal{A}(\cS,\kappa_\cS; R_{uv})$. 

\begin{definition}
Let $\cS$ be a lattice polyhedral subdivision of a lattice polytope $P$.  Define $\eta_{\cS,\infty}^\heartsuit \in \mathcal{A}([\emptyset,P],\kappa_P; R_{uv})$ and $\lambda_{\cS,\infty}^\heartsuit \in S(R_{uv})^P$ by
\[
\eta_{\cS,\infty}^\heartsuit = \lambda_{\cS,\infty}^\heartsuit * \gamma_P|_{t = uv} := \sigma_* (\tilde{\eta}^\heartsuit_\cS).
\]
\end{definition}

The analogous polynomial invariants are described below.

\begin{definition}\label{d:new}
Let $\cS$ be a lattice polyhedral subdivision of a lattice polytope $P$.  
The \define{limit mixed $h^*$-polynomial} of $(P,\cS)$ is
\[
h^*(P,\cS;u,v) := \sum_{F \in \cS} v^{\dim F + 1}\lc(F; uv^{-1})  h(\lk_\cS(F);uv),
\]
and the \define{local limit mixed $h^*$-polynomial} of $(P,\cS)$ is
\[
\lc(P, \cS;u,v) := \sum_{F \in \cS} v^{\dim F + 1}\lc(F; uv^{-1}) l_{[\sigma(F),P]}(\lk_\cS(F);uv).
\]
If $P$ is empty, then $h^*(P,\cS;u,v) =  \lc(P, \cS;u,v)  = 1$.  Equivalently, we may define: 
\[
h^*(P,\cS;u,v) := (uv)^{(\dim P + 1)/2}\eta_{\cS,\infty}^\heartsuit(P),
\]
\[
\lc(P, \cS;u,v) := (uv)^{(\dim P + 1)/2} \lambda_{\cS,\infty}^\heartsuit(P).
\]
\end{definition}

These polynomials satisfy a number of immediate properties.

\begin{theorem}\label{t:basic}
Let $\cS$ be a lattice polyhedral subdivision of a lattice polytope $P$.  
Then the limit mixed $h^*$-polynomial $h^*(P,\cS;u,v)$ and  local limit mixed $h^*$-polynomial $\lc(P,\cS;u,v)$
satisfy the following properties:

\begin{enumerate}

\item\label{i:rh1limit*} ($uv$-interchange) The limit mixed $h^*$-polynomial and   local limit mixed $h^*$-polynomial are invariant under the interchange of $u$ and $v$ i.e.
\[
h^*(P,\cS;u,v) = h^*(P,\cS;v,u), \; \; \lc(P,\cS;u,v) = \lc(P,\cS;v,u).
\]

\item\label{i:rh2limit*} (specialization) We have the specializations:
\[
h^*(P,\cS;u,1)= h^*(P;u), \; \; \lc(P,\cS;u,1)  = \lc(P;u). 
\]

\item\label{i:rh7limit*} (symmetry) We have
\[
(uv)^{\dim P + 1} h^*(P,\cS ;u^{-1},v^{-1}) = \sum_{Q \subseteq P} h^*(Q,\cS|_Q;u,v) (uv - 1)^{\dim P - \dim Q},
\]
\[
(uv)^{\dim P + 1} \lc(P,\cS;u^{-1},v^{-1}) =\lc(P,\cS;u,v). 
\]

\item\label{i:rh3limit*} (constant terms) If $P$ is non-empty, then 
\[
h^*(P,\cS;0,v)  = 1,   \; \; \lc(P,\cS;0,v)   = 0.
\]

\item\label{i:rh4limit*} (identity subdivision) If $\cS$ is the trivial subdivision, then 
\[
h^*(P,\cS;u,v) =  h^*(P;u,v), \; \;\lc(P,\cS;u,v) = v^{\dim P + 1} \lc(P;uv^{-1}).
\]

\item\label{i:rh5limit*} (non-negativity) The coefficients of the limit mixed  $h^*$-polynomial and local limit mixed  $h^*$-polynomial are non-negative. 
 
\item\label{i:rh6'limit*} (inversion)  We have 
\begin{align*}
h^*(P, \cS;u,v) &= \sum_{Q \subseteq P} \lc(Q, \cS|_Q;u,v)g([Q,P];uv),\\
\lc(P, \cS; u,v) &=  \sum_{Q \subseteq P} (-1)^{\dim P - \dim Q}h^*(Q, \cS|_Q;u,v) g([Q,P]^*;uv).
\end{align*}

\end{enumerate}

\end{theorem}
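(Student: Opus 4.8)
The plan is to verify the seven properties of Theorem~\ref{t:basic} one by one, in each case reducing to results already established for the ordinary mixed $h$-polynomial (Theorem~\ref{t:refineprop}), the local $h$-polynomial (Section~\ref{s:local}), or the classical Ehrhart invariants (Lemma~\ref{l:localprop}, Lemma~\ref{l:pushstar}), together with the fact that $\eta_{\cS,\infty}^\heartsuit = \sigma_*(\tilde\eta^\heartsuit_\cS)$ is acceptable (it is the pushforward of an acceptable function, by Proposition~\ref{p:alternate} and Corollary~\ref{c:mixedheart}). I would organize things so that the structural properties (\eqref{i:rh1limit*}, \eqref{i:rh7limit*}, \eqref{i:rh6'limit*}) flow from the acceptability of $\eta_{\cS,\infty}^\heartsuit$ and its relation to $\lambda_{\cS,\infty}^\heartsuit$ via $\gamma_P$, while the specialization and constant-term statements are direct substitutions into the defining sums in Definition~\ref{d:new}.

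First, for \eqref{i:rh1limit*}: since $\eta_{\cS,\infty}^\heartsuit$ is $\kappa_P$-acceptable over $R_{uv}$, by Lemma~\ref{l:symacc} we have $\lambda_{\cS,\infty}^\heartsuit \in S(R_{uv})^P$, so each value $\lambda_{\cS,\infty}^\heartsuit(Q)$ is invariant under the involution $u\leftrightarrow u^{-1}$, $v\leftrightarrow v^{-1}$; this gives the symmetry of $\lc(P,\cS;u,v)$, and then the symmetry of $h^*(P,\cS;u,v)$ follows from the inversion formula in \eqref{i:rh6'limit*} (which I would prove independently, see below) together with $g([Q,P];(uv)^{-1}) = (uv)^{-\dim P + \dim Q} g([Q,P];uv)$ from Example~\ref{e:hEulerian}. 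Alternatively the symmetry is transparent from the definition once one uses that $\lc(F;uv^{-1})$ has a symmetric leading/trailing structure and $h(\lk_\cS(F);uv)$ is genuinely symmetric. For \eqref{i:rh2limit*}, setting $v=1$ in the defining sums recovers exactly the formulas \eqref{i:push3} and \eqref{i:push4} of Lemma~\ref{l:pushstar}. For \eqref{i:rh7limit*}, the first identity is the unpacking of $(uv)^{\dim P + 1}\eta_{\cS,\infty}^\heartsuit(P)|_{\text{inv}} = \bar\eta_{\cS,\infty}^\heartsuit(P)$ combined with the acceptability relation $\bar\eta_{\cS,\infty}^\heartsuit = \eta_{\cS,\infty}^\heartsuit * \kappa_P$ expanded over faces $Q\subseteq P$ with $\kappa_P(Q,P) = q^{\dim P - \dim Q}$ and $q^2 = uv - 2 + (uv)^{-1}$, matched up after clearing denominators; the second is immediate from the symmetry in \eqref{i:rh1limit*} together with $\deg g([Q,P]^*;uv)$ being strictly bounded, exactly as in the proof of Corollary~\ref{c:symmetry}. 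For \eqref{i:rh3limit*}, one uses $\lc(F;0) = 0$ for every nonempty face together with $\lc(\emptyset; t) = 1$ and $h(\lk_\cS(\emptyset);uv)$, $l_{[\emptyset,P]}(\lk_\cS(\emptyset);uv)$ having constant term $1$ and $0$ respectively (Example~\ref{e:lconstant}, Example~\ref{e:hconstant}), isolating the single surviving term. For \eqref{i:rh4limit*}, when $\cS$ is trivial the only cells are faces of $P$ and $\lk_\cS(Q)$ is the interval poset; the formulas collapse to Definition~\ref{d:mixedstar} and to $v^{\dim P+1}\lc(P;uv^{-1})$ by Example~\ref{e:lidentity}. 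For \eqref{i:rh5limit*}, non-negativity follows by taking a lattice triangulation (or, for the unimodular refinement, by Example~\ref{e:unimodular}) and invoking Corollary~\ref{c:refine} together with the non-negativity of $\lc(F;t)$ (Lemma~\ref{l:localprop}\eqref{j:3}); more precisely I would apply the composition formula \eqref{i:rh8'} of Theorem~\ref{t:refineprop} to $\cS' \to \cS \to [\emptyset,P]$ with $\cS'$ a unimodular triangulation, using $h^*(F;u,v) = h_F(\cS'|_F;u,v)$ and non-negativity of the mixed $h$-polynomial.

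Finally, for \eqref{i:rh6'limit*}: the first identity is the statement that $\eta_{\cS,\infty}^\heartsuit = \lambda_{\cS,\infty}^\heartsuit * \gamma_P|_{t=uv}$, which is literally the definition, once one knows that $(uv)^{(\dim Q + 1)/2}\lambda_{\cS,\infty}^\heartsuit(Q) = \lc(Q,\cS|_Q;u,v)$ — and this compatibility with restriction to faces is where a small argument is needed, namely that the pushforward construction localizes along order ideals (Remark~\ref{r:restrict}), so $\lambda_{\cS,\infty}^\heartsuit(Q)$ computed in $[\emptyset,P]$ agrees with the analogous invariant of $\cS|_Q \to [\emptyset,Q]$. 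The second identity of \eqref{i:rh6'limit*} then follows by multiplying the relation $\eta_{\cS,\infty}^\heartsuit = \lambda_{\cS,\infty}^\heartsuit * \gamma_P|_{t=uv}$ on the right by $\gamma_P^{-1}|_{t=uv}$ and using Theorem~\ref{t:inverse} to identify $\gamma_P^{-1}(Q,P) = (-1)^{\dim P - \dim Q}(uv)^{-(\dim P - \dim Q)/2} g([Q,P]^*;uv)$, exactly parallel to the proof of \eqref{i:rh6''} in Theorem~\ref{t:refineprop}. The main obstacle I anticipate is the careful bookkeeping in \eqref{i:rh7limit*}: tracking the half-integer powers of $uv$ and the $q$-powers through the convolution $\eta_{\cS,\infty}^\heartsuit * \kappa_P$ so that, after clearing the overall factor $(uv)^{(\dim P+1)/2}$, one lands precisely on $(uv-1)^{\dim P - \dim Q}$; everything else reduces mechanically to facts already proved, but the symmetry/reciprocity identity requires genuinely unwinding the acceptability condition over the face poset of $P$.
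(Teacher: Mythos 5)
Your overall strategy matches the paper's: reduce each item to established facts (Lemma~\ref{l:localprop}, Lemma~\ref{l:pushstar}, Theorem~\ref{t:localcoef}, \eqref{e:intersect}) and to the acceptability of $\eta_{\cS,\infty}^\heartsuit = \sigma_*(\tilde\eta^\heartsuit_\cS)$ together with $\eta_{\cS,\infty}^\heartsuit = \lambda_{\cS,\infty}^\heartsuit * \gamma_P|_{t=uv}$, with items \eqref{i:rh2limit*}--\eqref{i:rh4limit*} being direct substitutions into Definition~\ref{d:new}. Your worry about localizing the pushforward to a face $Q$ is legitimate but is easily resolved: by Definition~\ref{d:push} the value $\eta_{\cS,\infty}^\heartsuit(Q)$ involves only cells $F$ with $\sigma(F)\le Q$, i.e.\ cells of $\cS|_Q$, and $\tilde\eta^\heartsuit_\cS(F)$ depends only on $F$ (Remark~\ref{r:independent}); that gives $\eta_{\cS,\infty}^\heartsuit(Q) = (uv)^{-(\dim Q+1)/2}h^*(Q,\cS|_Q;u,v)$ without extra work.

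However, you have conflated the two involutions in play, and this corrupts your reasoning for \eqref{i:rh1limit*} and \eqref{i:rh7limit*}. The membership $\lambda_{\cS,\infty}^\heartsuit \in S(R_{uv})^P$ means invariance under the ring involution $u\mapsto u^{-1}$, $v\mapsto v^{-1}$; that is precisely the content of the second identity in \eqref{i:rh7limit*}, namely $\lc(P,\cS;u,v) = (uv)^{\dim P+1}\lc(P,\cS;u^{-1},v^{-1})$, and it has nothing to do with the $u\leftrightarrow v$ interchange. So your primary route for \eqref{i:rh1limit*} does not work, and your attribution of the second identity of \eqref{i:rh7limit*} to ``\eqref{i:rh1limit*} plus a degree bound'' is backwards. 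The $uv$-interchange in \eqref{i:rh1limit*} instead comes straight from the symmetry of the local $h^*$-polynomial (Lemma~\ref{l:localprop}\eqref{j:1}): $\lc(F;vu^{-1}) = (vu^{-1})^{\dim F+1}\lc(F;uv^{-1})$, so $u^{\dim F+1}\lc(F;vu^{-1}) = v^{\dim F+1}\lc(F;uv^{-1})$, while $h(\lk_\cS(F);uv)$ and $l_{[\sigma(F),P]}(\lk_\cS(F);uv)$ are manifestly $u\leftrightarrow v$-symmetric --- this is the ``alternative'' you mention, and it is in fact the only correct one of the two routes you offer; the paper uses it.

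There is also a genuine gap in your argument for \eqref{i:rh5limit*}. You propose to pass to a unimodular triangulation $\cS'$ refining $\cS$ and invoke \eqref{i:rh8'} of Theorem~\ref{t:refineprop}; but such a refinement need not exist (the paper explicitly notes in Example~\ref{e:unimodular} that not all lattice polytopes admit unimodular subdivisions), and moreover \eqref{i:rh8'} concerns the mixed $h$-polynomial rather than the limit mixed $h^*$-polynomial. No refinement is needed: each summand $v^{\dim F+1}\lc(F;uv^{-1})\,h(\lk_\cS(F);uv)$ in Definition~\ref{d:new} (resp.\ $v^{\dim F+1}\lc(F;uv^{-1})\,l_{[\sigma(F),P]}(\lk_\cS(F);uv)$) is a product of polynomials with non-negative coefficients, by Lemma~\ref{l:localprop}\eqref{j:3}, \eqref{e:intersect}, and Theorem~\ref{t:localcoef}, noting that a lattice polyhedral subdivision is automatically rational so that Theorem~\ref{t:localcoef} applies.
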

\begin{proof}

Property~\eqref{i:rh1limit*} follows from the symmetry of the  local $h^*$-polynomial (\eqref{j:1} of Lemma~\ref{l:localprop}).

Property~\eqref{i:rh2limit*} follows from Lemma~\ref{l:pushstar}. 

Property~\eqref{i:rh7limit*} follows since  $\eta_{\cS,\infty}^\heartsuit \in \mathcal{A}([\emptyset,P],\kappa_P; R_{uv})$ and $\lambda_{\cS,\infty}^\heartsuit \in S(R_{uv})^P$.

Property~\eqref{i:rh3limit*} follows from \eqref{j:4} in Lemma~\ref{l:localprop}, Example~\ref{e:hconstant} and Example~\ref{e:lconstant}.

Property~\eqref{i:rh4limit*} follows from Remark~\ref{r:heartdef}. 

Property~\eqref{i:rh5limit*} follows from \eqref{j:3} in Lemma~\ref{l:localprop}, Example~\ref{e:intersect} and Theorem~\ref{t:localcoef}.

Property~\eqref{i:rh6'limit*} follows since  $\eta_{\cS,\infty}^\heartsuit = \lambda_{\cS,\infty}^\heartsuit * \gamma_P|_{t = uv}$ by definition, using Theorem~\ref{t:inverse}.

\end{proof}

The following generalizes Remark~\ref{r:hstardiamond}. 

\begin{remark}\label{r:hstardiamond2}
Assuming that $\cS$ is a lattice polyhedral subdivision of a  non-empty lattice polytope $P$, we may write 
\[
h^*(P,\cS;u,v) = 1 + uv \sum_{ 0 \le p,q \le \dim P - 1} h^*_{p,q} u^p v^q,
\]
\[
\lc(P,\cS;u,v) = uv \sum_{ 0 \le p,q \le \dim P - 1} \lc_{p,q} u^p v^q.
\]
 for some non-negative integers $h^*_{p,q}$ and $ \lc_{p,q}$.  We may visualize these coefficients 
in diamonds, which we call the \define{ $h^*$-diamond} and \define{local $h^*$-diamond} of $(P,\cS)$, 
by placing $h^*_{p,q}$ and $\lc_{p,q}$ at point $(q - p, p + q)$  in $\Z^2$ respectively (see Figure~\ref{f:diamond} below).
Note that when $\cS$ is the trivial subdivision,  the $h^*$-diamond and local $h^*$-diamond of 
$(P,\cS)$ coincide with the $h^*$-diamond and local $h^*$-diamond of 
$P$ introduced in Remark~\ref{r:hstardiamond}. Observe that the $h^*$-diamond and local $h^*$-diamond are symmetric about the vertical axis, and the local $h^*$-diamond is symmetric about the horizontal axis.
By comparing the coefficients of $u^{\dim P}v^{q + 1}$  in $h^*(P,\cS;u,v)$ and $\lc(P,\cS;u,v)$ using, for example, \eqref{i:rh6'limit*} in Theorem~\ref{t:basic}, 
the $h^*$-diamond and local $h^*$-diamond are identical along the part of the  boundary of the diamonds that is above and including the horizontal middle strip i.e $h^*_{\dim P - 1,q} = l^*_{\dim P - 1,q}$ for all $q$. 
We recover the coefficients of $h^*(P;u)$ and $\lc(P;u)$  by summing the coefficients of the $h^*$-diamond and local $h^*$-diamond respectively along a fixed choice of diagonal
i.e. $h^*_{i + 1} = \sum_{p + q = i} h^*_{p,q}$,  $l^*_{i + 1} = \sum_{p + q = i} l^*_{p,q}$. 
It follows from Theorem~\ref{t:lower} that each horizontal strip of the diamonds satisfies the following lower bound theorem: its first entry is a lower bound for the other entries i.e. 
$h^*_{k,0} \le h^*_{k-i,i}$, $h^*_{\dim P - 1,k} \le h^*_{\dim P - 1-i, k+i}$, $\lc_{k,0} \le \lc_{k-i,i}$  and $l^*_{\dim P - 1,k} \le l^*_{\dim P - 1-i, k+i}$. 
Moreover, if $\cS$ is regular then the coefficients of each vertical strip of the local $h^*$-diamond are symmetric and unimodal by Theorem~\ref{t:unimodality} below.  
\end{remark}

\begin{figure}[htb]
\begin{center}
\[
\begin{array}{p{5cm}p{5cm}}
\xy
(9,-13)*{}="A"; (-27,17)*{}="B"; (9,47)*{}="C"; (46,17)*{}="D";
"A"; "B" **\dir{-};
"A"; "D" **\dir{-};
"C"; "B" **\dir{-};
"C"; "D" **\dir{-};
(-10,20)*{
\xymatrix@=0.2em{ 
 &  &  & h^*_{3,3} & &                                              \\
  &  & h^*_{3,2} & & h^*_{2,3} &  &                                \\
   &  h^*_{3,1}  &  & h^*_{2,2} & & h^*_{1,3} &                    \\
     h^*_{3,0}  &   & h^*_{2,1}  &  & h^*_{1,2} &  & h^*_{0,3}      \\
      &  h^*_{2,0}  &  & h^*_{1,1} & & h^*_{0,2} &                \\
        &  & h^*_{1,0} & & h^*_{0,1} &  &                         \\
        &  &  & h^*_{0,0} & &                                \\
 }
};
(15,-17)*{\hbox{\small $h^*$-diamond of $(P,\cS)$}};
\endxy
&
\xy
(9,-13)*{}="A"; (-27,17)*{}="B"; (9,47)*{}="C"; (46,17)*{}="D";
"A"; "B" **\dir{-};
"A"; "D" **\dir{-};
"C"; "B" **\dir{-};
"C"; "D" **\dir{-};
(-8,20)*{
\xymatrix@=0.15em{ 
 &  &  & \lc_{3,3} & &                                              \\
  &  & \lc_{3,2} & & \lc_{2,3} &  &                                \\
   &  \lc_{3,1}  &  & \lc_{2,2} & & \lc_{1,3} &                    \\
     \lc_{3,0}  &   & \lc_{2,1}  &  & \lc_{1,2} &  & \lc_{0,3}      \\
      &  \lc_{2,0}  &  & \lc_{1,1} & & \lc_{0,2} &                \\
        &  & \lc_{1,0} & & \lc_{0,1} &  &                         \\
        &  &  & \lc_{0,0} & &                                \\
 }
};
(15,-17)*{\hbox{\small local $h^*$-diamond of $(P,\cS)$}};

\endxy
\end{array} 
\]
\end{center}

\caption{ $h^*$-diamond and  local $h^*$-diamond of $(P,\cS)$ when $\dim P = 4$}
\label{f:diamond}
\end{figure}

\begin{remark}\label{r:unimodalblah} As a consequence of  properties \eqref{i:rh5limit*} and \eqref{i:rh6'limit*} in Theorem~\ref{t:basic} together with the non-negativity of the $g$-polynomial justified by formula \eqref{e:gintersect},  we have $h^*(P,\cS;u,v) \ge \lc(P,\cS;u,v)$. In particular, 
the coefficients of the local $h^*$-diamond  are a lower bound for the corresponding coefficients of the $h^*$-diamond (more generally, see Theorem~\ref{t:lower entry}). \end{remark}

\begin{remark}\label{r:deducelower}
The fact that the coefficients of each vertical strip of the local $h^*$-diamond are symmetric and unimodal when $\cS$ is regular implies the lower bound theorem ,Theorem~\ref{t:lower}. Indeed, let
 $\cS$ be a regular, lattice polyhedral subdivision of $P$ such that every positive-dimensional  cell in $\cS$ contains no interior lattice points. By Example~\ref{e:lower}, $\lc_{0,q} = 0$ for all $q > 0$, and hence
 \[
 \lc_1 = \sum_q \lc_{q,0} = \lc_{0,0} \le \lc_{i,i} \le \sum_q  \lc_{q + i,i} = \lc_{i + 1}, 
 \]
 for $i \ge 0$. 
 
\end{remark}

Our invariants are well-behaved under refinement. The following result generalizes Lemma~\ref{l:pushstar}. 

\begin{proposition} \label{p:hstarrefinement}
Let $\cS$ be a lattice polyhedral subdivision of a lattice polytope $P$, and let $\cS'$ be a lattice polyhedral subdivision refining $\cS$. 
Then 
\begin{enumerate}
\item \label{hs:1} \[h^*(P, \cS';u,v) = \sum_{ \substack{ F \in \cS \\ \sigma(F) = P } } h^*(F,\cS'|_F;u,v)(uv - 1)^{\dim P - \dim F}  ,   \]
\item \label{hs:2} \[h^*(P,\cS';u,v) = \sum_{F \in \cS} \lc(F,\cS'|_F;u,v) h(\lk_\cS(F);uv),\]
\item \label{hs:3} \[\lc(P,\cS';u,v) = \sum_{F \in \cS} \lc(F,\cS'|_F;u,v) l_{[\sigma(F),P]}(\lk_\cS(F);uv),\]
\end{enumerate}
\end{proposition}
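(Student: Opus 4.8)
The plan is to reduce everything to the combinatorial machinery already developed, exactly as in the proof of Lemma~\ref{l:pushstar}, but now working over $R_{uv}$ with the mixed pushforward. Let $\tau:\cS'\rightarrow\cS$ and $\sigma:\cS\rightarrow[\emptyset,P]$ be the strong formal subdivisions coming from the refinements, so that $\sigma\circ\tau:\cS'\rightarrow[\emptyset,P]$ is the strong formal subdivision associated to $\cS'$ (Lemma~\ref{l:polyhedralsubdivision}, Remark~\ref{r:compose}). The key identity to start from is the compatibility of mixed pushforwards with composition, $\widetilde{(\sigma\circ\tau)}_* = \tilde\sigma_*\circ\tilde\tau_*$ (Remark~\ref{r:tildecompose}), together with Corollary~\ref{c:mixedheart}, which says $\tilde\tau_*(\tilde\eta^\heartsuit_{\cS'}) = \tilde\eta^\heartsuit_\cS$ and $(\sigma\circ\tau)_{*}(\tilde\eta^\heartsuit_{\cS'})$ applied appropriately recovers $\eta^\heartsuit_{\cS',\infty}$. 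The cleanest route is: first establish a ``relative'' version of Lemma~\ref{l:pushstar}\eqref{i:push3},\eqref{i:push4} with $\cS$ replaced by $\cS'|_F$ sitting inside the link, then assemble.

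First I would record that, by Definition~\ref{d:new} and Definition~\ref{d:ehrdef}, $h^*(P,\cS';u,v) = (uv)^{(\dim P+1)/2}\eta^\heartsuit_{\cS',\infty}(P)$ where $\eta^\heartsuit_{\cS',\infty} = (\sigma\circ\tau)_*(\tilde\eta^\heartsuit_{\cS'})$ (here $(\sigma\circ\tau)_*$ is the pushforward over $R_{uv}$ of Definition~\ref{d:push} with $t=uv$, applied to the element built from the $\lambda^\heartsuit$'s via Remark~\ref{r:heartdef}). Using $\widetilde{(\sigma\circ\tau)}_* = \tilde\sigma_*\circ\tilde\tau_*$ is not quite what is needed — the $\infty$-construction pushes forward $\tilde\eta^\heartsuit$ by the \emph{ordinary} (not mixed) pushforward in one slot. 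So the precise bookkeeping is: $\eta^\heartsuit_{\cS',\infty} = \sigma_*\big(\tilde\sigma'_*(\tilde\eta^\heartsuit_{\cS'})\big)$ does not literally make sense; instead one writes $\tilde\eta^\heartsuit_{\cS'} = \lambda^\heartsuit_{\cS'}|_{t=uv^{-1}} * \gamma_{\cS'}|_{t=uv}$ and pushes forward using Lemma~\ref{l:pushstar}\eqref{i:push4} applied to the restricted strong formal subdivision $\tau: \cS'|_F \to \lk_\cS(F)$ (more precisely $\cS'|_{\lk}$ versus $\lk_\cS$) for each cell $F\in\cS$. This gives, after matching coefficients of $e_Q*\gamma_P$, the three displayed formulas. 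Concretely: for \eqref{hs:2} and \eqref{hs:3} one expands $h^*(P,\cS';u,v) = \sum_{F'\in\cS'} v^{\dim F'+1}\lc(F';uv^{-1})h(\lk_{\cS'}(F');uv)$, then partitions the sum over $F'$ according to $\tau(F')\in\cS$, and uses Corollary~\ref{c:hrefine} / Theorem~\ref{t:refineprop}\eqref{i:rh8'} to factor $h(\lk_{\cS'}(F');uv)$ and $l_{[\sigma(F'),P]}(\lk_{\cS'}(F');uv)$ through the link of $\tau(F')$ in $\cS$. For \eqref{hs:1} one instead groups $F'$ by the smallest face of $\cS'$ — this is just Lemma~\ref{l:pushstar}\eqref{i:push2} applied to the top-dimensional part, combined with the identity $h^*(F,\cS'|_F;u,v)$ summing the link contributions over cells of $\cS'|_F$ mapping onto $F$.

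The main obstacle I anticipate is purely notational: keeping straight the three nested posets $\cS'\to\cS\to[\emptyset,P]$ and the distinction between the ordinary pushforward $\sigma_*$ (which produces the ``$\infty$'' invariants by applying an ordinary $q^{\bullet}$-weighted sum) and the mixed pushforward $\tilde\sigma_*$ (which substitutes $t\mapsto uv^{-1}$ in the $\lambda$ factor). In particular one must verify that the operation ``build $\tilde\eta^\heartsuit$ then apply $\sigma_*$'' commutes with restriction to order ideals and with the composition $\sigma\circ\tau$ in the precise way needed; this is where Remark~\ref{r:restrict}, Remark~\ref{r:localrestrict}, and Lemma~\ref{l:compose} get used. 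Once the pushforward is correctly identified, each of \eqref{hs:1}, \eqref{hs:2}, \eqref{hs:3} follows by comparing coefficients of the basis elements $\{e_Q\}$ or $\{e_Q*\gamma_P|_{t=uv}\}$ of $R_{uv}^P$, exactly paralleling the four-way equivalence in Lemma~\ref{l:pushstar}. I would therefore structure the proof as: (i) recall $\eta^\heartsuit_{\cS',\infty} = \sigma_*(\tilde\eta^\heartsuit_{\cS'})$ and its relation to $h^*(P,\cS';u,v)$, $\lc(P,\cS';u,v)$; (ii) apply Lemma~\ref{l:pushstar} and Corollary~\ref{c:hrefine} ``one subdivision at a time'' to the restricted data over each face/cell; (iii) reassemble and read off the three formulas by linearity and the definitions of the limit mixed invariants, with \eqref{hs:1} following from the Ehrhart-reciprocity/interior-lattice-point argument of Lemma~\ref{l:pushstar}\eqref{i:push2} applied verbatim to the $u,v$-refined setting.
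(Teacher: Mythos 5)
Your overall approach is the right one and matches the paper's: write $\eta^\heartsuit_{\cS',\infty}=(\sigma\circ\tau)_*(\tilde\eta^\heartsuit_{\cS'})$, factor the ordinary pushforward as $\sigma_*\circ\tau_*$ via Lemma~\ref{l:compose}, identify $\tau_*(\tilde\eta^\heartsuit_{\cS'})(F)=(uv)^{-(\dim F+1)/2}h^*(F,\cS'|_F;u,v)$ (using Remark~\ref{r:independent} and the fact that $\tau^{-1}(F)$ coincides with the fiber over $F$ of the trivial subdivision map $\cS'|_F\to[\emptyset,F]$), and then read off the three formulas by expanding $\sigma_*$ in the basis $\{e_Q\}$ versus $\{e_Q*\gamma_P|_{t=uv}\}$ exactly as in Lemma~\ref{l:pushstar}. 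However, the parenthetical worry in your middle paragraph — that ``$\eta^\heartsuit_{\cS',\infty}=\sigma_*(\tau_*(\tilde\eta^\heartsuit_{\cS'}))$ does not literally make sense'' — is mistaken. That composition is the \emph{ordinary} pushforward (over $R_{uv}$) applied twice, which is perfectly well-defined and is precisely the identity the paper's proof rests on; your instinct that $\tilde\sigma_*\circ\tilde\tau_*$ is not what is needed is correct (the mixed pushforward would collapse $\tilde\eta^\heartsuit_{\cS'}$ to $\tilde\eta^\heartsuit_{\cS}$ and forget $\cS'$), but the fix is simply to use the ordinary $\sigma_*\circ\tau_*$, not to abandon the pushforward formulation. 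Relatedly, the expression ``$\tau:\cS'|_F\to\lk_\cS(F)$'' is not a map between those two posets; what you want there is the restriction of $\tau$ to $(\cS')_{\ge F'}\to\cS_{\ge\tau(F')}=\lk_\cS(\tau(F'))$ (Remark~\ref{r:localrestrict}). Your alternative ``concrete'' route — expand $h^*(P,\cS';u,v)$ as a sum over $F'\in\cS'$, partition by $\tau(F')$, and apply Corollary~\ref{c:hrefine} to the nested subdivisions $(\cS')_{\ge F'}\to\cS_{\ge\tau(F')}\to[\sigma(\tau(F')),P]$ — is a legitimate unwinding of the same algebra and does close the argument (it requires the small check that $\tau(F')$ agrees with the smallest face of $F''$ containing $F'$ for $F'\subseteq F''\in\cS$, and that the interval $[\tau(F'),F'']$ in $\cS$ agrees with the corresponding face interval of $F''$; both hold because $\cS$ is a polyhedral subdivision), but it is not a genuinely different proof, just a coefficientwise version of the pushforward identity.
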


\begin{proof}
Let $\tau: \cS' \rightarrow \cS$ and $\sigma: \cS \rightarrow [\emptyset,P]$ be the corresponding strong formal subdivisions. By definition, for any cell $F$ in $\cS$, 
\[
\tau_* (\tilde{\eta}^\heartsuit_{\cS'})(F) = (uv)^{-(\dim F + 1)/2} h^*(F, \cS'|_F; u,v). 
\]
Hence \eqref{hs:1} follows from the fact that
$\eta^\heartsuit_{\cS',\infty} = \sigma_*(\tau_* (\tilde{\eta}^\heartsuit_{\cS'}))$. The other two formulas follow as in Lemma~\ref{l:pushstar}.
\end{proof}

\subsection{Examples}

\begin{example} \label{e:unimodularrefined} 
Let $\cS$ be a  unimodular triangulation of a lattice polytope $P$.  Then for every non-empty face $F$ of $\cS$, $l^*(F;t)=0$.  By definition, we have
\begin{eqnarray*}
h^*(P,\cS;u,v) &=&  h^*(P;uv)  = h(\cS;uv),\\
\lc(P, \cS;u,v) &=& \lc(P;uv)  = l_P(\cS;uv).
\end{eqnarray*}
In particular, the $h^*$-diamond and local $h^*$-diamond are concentrated on the central vertical strip. 
As in Example~\ref{e:unimodular}, if the triangulation is regular, then the coefficients of $\lc(P,\cS;u,v)$ as a polynomial in $uv$ are unimodal by Theorem~\ref{t:localcoef}.  
\end{example}

\begin{example}\label{e:lower}
We continue with the notation of Remark~\ref{r:hstardiamond2}.
Recall that $h^*(P,\cS;u,1) = h^*(P;u)$ and  $\lc(P,\cS;u,1) = \lc(P;u)$ and the linear coefficients of $h^*(P;u)$ and $\lc(P;u)$ are $\#(P \cap M) - \dim P - 1$ and $\#(\Int(P) \cap M)$ respectively, by  Example~\ref{e:hstarlinear} and both Properties \eqref{j:1} and \eqref{j:4} of Lemma~\ref{l:localprop} respectively.
By comparing coefficients of $uv^{q + 1}$  in both sides of the equations in Definition~\ref{d:new}, and using Example~\ref{e:hconstant} and Example~\ref{e:lconstant}, we deduce that for $q > 0$, 
\[
h^*_{0,q} = h^*_{q,0}    = \sum_{\substack{F \in \cS \\\ \dim F = q + 1}} \# (\Int(F) \cap M),
\]
\[
\dim P + 1 + h^*_{0,0}    = \sum_{\substack{F \in \cS \\\ \dim F \le 1}} \# (\Int(F) \cap M),
\]
\begin{align*}
\lc_{0,q} &= \lc_{q,0} =  \lc_{\dim P - 1,\dim P - 1 - q} =  \lc_{\dim P - 1 - q, \dim P - 1} = \\
&h^*_{\dim P - 1,\dim P - 1 - q} =  h^*_{\dim P - 1 - q, \dim P - 1} = \sum_{\substack{F \in \cS, \sigma(F) =  P \\\ \dim F = q + 1}} \# (\Int(F) \cap M),
\end{align*}
\[
\lc_{0,0} = \lc_{\dim P - 1,\dim P - 1} = h^*_{\dim P - 1,\dim P - 1} =  \sum_{\substack{F \in \cS, \sigma(F) =  P \\\ \dim F \le 1}} \# (\Int(F) \cap M). 
\]
This gives an explicit combinatorial description of the boundaries of the $h^*$-diamond and local $h^*$-diamond of $(P,\cS)$. In particular, this gives a complete description of both 
diamonds when $\dim P = 1,2$. 
\end{example}

\begin{example}\label{e:dim3}
If $\dim P = 3$, then Example~\ref{e:lower} describes every term in the $h^*$-diamond and local $h^*$-diamond except the middle terms $h^*_{1,1}$ and $\lc_{1,1}$ respectively. 
The remaining terms can be computed via the formulas
\[
\lc_2 = \lc_{1,1} + 2 \cdot  \sum_{\substack{F \in \cS, \sigma(F) =  P \\\ \dim F = 2}} \# (\Int(F) \cap M),
\]
\[
h^*_2 =  h^*_{1,1} +   2 \cdot  \sum_{\substack{F \in \cS, \sigma(F) =  P \\\ \dim F = 2}} \# (\Int(F) \cap M) +  \sum_{\substack{F \in \cS,  F \subseteq  \partial P \\\ \dim F = 2}} \# (\Int(F) \cap M).
\]
\end{example}

\begin{example}
In the case when $\cS$ is a lattice triangulation of $P$, we have the following description of $h^*(P,\cS;u,v)$. 
If $P$ is a lattice polytope in a lattice    $N$, then let $\tau_P$ denote the cone over $P \times \{ 1 \}$ in $N_\R \times \R$.  
Let $\Sigma$ denote the fan refinement of $\tau_P$ induced by $\cS$, with cones given by the cones over $F \times \{ 1\}$, where $F$ is a cell of $\cS$ (the empty cell of $\cS$ corresponds to $\{0\}$). 
Then the lattice points in $\tau_P$ admit a well-known involution $\iota$. More specifically, a non-zero lattice point $w$ in $\tau_P$ lies in the relative interior of a unique cone $\tau$ in $\Sigma$, corresponding to 
a non-empty face $F$ of $\cS$ with vertices $v_1, \ldots , v_r$. We may uniquely write $w = \sum_{i = 1}^{r} \alpha_i (v_i, 1)$  for some  $\alpha_i \in \Q_{>0}$. 
Then $\iota(w) =  \sum_{i = 1}^{r} (\lceil \alpha_i \rceil  + \lfloor \alpha_i \rfloor - \alpha_i) (v_i, 1)$, $\iota(0) = 0$, and 
\[
h^*(P, \cS;u,v)/(1 - uv)^{\dim P + 1} =  \sum_{w \in \tau_P \cap (N \times \Z)} u^{\psi(w)}v^{\psi(\iota(w))},
\]
where $\psi: N \times \Z \rightarrow \Z$ denotes projection onto the second co-ordinate. 
If we further assume that $P$ is a simplex, then \eqref{i:rh6'limit*} of Theorem~\ref{t:basic} implies that \[
 \lc(P, \cS;u,v)/(1 - uv)^{\dim P + 1} = \sum_{w \in \Int(\tau_P) \cap (N \times \Z)} u^{\psi(w)}v^{\psi(\iota(w))}.
\]
\end{example}

\section{The refined  limit mixed $h^*$-polynomial}\label{s:rlmhstar}

In this section, we introduce the refined limit mixed $h^*$-polynomial, 
a three-variable invariant of a lattice polyhedral subdivision $\cS$ of a lattice polytope $P$.

\subsection{Definition and properties}

We continue with the notation of previous sections with  $R =\Z[t^{\pm 1/2}]$ and $R_{uv}=\Z[u^{\pm 1/2},v^{\pm 1/2}]$. As in Remark~\ref{r:extend}, we also consider 
$R_{uvw}=\Z[u^{\pm 1/2},v^{\pm 1/2},w^{\pm 1}]$ with involution $\bar{r(u,v,w)} = r(u^{-1},v^{-1},w^{-1})$, and $\Z[(uvw^2)^{\pm 1/2}] \subseteq R_{uvw}$
i.e. $uvw^2$ will play the role of $t$ in the previous sections. For example, as in Lemma~\ref{l:gpoly} and  with a slight abuse of notation, 
the acceptability operator of a locally Eulerian poset $B$ is given by $\gamma_B(x,x')|_{t = uvw^2} = (uvw^2)^{-\rho(x,x')/2} g([x,x'];uvw^2)$.  

Recall that if $\cS$ is a lattice polyhedral subdivision of a lattice polytope $P$, 
then $\eta_{\cS,\infty}^\heartsuit \in \mathcal{A}([\emptyset,P],\kappa_P; R_{uv})$ and $\lambda_{\cS,\infty}^\heartsuit \in S(R_{uv})^P$ are defined by
\[
\eta_{\cS,\infty}^\heartsuit = \lambda_{\cS,\infty}^\heartsuit * \gamma_P|_{t = uvw^2} := \sigma_* (\tilde{\eta}^\heartsuit_\cS).
\]
Consider the natural inclusion $i_w: S(R_{uv})^P \rightarrow S(R_{uvw})^P, i_w(f(u,v)) = f(u,v)$. This induces a natural inclusion
\[
j_w:  \mathcal{A}([\emptyset,P],\kappa_P; R_{uv}) \rightarrow \mathcal{A}([\emptyset,P],\kappa_P; R_{uvw}),
\]
\[
 j_w(f) = \iota_w(f * \gamma_P^{-1}|_{t = uv}) * \gamma_P|_{t = uvw^2}.
\]

\begin{definition}\label{d:refinedhstar}
Let $\cS$ be a lattice polyhedral subdivision  of a lattice polytope $P$. Then the \define{refined limit mixed $h^*$-polynomial} of $(P,\cS)$ is
\[
h^*(P,\cS;u,v,w) = \sum_{Q \subseteq P} w^{\dim Q + 1}\lc(Q, \cS|_Q;u,v) g([Q,P];uvw^2).
\]
If $P$ is empty, then $h^*(P,\cS;u,v,w)  = 1$. 
Equivalently, we may define: 
\[
h^*(P,\cS;u,v,w) := (uvw^2)^{(\dim P + 1)/2} j_w(\eta_{\cS,\infty}^\heartsuit(P)).
\]
\end{definition}

The refined limit mixed $h^*$-polynomial satisfies a number of immediate properties. 

\begin{theorem}\label{t:combinatorics}
Let $\cS$ be a lattice polyhedral subdivision of a lattice polytope $P$.  
Then the refined limit mixed $h^*$-polynomial $h^*(P,\cS;u,v,w)$ 
satisfies the following properties:

\begin{enumerate}

\item\label{i:rh1refine*} ($uvw$-interchange) The refined limit mixed $h^*$-polynomial satisfies:
\[
h^*(P,\cS;u,v,w) = h^*(P,\cS;v,u,w), 
\]
\[
h^*(P,\cS;u,v,w) = h^*(P,\cS;u^{-1},v^{-1},uvw). 
\]

\item\label{i:rh2refine*} (specialization) We have the specializations:
\[
h^*(P,\cS;u,v,1)= h^*(P,\cS;u,v),
\]
\[
h^*(P,\cS;uw^{-1},1,w)= h^*(P;u,w).
\]
Hence, we have the following commutative diagram of invariants 
\[\xymatrix{
h^*(P,\cS;u,v,w)   \ar[r]^{\substack{u \mapsto uw^{-1} \\ v \mapsto 1}}   \ar[d]^{w \mapsto 1}  & h^*(P;u,w) \ar[d]^{w \mapsto 1}  &\\
 h^*(P,\cS;u,v) \ar[r]^{v \mapsto 1} &   h^*(P;u) \ar[r]^>>>>>{u \mapsto 1}  & (\dim P)!\vol(P),
}\] 
where $\vol(P)$ is the Euclidean volume of $P$.

\item\label{i:rh7refine*} (symmetry) We have
\[
(uvw^2)^{\dim P + 1} h^*(P,\cS ;u^{-1},v^{-1},w^{-1}) = \sum_{Q \subseteq P} h^*(Q,\cS|_Q;u,v,w) (uvw^2 - 1)^{\dim P - \dim Q}.
\]

\item\label{i:rh3refine*} (constant terms) We have $h^*(P,\cS;0,v,w) = h^*(P,\cS;u,v,0) = 1$.

\item\label{i:rh4refine*} (identity subdivision) If $\cS$ is the trivial subdivision, then 
\[ h^*(P,\cS;u,v,w) =  h^*(P;uw,vw).
\] 

\item\label{i:rh5refine*} (non-negativity) The coefficients of the refined limit mixed  $h^*$-polynomial are non-negative.

\item\label{i:rh6'refine*} (inversion)  We have 
\[
w^{\dim P + 1} \lc(P, \cS; u,v) =  \sum_{Q \subseteq P} (-1)^{\dim P - \dim Q}h^*(Q, \cS|_Q;u,v,w) g([Q,P]^*;uvw^2).
\]

\item\label{i:refinedegreelimit*} (degree) All terms in $h^*(P,\cS;u,v,w)$ have degree in $w$ at most $\dim P + 1$, and
the coefficient of  $w^{\dim P + 1}$ is $\lc(P, \cS; u,v)$.

\end{enumerate}

\end{theorem}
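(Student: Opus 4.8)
The plan is to derive all eight clauses from the equivalence of the two descriptions of $h^*(P,\cS;u,v,w)$ in Definition~\ref{d:refinedhstar}, using as black boxes the properties of $\lc(P,\cS;u,v)$ and $\eta^\heartsuit_{\cS,\infty}$ recorded in Theorem~\ref{t:basic} and the poset machinery of Sections~\ref{s:locEulerian}--\ref{s:local}. The first task is to check that the two displayed formulas agree: since $\eta^\heartsuit_{\cS,\infty} * \gamma_P^{-1}|_{t = uv} = \lambda^\heartsuit_{\cS,\infty}$, we have $j_w(\eta^\heartsuit_{\cS,\infty}) = \iota_w(\lambda^\heartsuit_{\cS,\infty}) * \gamma_P|_{t = uvw^2}$; evaluating at $P$ and inserting $\gamma_P(Q,P)|_{t = uvw^2} = (uvw^2)^{-(\dim P - \dim Q)/2}g([Q,P];uvw^2)$ (Lemma~\ref{l:gpoly}) together with $\lambda^\heartsuit_{\cS,\infty}(Q) = (uv)^{-(\dim Q + 1)/2}\lc(Q,\cS|_Q;u,v)$ (Definition~\ref{d:new} applied to $\cS|_Q$), the normalizing power $(uvw^2)^{(\dim P + 1)/2}$ collapses the mess to $\sum_{Q \subseteq P}w^{\dim Q + 1}\lc(Q,\cS|_Q;u,v)g([Q,P];uvw^2)$; the same computation at an arbitrary face $Q$ shows $(uvw^2)^{(\dim Q + 1)/2}j_w(\eta^\heartsuit_{\cS,\infty})(Q) = h^*(Q,\cS|_Q;u,v,w)$. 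One also observes that $\iota_w(\lambda^\heartsuit_{\cS,\infty})$, being a polynomial in $u,v$ that already lies in $S(R_{uv})^P$ (by definition, equivalently Theorem~\ref{t:basic}\eqref{i:rh7limit*}), lies in $S(R_{uvw})^P$; hence $j_w(\eta^\heartsuit_{\cS,\infty}) \in \mathcal{A}([\emptyset,P],\kappa_P;R_{uvw})$ by Lemma~\ref{l:symacc}.

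With the explicit formula in hand, the elementary clauses are substitutions. For \eqref{i:rh1refine*}, interchanging $u,v$ fixes each $g([Q,P];uvw^2)$ and each $\lc(Q,\cS|_Q;u,v)$ (Theorem~\ref{t:basic}\eqref{i:rh1limit*}), while $(u,v,w) \mapsto (u^{-1},v^{-1},uvw)$ fixes $uvw^2$ and hence each $g$-factor, the remaining bookkeeping reducing to the $(u,v)\mapsto(u^{-1},v^{-1})$-invariance of $(uv)^{-(\dim Q + 1)/2}\lc(Q,\cS|_Q;u,v)$, i.e.\ Theorem~\ref{t:basic}\eqref{i:rh7limit*}. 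Clause \eqref{i:rh3refine*} follows because $g([Q,P];0) = 1$ (Example~\ref{e:gconstant}) and $\lc(Q,\cS|_Q;0,v) = 0$ for $Q$ nonempty (Theorem~\ref{t:basic}\eqref{i:rh3limit*}), so only $Q = \emptyset$ survives $u = 0$, and setting $w = 0$ kills every term with $\dim Q + 1 > 0$. Clause \eqref{i:refinedegreelimit*} uses $\deg_t g([Q,P];t) < (\dim P - \dim Q)/2$ for $Q \ne P$, so $w^{\dim Q + 1}g([Q,P];uvw^2)$ has $w$-degree strictly below $\dim P + 1$ unless $Q = P$, where the $w^{\dim P + 1}$-coefficient is $1$; hence the coefficient of $w^{\dim P + 1}$ in the sum is $\lc(P,\cS;u,v)$. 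Clause \eqref{i:rh4refine*} substitutes $\lc(Q,\cS|_Q;u,v) = v^{\dim Q + 1}\lc(Q;uv^{-1})$ (Theorem~\ref{t:basic}\eqref{i:rh4limit*}) and recognizes $\sum_Q (vw)^{\dim Q + 1}\lc(Q;u/v)g([Q,P];(uw)(vw))$ as $h^*(P;uw,vw)$ via Definition~\ref{d:mixedstar}. Clause \eqref{i:rh5refine*} combines non-negativity of the $\lc(Q,\cS|_Q;u,v)$ (Theorem~\ref{t:basic}\eqref{i:rh5limit*}) with non-negativity of the $g$-polynomials \eqref{e:gintersect}.

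The reciprocity clauses come from acceptability of $j_w(\eta^\heartsuit_{\cS,\infty})$ over $R_{uvw}$. Writing out $\overline{j_w(\eta^\heartsuit_{\cS,\infty})} = j_w(\eta^\heartsuit_{\cS,\infty}) * \kappa_P$ at $P$, replacing $j_w(\eta^\heartsuit_{\cS,\infty})(Q) = (uvw^2)^{-(\dim Q + 1)/2}h^*(Q,\cS|_Q;u,v,w)$ and $\kappa_P(Q,P) = ((uvw^2)^{1/2} - (uvw^2)^{-1/2})^{\dim P - \dim Q}$, and clearing half-integer powers — the leftover $(uvw^2)^{1/2}$ absorbed into the $q$-power turns it into $(uvw^2 - 1)^{\dim P - \dim Q}$ — yields \eqref{i:rh7refine*}. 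Likewise $\iota_w(\lambda^\heartsuit_{\cS,\infty}) = j_w(\eta^\heartsuit_{\cS,\infty}) * \gamma_P^{-1}|_{t = uvw^2}$ evaluated at $P$, with $\gamma_P^{-1}(Q,P)|_{t = uvw^2} = (-1)^{\dim P - \dim Q}(uvw^2)^{-(\dim P - \dim Q)/2}g([Q,P]^*;uvw^2)$ from Theorem~\ref{t:inverse}, gives \eqref{i:rh6'refine*}. For \eqref{i:rh2refine*}: putting $w = 1$ makes $j_w$ the identity, so $h^*(P,\cS;u,v,1) = (uv)^{(\dim P + 1)/2}\eta^\heartsuit_{\cS,\infty}(P) = h^*(P,\cS;u,v)$ by Definition~\ref{d:new}; the substitution $(u,v,w) \mapsto (uw^{-1},1,w)$ turns $\lc(Q,\cS|_Q;uw^{-1},1)$ into $\lc(Q;uw^{-1})$ (Theorem~\ref{t:basic}\eqref{i:rh2limit*}), so $\sum_Q w^{\dim Q + 1}\lc(Q;uw^{-1})g([Q,P];uw)$ is $h^*(P;u,w)$ by Definition~\ref{d:mixedstar}; the commutative diagram assembles these with Theorem~\ref{t:basic}\eqref{i:rh2limit*}, the $v = 1$ specialization of $h^*(P;u,v)$ noted after Definition~\ref{d:mixedstar}, and $h^*(P;1) = (\dim P)!\vol(P)$ from Example~\ref{e:hstarlinear}.

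The only genuine difficulty is organizational: one must track the competing normalizations $(uvw^2)^{\pm(\dim\cdot + 1)/2}$ and $(uv)^{\pm(\dim\cdot + 1)/2}$ carefully when passing between $R_{uv}$ and $R_{uvw}$ through $j_w$, and must check that $\iota_w$ really carries $S(R_{uv})^P$ into $S(R_{uvw})^P$ — this is where the \emph{symmetry} (not merely acceptability) of $\lambda^\heartsuit_{\cS,\infty}$ over $R_{uv}$, itself a reflection of Ehrhart reciprocity (Lemma~\ref{l:reciprocity}) and Theorem~\ref{t:basic}\eqref{i:rh7limit*}, is essential. Once the equivalence of the two definitions and the acceptability of $j_w(\eta^\heartsuit_{\cS,\infty})$ are secured, no individual clause needs more than a substitution into an already-proved identity.
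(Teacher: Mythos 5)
Your proposal is correct and follows essentially the same strategy as the paper: each clause is reduced to the corresponding property of the limit mixed $h^*$-polynomial in Theorem~\ref{t:basic}, together with the acceptability of $j_w(\eta_{\cS,\infty}^\heartsuit)$ over $R_{uvw}$ for the symmetry and inversion clauses, $g$-polynomial facts (Example~\ref{e:gconstant}, \eqref{e:gintersect}, the degree bound) for the constant-term, non-negativity and degree clauses, and direct substitution with Definition~\ref{d:mixedstar} for the specialization and identity-subdivision clauses. The one place you are more explicit than the paper — verifying that $i_w$ carries $S(R_{uv})^P$ into $S(R_{uvw})^P$ so that $j_w$ genuinely lands in $\mathcal{A}([\emptyset,P],\kappa_P;R_{uvw})$ — is a worthwhile observation that the paper leaves implicit in the phrase ``natural inclusion,'' but it does not change the route.
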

\begin{proof}
The equations in Property~\eqref{i:rh1refine*} follow from \eqref{i:rh1limit*} and \eqref{i:rh7limit*} in Theorem~\ref{t:basic} respectively.

The equations in Property~\eqref{i:rh2refine*} follow from \eqref{i:rh6'limit*} 
and \eqref{i:rh2limit*} in Theorem~\ref{t:basic} respectively.

Property~\eqref{i:rh7refine*} is equivalent to the fact that $j_w(\eta_{\cS,\infty}^\heartsuit)  \in  \mathcal{A}([\emptyset,P],\kappa_P; R_{uvw})$. 

Property~\eqref{i:rh3refine*} follows from \eqref{i:rh3limit*} in Theorem~\ref{t:basic} and Example~\ref{e:gconstant}.

Property~\eqref{i:rh4refine*} follows from \eqref{i:rh4limit*} in Theorem~\ref{t:basic} . 

Property~\eqref{i:rh5refine*} follows from \eqref{i:rh4limit*} in Theorem~\ref{t:basic} and \eqref{e:gintersect}.  

Property~\eqref{i:rh6'refine*} follows since  $i_w(\lambda_{\cS,\infty}^\heartsuit)  = j_w(\eta_{\cS,\infty}^\heartsuit) * \gamma_P^{-1}|_{t = uvw^2}$ by definition, using Theorem~\ref{t:inverse}. 

Property~\eqref{i:refinedegreelimit*} follows since $g([Q,P];uvw^2)$ has  degree in $w$ strictly less than $\dim P - \dim Q$ for $Q \nsubseteq P$.

\end{proof}

If  $\cS$ is a lattice polyhedral subdivision of a  non-empty lattice polytope $P$, we may write 
\[
h^*(P,\cS;u,v,w) = 1 + uvw^2 \sum_{r = 0}^{ \dim P - 1}  \sum_{ 0 \le p,q \le r} h^*_{p,q,r} u^p v^q w^r,
\]
 for some non-negative integers $h^*_{p,q,r}$.  
 We may visualize these coefficients 
in diamonds as follows: for $0 \le r \le \dim P - 1$, the \define{$r$-local $h^*$-diamond} of $(P,\cS)$ is obtained by placing $h^*_{p,q,r}$ at point $(q - p, p + q)$  in $\Z^2$ for $0 \le p,q \le r$ 
(see Figure~\ref{f:localdiamond} below). 
With the notation of Remark~\ref{r:hstardiamond2},  the local $h^*$-diamond of $(P,\cS)$ is equal to the $(\dim P - 1)$-local $h^*$-diamond of $(P,\cS)$ i.e. $ \lc_{p,q} = h^*_{p,q,\dim P - 1}$, and the $h^*$-diamond of $(P,\cS)$ is obtained by stacking the $r$-local $h^*$-diamonds of $(P,\cS)$ on top of each other and summing  for $0 \le r \le \dim P - 1$ i.e. $h^*_{p,q} = \sum_r h^*_{p,q,r}$. Each $r$-local $h^*$-diamond is symmetric about the horizontal and vertical axes. 
By summing the coefficients of the $r$-local $h^*$-diamond of $(P,\cS)$ along a fixed choice of diagonal, we recover the $(r +1)^{\textrm{st}}$ horizontal strip of the $h^*$-diamond of $P$. 
For example, if $\cS$ is the trivial subdivision, then the $r$-local $h^*$-diamond of $(P,\cS)$ is concentrated on the middle horizontal strip which coincides with the 
$(r +1)^{\textrm{st}}$ horizontal strip of the $h^*$-diamond of $P$.    By Theorem~\ref{t:lowerentry} below, the first entry of each horizontal strip of the $r$-local $h^*$-diamond is a lower bound for the other entries. 
By Theorem~\ref{t:unimodality} below, if $\cS$ is regular then the coefficients of each vertical strip of the $r$-local $h^*$-diamond are symmetric and unimodal.

 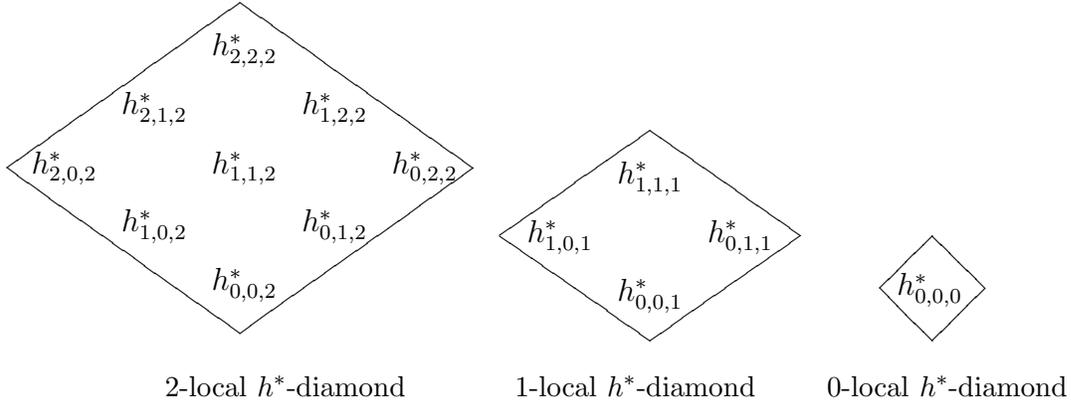
\begin{figure}[htb]
\begin{center}
\[
\begin{array}{p{5cm}p{4cm}p{5cm}}
\xy
(4,3)*{}="A"; (-27,25)*{}="B"; (4,47)*{}="C"; (35,25)*{}="D";
"A"; "B" **\dir{-};
"A"; "D" **\dir{-};
"C"; "B" **\dir{-};
"C"; "D" **\dir{-};
(-9.7,20.5)*{
\xymatrix@=0.2em{ 
   &  & h^*_{2,2,2} &                                               \\
    & h^*_{2,1,2} & & h^*_{1,2,2} &                                  \\
        h^*_{2,0,2}  &  & h^*_{1,1,2} & & h^*_{0,2,2}                 \\
          & h^*_{1,0,2} & & h^*_{0,1,2} &                           \\
         &  & h^*_{0,0,2} & &                                \\
 }
};
(10,-4)*{\hbox{\small $2$-local $h^*$-diamond}};
\endxy
&
\xy
(2,2)*{}="A"; (-18,16)*{}="B"; (2,30)*{}="C"; (22,16)*{}="D";
"A"; "B" **\dir{-};
"A"; "D" **\dir{-};
"C"; "B" **\dir{-};
"C"; "D" **\dir{-};
(-5,12)*{
\xymatrix@=0.2em{ 
          & h^*_{1,1,1} & {}                 \\
           h^*_{1,0,1} & & h^*_{0,1,1}                          \\
           & h^*_{0,0,1} &               \\
 }
};
(0,-4)*{\hbox{\small $1$-local $h^*$-diamond}};

\endxy
&
\xy
(-2,2)*{}="A"; (-9,9)*{}="B"; (-2,16)*{}="C"; (5,9)*{}="D";
"A"; "B" **\dir{-};
"A"; "D" **\dir{-};
"C"; "B" **\dir{-};
"C"; "D" **\dir{-};
(-5,9)*{
\xymatrix@=0.2em{ 
          & {} & {}                 \\
           {} & &{}                          \\
           & h^*_{0,0,0} &               \\
 }
};
(0,-4)*{\hbox{\small $0$-local $h^*$-diamond}};

\endxy

\end{array} 
\]
\end{center}

\caption{ $r$-local $h^*$-diamonds of $(P,\cS)$ when $\dim P = 3$}
\label{f:localdiamond}
\end{figure}

Substituting Definition~\ref{d:new} into Definition~\ref{d:refinedhstar} gives the following alternative expression for 
the refined limit mixed $h^*$-polynomial $h^*(P,\cS;u,v,w)$:
\begin{equation}\label{e:expanddef}
\sum_{ F \in \cS} \sum_{\sigma(F) \subseteq Q \subseteq P} w^{\dim Q + 1} v^{\dim F + 1} \lc(F;uv^{-1}) l_Q(\cS|_Q,F;uv) g([Q,P];uvw^2).
\end{equation}
By Theorem~\ref{t:hlocalh}, Theorem~\ref{t:localcoef}  and formula \eqref{e:gintersect} of Section~\ref{s:geometric}, the coefficients of all terms above have non-negative coefficients. 
We will use this expression below. 

\begin{theorem} \label{t:lowerentry}
The first entry of each horizontal strip of the $r$-local $h^*$-diamond is a lower bound for the other entries:
$h^*_{k,0,r} \le h^*_{k-i,i,r}$ and  $h^*_{r,k,r} \le h^*_{r-i, k+i,r}$. 
\end{theorem}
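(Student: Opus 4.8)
The plan is to reduce the inequality to a statement about the entries of the $r$-local $h^*$-diamond of a \emph{pair} of lattice polytopes, and to exploit the lower bound structure already extracted for local $h^*$-polynomials in Theorem~\ref{t:lower}. First I would use expression \eqref{e:expanddef} for $h^*(P,\cS;u,v,w)$ and isolate the coefficient of $w^{r+1}$ (equivalently, fix the stratum $Q$ of dimension $r$ in the inner sum and use Property~\eqref{i:refinedegreelimit*} in Theorem~\ref{t:combinatorics}, which identifies the top $w$-coefficient with $l^*(P,\cS;u,v)$ applied to the relevant restricted data): the $r$-local $h^*$-diamond records, after the change of variables in Remark~\ref{r:hstardiamond2}, the coefficients of $u^p v^q$ in a sum of the form $\sum_{\dim Q = r} l^*(Q,\cS|_Q;u,v) g([Q,P];uvw^2)|_{w\text{-top}}$, and since the $g$-polynomial contributes only scalars here this is a non-negative integer combination of the local limit mixed $h^*$-diamonds $l^*(Q,\cS|_Q;u,v)$ over strata $Q$ of dimension $r+1$. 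So it suffices to prove the statement for each such $l^*(Q,\cS|_Q;u,v)$ separately: the first entry of each horizontal strip of the local $h^*$-diamond of $(Q,\cS|_Q)$ is a lower bound for the other entries in that strip.

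Next I would attack the model case $l^*(P,\cS;u,v) = uv \sum l^*_{p,q} u^p v^q$ directly, aiming to show $l^*_{k,0} \le l^*_{k-i,i}$ for $0 \le i \le k$ (the second inequality $l^*_{r,k,r}\le l^*_{r-i,k+i,r}$ is then obtained from the first by the symmetry of the diamond about the vertical axis, Remark~\ref{r:hstardiamond2}, applied after reflecting). The horizontal strip with $p+q = k$ fixed is, by Definition~\ref{d:new}, the degree-$(k+1)$ part in $uv$ of $\sum_{F \in \cS} v^{\dim F + 1} l^*(F;uv^{-1}) l_{[\sigma(F),P]}(\lk_\cS(F);uv)$; tracking which $(p,q)$ a given $F$ contributes to, one sees that a cell $F$ of dimension $j$ contributes $l^*(F;t)$-data shifted by the $l_{[\sigma(F),P]}(\lk_\cS(F);t)$-data. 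The key point is that for each fixed $F$, the contribution to the strip is $[\text{coefficient of } t^{\,\dim F - j'} \text{ in } l^*(F;t)] \cdot [\text{coefficient of } t^{j''} \text{ in } l_{[\sigma(F),P]}(\lk_\cS(F);t)]$ summed appropriately, and by Theorem~\ref{t:localcoef} the polynomial $l_{[\sigma(F),P]}(\lk_\cS(F);t)$ has non-negative coefficients, while $l^*(F;t)$ has non-negative coefficients by Theorem~\ref{t:hlocalh}; the ``first entry'' $l^*_{k,0}$ picks out the extremal term of $l^*(F;t)$ (its leading or trailing coefficient), which by the symmetry in Lemma~\ref{l:localprop}\eqref{j:1} and Theorem~\ref{t:lower} is bounded by $l^*_1(F) = \#(\Int F \cap \Z^d)$, hence bounded above by any other coefficient of $l^*(F;t)$ of the same strip-index. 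Assembling these term-by-term inequalities over all $F$ in $\cS$ gives $l^*_{k,0} \le l^*_{k-i,i}$.

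I expect the main obstacle to be the bookkeeping: making precise exactly which coefficient of $l^*(F;t)$ and which coefficient of $l_{[\sigma(F),P]}(\lk_\cS(F);t)$ land in the $(p,q)$-slot of the diamond, and verifying that the ``first entry'' of a horizontal strip always corresponds to pairing the \emph{lowest} (equivalently, by symmetry, \emph{highest}) coefficient of the relevant local $h^*$-polynomial of $F$ against the full local $h$-polynomial of its link, so that Theorem~\ref{t:lower} applies coefficient-wise. Concretely, if $F$ has dimension $j$ and the horizontal strip is $p + q = k$, then the entry at $(q-p, p+q)$ with $p$ minimal (namely $p = \max(0, k - (r-j))$ or $p = 0$) is the one needing the bound $l^*_{j}(F)$-type coefficient to be $\le$ the others, and this is precisely the content of Theorem~\ref{t:lower} combined with Lemma~\ref{l:localprop}\eqref{j:1}. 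Once the indexing is pinned down the inequality is a sum of products of non-negative quantities each of which dominates the corresponding product in the first-entry expression, so the conclusion follows. Finally I would remark that this argument does not require $\cS$ to be regular, consistent with the placement of the theorem before the unimodality results.

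\begin{proof}[Proof sketch]
By Property~\eqref{i:refinedegreelimit*} of Theorem~\ref{t:combinatorics} applied to the strata of $P$, together with Definition~\ref{d:refinedhstar} and formula \eqref{e:gintersect}, the $r$-local $h^*$-diamond of $(P,\cS)$ is a non-negative integer combination of the local $h^*$-diamonds of the pairs $(Q,\cS|_Q)$ with $\dim Q = r+1$; concretely, comparing the coefficient of $w^{r+1}$ on both sides of Definition~\ref{d:refinedhstar},
\[
uvw^2 \sum_{0 \le p,q \le r} h^*_{p,q,r} u^p v^q w^r
= w^{r+1} \sum_{\substack{Q \subseteq P \\ \dim Q = r+1}} l^*(Q,\cS|_Q;u,v) \cdot c_Q + (\text{lower } w)\text{-terms},
\]
for non-negative integers $c_Q$ coming from the constant terms of the $g$-polynomials (Example~\ref{e:gconstant}), once we account for the $w$-degree contributions; in particular each $r$-local diamond is a non-negative combination of the diamonds $l^*_{p,q}(Q,\cS|_Q)$. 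Hence it suffices to prove both inequalities with $h^*_{p,q,r}$ replaced by $l^*_{p,q} = l^*_{p,q}(Q,\cS|_Q)$ for an arbitrary pair $(Q,\cS|_Q)$ with $\dim Q = r+1$; we rename $Q$ to $P$ and $r+1$ to $\dim P$.

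Write $l^*(P,\cS;u,v) = uv\sum_{p,q} l^*_{p,q} u^p v^q$ as in Remark~\ref{r:hstardiamond2}, with the diamond symmetric about the vertical axis, so the second inequality $l^*_{r,k,r} \le l^*_{r-i,k+i,r}$ follows from the first, $l^*_{k,0,r} \le l^*_{k-i,i,r}$, by reflection. By Definition~\ref{d:new},
\[
l^*(P,\cS;u,v) = \sum_{F \in \cS} v^{\dim F + 1} l^*(F;uv^{-1}) \, l_{[\sigma(F),P]}(\lk_\cS(F);uv).
\]
Fixing a cell $F$ of dimension $j$, write $l^*(F;t) = \sum_a l^*_a(F) t^a$ and $l_{[\sigma(F),P]}(\lk_\cS(F);t) = \sum_b c_b(F) t^b$; by Theorem~\ref{t:hlocalh} the $l^*_a(F)$ are non-negative and by Theorem~\ref{t:localcoef} the $c_b(F)$ are non-negative. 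The term $v^{j+1} (uv^{-1})^a (uv)^b$ contributes to $u^{a+b+1} v^{b - a + j}$, hence (after subtracting $uv$) to the diamond point with $p + 1 = a + b + 1 - (\text{shift})$; more precisely one finds that $F$ contributes $l^*_a(F) c_b(F)$ to $l^*_{p,q}$ whenever $p + q = a + b + j - 1$ and $q - p = b - a + j - a - b - 1 + \cdots$, i.e. the horizontal strip index $k = p+q$ of the contribution is determined by $a+b$ and $j$, while within that strip the position is governed by $a$ versus $b$. The first entry $l^*_{k,0}$ of the strip $p + q = k$ collects, for each $F$, the term in which the $l^*(F;t)$-index $a$ is \emph{extremal} among those compatible with $k$ — that is, $a$ is as small as possible (equivalently, by the symmetry $l^*(F;t) = t^{\dim F + 1} l^*(F;t^{-1})$ of Lemma~\ref{l:localprop}\eqref{j:1}, as large as possible). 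By Theorem~\ref{t:lower}, $l^*_1(F) = \#(\Int F \cap \Z^d)$ is a lower bound for every coefficient $l^*_a(F)$ with $1 \le a \le \dim F$, and by the symmetry the extremal coefficient entering $l^*_{k,0}$ equals (or is bounded by) $l^*_1(F)$. Therefore, for every $F$ and every $0 \le i \le k$, the product of non-negative integers contributing to $l^*_{k,0}$ is at most the corresponding product contributing to $l^*_{k-i,i}$, the latter pairing a coefficient $l^*_a(F) \ge l^*_1(F)$ against the same $c_b(F)$. Summing over all $F \in \cS$ yields $l^*_{k,0} \le l^*_{k-i,i}$, and hence $h^*_{k,0,r} \le h^*_{k-i,i,r}$ and $h^*_{r,k,r} \le h^*_{r-i,k+i,r}$ as claimed.
\end{proof}
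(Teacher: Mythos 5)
The essential mechanism you use is exactly the paper's: expand into non-negative factors, isolate the $v^{\dim F+1}\lc(F;uv^{-1})$ piece for each cell $F$, and invoke Theorem~\ref{t:lower} to say that the extreme coefficient $\lc_{\dim F}(F)=\lc_1(F)$, which is what lands in the first entry of a strip, is a lower bound for all the others. The paper does this in one step directly from \eqref{e:expanddef}, treating the factors $w^{\dim Q+1}$, $l_Q(\cS|_Q,F;uv)$ and $g([Q,P];uvw^2)$ as supplying a non-negative monomial $\alpha(uv)^{k_0}w^{r'}$ multiplying $v^{\dim F+1}\lc(F;uv^{-1})$.

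Your added intermediate reduction to the local $h^*$-diamonds of the faces $(Q,\cS|_Q)$ is where the argument breaks as written. Since $g([Q,P];uvw^2)$ is a polynomial in $uvw^2$, the coefficient of $w^{r+2}$ in Definition~\ref{d:refinedhstar} picks up a contribution from every $Q$ with $\dim Q = r+1-2m$ for $m\ge 0$, not only from $\dim Q = r+1$, and these lower-dimensional contributions appear shifted diagonally by $(uv)^m$. Your display therefore has mismatched $w$-degrees and incorrectly claims that only $\dim Q=r+1$ contributes, with the rest buried in ``lower $w$-terms'' — but they are at the \emph{same} $w$-degree, just at a different position in the diamond. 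The claim ``it suffices to prove the statement for each $\lc(Q,\cS|_Q;u,v)$ separately'' then needs an extra argument: the lower-bound property of a horizontal strip is not preserved under diagonal shifts in general, because a shift moves a diamond's first entry off the first position of the larger strip. What saves the conclusion is that for $m>0$ the shifted copy contributes $0$ to the first entry (the smallest $p$ it reaches is $m>0$), so the first entry of the $r$-local strip is supported only on the unshifted ($m=0$) pieces; you would need to say this. The paper's direct route via \eqref{e:expanddef} encounters the same phenomenon with the monomial $(uv)^{k_0}$ but bypasses your reduction. Two minor points: the passage from $h^*_{k,0,r}\le h^*_{k-i,i,r}$ to $h^*_{r,k,r}\le h^*_{r-i,k+i,r}$ uses the horizontal reflection $h^*_{p,q,r}=h^*_{r-q,r-p,r}$ (coming from $(u,v,w)\mapsto(u^{-1},v^{-1},uvw)$), not the vertical one alone; and the extremal index $a$ feeding the first entry is $a=\dim F$ (maximal), not minimal — though by the symmetry of $\lc(F;t)$ both give $\lc_1(F)$, so this is cosmetic.
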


\begin{proof}
Consider the expression \eqref{e:expanddef} above. We have seen that all terms have non-negative coefficients.
Hence $h^*(P,\cS;u,v,w)$ is the sum of terms of the form $\alpha (uv)^k w^r \lc(F;uv^{-1})$, for some non-negative integers $\alpha,k,r$ and some cell $F \in \cS$. 
By Theorem~\ref{t:lower}, the polynomials  $v^{\dim F + 1}\lc(F;uv^{-1})$ satisfy the property that their linear coefficient is a lower bound for the other coefficients. The result follows. 
\end{proof}

The theorem below states that  if the lattice polyhedral subdivision $\cS$ is regular, then, for $r = 0, \ldots, \dim P - 1$,  the coefficients of each vertical strip of the $r$-local $h^*$-diamond form a 
symmetric, unimodal sequence. 

\begin{theorem} \label{t:unimodality} Let $\cS$ be a regular  lattice polyhedral subdivision of a lattice polytope $P$.  Then for $0 \le k \le r \le \dim P - 1$, the 
sequence $\{h^*_{k + i,i,r} \in \Z_{\ge 0} \mid 0 \le i \le r -k \}$ is symmetric, non-negative and unimodal.
\end{theorem}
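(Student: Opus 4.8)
The goal is to show that for a regular lattice polyhedral subdivision $\cS$ of $P$, each vertical strip of each $r$-local $h^*$-diamond is symmetric, non-negative, and unimodal. Non-negativity is already known from Theorem~\ref{t:combinatorics}\eqref{i:rh5refine*} (equivalently, from the expression \eqref{e:expanddef} together with Theorems~\ref{t:hlocalh}, \ref{t:localcoef} and \eqref{e:gintersect}), so the real content is unimodality (symmetry of each vertical strip is also recorded after Theorem~\ref{t:combinatorics} as a consequence of the $uvw$-interchange symmetry \eqref{i:rh1refine*}, so I would only need to reprove it if convenient). The strategy is to exhibit the vertical strips as coefficient sequences built out of pieces that are \emph{already} known to be symmetric and unimodal, and to invoke the elementary fact that a sum (or appropriate product) of symmetric unimodal sequences with a common center of symmetry is again symmetric and unimodal.

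\emph{Step 1: Reduce to the building blocks.} Start from \eqref{e:expanddef}: $h^*(P,\cS;u,v,w)$ is a sum over $F \in \cS$ and $\sigma(F) \subseteq Q \subseteq P$ of terms
\[
w^{\dim Q + 1} \, v^{\dim F + 1} \lc(F;uv^{-1}) \, l_Q(\cS|_Q,F;uv) \, g([Q,P];uvw^2).
\]
Fix $r$ and extract the coefficient of $w^r$ (equivalently $w^{r+2}$ after the global $uvw^2$, matching the indexing $h^*_{p,q,r}$). Since $g([Q,P];uvw^2)$ contributes only even powers of $w$ (it is a polynomial in $uvw^2$) and $w^{\dim Q+1}$ is the only other $w$, a given $(F,Q)$ contributes to $w^{r+2}$ only when $\dim Q + 1 + 2m = r+2$ for the $m$-th coefficient $[g([Q,P];uvw^2)]$; so the $r$-local diamond is a finite sum, over admissible $(F,Q,m)$, of the polynomials in $u,v$ of the form
\[
(uv)^m \cdot v^{\dim F + 1}\lc(F;uv^{-1}) \cdot l_Q(\cS|_Q,F;uv).
\]

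\emph{Step 2: Each building block is symmetric and unimodal along vertical strips.} I would analyze the factors separately. The factor $v^{\dim F+1}\lc(F;uv^{-1})$ is a homogeneous-in-combined-degree object concentrated (in the diamond picture) on a single vertical line: by symmetry of $\lc$ (Lemma~\ref{l:localprop}\eqref{j:1}) and Theorem~\ref{t:lower}, the relevant sequence is precisely the coefficient sequence of a symmetric local $h^*$-polynomial, hence symmetric; but more importantly, on \emph{vertical} strips this factor lives on one strip and there its unimodality is not needed — it just shifts things. The genuinely two-dimensional factor is $l_Q(\cS|_Q,F;uv)$, a polynomial in the single variable $uv$, hence concentrated on the central vertical strip of the diamond and there symmetric and unimodal by Theorem~\ref{t:localcoef} (regularity of $\cS$, hence of $\cS|_Q$, is what buys unimodality here — this is exactly where the hypothesis is used). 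The factor $(uv)^m$ is again a pure shift along the central vertical direction. The product of the pure-$uv$ factors with the vertical-line factor $v^{\dim F+1}\lc(F;uv^{-1})$ then yields, after collecting, a polynomial whose vertical strips are, up to an overall $(uv)$-power shift, translates of the coefficient sequence of a product of two symmetric unimodal polynomials in $uv$ — hence symmetric and unimodal, using that the product of symmetric unimodal sequences is symmetric unimodal.

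\emph{Step 3: Assemble.} Summing over $(F,Q,m)$: all summands, restricted to a fixed vertical strip $\{q - p = k\text{ fixed}\}$ of the fixed $r$-local diamond, are symmetric unimodal sequences with the \emph{same} center of symmetry — namely, the center dictated by the overall symmetry of the $r$-local diamond about the horizontal axis, which is forced by Theorem~\ref{t:combinatorics}\eqref{i:rh1refine*} (the $(u,v,w)\mapsto(u^{-1},v^{-1},uvw)$ symmetry). A finite sum of symmetric unimodal sequences centered at a common point is symmetric and unimodal, so each vertical strip of the $r$-local $h^*$-diamond is symmetric, non-negative, and unimodal, as claimed.

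\emph{Main obstacle.} The delicate point is bookkeeping: matching up the centers of symmetry of the various summands so that one really is summing \emph{co-centered} symmetric unimodal sequences, rather than merely symmetric unimodal sequences (whose sum need not be unimodal if the centers differ). I expect this to require carefully tracking, for each $(F,Q,m)$, the degree shifts coming from $\dim F$, $\dim Q$, $m$, and the degrees of $\lc(F;\cdot)$ and $l_Q(\cS|_Q,F;\cdot)$, and checking that they conspire — via the relative-local-$h$ symmetry and the definition of the $r$-indexing — to land every summand symmetric about the same horizontal level within the fixed $r$-diamond. The cleanest way to organize this is probably to work not with the diamond coordinates directly but with the push-forward/acceptability formalism: the statement is equivalent to a unimodality statement for the coefficients of $j_w(\eta_{\cS,\infty}^\heartsuit(P))$ grouped by $w$-degree, and the regularity hypothesis enters through relative hard Lefschetz as in Theorem~\ref{t:localcoef}. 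If the direct argument gets unwieldy, the fallback is to mimic the proof of Corollary~\ref{c:refine}: expand $h^*(P,\cS;u,v,w)$ via \eqref{i:rh6'refine*} or \eqref{i:refinedegreelimit*} plus Proposition~\ref{p:hstarrefinement} in terms of $l^*$'s of cells and $l_Q$'s of links, each of which, for regular $\cS$, carries the required symmetric-unimodal structure from Theorem~\ref{t:localcoef}, and then invoke the co-centered-sum lemma.
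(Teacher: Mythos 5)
Your plan takes essentially the same route as the paper: expand via \eqref{e:expanddef}, view the result as a non-negative sum of terms of the shape $\alpha\,u^p v^q w^r\,l_Q(\cS|_Q,F;uv)$, and apply Theorem~\ref{t:localcoef} (regularity implies symmetric, unimodal relative local $h$-polynomials) to each factor $l_Q(\cS|_Q,F;uv)$, concluding by summing co-centered symmetric unimodal non-negative sequences. Two small corrections are worth flagging. First, the factor $v^{\dim F+1}\lc(F;uv^{-1})$ sits on a single \emph{horizontal} strip of the diamond (combined degree $\dim F + 1$), not a vertical line; its role in the product is to select the vertical strip and shift the height, which is what your argument actually uses, so this is only a terminological slip. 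Second, and more substantively, the Step 3 inference that all summands are co-centered ``because'' the $r$-local diamond is symmetric about the horizontal axis is not valid: a symmetric sequence may decompose as a sum of symmetric sequences with different centers. The co-centering needs the direct degree check that your ``Main obstacle'' paragraph asks for, namely that each term $\alpha u^p v^q w^r l_Q(\cS|_Q,F;uv)$ satisfies $r - p - q = \dim Q - \dim F$, so that as a polynomial in $u,v$ (for fixed $w$-degree $r$) its combined degree runs from $p+q$ to $2r-(p+q)$ and hence is centered at $r$ regardless of $(F,Q,p,q)$. The paper records precisely this identity as the hinge of its (very terse) proof; once you make it explicit your plan does close the proof, and in the same way.
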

\begin{proof}
Symmetry and non-negativity of the coefficients was established in \eqref{i:rh1refine*} and \eqref{i:rh5refine*} of Theorem~\ref{t:combinatorics} respectively. 
Consider the expression \eqref{e:expanddef} above. We have seen that   all terms have non-negative coefficients.
Hence $h^*(P,\cS;u,v,w)$ is the sum of terms of the form $\alpha u^pv^q w^r l_Q(\cS|_Q,F;uv)$, for some non-negative integers $\alpha,p,q,r$ and some cell $F \in \cS$ contained in a face $Q \subseteq P$, 
such that $r - p - q = \dim Q - \dim F$. 
Unimodality now follows from Theorem~\ref{t:localcoef} applied to $l_Q(\cS|_Q,F;uv)$. 
\end{proof}

\begin{remark} \label{r:geometric}
Theorem~\ref{t:unimodality} can also be proved geometrically by identifying the coefficients of the refined limit mixed $h^*$-polynomial with 
refined limit mixed Hodge numbers associated to a corresponding degeneration of hypersurfaces \cite[Corollary~5.11]{geometricpaper}. The unimodality statement then follows immediately from properties of the associated monodromy operator. This was the original motivation for the introduction of the refined limit mixed $h^*$-polynomial and was our original proof of Theorem~\ref{t:unimodality} and, 
as explained in Remark~\ref{r:deducelower}, Theorem~\ref{t:lower}. 
\end{remark}


The polytope $P$ specifies a projective toric variety.  It is useful for geometric applications to resolve singularities of the toric varieties.  This allows us to interpret $\lc$ as the Hodge-Deligne polynomial of a particular limit mixed Hodge structure.  We do this combinatorially here.  
Let $\Delta_P$ denote the normal fan to $P$ with all maximal cones removed. The cones $C_Q$ in $\Delta_P$  are in inclusion-reserving correspondence with the positive dimensional faces $Q$ of $P$.
Let $\Delta_P'$ denote a fan refinement of $\Delta_P$.  We will assume that $\Delta_P'$ is simplicial,  
which is possible by the resolution of singularities algorithm for toric varieties \cite[Sec. 2.6]{FulIntroduction}. Recall that $\Delta_P'$ is simplicial if  every cone $C'$ in $\Delta_P'$ is generated by precisely $\dim C'$ rays. Let $\Gamma,B$ be the lower Eulerian poset of cones of $\Delta_P',\Delta_P$ ordered by inclusion, respectively.  Let $\sigma:\Gamma\rightarrow B$ be defined by letting 
 $\sigma(C')$ be the smallest cone in $\Delta_P$ containing $C'$. It follows from Lemma~\ref{l:polyhedralsubdivision} that $\sigma$ is a strong formal subdivision. 
We set 
\[
\Phi(P,\cS,\Delta_P';u,v,w) = 
\sum_{  \substack{ Q \subseteq P  \\ \dim Q > 0   } } (-1)^{\dim Q} h^*(Q,\cS|_{Q};u,v,w)  \sum_{ \substack{ C' \in \Delta_P' \\  \sigma(C') = C_Q } }   (uvw^2 - 1)^{\dim C_Q - \dim C'},
\]
and 
\[
\Lambda(P,\cS,\Delta_P';u,v,w) = \sum_{ C' \in \Delta_P'  } (uvw^2 - 1)^{\dim P - \dim C'}   - \Phi(P,\cS,\Delta_P';u,v,w). 
\]

\begin{lemma}
We have the following symmetry
\[
\Lambda(P,\cS,\Delta_P';u,v,w) = (uvw^2)^{\dim P + 1}\Lambda(P,\cS,\Delta_P';u^{-1},v^{-1},w^{-1}).
\]
\end{lemma}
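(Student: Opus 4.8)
The plan is to reduce the claimed symmetry of $\Lambda$ to the symmetry already established for $h^*(P,\cS;u,v,w)$ in Theorem~\ref{t:combinatorics}\eqref{i:rh7refine*}, together with an elementary symmetry of the purely toric sum $\sum_{C'\in\Delta_P'}(uvw^2-1)^{\dim P - \dim C'}$. Write $s := uvw^2$ throughout; the involution $r(u,v,w)\mapsto r(u^{-1},v^{-1},w^{-1})$ sends $s\mapsto s^{-1}$, so the statement to prove is that each piece is ``acceptable'' with respect to the kernel $\kappa(x,x')=q^{\rho(x,x')}$ with $q=s^{1/2}-s^{-1/2}$ on the relevant Eulerian posets.

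First I would deal with the toric term $T(u,v,w) := \sum_{C'\in\Delta_P'}(s-1)^{\dim P - \dim C'}$. Here $\Delta_P'$ is the simplicial fan refining $\Delta_P$ (the normal fan with maximal cones removed), and by Lemma~\ref{l:polyhedralsubdivision} applied to $\sigma\colon\Gamma\to B$ — or, more simply, directly — one recognizes $T$ as $s^{\dim P}h(\Gamma';s^{-1})$ where $\Gamma'$ is the lower Eulerian poset obtained from $\Delta_P'$ by adjoining a minimal element, i.e. $T = s^{\dim P}h(\widehat{\Gamma'};s^{-1})$. Since $\widehat{\Gamma'}$ is the face poset of a complete simplicial fan, it is (after adding an apex) the cone over a sphere, hence $h(\widehat{\Gamma'};s)$ has symmetric coefficients of degree $\dim P + 1$; concretely $\Delta_P'$ is a triangulation of the boundary of a polytope dual to $P$, so $h(\widehat{\Gamma'} ;s) = s^{\dim P+1}h(\widehat{\Gamma'};s^{-1})$. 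This gives $T(u,v,w) = s^{\dim P + 1}\,T(u^{-1},v^{-1},w^{-1})$ directly, since $T$ depends on $(u,v,w)$ only through $s$.

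Next I would handle $\Phi(P,\cS,\Delta_P';u,v,w)$. Grouping \eqref{i:rh7refine*} of Theorem~\ref{t:combinatorics} over all faces $Q$ and feeding in the decomposition of $\Delta_P'$ over the cones $C_Q$, one sees that $\Phi$ is essentially the ``boundary part'' of $s^{\dim P + 1}h^*(P,\cS;u^{-1},v^{-1},w^{-1})$: indeed, applying the identity in \eqref{i:rh7refine*} (with $s=uvw^2$) to the polytope $P$ and then substituting the refinement sum $\sum_{C'\colon\sigma(C')=C_Q}(s-1)^{\dim C_Q - \dim C'}$ in place of the local term at each proper face $Q$, the strong-formal-subdivision relation (Proposition~\ref{p:alternate}\eqref{i:pthird}, equivalently the defining recursion \eqref{e:sfs}) lets one re-index and collapse the double sum. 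The upshot is an identity of the shape $s^{\dim P + 1}\Phi(P,\cS,\Delta_P';u^{-1},v^{-1},w^{-1}) = T(u,v,w) - \big(s^{\dim P + 1}h^*(P,\cS;u^{-1},v^{-1},w^{-1}) - \text{(something)}\big)$, arranged so that $\Lambda = T - \Phi$ becomes manifestly self-dual. Cleanest is to observe that $\Lambda$ itself equals $\sum_{C'\in\Delta_P'}(s-1)^{\dim P - \dim C'} - \Phi$ is, by the same M\"obius/Euler-characteristic bookkeeping used throughout Section~\ref{s:locEulerian}, the value at the top of an acceptable function on the Eulerian poset $\widehat{B}$ obtained by adjoining a maximal element to $B = \Delta_P$; acceptability of that function — which is exactly what the decomposition-theorem/Ehrhart-reciprocity inputs give for $h^*(Q,\cS|_Q;u,v,w)$ via Theorem~\ref{t:combinatorics}\eqref{i:rh7refine*} — forces the stated functional equation $\Lambda(u,v,w) = s^{\dim P + 1}\Lambda(u^{-1},v^{-1},w^{-1})$ by Lemma~\ref{l:symacc}.

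The main obstacle I anticipate is purely bookkeeping: getting the indexing of the double sum over $(Q, C')$ to match up correctly with the single sum over $C'$, and keeping track of the powers of $s$ and the signs $(-1)^{\dim Q}$ versus $(-1)^{\rho_B(Q,\hat 1)}$ so that the ``defect'' $T - \Phi$ is genuinely the pairing of an acceptable element against the appropriate basis vector. Once the combinatorial identity $\sum_{Q}(-1)^{\dim Q}(\cdots)\sum_{C'\colon\sigma(C')=C_Q}(s-1)^{\dim C_Q-\dim C'}$ is rewritten using the strong-formal-subdivision property of $\sigma\colon\Gamma\to B$, the symmetry is inherited term-by-term and no further hard input (no hard Lefschetz, only Poincar\'e duality / reciprocity, already encoded in \eqref{i:rh7refine*}) is needed.
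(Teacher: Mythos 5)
There is a genuine gap: the very first step — proving the purely toric term $T(u,v,w) = \sum_{C'\in\Delta_P'}(uvw^2-1)^{\dim P - \dim C'}$ is separately symmetric under $s\mapsto s^{-1}$ (with $s=uvw^2$) — is false, and the decomposition $\Lambda=T-\Phi$ into two independently self-dual pieces does not work. The fan $\Delta_P'$ is a simplicial refinement of the normal fan \emph{with all maximal cones removed}; it is not a triangulation of a sphere, nor is the poset $\widehat{\Gamma'}$ obtained by adjoining a minimal element Eulerian. Concretely, take $\dim P=1$: then $\Delta_P'=\{\{0\}\}$ and $T=s-1$, whereas $s^{2}T(s^{-1})=s-s^{2}\neq T$. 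In the same example one can check directly (via Example~\ref{e:smallterms}) that $\Lambda=s(1+\#(\Int(P)\cap M))$ is symmetric, so the symmetry necessarily arises from an interplay between $T$ and $\Phi$ that your split kills.

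The mechanism the paper uses, and which your sketch omits, is to extend $\Delta_P'$ to a simplicial refinement $\tilde\Delta_P'$ of the \emph{full} normal fan $\tilde\Delta_P$, i.e.\ to put the maximal cones back in. The observation that $h^*(Q,\cS|_Q;u,v,w)=1$ for $Q$ a vertex then allows $\Lambda$ to be rewritten as a sum indexed by $\tilde\Delta_P'$ and by all non-empty faces $Q$ of $P$; the ``extra'' terms coming from maximal cones and vertices cancel exactly. Only at that point is $\sum_{C'\in\tilde\Delta_P'}(s-1)^{\dim P-\dim C'}$ identified with $h(\tilde\Gamma;s)$, the $h$-polynomial of a \emph{complete} simplicial fan, which is genuinely palindromic (Example~\ref{e:hEulerian}); and the inner sums become $h(\tilde\Gamma_{C_Q};s)$ via Proposition~\ref{p:alternate}\eqref{i:pthird}. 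The symmetry is then extracted by combining \eqref{i:rh7refine*} of Theorem~\ref{t:combinatorics} with the identity \eqref{e:fromdef} (a Definition~\ref{d:hpoly}-level recursion that the proof derives on the spot), not by a term-by-term acceptability argument on the incomplete poset $B=\Delta_P$. Your second paragraph gestures at ``re-indexing and collapsing the double sum'' and at reading $\Lambda$ as the top value of an acceptable function, but without introducing the completed fan $\tilde\Delta_P'$ there is nothing to collapse against, no Eulerian poset on which acceptability makes sense, and no palindromic toric $h$-polynomial to anchor the argument. That extension is the missing idea; as written, the proposal would not go through.
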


\begin{proof}
The cones $C_Q$ in the normal fan $\tilde{\Delta}_P$ of $P$ are in inclusion-reserving correspondence with the non-empty faces $Q$ of $P$.
We may consider a simplicial fan $\tilde{\Delta}_P'$ that refines $\tilde{\Delta}_P$, and contains $\Delta_P'$ as a subfan. 
Let $\tilde{\Gamma}$,$\tilde{B}$ be the lower Eulerian poset of cones of $\tilde{\Delta}_P'$,$\tilde{\Delta}_P$ ordered by inclusion, respectively.  
Consider the corresponding strong formal subdivision $\tilde{\sigma}: \tilde{\Gamma} \rightarrow \tilde{B}$. Since $h^*(Q,\cS|_{Q};u,v,w) = 1$ when $Q$ is a point, it follows that 
$\Lambda(P,\cS,\Delta_P';u,v,w)$ has the form:
\[
\sum_{ C' \in \tilde{\Delta}_P'} (uvw^2 - 1)^{\dim P - \dim C'} - \sum_{  \emptyset \ne Q \subseteq P } (-1)^{\dim Q} h^*(Q,\cS|_{Q};u,v,w)  \sum_{ \substack{ C' \in \tilde{\Delta}_P' \\  \tilde{\sigma}(C') = C_Q } }   (uvw^2 - 1)^{\dim C_Q - \dim C'}.
\]
It follows from Example~\ref{e:hEulerian} that $h(\tilde{\Gamma};t) = t^{\dim P}h(\tilde{\Gamma};t^{-1}) = \sum_{ C' \in \tilde{\Delta}_P'} (t - 1)^{\dim P - \dim C'}$. 
By \eqref{i:pthird} in Proposition~\ref{p:alternate}, we may rewrite the above as: 
\begin{equation}\label{e:newlambda}
\Lambda(P,\cS,\Delta_P';u,v,w) = h(\tilde{\Gamma};uvw^2) + \sum_{  \emptyset \ne Q \subseteq P } (-1)^{\dim Q + 1} h^*(Q,\cS|_{Q};u,v,w) h(\tilde{\Gamma}_{C_Q}; uvw^2). 
\end{equation}
Using the above expression \eqref{e:newlambda}, together with \eqref{i:rh7refine*} in Theorem~\ref{t:combinatorics} and \eqref{i:pthird} in Proposition~\ref{p:alternate}, we compute:
\begin{align*}
 &(uvw^2)^{\dim P + 1}\Lambda(P,\cS,\Delta_P';u^{-1},v^{-1},w^{-1}) = uvw^2h(\tilde{\Gamma};uvw^2) \\
&+ \sum_{  \emptyset \ne Q \subseteq P } (-1)^{\dim Q + 1} \sum_{Q' \subseteq Q} h^*(Q',\cS|_{Q'};u,v,w) (uvw^2 - 1)^{\dim Q - \dim Q'}  h(\IInt(\tilde{\Gamma})_{C_Q}; uvw^2). \\
\end{align*}
Dividing the above expression into the two cases when $Q' = \emptyset$ and $Q' \ne \emptyset$, we have the following formula for $(uvw^2)^{\dim P + 1}\Lambda(P,\cS,\Delta_P';u^{-1},v^{-1},w^{-1})$:
\begin{align*}
&uvw^2h(\tilde{\Gamma};uvw^2)  + (1 - uvw^2) \sum_{  \emptyset \ne Q \subseteq P } (1 - uvw^2)^{\dim Q}  h(\IInt(\tilde{\Gamma})_{C_Q}; uvw^2) \\
&+  \sum_{  \emptyset \ne Q' \subseteq P } (-1)^{\dim Q' + 1} h^*(Q',\cS|_{Q'};u,v,w)  \sum_{Q' \subseteq Q}  (1 - uvw^2)^{\dim Q - \dim Q'}h(\IInt(\tilde{\Gamma})_{C_Q}; uvw^2). 
\end{align*}
It follows from Definition~\ref{d:hpoly} 
that if $\sigma: \Gamma \rightarrow B$ is any strong formal subdivision between lower Eulerian posets, and $B$ has rank $n$ and rank function $\rho_B$, then 
\begin{align*}
t^{\rk(\Gamma)} h(\Gamma;t^{-1}) &= \sum_{y \in \Gamma}  g([\hat{0}_\Gamma,y];t) (t - 1)^{\rk(\Gamma) - \rho(\hat{0}_\Gamma,y)} \\
&= \sum_{x \in B}  (t - 1)^{n - \rho_B(\hat{0},x)}  \sum_{y \in \IInt(\Gamma_x)}  g([\hat{0}_\Gamma,y];t) (t - 1)^{\rk(\Gamma_x) - \rho(\hat{0}_\Gamma,y)} \\
&=  \sum_{x \in B}  (t - 1)^{n - \rho_B(\hat{0},x)}  t^{\rk(\Gamma_x) } h(\IInt(\Gamma)_x;t^{-1}).
\end{align*}
Replacing $t$ with $t^{-1}$ gives:
\begin{equation}\label{e:fromdef}
h(\Gamma;t) = \sum_{x \in B} (1 - t)^{n - \rho_B(\hat{0},x)} h(\IInt(\Gamma)_x;t). 
\end{equation}
Using \eqref{e:fromdef}, our previous expression for  $(uvw^2)^{\dim P + 1}\Lambda(P,\cS,\Delta_P';u^{-1},v^{-1},w^{-1})$ simplifies to 
\begin{align*}
 &uvw^2h(\tilde{\Gamma};uvw^2)  + (1 - uvw^2)h(\tilde{\Gamma};uvw^2)   \\
 &+ \sum_{  \emptyset \ne Q' \subseteq P } (-1)^{\dim Q' + 1} h^*(Q',\cS|_{Q'};u,v,w) h(\tilde{\Gamma}_{C_{Q'}}; uvw^2). 
\end{align*}
The result now follows by comparing the above expression with \eqref{e:newlambda}. 
\end{proof}

We have the following important characterization of the refined limit mixed $h^*$-polynomial. 

\begin{corollary}\label{c:superimportant}
The refined  limit mixed $h^*$-polynomial as an invariant of polyhedral subdivisions of lattice polytopes is uniquely characterized by the following properties:
\begin{enumerate}
\item\label{m:1} The degree of $h^*(P,\cS;u,v,w)$ as a polynomial in $w$ is at most $\dim P + 1$.  

\item\label{m:2} The refined  limit mixed $h^*$-polynomial specializes to the limit mixed $h^*$-polynomial i.e.
\[
h^*(P,\cS;u,v,1) = h^*(P,\cS;u,v). 
\]

\item\label{m:3} If $\Delta_P'$ denotes a simplicial fan refinement of $\Delta_P$ then for $\Lambda$ defined in terms of the refined limit mixed $h^*$-polynomial as above, we have 
\[
\Lambda(P,\cS,\Delta_P';u,v,w) = (uvw^2)^{\dim P + 1}\Lambda(P,\cS,\Delta_P';u^{-1},v^{-1},w^{-1}).
\]
\end{enumerate}
\end{corollary}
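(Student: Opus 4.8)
The strategy is induction on $\dim P$. First one checks that the refined limit mixed $h^*$-polynomial itself satisfies \eqref{m:1}--\eqref{m:3}: property \eqref{m:1} is \eqref{i:refinedegreelimit*} of Theorem~\ref{t:combinatorics}, property \eqref{m:2} is the first specialization in \eqref{i:rh2refine*} of Theorem~\ref{t:combinatorics}, and property \eqref{m:3} is the lemma immediately preceding this corollary. So suppose $A(P,\cS;u,v,w)$ and $B(P,\cS;u,v,w)$ are two rules assigning to every lattice polyhedral subdivision $\cS$ of a lattice polytope $P$ a polynomial in $w$ over $\Z[u^{\pm 1/2},v^{\pm 1/2}]$, each satisfying \eqref{m:1}--\eqref{m:3}, and set $D(P,\cS;u,v,w) := A(P,\cS;u,v,w) - B(P,\cS;u,v,w)$. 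For $\dim P \le 0$ both invariants equal $1$ (for $P = \emptyset$ by convention, and then by \eqref{m:1} and \eqref{m:2}), so $D = 0$ there; assume now $\dim P \ge 1$ and, inductively, that $D(Q,\cS|_Q;u,v,w) = 0$ for every face $Q$ of $P$ with $\dim Q < \dim P$ (this hypothesis is vacuous when $\dim P = 1$, so that case is genuinely the first step).

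The key observation is that $\Phi(P,\cS,\Delta_P';u,v,w)$, and hence $\Lambda(P,\cS,\Delta_P';u,v,w)$, is an affine-linear function of the tuple $\bigl(h^*(Q,\cS|_Q;u,v,w)\bigr)_{Q\subseteq P,\ \dim Q>0}$, with coefficients depending only on $P$ and the chosen simplicial refinement $\Delta_P'$: indeed
\[
\Phi(P,\cS,\Delta_P';u,v,w) = \sum_{\substack{Q\subseteq P\\ \dim Q>0}} (-1)^{\dim Q}\, h^*(Q,\cS|_Q;u,v,w) \sum_{\substack{C'\in\Delta_P'\\ \sigma(C')=C_Q}} (uvw^2-1)^{\dim C_Q - \dim C'},
\]
and the remaining summand $\sum_{C'\in\Delta_P'}(uvw^2-1)^{\dim P-\dim C'}$ of $\Lambda$ does not involve the invariant at all. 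Fix one simplicial fan refinement $\Delta_P'$ of $\Delta_P$, and let $\Lambda_A,\Lambda_B$ denote the polynomials $\Lambda$ built from $A$ and $B$. Subtracting and using the inductive hypothesis to kill every term with $\dim Q<\dim P$, only $Q=P$ survives; since $C_P=\{0\}$ is the zero cone, the only $C'\in\Delta_P'$ with $\sigma(C')=C_P$ is $C'=\{0\}$ and its contribution is $(uvw^2-1)^{0}=1$, so that term is $(-1)^{\dim P}D(P,\cS;u,v,w)$. Hence
\[
\Lambda_A - \Lambda_B = (-1)^{\dim P+1}\, D(P,\cS;u,v,w).
\]

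Now apply property \eqref{m:3} to $A$ and to $B$ for this $\Delta_P'$: with $N := \dim P+1$ we get $\Lambda_A(u,v,w)=(uvw^2)^N\Lambda_A(u^{-1},v^{-1},w^{-1})$ and likewise for $B$; subtracting, $D$ inherits the symmetry
\[
D(P,\cS;u,v,w) = (uvw^2)^N\, D(P,\cS;u^{-1},v^{-1},w^{-1}).
\]
By property \eqref{m:1} for $A$ and $B$, $D$ is a polynomial in $w$ of degree at most $N$, say $D=\sum_{k=0}^N d_k(u,v)\,w^k$. The left-hand side of the symmetry involves only $w^0,\dots,w^N$, while the right-hand side involves only $w^N,\dots,w^{2N}$; comparing coefficients forces $d_k=0$ for $k<N$, so $D=d_N(u,v)\,w^N$. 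Finally property \eqref{m:2} gives $D(P,\cS;u,v,1)=0$, i.e.\ $d_N(u,v)=0$, whence $D(P,\cS;u,v,w)=0$. This completes the induction.

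The substantive input here is the preceding lemma (the $(uvw^2)$-symmetry of $\Lambda$), which is precisely what turns \eqref{m:3} into a genuine constraint; given it, the present argument is bookkeeping, the only thing to be careful about being that the inductive hypothesis is used to collapse the affine-linear expression for $\Lambda$ down to the single unknown $D(P,\cS;u,v,w)$, after which \eqref{m:1} and \eqref{m:2} pin it to zero. The main (mild) obstacle is purely expository: making sure the affine-linearity of $\Phi$ in the face-$h^*$'s and the identification of the $Q=P$ coefficient as $(-1)^{\dim P}$ are spelled out correctly, and handling the degenerate range $\dim P\le 0$ where \eqref{m:3} is vacuous and one falls back on the defining conventions.
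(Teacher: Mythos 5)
Your proof is correct and follows essentially the same inductive route as the paper: induction on $\dim P$, the affine-linearity of $\Phi$ in the face-$h^*$'s (with the $Q=P$ term isolated), and then using~\eqref{m:3},~\eqref{m:1},~\eqref{m:2} in that order to kill the single unknown. You are slightly more careful than the paper in recording the coefficient of $h^*(P,\cS;u,v,w)$ in $\Phi$ as $(-1)^{\dim P}$ (the paper writes $\Phi = h^* + \lambda$, implicitly absorbing this unit), and in packaging the argument as ``the difference $D$ of two candidate invariants vanishes'' rather than ``$h^*$ is successively determined,'' but the content is the same.
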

\begin{proof}
The properties  were established above.

 Note that 
\[
\Phi(P,\cS,\Delta_P';u,v,w)  = h^*(P,\cS;u,v,w) + \lambda(u,v,w),
\]
where  $\lambda(u,v,w)$ involves terms of the form $h^*(Q,\cS|_{Q};u,v,w)$ where $\dim Q < \dim P$. 
By induction on dimension, we may assume  that $\lambda(u,v,w)$ is determined. Then by \eqref{m:1}, as a polynomial in $w$, $\Phi(P,\cS,\Delta_P';u,v,w)$
is known in $w$-degree strictly greater than $\dim P + 1$. Hence by \eqref{m:3} it is known in  $w$-degree strictly less than $\dim P + 1$. Finally, \eqref{m:2} determines $h^*(P,\cS;u,v,w)$.
\end{proof}

Similarly, we have the following characterization of the mixed $h^*$-polynomial. 
With the notation above, we set $\Lambda(P,\cS,\Delta_P';u,w) := \Lambda(P,\cS,\Delta_P';uw^{-1},1,w)$. Using \eqref{prop2} in Theorem~\ref{t:combinatorics}, we may write this as:
\[
 \sum_{ C' \in \Delta_P'  } (uw - 1)^{\dim P - \dim C'} - \sum_{  \substack{ Q \subseteq P  \\ \dim Q > 0   } } (-1)^{\dim Q} h^*(Q,\cS|_{Q};u,w)  \sum_{ \substack{ C' \in \Delta_P' \\  \sigma(C') = C_Q } }   (uw - 1)^{\dim C_Q - \dim C'}.
\] 

\begin{corollary}
The mixed $h^*$-polynomial as an invariant of lattice polytopes is uniquely characterized by the following properties:
\begin{enumerate}
\item\label{m:1'} All terms in $h^*(P;u,w)$ have combined degree in $u$ and $w$ at most $\dim P + 1$.  

\item\label{m:2'} The mixed $h^*$-polynomial specializes to the $h^*$-polynomial i.e.
\[
h^*(P;u,1) = h^*(P;u). 
\]

\item\label{m:3'} If $\Delta_P'$ denotes a simplicial fan refinement of $\Delta_P$ then for $\Lambda$ defined in terms of the mixed $h^*$-polynomial as above, we have 
\[
\Lambda(P,\cS,\Delta_P';u,w) = (uw)^{\dim P + 1}\Lambda(P,\cS,\Delta_P';u^{-1},w^{-1}).
\]
\end{enumerate}
\end{corollary}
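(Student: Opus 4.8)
The plan is to follow the template of Corollary~\ref{c:superimportant}, which establishes the analogous characterization for the refined limit mixed $h^*$-polynomial, and to deduce the statement for the mixed $h^*$-polynomial by the substitution $v \mapsto 1$, $u \mapsto uw^{-1}$, $w \mapsto w$. More precisely, I would first verify that the three listed properties hold for $h^*(P;u,w)$. Property~\eqref{m:1'} follows from Remark~\ref{r:charmixed}, which says all terms of $h^*(P;u,v)$ have combined degree in $u$ and $v$ at most $\dim P + 1$: since $h^*(P;u,w) = h^*(P,\cS;uw^{-1},1,w)$ arises from $h^*(P,\cS;u,v,w)$ via the specialization in \eqref{i:rh2refine*} of Theorem~\ref{t:combinatorics}, and since the trivial subdivision gives $h^*(P,\cS;u,v) = h^*(P;u,v)$ by \eqref{i:rh4limit*} of Theorem~\ref{t:basic}, the substitution $u \mapsto uw^{-1}$, $v \mapsto w$ into a polynomial of combined degree at most $\dim P + 1$ in $u,v$ yields a polynomial of combined degree at most $\dim P + 1$ in $u,w$. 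Property~\eqref{m:2'} is immediate from the definition of the mixed $h^*$-polynomial and the fact that $h^*(P;u,v)$ specializes to $h^*(P;u)$ at $v = 1$ (Definition~\ref{d:mixedstar} and the discussion following it). Property~\eqref{m:3'} follows by specializing the symmetry identity for $\Lambda(P,\cS,\Delta_P';u,v,w)$ established in the Lemma preceding Corollary~\ref{c:superimportant}: setting $v = 1$ and replacing $u$ by $uw^{-1}$ turns $uvw^2$ into $uw$, and the symmetry $\Lambda(P,\cS,\Delta_P';u,v,w) = (uvw^2)^{\dim P+1}\Lambda(P,\cS,\Delta_P';u^{-1},v^{-1},w^{-1})$ becomes exactly $\Lambda(P,\cS,\Delta_P';u,w) = (uw)^{\dim P+1}\Lambda(P,\cS,\Delta_P';u^{-1},w^{-1})$.

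For the uniqueness half, I would argue exactly as in the proof of Corollary~\ref{c:superimportant}, proceeding by induction on $\dim P$. Writing
\[
\Phi(P,\cS,\Delta_P';u,w) = h^*(P;u,w) + \lambda(u,w),
\]
where $\lambda(u,w)$ is a combination of terms $h^*(Q;u,w)$ with $\dim Q < \dim P$ (here using that $h^*$ of a point is $1$), the inductive hypothesis determines $\lambda(u,w)$. Property~\eqref{m:1'} pins down $\Phi$ in combined $(u,w)$-degrees where the $w$-degree exceeds $\dim P + 1$ — more carefully, since $h^*(P;u,w)$ has combined degree at most $\dim P + 1$, the part of $\Phi$ of combined degree greater than $\dim P + 1$ equals the corresponding part of $\lambda(u,w)$, hence is known; then Property~\eqref{m:3'} transfers this knowledge to the part of $\Phi$ of combined degree strictly less than $\dim P + 1$; and Property~\eqref{m:2'} together with the explicit form of $\Phi$ recovers $h^*(P;u,w)$ itself. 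One subtlety compared to Corollary~\ref{c:superimportant} is that the bidegree bound \eqref{m:1'} is a \emph{combined} degree bound rather than a bound on the degree in a single variable $w$, so the symmetry in \eqref{m:3'} must be used with respect to the total degree in $(u,w)$; this is precisely the symmetry one obtains, so the argument goes through, but I would state this bookkeeping carefully.

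The main obstacle I anticipate is making the degree bookkeeping in the uniqueness argument fully rigorous: one must check that $\Lambda(P,\cS,\Delta_P';u,w)$, as defined via the displayed formula, genuinely has the form ``known high-degree part $+$ unknown low-degree part'' with the cut occurring at total degree $\dim P + 1$, and that no cancellation spoils the induction. This requires tracking how $h(\tilde{\Gamma};uw)$ and the terms $h^*(Q;u,w)h(\tilde{\Gamma}_{C_Q};uw)$ contribute to each bidegree, using that $h(\tilde\Gamma;t)$ has degree $\dim P$ and the combined-degree bound for each $h^*(Q;u,w)$. A secondary, purely notational point is that the excerpt refers to ``\eqref{prop2} in Theorem~\ref{t:combinatorics}'' when giving the explicit formula for $\Lambda(P,\cS,\Delta_P';u,w)$; I would replace this with the correct reference, namely the specialization $h^*(P,\cS;uw^{-1},1,w) = h^*(P;u,w)$ from \eqref{i:rh2refine*} of Theorem~\ref{t:combinatorics}, to keep the argument self-contained.
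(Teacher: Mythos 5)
Your proposal is correct and follows the approach the paper intends: the paper leaves the proof of this corollary implicit, expecting the reader to re-run the proof of Corollary~\ref{c:superimportant} under the specialization $v \mapsto 1$, $u \mapsto uw^{-1}$, and that is exactly what you do. The one point worth emphasizing is the bookkeeping you flag yourself: the degree bound here is a total $(u,w)$-degree bound rather than a $w$-degree bound, but the specialized symmetry $\Lambda(u,w) = (uw)^{\dim P + 1}\Lambda(u^{-1},w^{-1})$ acts on monomials $u^a w^b$ by $c_{a,b} = c_{n+1-a,\,n+1-b}$ (with $n = \dim P$), so it pairs total degree $d$ with total degree $2(n+1)-d$ and the argument carries over verbatim with ``$w$-degree'' replaced throughout by ``combined $(u,w)$-degree.'' Your observation that ``\eqref{prop2}'' is a broken reference to \eqref{i:rh2refine*} is also correct, as is the remark that the $Q=P$ term contributes a nonzero scalar multiple of $h^*(P;u,w)$ to $\Phi$, which is all the uniqueness argument needs.
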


\subsection{Examples}

\begin{example} 
Let $\cS$ be a unimodular triangulation of a lattice polytope $P$. Then it follows from Example~\ref{e:unimodularrefined} that 
\[
h^*(P,\cS;u,v, w) =   \sum_{Q \subseteq P} w^{\dim Q + 1} l_Q(\cS|_Q;uv) g([Q,P];uvw^2).
\]
In this case, all $r$-local $h^*$-diamonds are concentrated on their central vertical strips. More generally, if 
$\cS$ is a lattice polyhedral subdivision of $P$ and there exists a unimodular triangulation $\cS'$ refining $\cS$, then 
it can be shown that, with the notation of Remark~\ref{r:generalizations},
\[
h^*(P,\cS;u,v,w) = h_P(\cS',\cS;u,v,w).
\]
\end{example}

\begin{example}\label{e:smallterms} We continue with the notation above.
We have an explicit description of some of the coefficients of  $h^*(P,\cS;u,v,w)$. Recall
that if $F$ is a (possibly empty) face of $\cS$, then $\sigma(F)$ denotes the smallest face of $P$ containing $F$.
Using Example~\ref{e:lower}, we compute that for $q,r > 0$,
\[
h^*_{0,q,r} = h^*_{q,0,r} = h^*_{r,r - q,r} =  h^*_{r- q,r,r} =  \sum_{\substack{F \in \cS \\\ \dim F = q + 1 \\\ \dim \sigma(F) = r + 1}} \# (\Int(F) \cap M),
\]
\[
h^*_{0,0,r} = h^*_{r,r,r} =    \sum_{\substack{F \in \cS \\\ \dim F \le 1 \\\ \dim \sigma(F) = r + 1}} \# (\Int(F) \cap M),
\]
\[
\dim P + 1 + h^*_{0,0,0} =   \sum_{\substack{F \in \cS \\\ \dim F \le 1 \\\ \dim \sigma(F) \le 1}} \# (\Int(F) \cap M).
\]

When $\dim P = 1$,  this gives an explicit description of $h^*(P,\cS;u,v,w)$:
\[
h^*(P,\cS;u,v,w) = 1 + uvw^2 \cdot  \# (\Int(P) \cap M).
\]
When $\dim P = 2$, this gives an explicit description of $h^*(P,\cS;u,v,w)$:
\[
h^*(P,\cS;u,v,w) = 1 + uvw^2  \big[ h^*_{0,0,0} + w\big[(1 + uv)h^*_{0,0,1} + (u + v)h^*_{0,1,1}\big].
\]
When $\dim P = 3$, we have 
\[
h^*(P,\cS;u,v,w) = 1 + uvw^2  \big[ h^*_{0,0,0} + w\big[(1 + uv)h^*_{0,0,1} + (u + v)h^*_{0,1,1}\big] 
\]
\[
+ w^2\big[ (1 + (uv)^2)h^*_{0,0,2} + (u + v)(1 + uv)h^*_{0,1,2} + (u^2 + v^2)h^*_{0,2,2} + uvh^*_{1,1,2}  \big]   \big],  
\]
where each term has an explicit description above except $h^*_{1,1,2}$. By  \eqref{i:rh2refine*} of Theorem~\ref{t:combinatorics}, 
$\sum_{p,q,r} h^*_{p,q,r} = 6\vol(P)$, and this determines $h^*_{1,1,2}$ and hence $h^*(P,\cS;u,v,w)$.
\end{example}

\section{Tropical geometry}\label{s:trop}

In this section, we give an outline of how to apply our setup to tropical geometry. We refer the reader to \cite{geometricpaper} for details. 

 Let $\K = \C(t)$ and 
let $X^\circ \subseteq (\K^*)^n$ be a closed subvariety that is \define{sch\"on} in the sense of Tevelev \cite[Definition~1.1]{TevComp}.  
The associated \define{tropical variety}  $\Trop(X^\circ) \subseteq \R^n$ may be given a rational polyhedral structure $\Sigma$. 
For any cell $\gamma \in \Sigma$, let  $\rec(\gamma)$ denote the corresponding 
recession cone i.e. $\gamma = P + \rec(\gamma)$, for some bounded polytope $P$. The collection of cones $\{ \rec(\gamma) \mid \gamma \in \Sigma \}$
forms a fan $\Delta$ called the \define{recession fan} of $\Sigma$. The corresponding dual posets are locally Eulerian with rank functions $\rho_{\Sigma^*}(\gamma) = - \dim \gamma$ and 
$\rho_{\Delta^*}(\tau) = - \dim \tau$ respectively. One can show that taking recession cones gives a strong formal subdivision: 
\[
\rec: \Sigma^* \rightarrow \Delta^*,
\]
\[
\gamma \mapsto \rec(\gamma). 
\]
Let $X_\Delta$ denote the closure of $X^\circ$ in the toric variety associated to $\Delta$ over $\K$. Then $X_\Delta$ admits a natural stratification 
$X_{\Delta} = \cup_{\tau \in \Delta} X^\circ_{\tau}$, where $X^\circ_{\{0\}} = X^\circ$. We may naturally view a variety defined over $\K$ as
a family of complex varieties over a small punctured disc $\D^*$ with co-ordinate $t$. In this case, there exists a canonical extension of 
$X_\Delta$ to a family over the whole disc $\D$, and the central fiber  $X_0 = \cup_{\gamma \in \Sigma} X^\circ_{\gamma}$ admits a natural 
stratification by explicit (and easily computed) complex varieties $X^\circ_{\gamma}$.

As in Example~\ref{e:groth}, let $R = K_0(\Var_\C)[\L^{\pm 1/2}]$ be an extension of the Grothendieck ring of complex varieties, with corresponding involution $\mathcal{D}_\C$. 
We define an element $\zeta \in I(\Sigma^*;R)$ in the incidence algebra of $\Sigma^*$ by:
\[
\zeta(\gamma) = (-1)^{\dim \gamma} \L^{\dim \gamma/2} [X^\circ_{\gamma}].
\]
We  define an element $\psi \in I(\Delta^*;R)$ in the incidence algebra of $\Delta^*$ by:
\[
\psi(\tau) = (-1)^{\dim \tau} \L^{\dim \tau/2} \psi_{X^\circ_\tau}, 
\]
where $\psi_{X^\circ_\tau} \in K_0(\Var_\C)$ is the \define{motivic nearby fiber} of $X^\circ_\tau$, as introduced by Denef and Loeser \cite{DLGeometry}.
Then our formula for the motivic nearby fiber \cite[Theorem~1.2]{geometricpaper}  is equivalent to the statement that $\zeta$ pushes forward to $\psi$ via $\rec$, i.e.
\[
\rec_*(\zeta) = \psi. 
\]

\bibliographystyle{amsplain}
\def\cprime{$'$}
\providecommand{\bysame}{\leavevmode\hbox to3em{\hrulefill}\thinspace}
\providecommand{\MR}{\relax\ifhmode\unskip\space\fi MR }
\providecommand{\MRhref}[2]{%
  \href{http://www.ams.org/mathscinet-getitem?mr=#1}{#2}
}
\providecommand{\href}[2]{#2}

\end{document}